\begin{document}
\theoremstyle{plain}
\newtheorem{theorem}{Theorem}[section]
\newtheorem{main}{Main Theorem}
\newtheorem{proposition}[theorem]{Proposition}
\newtheorem{corollary}[theorem]{Corollary}
\newtheorem{lemma}[theorem]{Lemma}
\newtheorem{conjecture}[theorem]{Conjecture}
\newtheorem{claim}[theorem]{Claim}
\newtheorem{fact}[theorem]{Fact}
\newtheorem{question}[theorem]{Question}
\newtheorem*{st}{Statements}	

\numberwithin{equation}{section}

\theoremstyle{definition}
\newtheorem{definition}[theorem]{Definition}
\newtheorem{notation}[theorem]{Notation}
\newtheorem{convention}[theorem]{Convention}
\newtheorem{example}[theorem]{Example}
\newtheorem{remark}[theorem]{Remark}
\newtheorem*{ac}{Acknowledgements}	

\newcommand{\one}{{\bf 1}} 
\newcommand{\Rep}{\mathrm{Rep}}
\newcommand{\Id}{\mathrm{id}}
\newcommand{\Aut}{\mathrm{Aut}}
\newcommand{\ch}{\mathrm{ch}}
\newcommand{\VVec}{\mathrm{Vec}}
\newcommand{\PSU}{\mathrm{PSU}}
\newcommand{\SO}{\mathrm{SO}}
\newcommand{\SU}{\mathrm{SU}}
\newcommand{\SL}{\mathrm{SL}}
\newcommand{\FPdim}{\mathrm{FPdim}}
\newcommand{\FPdims}{\mathrm{FPdims}}
\newcommand{\Tr}{\mathrm{Tr}}
\newcommand{\ord}{\mathrm{ord}}
\newcommand{\customcite}[2]{\cite[#2]{#1}}
\newcommand{\mC}{\mathcal{C}}
\newcommand{\mB}{B}
\newcommand{\hc}{\hom_{\mC}}
\newcommand{\id}{{\bf 1}} 
\newcommand{\spec}{i_0} 
\newcommand{\Sp}{i_0} 
\newcommand{\SpecS}{I_{s}} 
\newcommand{\SpS}{I_{s}'} 
\newcommand{\field}{\mathbb{K}} 
\newcommand{\white}{\textcolor{red}{\bullet}} 
\newcommand{\black}{\bullet} 
\newcommand{\proofsketch}{ \noindent \textit{Proof sketch.} }
\newenvironment{restatetheorem}[1]
  {
   \par\addvspace{0.25\baselineskip}
   \noindent\textbf{Theorem \ref{#1}.}\ \itshape
  }
  {
   \par\addvspace{0.25\baselineskip}
  }

\newcommand{\sebastien}[1]{\textcolor{blue}{#1 - Sebastien}}

\newcommand{\RR}{\mathbb{R}}
\newcommand{\CC}{\mathbb{C}}
\newcommand{\QQ}{\mathbb{Q}}
\newcommand{\ZZ}{\mathbb{Z}}

\newcommand{\Normaliz}{\textsf{Normaliz}}
\newcommand{\SageMath}{\textsf{SageMath}}
\newcommand{\GAP}{\textsf{GAP}}

\title{Classifying integral Grothendieck rings up to rank 5 and beyond}

\author{Max A. Alekseyev}
\address{M.A.~Alekseyev, Department of Mathematics, George Washington University, Washington, DC, USA}
\email{maxal@gwu.edu}

\author{Winfried Bruns}
\address{W.~Bruns, Institut für Mathematik, Universität Osnabrück, 49069 Osnabrück, Germany}
\email{wbruns@uos.de}

\author{Jingcheng Dong}
\address{J.~Dong, College of Mathematics and Statistics, Nanjing University of Information Science and Technology, Nanjing 210044, China}
\email{jcdong@nuist.edu.cn}

\author{Sebastien Palcoux}
\address{S.~Palcoux, Beijing Institute of Mathematical Sciences and Applications, Huairou District, Beijing, China}
\email{sebastien.palcoux@gmail.com}
\urladdr{https://sites.google.com/view/sebastienpalcoux}

\maketitle

\begin{abstract} 
In this paper, we define a Grothendieck ring as a fusion ring categorifiable into a fusion category over the complex field. An integral fusion ring is called Drinfeld if all its formal codegrees are integers dividing the global Frobenius--Perron dimension. Every integral Grothendieck ring is necessarily Drinfeld.

Using the fact that the formal codegrees of integral Drinfeld rings form an Egyptian fraction summing to 1, we derive a finite list of possible global FPdims for small ranks. Applying Normaliz, we classify all fusion rings with these candidate FPdims, retaining only those admitting a Drinfeld structure. To exclude Drinfeld rings that are not Grothendieck rings, we analyze induction matrices to the Drinfeld center, classified via our new Normaliz feature. Further exclusions and constructions involve group-theoretical fusion categories and Schur multipliers.

Our main result is a complete classification of integral Grothendieck rings up to rank 5, extended to rank 7 in odd-dimensional and noncommutative cases using Frobenius--Schur indicators and Galois theory. Moreover, we show that any noncommutative, odd-dimensional, integral Grothendieck ring of rank at most 22 is pointed of rank 21.

We also classify all integral 1-Frobenius Drinfeld rings of rank 6, identify the first known non-Isaacs integral fusion category (which turns out to be group-theoretical), classify integral noncommutative Drinfeld rings of rank 8, and integral 1-Frobenius MNSD Drinfeld rings of rank 9. Finally, we determine the smallest-rank exotic simple integral fusion rings: rank 4 in general, rank 6 in the Drinfeld case, and rank 7 in the 1-Frobenius Drinfeld case.
\end{abstract}



\tableofcontents

\section*{About the Appendices}

The appendices, which contain extensive fusion data, have been omitted from the published version due to space constraints. They are referred to as~\cite{appendices}.

\section{Introduction}   \label{sec:Intro}
In this paper, all fusion categories are assumed to be over $\mathbb{C}$. 

\subsection{Integral fusion categories in the literature}
The classification of fusion categories has emerged as a central theme in modern algebra, bridging representation theory, operator algebras, and mathematical physics. The overarching goal is to systematically organize these structures, which generalize the representation theories of finite groups and quantum groups. A principal strategy is to classify categories according to the Frobenius--Perron dimensions of their simple objects, yielding a fundamental dichotomy between integral (all dimensions are integers) and non-integral categories. An integral fusion category is pseudo-unitary—that is, its categorical dimension equals its Frobenius-Perron dimension—and therefore spherical, hence pivotal~\cite{EGNO15}.

Integral fusion categories---known to be equivalent to the representation categories of finite-dimensional semisimple quasi-Hopf algebras \cite{EO04}---are studied along two main directions. One approach seeks to classify all integral fusion categories of a fixed finite dimension. Landmark results include complete classifications for dimensions $p$, $p^{2}$, and $pq$ \cite{ENO05,EGO04}, where $p$ and $q$ are distinct primes. The other approach focuses on classification by rank (the number of simple objects), particularly in low-rank cases. 

For instance, integral (more generally, pivotal) fusion categories of ranks $2$ and $3$ have been fully classified \cite{Os03,Os15}, often exhibiting highly constrained fusion rules governed by number-theoretic and combinatorial conditions arising from Frobenius--Perron theory, Frobenius--Schur indicators, and related tools. In contrast, classification efforts stall at rank $4$ due to a sharp increase in complexity. The primary challenges stem from the rapid growth of possible fusion rings and the difficulty of solving the pentagon equations necessary for categorification. Moreover, the classification of integral fusion categories already subsumes that of finite group representations, a problem still out of reach.

Research in both directions remains vibrant, with recent advances increasingly exploiting deeper structural features, such as braidings and the non-degeneracy of the $S$-matrix \cite{CP22,ABPP,CGP24}. Nevertheless, general classification results for integral fusion categories remain scarce. 

This paper extends the classification of the Grothendieck rings of integral fusion categories up to rank~$5$ unconditionally, and to higher ranks under additional assumptions.

\subsection{Egyptian fractions and Drinfeld rings}

A previous work \cite{ABPP}, inspired by \cite{BrRo}, classified the Grothendieck rings (and modular data) of integral modular fusion categories up to rank $13$, relying on the key observation that being half-Frobenius induces an Egyptian fraction with squared denominators. The present paper also employs Egyptian fractions, albeit in a somewhat dual manner.


The concept of a Drinfeld ring is introduced in general in \S\ref{sub:Drin}, but in the integral setting, it simply means that the formal codegrees $(f_i)$—defined in \S\ref{sub:formal}—are integers that divide the global $\FPdim = f_1$. Additionally, in the commutative case, $\sum_i \frac{1}{f_i} = 1$, forming an Egyptian fraction. This concept also extends to the noncommutative case by considering multiplicities (see \S\ref{sub:formal}), but every fusion ring up to rank $5$ is commutative (Proposition \ref{prop:min6}). Note that the Egyptian fractions must satisfy the \emph{divisibility assumption}, meaning each $f_i$ divides $f_1$ (see \S\ref{sub:EgyAlgo}, \cite{A374582} and \cite{MaxScripts}).

There are only a finite number of Egyptian fractions of a given length $\ell$ (see \cite{Lan03} and \cite{A002966}), and a finite number of integral fusion rings for a given global $\FPdim$ and rank $\ell$. Consequently, there are finitely many possible integral Drinfeld rings of a fixed rank, as noted in \cite[Proposition 8.38]{ENO05}. This finiteness makes computer-assisted classification feasible for small ranks. 

\subsection{Computational vs. non-computational} \label{sub:CvsNV}

The aim of this subsection is to distinguish between the computer-assisted components of this work and those that are not. The classifications of Egyptian fractions, integral fusion rings, and induction matrices are fully automated; the corresponding algorithmic details using \SageMath{} and \Normaliz{} are provided in \S\ref{sec:AlgoDetails}. The hardest cases require heavy computations on HPC systems.

Concerning the problem of categorification over $\mathbb{C}$ of a given integral fusion ring: if it is not Drinfeld, or if it is Drinfeld without an induction matrix (Lemmas~\ref{lem:NoInd} and \ref{lem:NoInd2}), then it is excluded, and the process is almost fully automated. Otherwise, additional analysis of the induction matrices is required: if the induction matrices induce an embedding into the Drinfeld center while the fusion ring is known not to be the Grothendieck ring of a premodular category, or if the corresponding type of the Drinfeld center is known not to arise from an integral modular fusion category (Lemmas~\ref{lem:1126}, \ref{lem:AllBraided[1,1,1,3,12]}, \ref{lem:11266}, \ref{lem:112210}), then the integral fusion ring is also excluded, and the process reduces to consulting the literature. Otherwise, more elaborate arguments are required, involving de-equivariantization (Lemma~\ref{lem:11136}) or group-theoretical categories and the Schur multiplier (Remark~\ref{rk:alternative}, \S\ref{sub:schur}, and \S\ref{sub:r7d903}). All remaining fusion rings admit known models: near-group categories, Kac algebras, group-theoretical categories, or a variant of $\Rep(S_4)$. The group-theoretical ones required additional work to be identified (\S\ref{sub:r7d60nc} and \S\ref{sub:NonIsaacs}), partially automated using \GAP{}.  

This approach allowed us to complete the classification of integral Grothendieck rings up to rank~$5$ (Theorem~\ref{thm:main}). We then extended the classification up to rank~$7$, in the noncommutative case (Theorem~\ref{thm:IntroNCGrIntRank7}) and in the odd-dimensional case (Theorem~\ref{thm:mainodd}). In these two situations, the classification of Egyptian fractions can be refined by imposing additional constraints on the formal codegrees: in the noncommutative case they are obtained via Galois-theoretic methods (Proposition~\ref{prop:min6}, Lemmas~\ref{lem:excl} and~\ref{lem:NCmin}), while in the odd-dimensional case, these follow from Frobenius--Schur indicators (Proposition~\ref{prop:MNSDFormalCodegrees}). Finally, by combining these two cases, the classification can be extended further up to rank~$22$ (Theorem~\ref{thm:OddNC2Intro}), relying on additional restrictions (Proposition~\ref{prop:DimDet} and Corollary~\ref{cor:OddNC}).


\subsection{Classification results} \label{sub:IntroClass}

Our classification of integral Drinfeld rings is organized according to several structural constraints: the general case (\S\ref{sec:General} and~\cite[\ref{sec:GeneralA}]{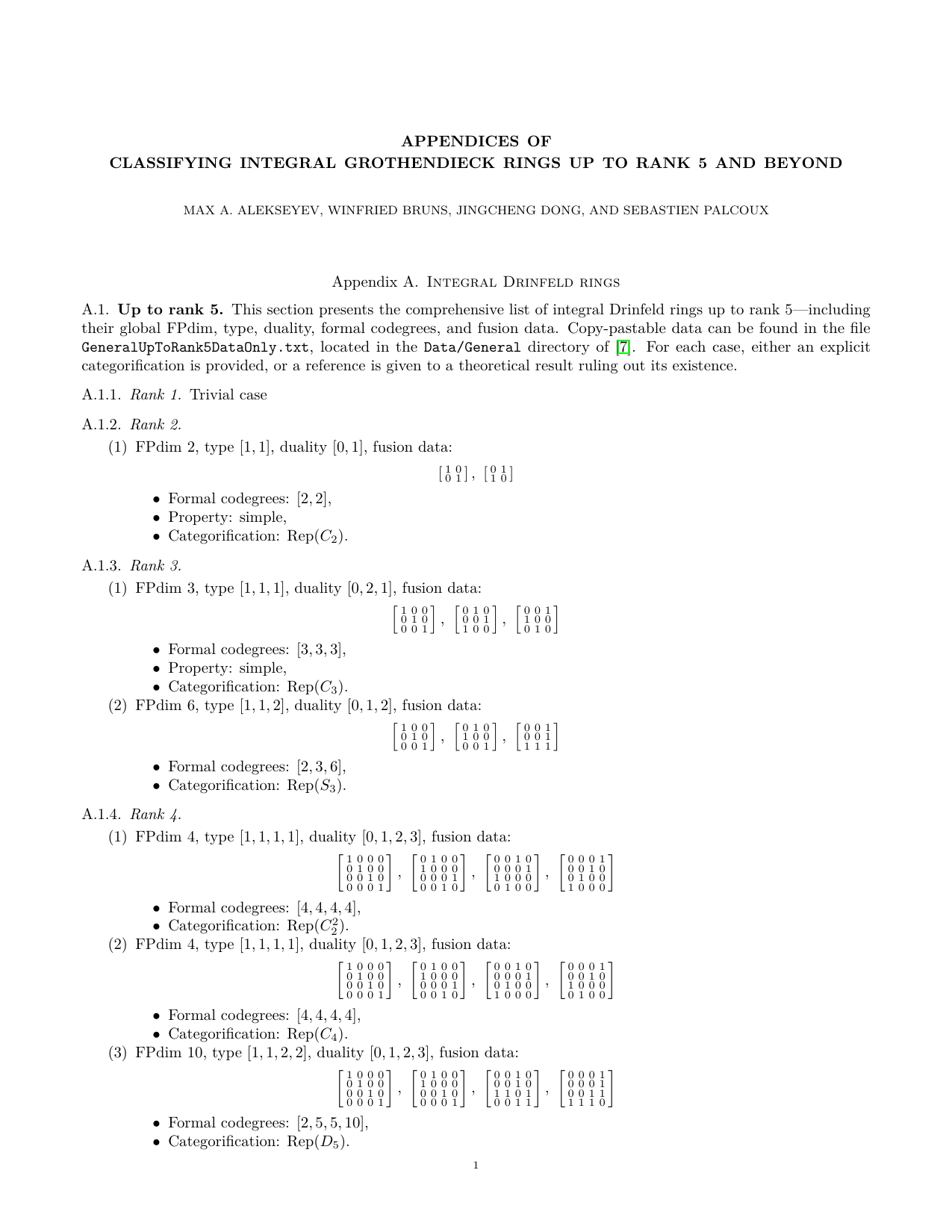}), the odd-dimensional case (\S\ref{sec:Odd} and~\cite[\ref{sec:OddA}]{appendices}), and the noncommutative case (\S\ref{sec:NC} and~\cite[\ref{sec:NCA}]{appendices}). Within each setting, we further refine the classification either under the $1$-Frobenius condition—meaning that each basic $\FPdim$ divides the global $\FPdim$ (see~\S\ref{sub:Fu})—or under specific bounds on the global $\FPdim$.
Most non-Grothendieck integral Drinfeld rings were detected through an analysis of the possible induction matrices (see~\S\ref{sec:IndMat} and~\S\ref{sec:ExcMod}), using a new feature of \Normaliz{}~\cite[\S H.6.3]{NorManual} developed specifically for this work.

In the general case, this leads to the following result:


\begin{theorem} \label{thm:main}
An integral fusion category up to rank $5$ (over $\mathbb{C}$) is Grothendieck equivalent to one of the following:
\begin{itemize}
\item 	$\Rep(G)$ with $G = C_1, C_2, C_3, C_4, C_5$, $C_2^2$, $S_3$, $S_4$, $D_4$, $D_5$, $D_7$, $F_5$, $C_7 \rtimes C_3$, $A_4$ and $A_5$. 
\item Tambara-Yamagami near-group $C_4+0$, see \cite{TY98},
\item Isotype variant (but non-zesting) of ${\rm Rep}(S_4)$, see \cite[\S 4.4]{LPR22}. 
\end{itemize}
The fusion data are available in \cite[\S\ref{sub:UpToRank5}]{appendices}.
\end{theorem}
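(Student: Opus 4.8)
The plan is to reduce the categorical problem to a finite, computer-verifiable classification of integral Drinfeld rings, and then to separate the genuine Grothendieck rings from the non-categorifiable candidates. First I would use the fact recalled in \S\ref{sub:Drin} that the Grothendieck ring of any integral fusion category is an integral Drinfeld ring: its formal codegrees (\S\ref{sub:formal}) are integers dividing the global $\FPdim = f_1$. By Proposition \ref{prop:min6}, every fusion ring of rank at most $5$ is commutative, so the codegrees satisfy the Egyptian fraction relation $\sum_i 1/f_i = 1$ subject to the divisibility constraints $f_i \mid f_1$. This converts the search for candidate global $\FPdim$ values into an enumeration of short Egyptian fractions.

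Next I would list all Egyptian fractions of length $\ell \le 5$ obeying the divisibility assumption; this is a finite computation bounding the admissible values of the global $\FPdim$. For each admissible pair consisting of a rank and a global $\FPdim$, I would run \textsf{Normaliz} to generate every integral fusion ring with those parameters, discarding those carrying no Drinfeld structure. The output is an explicit finite list of integral Drinfeld rings of rank at most $5$, which is the complete pool of candidates for Grothendieck rings.

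The decisive step is deciding, for each candidate, whether it is categorifiable. On the constructive side I would exhibit explicit categorifications accounting for the surviving entries: the representation categories $\Rep(G)$ for the listed groups $G$, the Tambara--Yamagami near-group $C_4+0$, and the isotype (non-zesting) variation of $\Rep(S_4)$. On the obstruction side I would eliminate the remaining Drinfeld rings by analyzing their admissible induction matrices to the Drinfeld center, using the new \textsf{Normaliz} feature developed in \S\ref{sec:IndMat} and \S\ref{sec:ExcMod}: a Drinfeld ring for which no compatible induction matrix exists cannot underlie any fusion category. Residual cases would be settled by realizing or obstructing them through group-theoretical fusion categories and Schur-multiplier computations.

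I expect the exclusion step to be the main obstacle. Placing a ring into the list only requires producing a single categorification, whereas removing it requires certifying that \emph{no} fusion category has that Grothendieck ring. The induction-matrix infeasibility test is the central device for such certificates, and its validity rests on the completeness of the \textsf{Normaliz} enumeration of admissible induction matrices to the center, so verifying that completeness is where the real work lies.
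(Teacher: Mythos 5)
Your overall strategy is exactly the paper's (\S\ref{sub:MainProof}): enumerate the length-$\le 5$ Egyptian fractions satisfying the divisibility condition ($114$ of $166$), use commutativity from Proposition~\ref{prop:min6} to justify the plain relation $\sum_i 1/f_i = 1$, generate the finitely many candidate types and fusion rings with \textsf{Normaliz} ($219$ types, $36$ fusion rings, $29$ Drinfeld rings), exhibit the categorifications on the surviving entries, and exclude the rest. The constructive half of your argument matches the paper verbatim.

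The genuine gap is in your exclusion step: you make induction-matrix \emph{infeasibility} the central certificate, but for most of the excluded rank-$\le 5$ candidates an induction matrix \emph{does} exist, so your test returns no obstruction and your proof stalls. Concretely, the types $[1,1,2,6]$, $[1,1,2,6,6]$, $[1,1,2,2,10]$, and $[1,1,1,3,12]$ each admit a unique induction matrix; the paper's Lemmas~\ref{lem:1126}, \ref{lem:11266}, \ref{lem:112210}, and \ref{lem:AllBraided[1,1,1,3,12]} exploit that matrix positively, deducing that the category would embed into its Drinfeld center and hence carry a braiding, and then invoke the classification of premodular fusion categories of rank $\le 5$ (\cite{B16}, \cite[Theorem I.1]{BO18}) to reach a contradiction. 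The near-group $C_2^2+3$ of type $[1,1,1,1,4]$ is likewise not killed by infeasibility but by Izumi's classification \cite[Theorem 5.1]{Iz17} (or by the braiding route via Lemma~\ref{lem:AllBraidedR5} plus \cite{BO18}). Worst for your plan is the type $[1,1,1,3,6]$ of $\FPdim\ 48$: it admits \emph{twelve} induction matrices, and since $48 = 2^4\cdot 3$ the group-theoretical reduction of \cite[Theorem 9.2]{ENO11} is unavailable, so neither of your fallback devices applies; the paper needs the bespoke argument of Lemma~\ref{lem:11136}, analyzing the hypothetical Drinfeld center via de-equivariantization by the Tannakian subcategory $\VVec(C_3)$, nilpotency, the adjoint subcategory \cite{GN08}, and the grading result of \cite[Theorem 8.28]{ENO05}. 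Finally, the type $[1,1,2,6,42]$ of $\FPdim\ 1806$ is dispatched not by any centralizer computation but by the obstruction inherited from its non-categorifiable fusion subring $[1,1,2,6]$, a mechanism absent from your proposal. So your architecture is right, but without the braiding-plus-premodular-classification arguments, the near-group exclusion, subring obstructions, and the ad hoc treatment of $\FPdim\ 48$, the exclusion half of the theorem does not close.
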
 

In combination with~\cite[Proposition 9.11 and Theorem 9.16]{ENO11}, we deduce the following:

\begin{corollary} \label{cor:main}
A perfect integral fusion category up to rank $5$ (over $\mathbb{C}$) is equivalent to $\Rep(A_5)$.
\end{corollary}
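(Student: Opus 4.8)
The plan is to match the classification of Theorem~\ref{thm:main} against the definition of a perfect fusion category, namely one whose only invertible object is the unit (equivalently $\mathcal{C}_{pt}=\mathrm{Vec}$). The key point making this a short argument is that being perfect is a property of the Grothendieck ring alone: the invertible objects are exactly the basic elements of $\FPdim$ equal to $1$. I can therefore test each entry produced by Theorem~\ref{thm:main} directly, before worrying about which fusion category realizes a given ring.

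First I would go through the list. For $\Rep(G)$ the invertible objects are the linear characters, forming $\widehat{G^{\mathrm{ab}}}\cong G^{\mathrm{ab}}$, which is trivial exactly when $G$ is a perfect group; of the groups listed in Theorem~\ref{thm:main} the only perfect ones are $C_1$ and $A_5$. Note this already rules out $\Rep(S_3)$, $\Rep(S_4)$, and the like, even though those groups have trivial center—perfectness is governed by the invertibles (the maximal pointed subcategory), not by the universal grading. The two remaining entries are immediate: the Tambara--Yamagami near-group $C_4+0$ has its invertibles forming $C_4$, and the isotype variation of $\Rep(S_4)$ has the same fusion ring, hence the same group $C_2$ of invertibles, as $\Rep(S_4)$. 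Discarding the trivial rank-$1$ category $\mathrm{Vec}=\Rep(C_1)$, the only perfect Grothendieck ring of rank at most $5$ is that of $\Rep(A_5)$.

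It then remains to upgrade ``Grothendieck equivalent to $\Rep(A_5)$'' to a genuine equivalence of fusion categories, which is where I would invoke \cite[Proposition~9.11 and Theorem~9.16]{ENO11}. A perfect fusion category of $\FPdim>1$ is non-solvable, since a nontrivial solvable fusion category always carries a nontrivial invertible object; thus any categorification of the $\Rep(A_5)$ fusion ring is a non-solvable integral fusion category of $\FPdim=60$, and the cited results identify it with $\Rep(A_5)$.

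I expect the genuine difficulty to lie entirely in this last step. The enumeration above is routine once Theorem~\ref{thm:main} is in hand, but excluding an exotic, non-$\Rep(A_5)$ categorification of the $\Rep(A_5)$ fusion rules—for instance one twisted by the nontrivial Schur multiplier $H^2(A_5,\mathbb{C}^\times)=C_2$, or a Galois conjugate—cannot be detected at the level of fusion rings and is precisely what the rigidity supplied by \cite{ENO11} must handle.
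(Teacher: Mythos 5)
Your proof is correct and follows essentially the same route as the paper, which deduces the corollary in exactly this way: scan the list of Theorem~\ref{thm:main} for perfect entries (only $\Rep(A_5)$, apart from the trivial category, since invertibles are detected by the basic elements of $\FPdim$ equal to $1$) and then invoke \cite[Proposition 9.11 and Theorem 9.16]{ENO11} to promote Grothendieck equivalence to genuine equivalence. Your closing paragraph correctly identifies where the real content lies, namely in that last rigidity step, which is precisely what the cited results of \cite{ENO11} supply.
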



From the comprehensive classifications in~\S\ref{sec:General} and \cite[\ref{sec:GeneralA}]{appendices}, we find that there are exactly $29$ integral Drinfeld rings of rank at most~$5$, and $58$ integral $1$-Frobenius Drinfeld rings of rank~$6$. Among these, only two are non-pointed and simple: the Grothendieck rings of~$\Rep(G)$ for $G = A_5$ and $G = \mathrm{PSL}(2,7)$.

\begin{corollary} \label{cor:main2} 
A non-pointed simple integral $1$-Frobenius fusion category of rank at most~$6$ (over $\mathbb{C}$) is Grothendieck equivalent to $\Rep(G)$, where $G = A_5$ or $\mathrm{PSL}(2,7)$.
\end{corollary}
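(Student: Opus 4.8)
The plan is to reduce the categorical statement to the purely ring-theoretic classification established in §\ref{sec:General} and §\ref{sec:GeneralA}, and then read off the resulting finite list. The essential point is that all the hypotheses placed on $\mC$ — integral, $1$-Frobenius, non-pointed, and simple — are invariants of the Grothendieck ring $K(\mC)$, so that the problem descends to an inspection of finitely many fusion rings.

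First I would record the translation from category to ring. If $\mC$ is an integral fusion category, then $K(\mC)$ is an integral Grothendieck ring, hence an integral Drinfeld ring (recalled in §\ref{sub:Drin}); the $1$-Frobenius condition is by definition a statement about the basic $\FPdim$'s, i.e.\ about $K(\mC)$; and non-pointedness means $K(\mC)$ has a basis element of $\FPdim>1$. For simplicity I would use that the fusion subcategories of $\mC$ are in bijection with the based subrings of $K(\mC)$, namely the subrings spanned by a subset of the basis that contains $\id$ and is closed under multiplication and duality. Thus $\mC$ is simple if and only if $K(\mC)$ admits no proper nontrivial based subring, again a condition on the ring alone. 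Consequently each hypothesis survives the passage to $K(\mC)$ and may be tested there directly.

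Next I would invoke the enumeration: by §\ref{sec:General} and §\ref{sec:GeneralA} there are exactly $29$ integral Drinfeld rings of rank at most $5$ and $58$ integral $1$-Frobenius Drinfeld rings of rank $6$, all listed explicitly with their fusion data (§\ref{sub:UpToRank5}). Restricting to the $1$-Frobenius rings and checking the two remaining properties on each, one finds that exactly two are simultaneously non-pointed and simple, namely $K(\Rep(A_5))$ of rank $5$ and $K(\Rep(\mathrm{PSL}(2,7)))$ of rank $6$. Every other non-pointed ring on the list carries a proper nontrivial based subring: for the rings $K(\Rep(G))$ these are the subrings $K(\Rep(G/N))$ attached to proper nontrivial normal subgroups $N\trianglelefteq G$ (which exist precisely because $S_4$, $A_4$, $D_n$, $F_5$, $C_7\rtimes C_3$, etc.\ are not simple), while the near-group and isotype examples of Theorem~\ref{thm:main} are handled individually. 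Since $A_5$ and $\mathrm{PSL}(2,7)\cong\mathrm{GL}(3,2)$ are simple groups with $5$ and $6$ irreducible representations of degrees dividing their orders, $\Rep(A_5)$ and $\Rep(\mathrm{PSL}(2,7))$ are genuinely non-pointed, simple, integral, and $1$-Frobenius. Combining, $K(\mC)$ is Grothendieck equivalent to one of these two.

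The real content is carried by the classification of §\ref{sec:General} and §\ref{sec:GeneralA}, which I take as given; the only genuinely new ingredient for this corollary is the dictionary between categorical simplicity and the based-subring condition, ensuring the hypotheses descend to $K(\mC)$. The main obstacle I anticipate is therefore not a single hard argument but the exhaustive bookkeeping: verifying, ring by ring across the $1$-Frobenius list, that every non-pointed example other than $K(\Rep(A_5))$ and $K(\Rep(\mathrm{PSL}(2,7)))$ indeed possesses a proper nontrivial based subring. This is routine given the explicit fusion data, but it must be carried out in full to close the argument.
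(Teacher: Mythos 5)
Your proposal is correct and follows essentially the same route as the paper: the paper also deduces this corollary directly from the classifications in \S\ref{sec:General} and \ref{sec:GeneralA} (the $29$ integral Drinfeld rings of rank at most $5$ and the $58$ integral $1$-Frobenius Drinfeld rings of rank $6$), observing that the hypotheses descend to the Grothendieck ring and that exactly two rings in these lists are non-pointed and simple, namely those of $\Rep(A_5)$ and $\Rep(\mathrm{PSL}(2,7))$. Your explicit dictionary between fusion subcategories and based subrings is the standard justification the paper leaves implicit, so nothing is missing.
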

The first known example of a non-Isaacs fusion category was identified in~\cite{BP25}: the Extended Haagerup fusion category $\mathcal{EH}_1$, which is non-integral and has rank~$6$. From Theorem \ref{thm:main} and \S\ref{sub:NonIsaacs}, we deduce:
\begin{corollary} \label{cor:IntnonIsaacs}
The smallest rank for a non-Isaacs integral fusion category is $6$, as realized by the group-theoretical fusion category $\mathcal{C}(A_5,1,S_3,1)$.
\end{corollary}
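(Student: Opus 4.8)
The plan is to establish the claim in two halves: a lower bound showing that no integral fusion category of rank at most $5$ can be non-Isaacs, and a construction exhibiting a non-Isaacs example at rank $6$. For the lower bound I would invoke Theorem \ref{thm:main}, which reduces the problem to a finite list: every integral fusion category of rank at most $5$ is Grothendieck equivalent to some $\Rep(G)$ from the fourteen listed groups, to the Tambara--Yamagami category $C_4+0$, or to the isotype variation of $\Rep(S_4)$. It then suffices to verify that each of these is Isaacs. For the representation categories $\Rep(G)$ this is precisely the content of Isaacs' group-theoretic theorem (after which the property is named), so nothing new is needed there; the two remaining entries are handled by a direct, finite computation of the relevant invariant recorded in \S\ref{sub:NonIsaacs}. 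Concluding this step requires knowing that the Isaacs condition depends only on the data preserved by our classification — for integral categories, the Grothendieck ring together with its formal codegrees — so that testing the representatives suffices.

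For sharpness I would produce the group-theoretical category $\mathcal{C}(A_5,1,S_3,1)$, where $S_3$ is realized as a dihedral subgroup of order $6$ inside $A_5$ and both cochains are trivial. First I would confirm that it has rank $6$ and compute its Frobenius--Perron dimensions by the standard group-theoretical recipe, enumerating simple objects through the $S_3$--$S_3$ double cosets in $A_5$ together with the irreducible representations of the corresponding intersection stabilizers; this also pins down $\FPdim(\mathcal{C}(A_5,1,S_3,1)) = 60$. I would then check directly that this category violates the Isaacs condition, using the explicit dimension and codegree data of \S\ref{sub:NonIsaacs}. Since the category is integral, of rank $6$, group-theoretical by construction, and non-Isaacs, it realizes the bound and proves that the minimal rank is exactly $6$.

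The main obstacle is the lower-bound half, and specifically the passage from Theorem \ref{thm:main} to a statement about \emph{all} integral fusion categories of rank at most $5$ rather than about the listed representatives alone. One must verify that being Isaacs is invariant under Grothendieck equivalence in the integral setting — or, failing that, check the property for every category in each Grothendieck class, paying particular attention to the non-$\Rep(G)$ entries $C_4+0$ and the isotype (non-zesting) variation of $\Rep(S_4)$, which lie outside the scope of Isaacs' original theorem. By contrast, the rank-$6$ construction is a concrete verification: once the dimensions and codegrees of $\mathcal{C}(A_5,1,S_3,1)$ are in hand, failure of the Isaacs condition is a finite arithmetic check rather than a conceptual difficulty.
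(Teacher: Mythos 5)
Your proposal is correct and takes essentially the same route as the paper, which deduces the corollary precisely from Theorem \ref{thm:main} (every rank-$\le 5$ entry must be checked to be Isaacs, with the $\Rep(G)$ cases covered by Isaacs' classical theorem) together with \S\ref{sub:NonIsaacs}, where the double-coset computation identifies $\mathcal{C}(A_5,1,S_3,1)$ as the unique self-dual Drinfeld ring of type $[1,1,2,3,3,6]$ and the explicit ratio $\frac{\lambda_{4,2}c_1}{d_4 c_2}=-\frac{8}{3}$ witnesses failure of the Isaacs property. The one obstacle you flag---invariance under Grothendieck equivalence---is vacuous here, since the paper defines the Isaacs property directly as a condition on the Grothendieck ring (a pseudo-unitary category is Isaacs if and only if its Grothendieck ring is), so testing the representatives in Theorem \ref{thm:main} suffices by definition.
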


Without assuming the $1$-Frobenius condition, we obtain the following result in the noncommutative setting:

\begin{theorem} \label{thm:IntroNCDrIntRank6}
Up to rank~$6$, the only noncommutative integral Drinfeld ring is the group ring~$\mathbb{Z}S_3$.
\end{theorem}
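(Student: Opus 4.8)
\noindent By Proposition~\ref{prop:min6}, every fusion ring of rank at most~$5$ is commutative, so any noncommutative example of rank at most~$6$ has rank exactly~$6$; the plan is thus to classify the rank-$6$ noncommutative integral Drinfeld rings~$R$. The first step is to fix the structure of the semisimple algebra $A := \mathbb{C}\otimes_{\mathbb{Z}} R$, which has dimension~$6$. As $A$ is noncommutative, while $\dim_{\mathbb{C}} M_3(\mathbb{C}) = 9$ and $\dim_{\mathbb{C}}\bigl(M_2(\mathbb{C})\times M_2(\mathbb{C})\bigr) = 8$ both exceed~$6$, its Wedderburn decomposition is forced to be $A \cong M_2(\mathbb{C}) \times \mathbb{C} \times \mathbb{C}$. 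Hence $R$ has exactly three irreducible representations, of dimensions $2,1,1$, with one of the two one-dimensional ones being the $\FPdim$-character.

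Next I would exploit the formal codegrees. Writing $f_1$ for the codegree of the $\FPdim$-character (equal to the global $\FPdim$, and the largest codegree), $g$ for that of the second one-dimensional representation, and $h$ for that of the $2$-dimensional one, the codegree identity with multiplicities (\S\ref{sub:formal}) becomes
\[
\frac{1}{f_1} + \frac{1}{g} + \frac{2}{h} = 1,
\]
where the numerator~$2$ is the dimension (hence multiplicity) of the corresponding representation. Adjoining the Drinfeld divisibility constraints $g \mid f_1$, $h \mid f_1$ together with $g, h \le f_1$ turns this into a finite Diophantine problem. Solving it, and discarding the solution with $f_1 = 4$ (impossible, since the global $\FPdim = \sum_i \FPdim(x_i)^2$ is at least the rank~$6$), leaves only the candidate global dimensions $f_1 \in \{6, 10, 12\}$.

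For each candidate I would run a dimension-vector analysis, writing $f_1 = 1 + \sum_{i\ge 2} d_i^2$ with positive integers~$d_i$. The value $f_1 = 10$ is impossible, as no five positive squares sum to~$9$. For $f_1 = 6$ the only type is the pointed one $(1,1,1,1,1,1)$, so $R$ is the integral group ring of a group of order~$6$; the sole noncommutative such group is $S_3$, giving $R = \mathbb{Z}S_3$, whose codegrees $\{6,6,3\}$ indeed satisfy the Drinfeld condition. For $f_1 = 12$ the only admissible type is $(1,1,1,1,2,2)$, for which the single codegree triple compatible with the Diophantine analysis is $\{12,4,3\}$; here I would invoke the \textsf{Normaliz}-based enumeration of fusion rings (as in~\S\ref{sec:General}) to list all rings of this type, retain the noncommutative ones, and compute their formal codegrees.

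The main obstacle is precisely this case $f_1 = 12$: unlike the pointed case it cannot be resolved by group theory alone, and one must carry out the computational enumeration of fusion rings of type $(1,1,1,1,2,2)$ and verify that none of the noncommutative ones is Drinfeld, i.e.\ that none has its codegrees equal to the integers $\{12,4,3\}$ dividing~$12$. Once this is confirmed, $\mathbb{Z}S_3$ remains as the unique noncommutative integral Drinfeld ring up to rank~$6$.
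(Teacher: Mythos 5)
Your argument is correct and follows essentially the same route as the paper's proof: forcing the Wedderburn decomposition $\mathbb{C}^{\oplus 2}\oplus M_2(\mathbb{C})$ (the paper's Lemma~\ref{lem:NCmin}, which you obtain by a direct dimension count valid at rank exactly~$6$), solving the Egyptian fraction $\tfrac{1}{a}+\tfrac{1}{b}+\tfrac{2}{c}=1$ under the divisibility constraints to get global $\FPdim\in\{6,10,12\}$, reducing to the types $[1,1,1,1,1,1]$ and $[1,1,1,1,2,2]$, and settling the pointed case by group theory and the $\FPdim=12$ case by computer enumeration. The only cosmetic difference is that the paper's enumeration shows there is no noncommutative fusion ring of type $[1,1,1,1,2,2]$ at all, so the final codegree check you propose for that type is vacuously satisfied.
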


We completed the classification of the noncommutative integral Grothendieck rings up to rank $7$:

\begin{theorem} \label{thm:IntroNCGrIntRank7}
For ranks up to $7$, an integral fusion category with a noncommutative Grothendieck ring is Grothendieck equivalent to one of the following:
\begin{itemize}
\item Rank $6$, $\FPdim \ 6$,  type $[1,1,1,1,1,1]$: $\VVec(S_3)$;
\item Rank $7$, $\FPdim \ 24$, type $[1,1,1,2,2,2,3]$: $\Rep(H)$ with $H$ the Kac algebra in \cite[Theorem 14.40 (VI)]{IK02};
\item Rank $7$, $\FPdim \ 60$, type $[1,1,1,3,4,4,4]$: group-theoretical $\mathcal{C}(A_5, 1, A_4, 1)$, see \S\ref{sub:r7d60nc}.
\end{itemize}
The fusion data are available in \cite[\ref{sec:NCA}]{appendices}.
\end{theorem}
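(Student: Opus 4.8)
The plan is to reduce the range to ranks $6$ and $7$. By Proposition~\ref{prop:min6} every fusion ring of rank at most $5$ is commutative and so cannot carry a noncommutative Grothendieck ring. For rank $6$, Theorem~\ref{thm:IntroNCDrIntRank6} already asserts that the only noncommutative integral Drinfeld ring is the group ring $\mathbb{Z}S_3$; since every integral Grothendieck ring is Drinfeld, $\mathbb{Z}S_3$ is the sole candidate, and it is genuinely a Grothendieck ring because it is categorified by $\VVec(S_3)$. This disposes of the rank-$6$ line, and the entire problem collapses to classifying the noncommutative integral Grothendieck rings of rank exactly $7$.

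For rank $7$ I would first enumerate all noncommutative integral Drinfeld rings. Writing $R \otimes \mathbb{C} \cong \prod_i M_{d_i}(\mathbb{C})$, the rank equals $\sum_i d_i^2$, so $\sum_i d_i^2 = 7$; the only partition of $7$ into squares containing a part $\geq 2$ is $7 = 2^2 + 1^2 + 1^2 + 1^2$, forcing exactly one $2$-dimensional and three $1$-dimensional irreducible representations of the fusion ring (note this $(2,1,1,1)$ refers to the codegree multiplicities, not the $\FPdim$ type of the simple objects). The formal codegrees $f_i$ then obey the weighted Egyptian fraction $2/f_a + 1/f_b + 1/f_c + 1/f_d = 1$ with divisibility $f_i \mid f_1 = \FPdim$, where $f_1$ is the Frobenius--Perron codegree of the $1$-dimensional Frobenius--Perron character. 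Exactly as in the commutative setting, this admits only finitely many values of $\FPdim$, which I would list explicitly; for each such value \textsf{Normaliz} classifies the integral fusion rings of rank $7$, and I retain only those carrying a Drinfeld structure of the above codegree type.

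It remains to decide which of these finitely many noncommutative integral Drinfeld rings are Grothendieck rings. The two positive cases are handled by explicit categorifications whose Grothendieck rings I would match against the candidates: the type $[1,1,1,2,2,2,3]$ ring is realized by $\Rep(H)$ for the $24$-dimensional Kac algebra of \cite[Theorem 14.40 (VI)]{IK02}, and the type $[1,1,1,3,4,4,4]$ ring by the group-theoretical category $\mathcal{C}(A_5,1,A_4,1)$ of $\FPdim = |A_5| = 60$. For every remaining candidate I would establish non-categorifiability by obstruction: analyzing the admissible induction matrices to the Drinfeld center through the \textsf{Normaliz} feature of \cite[\S H.6.3]{NorManual}, since a Grothendieck ring must admit at least one such matrix, and, where that is inconclusive, invoking Frobenius--Schur indicator and Galois-theoretic constraints on the character values of any putative categorification.

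The hard part will be this last exclusion step. Once the $\FPdim$ list is pinned down the Drinfeld-ring enumeration is a finite computation, but ruling out categorifiability is delicate: a ring can satisfy the Egyptian-fraction, divisibility, and even the induction-matrix tests and still fail to be a Grothendieck ring, so the Frobenius--Schur and Galois arguments must be sharp enough to eliminate every borderline case while leaving precisely the two genuine categorifications. Coordinating these obstructions, and in particular confirming that the induction-matrix criterion together with the arithmetic constraints is decisive at rank~$7$, is where the real work lies.
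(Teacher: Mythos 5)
Your reduction and enumeration follow the paper's route exactly: rank $\le 5$ is handled by Proposition~\ref{prop:min6}, rank $6$ by Theorem~\ref{thm:IntroNCDrIntRank6} together with the categorification $\VVec(S_3)$, and at rank $7$ your partition argument $7 = 2^2+1^2+1^2+1^2$ reproduces Lemma~\ref{lem:NCmin} (so $\mathcal{R}_{\mathbb{C}} \cong \mathbb{C}^{\oplus 3}\oplus M_2(\mathbb{C})$), leading to the weighted Egyptian fraction $\tfrac{1}{a}+\tfrac{1}{b}+\tfrac{1}{c}+\tfrac{2}{d}=1$ with $b,c,d \mid a \ge 7$ and a finite \textsf{Normaliz} enumeration, which is precisely the proof of Proposition~\ref{prop:NCDrIntRank7}. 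You also correctly flag that for type $[1,1,1,3,4,4,4]$ one must match the group-theoretical category against the \emph{noncommutative} member of the pair of Drinfeld rings of that type, which the paper does in \S\ref{sub:r7d60nc} via formal codegrees and duality.

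The genuine gap is the final exclusion step. The enumeration leaves exactly one uncategorified candidate: $\FPdim\ 42$, type $[1,1,1,1,1,1,6]$, which is the near-group fusion ring $S_3+5$. Your proposed toolkit — induction matrices, Frobenius--Schur indicators, and Galois constraints — is not what eliminates it, and there is reason to doubt it could: the Frobenius--Schur machinery of \cite{NS07} used in this paper applies to \emph{odd}-dimensional categories ($42$ is even); the Galois arguments in \S\ref{FuRingsNC} constrain the algebra structure of noncommutative fusion rings rather than obstruct categorification; and no induction-matrix obstruction is recorded for this ring — indeed, for the analogous near-group rings $C_2^2+3$ and $C_5+4$ the induction-matrix analysis produces valid solutions (yielding a braiding in the rank-$5$ case), so the criterion can be silent. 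The paper instead recognizes the candidate as a near-group ring and excludes it by Izumi's classification of near-group categories, \cite[Theorem 5.1]{Iz17}. Without that identification and citation (or a proved substitute, e.g.\ an induction-matrix computation actually shown to have no solution), your plan does not close, since ruling out this single ring is exactly what separates the three Drinfeld rings of Proposition~\ref{prop:NCDrIntRank7} from the two Grothendieck rings in the statement.
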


We also complete the classification of all $29$ noncommutative integral Drinfeld rings of rank at most $8$; see \S\ref{sec:NC} and \cite[\ref{sec:NCA}]{appendices}. There is exactly one such ring of rank $6$ and three of rank $7$, all of which are $1$-Frobenius. At rank~$8$, there are $25$ examples, precisely five of which are not $1$-Frobenius.

%
%
%

Regarding the odd-dimensional case: the Grothendieck ring of any odd-dimensional integral fusion category is an MNSD integral Drinfeld ring (see Definition~\ref{def:MNSDring}). A complete classification up to rank~$7$ yields eight such Drinfeld rings (see \S\ref{sec:Odd} and \cite[\ref{sec:OddA}]{appendices}), all of which are categorifiable except one (see \S\ref{sub:r7d903}). We then obtain the following:

\begin{theorem} \label{thm:mainodd}
Every odd-dimensional integral fusion category of rank at most~$7$ is Grothendieck equivalent to a Tannakian category, namely $\Rep(G)$, where $G = C_1$, $C_3$, $C_5$, $C_7$, $C_7 \rtimes C_3$, $C_{13} \rtimes C_3$, or $C_{11} \rtimes C_5$.
\end{theorem}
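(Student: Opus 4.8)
The plan is to reduce the statement to the classification of MNSD integral Drinfeld rings of rank at most $7$, and then to settle categorifiability ring by ring. By the reduction recorded just above the statement, the Grothendieck ring of any odd-dimensional integral fusion category is an MNSD integral Drinfeld ring in the sense of Definition~\ref{def:MNSDring}; since the non-self-dual structure forces the non-trivial simple objects to come in dual pairs $\{X, X^*\}$ with $X \not\cong X^*$, the rank is odd, so only ranks $1, 3, 5, 7$ can occur and it suffices to enumerate the MNSD integral Drinfeld rings of those ranks. Because every integral Grothendieck ring is Drinfeld, any such fusion category does appear on this list.

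For the enumeration I would run the Egyptian-fraction machinery of the paper. For each admissible rank $\ell \in \{1,3,5,7\}$, the formal codegrees $(f_i)$ form an Egyptian fraction $\sum_i 1/f_i = 1$ of length $\ell$ subject to the divisibility assumption $f_i \mid f_1 = \FPdim$, which leaves only finitely many candidate global dimensions. For each candidate I would use \textsf{Normaliz} to enumerate all fusion rings of that rank and global $\FPdim$, retain precisely those admitting an MNSD Drinfeld structure, and thereby recover the announced list of exactly eight rings (the details are in \S\ref{sec:Odd} and \S\ref{sec:OddA}). The odd, self-duality-free constraint is what keeps this search finite and small.

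The next step is to realize seven of the eight rings as Grothendieck rings of the Tannakian categories $\Rep(G)$ with $G = C_1, C_3, C_5, C_7, C_7 \rtimes C_3, C_{13} \rtimes C_3, C_{11} \rtimes C_5$. This direction is essentially bookkeeping: each of these groups has odd order and the correct number of conjugacy classes, namely $1, 3, 5, 7, 5, 7, 7$ respectively, and matching the character-ring fusion data against the classified rings (as tabulated in \S\ref{sec:OddA}) identifies them. Categorifiability is automatic here, since each $\Rep(G)$ is a genuine fusion category, and these are the only categorifications up to Grothendieck equivalence.

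The hard part will be the eighth ring, of rank $7$ and global $\FPdim = 903 = 3 \cdot 7 \cdot 43$, which must be excluded as a Grothendieck ring altogether. Here the fusion data alone are insufficient, and I would deploy the finer invariants flagged in the introduction: the Frobenius--Schur indicators together with the Galois action on the modular data of the Drinfeld center. Concretely, I would analyze the possible induction matrices into the center using the dedicated \textsf{Normaliz} feature (see \S\ref{sec:IndMat} and \S\ref{sec:ExcMod}) and show that the congruence and integrality constraints imposed by Galois theory cannot be simultaneously met, so no categorification exists; this is carried out in \S\ref{sub:r7d903}. Combining the seven realizations with this single exclusion shows that every odd-dimensional integral fusion category of rank at most $7$ is Grothendieck equivalent to one of the seven listed $\Rep(G)$, which completes the proof.
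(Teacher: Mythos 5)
Your overall architecture matches the paper's: you reduce to MNSD integral Drinfeld rings via the Frobenius--Schur indicator result (Theorem~\ref{thm:OddMNSD} and Proposition~\ref{prop:MNSDFormalCodegrees}, which force odd rank, dual pairing of nontrivial simples, and co-MNSD formal codegrees), you enumerate via MNSD Egyptian fractions with the divisibility condition and a \textsf{Normaliz} search over the resulting types (this is exactly \S\ref{sub:MNSDGeneralUpToR7}: $13$ fractions of length $7$, eleven global $\FPdim$ values $\{7,15,27,35,39,55,63,147,171,315,903\}$, four Drinfeld rings at rank $7$, plus the four of rank at most $5$), and you match seven of the eight rings to $\Rep(G)$ for the listed groups. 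Up to this point the proposal is correct and is essentially the paper's argument.

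The genuine gap is the exclusion of the eighth ring, of rank $7$, $\FPdim\ 903$ and type $[1,1,1,3,3,21,21]$. You assert that \S\ref{sub:r7d903} rules it out by combining induction matrices, Frobenius--Schur indicators, and ``congruence and integrality constraints imposed by Galois theory'' on the modular data of the center --- but that is not what that section does, and your version of the step is a placeholder rather than an argument: you never exhibit a constraint that fails, and the paper records no induction-matrix computation for this ring, so there is no evidence that route closes the case. The actual exclusion is group-theoretical: since $903 = 3 \times 7 \times 43$ is a product of three distinct primes, any categorification must be group-theoretical by \cite[Theorem 9.2]{ENO11}, hence of the form $\mathcal{C}(G,\omega,H,\psi)$ with $|G|=903$; since $903$ is square-free, every stabilizer $H^g = H \cap gHg^{-1}$ has square-free order and therefore trivial Schur multiplier (\S\ref{sub:schur}), so all the cocycles $\psi^g$ are trivial and the type of $\mathcal{C}(G,\omega,H,\psi)$ coincides with that of $\mathcal{C}(G,1,H,1)$; finally a \textsf{GAP} enumeration (\texttt{FindGroupSubgroup}) over all groups of order $903$ and their subgroups shows that no pair $(G,H)$ produces the type $[1,1,1,3,3,21,21]$, a contradiction. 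Note also that Frobenius--Schur indicators enter the proof at the MNSD reduction stage, not at the exclusion stage; as written, your proof of the key exclusion is incomplete, and you should replace it with the group-theoretical/Schur-multiplier argument (or supply an actual obstruction along the lines you sketch, which the paper does not provide).
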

Our first known odd-dimensional integral fusion category not Grothendieck equivalent to a Tannakian one appears at rank~$27$; see~\S\ref{sub:NonTann}.

Concerning the 1-Frobenius MNSD integral Drinfeld rings, we classified all such rings of rank~$9$, yielding ten examples described in \cite[\S\ref{sub:Rank9MNSD1Frob}]{appendices}. We also classified all such rings of rank~$11$ with global \( \FPdim \le 10^9 \); see \S\ref{sub:MNSD1FrobR11}. All MNSD integral Drinfeld rings identified so far are commutative. More generally, we established the following result:

\begin{theorem} \label{thm:OddNC2Intro}
Let \( \mathcal{C} \) be a noncommutative, odd-dimensional, integral fusion category. Then the rank of \( \mathcal{C} \) is at least~$21$. If equality holds, then \( \mathcal{C} \) is pointed and Grothendieck equivalent to \( \VVec(C_7 \rtimes C_3) \). Moreover, if \( \mathcal{C} \) is non-pointed, then its rank is at least~$23$.
\end{theorem}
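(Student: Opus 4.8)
The plan is to argue entirely on the Grothendieck ring $R = K_0(\mathcal{C})$, which by hypothesis and the discussion in \S\ref{sub:formal} is a noncommutative, odd-dimensional, MNSD integral Drinfeld ring. Writing $R \otimes_{\mathbb{Z}} \mathbb{C} \cong \prod_i M_{m_i}(\mathbb{C})$, the rank is $\sum_i m_i^2$, the formal codegrees obey the weighted Egyptian fraction $\sum_i m_i/f_i = 1$ with every $f_i$ an odd divisor of $f_1 = \FPdim$, and noncommutativity is equivalent to $m_i \geq 2$ for some $i$. The MNSD hypothesis enters through the fusion-ring analogue of ``$\#$ real characters $=$ $\#$ real classes'': the number of complex-conjugation-fixed irreducible representations of $R\otimes\mathbb{C}$ equals the number of self-dual basis elements, namely $1$. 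Hence only the FPdim representation ($m_1 = 1$, $f_1 = \FPdim$) is self-conjugate, while every other representation lies in a conjugate pair $\{E,\bar E\}$ with $m_{\bar E}=m_E$, $f_{\bar E}=f_E$; in particular the rank is odd and
\[ \mathrm{rank}(\mathcal{C}) = 1 + 2\!\sum_{\text{pairs}} m_E^2, \qquad \frac{1}{\FPdim} + \sum_{\text{pairs}} \frac{2m_E}{f_E} = 1. \]

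First I would dispose of the pointed case. If $\mathcal{C}$ is pointed then $R = \mathbb{Z}G$ for a group $G$ of odd order $\FPdim$, and noncommutativity forces $G$ nonabelian. The smallest odd order admitting a nonabelian group is $21$, realized uniquely by $C_7 \rtimes C_3$; thus $\mathrm{rank}(\mathcal{C}) = |G| \geq 21$, with equality iff $\mathcal{C}$ is Grothendieck equivalent to $\VVec(C_7\rtimes C_3)$. This settles the rank-$21$ conclusion, so it remains to prove that a \emph{non-pointed} example has rank $\geq 23$; since the rank is odd, it suffices to exclude rank $\leq 21$.

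For the non-pointed case, let $U = G(\mathcal{C})$ be the group of invertible objects. It spans a pointed fusion subcategory, so $|U|$ divides the odd number $\FPdim$, and since $\mathcal{C}$ is non-pointed there is a non-invertible simple object; by MNSD its dual is a distinct non-invertible, so there are at least two non-invertibles, each of odd FPdim $\geq 3$. If $U$ is nonabelian then $|U| \geq 21$ and $\mathrm{rank}(\mathcal{C}) \geq |U| + 2 \geq 23$, as required. The essential case is $U$ abelian: here the noncommutativity cannot arise from the invertibles and must be produced by the fusion of the non-invertible objects, so some conjugate pair has $m_E \geq 2$ and contributes $2m_E^2 \geq 8$ to the rank. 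I would then combine the weighted Egyptian fraction above with the divisibility $f_E \mid \FPdim$ and oddness of the $f_E$, the per-pair lower bounds $2m_E^2$ on the rank, and the Frobenius--Schur/Galois constraints (the Galois orbit of the FPdim character, and the invariance of multiplicities along a Galois orbit of codegrees) to enumerate the finitely many admissible multisets $\{(m_E,f_E)\}$ with $\sum_{\text{pairs}} m_E^2 \leq 10$, eliminating each either by an arithmetic contradiction in the fraction or by nonexistence of a corresponding integral fusion ring, checked with the classification machinery of \S\ref{sec:Odd}.

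I expect the main obstacle to be precisely this abelian-invertibles, non-pointed case. The weighted Egyptian fraction alone does not force $\mathrm{rank} \geq 23$, so the argument must invoke the finer realizability of fusion rings together with the Galois and Frobenius--Schur structure to kill the small configurations. The delicate point is controlling how few non-invertible objects, and how small their FPdims and multiplicities, can coexist with genuine noncommutativity: this is where the interaction between the odd divisors $f_E \mid \FPdim$ and the admissible fusion coefficients is tightest, and where I anticipate the bulk of the case analysis will lie.
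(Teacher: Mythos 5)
Your skeleton matches the paper's in outline (pointed case via the minimal odd nonabelian order $21$; non-pointed case via MNSD pairing of nontrivial irreducibles plus Egyptian-fraction arithmetic and realizability checks), but there is a genuine gap precisely where you yourself locate ``the bulk of the case analysis.'' You bound the noncommutative block only by $m_E \geq 2$, which leaves all odd ranks $9,11,\dots,21$ with $M_2(\mathbb{C})$-pairs to be eliminated by an unspecified enumeration, and you concede that the weighted Egyptian fraction alone cannot do it. The missing ingredient is the parity constraint the paper proves first: by Proposition~\ref{prop:DimDet} (the determinant of $L_Z$, namely $\prod_V (n_V f_V)^{n_V^2}$, has the same prime divisors as the Frobenius--Schur exponent and hence as $\FPdim$, via \cite{WLL21} and \cite{NS07}), an odd $\FPdim$ forces \emph{every} $n_V$ to be odd (Corollary~\ref{cor:OddNC}). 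Hence the noncommutative pair has $m_E \geq 3$, the rank is at least $1 + 3^2 + 3^2 = 19$, and since the rank is odd only $19$ and $21$ survive; rank $19$ then dies by the elementary computation that $\frac{1}{a}+\frac{3}{b}+\frac{3}{b}=1$ with $b \mid a$ forces $a=b=7<19$. Without this parity fact, configurations such as $\mathbb{C}\oplus\mathbb{C}^{2k}\oplus M_2(\mathbb{C})^{2}$ at ranks $9$ through $21$ are not excluded by anything you actually state, and your appeal to ``Frobenius--Schur/Galois constraints'' is too vague to substitute for it.

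The rank-$21$ endgame also needs more than ``the classification machinery of \S\ref{sec:Odd}.'' The length-$9$ MNSD Egyptian fractions with divisibility leave, in the non-pointed case, exactly the global dimensions $57$ and $105$; the paper does \emph{not} classify all rank-$21$ MNSD fusion rings at these dimensions (infeasible), but instead uses that $57=3\cdot 19$ and $105=3\cdot 5\cdot 7$ are square-free with at most three prime factors, so any such category is group-theoretical by \cite[Theorem~9.2]{ENO11} and therefore $1$-Frobenius by \cite[Theorem~1.5]{ENO11}, after which a short computation shows no $1$-Frobenius MNSD type of rank $21$ exists at those dimensions. Your proposal works purely at the level of Drinfeld rings, where this $1$-Frobenius reduction is unavailable, so the final elimination step is not actually reachable by your stated tools. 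Two minor points: your split on $G(\mathcal{C})$ abelian versus nonabelian is harmless but unnecessary on the paper's route; and your Brauer-style claim that conjugation-fixed irreducibles biject with self-dual basis elements is asserted without proof, whereas the paper obtains the needed pairing ($V^*\not\simeq V$ for the big block) categorically, via Lemma~\ref{lem:MultI(1)} and the MNSD property of the Drinfeld center (Proposition~\ref{prop:MNSDFormalCodegrees}).
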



%

\subsection{Exotic Drinfeld rings} \label{sub:ExoticIntro}

Beyond classifications, the main objective of this paper is to highlight intriguing integral Drinfeld rings whose categorification remains unknown. We focus in particular on the smallest simple candidates, with the aim of stimulating further progress on the categorification problem. These candidates merit close attention, both from theoretical and computational perspectives.

We say that a weakly-integral fusion ring is \emph{exotic} if it is not the Grothendieck ring of any weakly group-theoretical fusion category. This terminology reflects the fact that if such an exotic fusion ring were to admit a categorification, it would yield a positive answer to~\cite[Question~2]{ENO11}. The following theorem, extracted from~\cite[\S5]{LPR23}, characterizes exotic simple integral fusion rings:

\begin{theorem}
A simple integral fusion ring is exotic if and only if it is not the character ring of a finite simple group.
\end{theorem}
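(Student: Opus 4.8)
The statement is a biconditional; equivalently I will show that a simple integral fusion ring is \emph{not} exotic if and only if it is the character ring of a finite simple group. The two implications are proved separately, the reverse one being essentially formal and the forward one carrying all the content.

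Suppose first that $R$ is the character ring of a finite simple group $G$, so $R \cong K_0(\Rep(G))$. As $\Rep(G)$ is Tannakian it is group-theoretical---indeed Morita equivalent to $\VVec_G$---and every group-theoretical fusion category is weakly group-theoretical; hence $R$ is the Grothendieck ring of a weakly group-theoretical category and is by definition not exotic. It is worth recording that such an $R$ really is a simple fusion ring: the fusion subrings of the character ring of $G$ correspond bijectively to the inflated character rings $\Rep(G/N)$ for $N \trianglelefteq G$, so $R$ is simple exactly when $G$ has no proper nontrivial normal subgroup, which is the hypothesis that $G$ is simple.

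For the converse, assume $R$ is simple and not exotic, so $R \cong K_0(\mathcal{C})$ for some integral weakly group-theoretical fusion category $\mathcal{C}$. I would analyze $\mathcal{C}$ through its adjoint subcategory $\mathcal{C}_{\mathrm{ad}}$ and universal grading by $U(\mathcal{C})$. Since $\mathcal{C}_{\mathrm{ad}}$ is a fusion subcategory, its based subring of $R$ is, by simplicity, either trivial or all of $R$. If it is trivial then $\mathcal{C}_{\mathrm{ad}} = \VVec$, which forces $X \otimes X^{*} \cong \one$ for every simple $X$, so $X$ is invertible and $\mathcal{C}$ is pointed; then $R \cong \mathbb{Z}[\Gamma]$ with the subgroups of $\Gamma$ matching the fusion subrings of $R$, and simplicity gives $\Gamma \cong C_p$ for a prime $p$, whence $R$ is the character ring of the simple group $C_p$. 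Otherwise $\mathcal{C}_{\mathrm{ad}} = \mathcal{C}$, i.e. $\mathcal{C}$ is perfect.

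The perfect case is the crux. Here I would invoke the structure theory of perfect weakly group-theoretical fusion categories (\cite[Proposition 9.11 and Theorem 9.16]{ENO11}) to reduce $\mathcal{C}$ to a Tannakian layer and conclude that $K_0(\mathcal{C})$ is Grothendieck equivalent to $\Rep(G)$ for a finite perfect group $G$; simplicity of $R$ then upgrades $G$ to a nonabelian finite simple group via the normal-subgroup/fusion-subring correspondence used above. I expect the main difficulty to lie exactly in this reduction: one must control the chain of extensions and (de-)equivariantizations witnessing weak group-theoreticity tightly enough that simplicity of the based ring collapses every nonpointed layer onto a single Tannakian one, and---crucially---one must obtain the conclusion at the level of the Grothendieck ring rather than merely the Morita class, since Morita equivalence to $\Rep(G)$ (as enjoyed by $\VVec_G$) does not determine $K_0$. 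Combining the pointed and perfect cases shows that every non-exotic simple integral fusion ring is the character ring of a finite simple group, which is the desired equivalence.
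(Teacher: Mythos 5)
Your forward direction is complete and correct ($\ch(G)$ is the Grothendieck ring of the group-theoretical, hence weakly group-theoretical, category $\Rep(G)$, and fusion subrings of $\ch(G)$ correspond to quotients $G/N$), and your opening reduction in the converse is also sound: since $\mathcal{C}_{\mathrm{ad}}$ spans a fusion subring of $K_0(\mathcal{C})$, simplicity forces either $\mathcal{C}_{\mathrm{ad}}=\VVec$, whence $\mathcal{C}$ is pointed and $R\cong\mathbb{Z}C_p=\ch(C_p)$, or $\mathcal{C}_{\mathrm{ad}}=\mathcal{C}$. But the remaining case is the entire content of the theorem, and there your proposal stops: you say you ``would invoke the structure theory of perfect weakly group-theoretical fusion categories'' and then concede that ``the main difficulty'' lies exactly in the reduction being invoked. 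A proposal that defers its crux in this way is a sketch, not a proof --- and indeed the paper itself does not reprove this statement; it imports it from \cite[\S 5]{LPR23}, so supplying that argument is precisely the burden your proof would have to carry.

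Concretely, two things fail in the perfect case. First, the citation does not say what you need: in \cite{ENO11}, Proposition 9.11 and Theorem 9.16 belong to the analysis of fusion categories of small Frobenius--Perron dimension, and in the present paper they are invoked for exactly one purpose --- to pass from the Grothendieck-level classification of Theorem~\ref{thm:main} to the actual equivalence $\mathcal{C}\simeq\Rep(A_5)$ in the perfect $\FPdim$~$60$ case of Corollary~\ref{cor:main}. They contain no general structure theorem for perfect weakly group-theoretical categories of arbitrary dimension. Second, the plan of ``controlling the chain of extensions and (de-)equivariantizations witnessing weak group-theoreticity'' has no theorem to run on: weak group-theoreticity is \emph{defined} by Morita equivalence to a nilpotent fusion category, and --- unlike the solvable case, for which \cite{ENO11} gives a chain characterization via cyclic extensions and equivariantizations --- no presentation of an arbitrary weakly group-theoretical category as a chain from $\VVec$ is available, so ``simplicity collapses every nonpointed layer onto a single Tannakian one'' cannot even be set up. The known route (as in \cite[\S 5]{LPR23}) exploits weak group-theoreticity through Morita invariance of the Drinfeld center $\mathcal{Z}(\mathcal{C})$, producing a nontrivial Tannakian subcategory there and playing its image under the forgetful functor against simplicity of the based ring; your proposal would need to reconstruct an argument of that kind, including --- as you yourself correctly flag --- obtaining the conclusion at the level of $K_0$ rather than of the Morita class.
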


Theorems~\ref{thm:ExoticRk4}, \ref{thm:mainexotic}, and~\ref{thm:mainexotic2} respectively determine the minimal possible rank of an exotic simple integral fusion ring in the general case, the Drinfeld case, and the $1$-Frobenius Drinfeld case. Each theorem is accompanied by an example realizing the smallest possible $\FPdim$ in its setting.

\begin{theorem}[\cite{BP24}] \label{thm:ExoticRk4}
The smallest possible rank of an exotic simple integral fusion ring is~$4$.
\end{theorem}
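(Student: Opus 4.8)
The plan is to combine the characterization theorem stated above—that a simple integral fusion ring is exotic precisely when it is not the character ring of a finite simple group—with a classification in small rank and an explicit construction in rank~$4$. First I would reformulate the goal: determining the minimal rank of an exotic simple integral fusion ring is equivalent to determining the minimal rank of a simple integral fusion ring that is \emph{not} the character ring of a finite simple group. This splits the argument into a lower bound (no such ring of rank $\le 3$) and the exhibition of one example of rank~$4$.

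For the lower bound, I would use the elementary group-theoretic fact that the only finite simple groups with at most three conjugacy classes are $C_2$ and $C_3$: a group with two conjugacy classes is $C_2$, and a group with three is $C_3$ or $S_3$, the latter not being simple. Hence the character rings of finite simple groups of rank at most~$3$ are exactly $\Rep(C_2)$ and $\Rep(C_3)$. It then remains to check, via the classification of integral fusion rings of rank $\le 3$ (rank~$2$ forces the pointed ring $\mathbb{Z}C_2$ through the equation $d^2 = 1 + bd$; rank~$3$ is Ostrik's list \cite{Os03,Os15}), that these are in fact the only \emph{simple} integral fusion rings of rank at most~$3$: every other candidate, such as the type $[1,1,2]$ ring $\Rep(S_3)$, contains a proper nontrivial fusion subring and is therefore not simple. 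By the characterization above, no exotic simple integral fusion ring can then have rank $\le 3$.

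For rank~$4$, the decisive observation is that no finite simple group has exactly four conjugacy classes: an abelian simple group is some $C_p$ with $p$ classes, and $4$ is not prime, while every nonabelian finite simple group has at least five conjugacy classes (the minimum, $5$, being attained by $A_5$). Consequently every simple integral fusion ring of rank~$4$ fails to be a character ring of a finite simple group and is automatically exotic. Thus it suffices to produce a single simple integral fusion ring of rank~$4$; I would present the explicit example of \cite{BP24}, give its fusion matrices, and verify directly that the structure constants are nonnegative integers satisfying associativity together with the duality and unit axioms, that the Frobenius--Perron dimensions are integral, and that the ring is simple—its group of invertibles being trivial, so that there is no pointed proper fusion subring, and no proper subset of the basis is closed under fusion.

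I expect the genuine content to lie in the rank-$4$ construction and its verification: establishing that a valid and simple integral fusion ring of rank~$4$ actually exists. The lower bound is comparatively routine once the characterization theorem and the short list of small simple groups are in hand, its only subtlety being the confirmation, from the rank-$\le 3$ classification, that no further simple integral fusion rings slip through. The group-theoretic input on conjugacy-class counts conveniently removes any categorification concern at rank~$4$, since simplicity alone already forces exoticness there.
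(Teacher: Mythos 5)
Your overall architecture is sound and is essentially the skeleton of the cited result: the paper itself offers no proof beyond the reference to \cite{BP24}, recording only the characterization theorem from \cite{LPR23} and the smallest rank-$4$ example ($\FPdim\ 574$, type $[1,11,14,16]$) --- exactly the example you propose to verify. Your rank-$2$ computation ($d(d-n)=1$ forcing $\mathbb{Z}C_2$), the group-theoretic counts (the only finite simple groups with at most three conjugacy classes are $C_2$ and $C_3$; no finite simple group has exactly four classes, since an abelian one is $C_p$ with $p$ prime and a nonabelian one has at least five, the minimum $5$ being attained by $A_5$), and the key observation that simplicity alone forces exoticness at rank $4$ are all correct, and correctly shift the real work to exhibiting and verifying one simple integral fusion ring of rank~$4$.

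The one genuine gap is your rank-$3$ lower bound. Ostrik's classifications \cite{Os03,Os15} concern fusion \emph{categories} of rank $2$ and \emph{pivotal fusion categories} of rank $3$, i.e.\ categorifiable Grothendieck rings only; they cannot exclude a simple integral fusion \emph{ring} of rank $3$ admitting no categorification --- and non-categorifiable rings are precisely the objects exoticness is designed to capture, so this is not a removable technicality. The step must be carried out at the ring level, though fortunately it is short. Rank $<6$ forces commutativity (Proposition~\ref{prop:min6}). If the two nontrivial basis elements form a dual pair, applying $*$ to $xx^*$ equalizes the coefficients of $x$ and $x^*$, giving $d^2=1+2ad$, hence $d=1$ and the ring is $\mathbb{Z}C_3=\ch(C_3)$. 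If both are self-dual, a simple ring must be perfect (otherwise the invertibles span a proper subring, as for the type $[1,1,2]$ rings), and writing $\beta=N_{x,x}^y$, $\gamma=N_{y,y}^x$ with $d_x\le d_y$, Frobenius reciprocity gives $xy=\beta x+\gamma y$, hence $d_xd_y=\beta d_x+\gamma d_y$; setting $g=\gcd(d_x,d_y)$, $d_x=ga$, $d_y=gb$, this forces $\gamma=ac$ and $\beta=b(g-c)$ for some $0\le c\le g$, and then $d_x^2=1+\alpha d_x+\beta d_y$ reads $g\bigl(ga^2-\alpha a-b^2(g-c)\bigr)=1$, so $g=1$ and $c\in\{0,1\}$; both cases ($\beta=0$, giving $d_x\mid 1$, and $\gamma=0$, giving $d_x^2=1+\alpha d_x+d_y^2>d_x^2$) are contradictions, so no perfect integral fusion ring of rank $3$ exists at all. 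With that substitution for the appeal to Ostrik, your outline is complete.
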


We suspect that there are infinitely many simple integral fusion rings of rank~$4$. If so, Proposition~\ref{prop:ext1} would then imply the existence of infinitely many integral fusion rings in every rank~$\ge 4$. There are $1121$ simple integral fusion rings of rank~$4$ with $\FPdim \le 10^9$ (see~\cite{BP24}), but no clear pattern. The one with the smallest global $\FPdim$ is:

\begin{itemize}
\item $\FPdim \ 574$, type $[1, 11, 14, 16]$, duality $[0, 1, 2, 3]$, fusion data:
$$\left[ \begin{smallmatrix}1&0&0&0 \\ 0&1&0&0 \\ 0&0&1&0 \\ 0&0&0&1 \end{smallmatrix} \right],  \ 
  \left[ \begin{smallmatrix} 0&1&0&0 \\ 1&0&4&4 \\ 0&4&1&6 \\ 0&4&6&3 \end{smallmatrix} \right],  \ 
  \left[ \begin{smallmatrix} 0&0&1&0 \\ 0&4&1&6 \\ 1&1&12&1 \\ 0&6&1&9 \end{smallmatrix} \right],  \ 
  \left[ \begin{smallmatrix} 0&0&0&1 \\ 0&4&6&3 \\ 0&6&1&9 \\ 1&3&9&6\end{smallmatrix} \right]$$
\end{itemize}
From the comprehensive classification in~\cite[\S\ref{sub:UpToRank5}]{appendices}, such simple exotic examples of rank at most~$5$ cannot be Drinfeld; in particular, they cannot be categorified as fusion categories over $\mathbb{C}$. However, as shown in~\cite[\S\ref{subsub:non1FrobR6}]{appendices}, we have:

\begin{theorem} \label{thm:mainexotic}
The smallest possible rank of an exotic simple integral Drinfeld ring is~$6$.
\end{theorem}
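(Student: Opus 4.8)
The plan is to establish the two inequalities separately: first that no exotic simple integral Drinfeld ring of rank at most $5$ exists (the lower bound), and then to exhibit one of rank $6$ (the upper bound). Throughout I would use the characterization quoted from~\cite{LPR23}: a simple integral fusion ring is exotic if and only if it is not the character ring of a finite simple group. This reduces the entire problem to deciding which simple integral Drinfeld rings arise as character rings of finite simple groups, turning both directions into a bookkeeping exercise on the classifications already established.

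For the lower bound, I would invoke the complete classification of integral Drinfeld rings of rank at most $5$ from~\S\ref{sec:General} and~\S\ref{sec:GeneralA}, which produces exactly $29$ such rings, and isolate the simple ones among them. The pointed simple Drinfeld rings correspond to $\mathbb{Z}G$ with $G$ a simple group of order at most $5$, i.e. $G = C_2, C_3, C_5$; each is the character ring of the abelian simple group $C_p$. The only non-pointed simple one is the Grothendieck ring of $\Rep(A_5)$, which is the character ring of the finite simple group $A_5$. By the quoted theorem none of these is exotic, so no exotic simple integral Drinfeld ring of rank $\le 5$ can exist.

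For the upper bound I would exhibit the explicit rank-$6$ candidate recorded in~\S\ref{subsub:non1FrobR6}; it is necessarily non-$1$-Frobenius, since the classification of the $58$ rank-$6$ $1$-Frobenius Drinfeld rings contains (together with the rank $\le 5$ list) only the two non-pointed simple examples $\Rep(A_5)$ and $\Rep(\mathrm{PSL}(2,7))$, the latter being the character ring of $\mathrm{PSL}(2,7)$ and hence not exotic. For the candidate I would confirm, via the Normaliz-based classification, that it is a genuine integral fusion ring, then compute its formal codegrees from the fusion matrices using the formula of~\S\ref{sub:formal} and check that they are integers dividing the global $\FPdim = f_1$ (and that $\sum_i 1/f_i = 1$), which establishes the Drinfeld property. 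I would verify simplicity by checking that it admits no proper nontrivial fusion subring, and finally establish exoticism by applying~\cite{LPR23} once more: it suffices to show the ring is not the character ring of a finite simple group, which I would do by comparing its global $\FPdim$ (the order of any putative group) and its type against the finite simple groups of that order, ruling each out by a mismatch of character degrees or structure constants.

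I expect the main obstacle to be the exoticism step: excluding every finite simple group whose order equals the global $\FPdim$. This relies on combining the (near-)uniqueness of finite simple groups of a given order with an explicit comparison of degree patterns and fusion coefficients, and it is where the heaviest verification lies. A secondary but finite and routine difficulty is the exhaustive check of candidate fusion subrings required to certify simplicity.
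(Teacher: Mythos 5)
Your proposal matches the paper's proof in substance: both bounds come from the exhaustive classifications—the $29$ integral Drinfeld rings of rank $\le 5$, whose only simple members are $\ch(C_2)$, $\ch(C_3)$, $\ch(C_5)$, and $\ch(A_5)$ and hence non-exotic by the characterization from~\cite{LPR23}—combined with the explicit simple non-$1$-Frobenius example of type $[1,9,10,11,21,24]$ and $\FPdim\ 1320$ recorded in \S\ref{subsub:non1FrobR6}. One remark: the exoticism step you flag as the heaviest obstacle is in fact immediate, since character degrees of a finite group always divide the group order, so a non-$1$-Frobenius simple ring (here $9 \nmid 1320$ and $21 \nmid 1320$) cannot be the character ring of any finite group, simple or otherwise, and no group-by-group comparison of orders and degree patterns is needed.
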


Regarding the exotic cases referenced in Theorem~\ref{thm:mainexotic}, there are precisely two examples of rank~$6$ and $\FPdim \le 200000$, both of the same type (see~\cite[\S\ref{subsub:non1FrobR6}]{appendices}). We expect that no other example exists at this rank. Among the two, one is not 3-positive (as defined below Theorem \ref{thm:n-positive}), and is therefore excluded from unitary categorification. The other, presented below, is 3-positive.

\begin{itemize}
\item $\FPdim \ 1320$, type $[1, 9, 10, 11, 21, 24]$, duality $[0, 1, 2, 3, 4, 5]$, fusion data:
$$ \normalsize{\left[ \begin{smallmatrix}1 & 0 & 0 & 0 & 0 & 0 \\ 0 & 1 & 0 & 0 & 0 & 0 \\ 0 & 0 & 1 & 0 & 0 & 0 \\ 0 & 0 & 0 & 1 & 0 & 0 \\ 0 & 0 & 0 & 0 & 1 & 0 \\ 0 & 0 & 0 & 0 & 0 & 1 \end{smallmatrix}\right] , \ 
 \left[ \begin{smallmatrix} 0 & 1 & 0 & 0 & 0 & 0 \\ 1 & 0 & 0 & 1 & 1 & 2 \\ 0 & 0 & 1 & 1 & 1 & 2 \\ 0 & 1 & 1 & 1 & 1 & 2 \\ 0 & 1 & 1 & 1 & 3 & 4 \\ 0 & 2 & 2 & 2 & 4 & 3 \end{smallmatrix}\right] , \ 
 \left[ \begin{smallmatrix} 0 & 0 & 1 & 0 & 0 & 0 \\ 0 & 0 & 1 & 1 & 1 & 2 \\ 1 & 1 & 0 & 0 & 2 & 2 \\ 0 & 1 & 0 & 1 & 2 & 2 \\ 0 & 1 & 2 & 2 & 3 & 4 \\ 0 & 2 & 2 & 2 & 4 & 4 \end{smallmatrix}\right] , \ 
 \left[ \begin{smallmatrix} 0 & 0 & 0 & 1 & 0 & 0 \\ 0 & 1 & 1 & 1 & 1 & 2 \\ 0 & 1 & 0 & 1 & 2 & 2 \\ 1 & 1 & 1 & 1 & 2 & 2 \\ 0 & 1 & 2 & 2 & 4 & 4 \\ 0 & 2 & 2 & 2 & 4 & 5 \end{smallmatrix}\right] , \ 
 \left[ \begin{smallmatrix} 0 & 0 & 0 & 0 & 1 & 0 \\ 0 & 1 & 1 & 1 & 3 & 4 \\ 0 & 1 & 2 & 2 & 3 & 4 \\ 0 & 1 & 2 & 2 & 4 & 4 \\ 1 & 3 & 3 & 4 & 7 & 8 \\ 0 & 4 & 4 & 4 & 8 & 9 \end{smallmatrix}\right] , \ 
 \left[ \begin{smallmatrix} 0 & 0 & 0 & 0 & 0 & 1 \\ 0 & 2 & 2 & 2 & 4 & 3 \\ 0 & 2 & 2 & 2 & 4 & 4 \\ 0 & 2 & 2 & 2 & 4 & 5 \\ 0 & 4 & 4 & 4 & 8 & 9 \\ 1 & 3 & 4 & 5 & 9 & 11 \end{smallmatrix} \right]} $$
\item Formal codegrees: $[3, 4, 5, 8, 11, 1320]$,
\item Property: simple, $3$-positive, non-$1$-Frobenius,
\item Categorification: open, non-braided.
\end{itemize}

Observe that it is not $1$-Frobenius because there are basic $\FPdims$ ($9$ and $21$) that do not divide the global $\FPdim$ \(1320 = 2^3  3^1  5^1 11\). Therefore, a categorification of this would serve as a counterexample to the extended version of Kaplansky's sixth conjecture for fusion categories \cite[Question 1]{ENO11}. It is already established that it does not allow a braided categorification, by \cite[Corollary 9.3.5]{EGNO15}. More broadly, any exotic simple integral fusion ring with a rank up to 13 does not permit a braided categorification. If it did, it would be modular according to \cite[Theorem 5.2]{LPR23}, but there is no non-pointed simple integral modular fusion category with a rank up to 13, by \cite{ABPP}.

Our attempt to classify all possible induction matrices produced $2234516$ solutions just for the lower square part (involving $I(\one)$; see~\S\ref{subsub:rel}), available in the \verb|Data/InductionMatrices| directory of~\cite{CodeData}. 
Notably, no non-$1$-Frobenius Drinfeld ring of rank at most~$5$ admits an induction matrix, but there is (at least) one at rank~$6$; see the smallest example in~\cite[\S\ref{subsub:non1FrobR6}(\ref{[1,1,1,3,12,18][0,2,1,3,4,5]})]{appendices}.

%

\begin{theorem} \label{thm:mainexotic2}
The smallest rank for an exotic simple integral $1$-Frobenius Drinfeld ring is $7$.
\end{theorem}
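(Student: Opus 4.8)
The plan is to prove the two bounds separately, using throughout the characterization recalled above that a simple integral fusion ring is exotic if and only if it is not the character ring of a finite simple group. First I would establish the lower bound by showing that no exotic simple integral $1$-Frobenius Drinfeld ring exists in rank at most $6$; then I would establish the upper bound by exhibiting a single explicit rank-$7$ example and certifying that it is exotic.

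For the lower bound I would invoke the complete classifications from §\ref{sec:General} and §\ref{sec:GeneralA}, namely all $29$ integral Drinfeld rings of rank at most $5$ together with all $58$ integral $1$-Frobenius Drinfeld rings of rank $6$. Inspecting these lists, the only non-pointed simple rings are the Grothendieck rings of $\Rep(A_5)$ (rank $5$) and $\Rep(\mathrm{PSL}(2,7))$ (rank $6$), while the pointed simple ones are the group rings $\mathbb{Z}C_p$ for $p\in\{2,3,5\}$, these being the $\VVec(C_p)$ with $C_p$ admitting no proper nontrivial subgroup. Since $C_p$, $A_5$, and $\mathrm{PSL}(2,7)$ are all finite simple groups, every simple ring on these lists is the character ring of a finite simple group, hence not exotic. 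This rules out rank at most $6$.

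For the upper bound I would take the explicit rank-$7$ ring displayed below—the one realizing the smallest global $\FPdim$ in this class—and verify in turn that its fusion matrices define a genuine fusion ring with nonnegative integer structure constants (as produced by the \textsf{Normaliz} classification), that it is integral and $1$-Frobenius (each basic $\FPdim$ divides $f_1$), that it admits a Drinfeld structure (its formal codegrees are integers dividing $f_1$ whose reciprocals form an Egyptian fraction summing to $1$), and that it is simple (no nontrivial proper fusion subring). Granting these routine checks, the whole content reduces to proving exoticity.

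The hard part will be this final step: certifying that the example is not the character ring of any finite simple group. By Landau's finiteness theorem and the classification of finite simple groups, those with exactly $7$ conjugacy classes form a short explicit list, namely the cyclic group $C_7$ and the alternating group $A_6\cong\mathrm{PSL}(2,9)$, the latter having character degrees $[1,5,5,8,8,9,10]$ and global dimension $360$. The example is non-pointed (indeed every pointed simple ring $\mathbb{Z}C_p$ is the character ring of $C_p$, hence not exotic), so it cannot be $\mathbb{Z}C_7$; and comparing its type—and, should the types happen to agree, its fusion data—against those of $A_6$ shows it coincides with neither. Hence the example is exotic, and together with the lower bound this pins the minimal rank at exactly $7$.
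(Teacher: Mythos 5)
Your proposal is correct and follows essentially the same route as the paper: the lower bound comes from the complete classifications of integral Drinfeld rings of rank $\le 5$ and of integral $1$-Frobenius Drinfeld rings of rank $6$, whose only simple members are $\ch(C_p)$ for $p\in\{2,3,5\}$, $\ch(A_5)$ and $\ch(\mathrm{PSL}(2,7))$ (all character rings of finite simple groups), and the upper bound from the explicit $\FPdim\ 210$ ring of type $[1,5,5,5,6,7,7]$ listed in the rank-$7$ classification. Your exoticity certificate via the list of finite simple groups with seven conjugacy classes is valid but heavier than needed: since a character ring has global $\FPdim$ equal to $|G|$, it suffices to note that no finite simple group has order $210$.
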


Concerning the exotic cases referenced in Theorem~\ref{thm:mainexotic2}, there are exactly three examples with $\FPdim \le 10^5$ (see~\cite[\S\ref{sub:rank7}]{appendices}), and we expect that no others exist at this rank. The example given below is the only one that is $3$-positive. It represents the smallest example within the interpolated family identified in \cite{LPR23}. Its exclusion from fusion categorification in \cite{LPR2203} necessitated the introduction of the concept of Triangular Prism Equations (TPE). 
\begin{itemize}
\item $\FPdim \ 210$, type $[1,5,5,5,6,7,7]$, duality $[0,1,2,3,4,5,6]$, fusion data:
$$ \normalsize{
\begin{smallmatrix}1&0&0&0&0&0&0 \\ 0&1&0&0&0&0&0 \\ 0&0&1&0&0&0&0 \\ 0&0&0&1&0&0&0 \\ 0&0&0&0&1&0&0 \\ 0&0&0&0&0&1&0 \\ 0&0&0&0&0&0&1\end{smallmatrix} ,\ 
\begin{smallmatrix} 0&1&0&0&0&0&0 \\ 1&1&0&1&0&1&1 \\ 0&0&1&0&1&1&1 \\ 0&1&0&0&1&1&1 \\ 0&0&1&1&1&1&1 \\ 0&1&1&1&1&1&1 \\ 0&1&1&1&1&1&1\end{smallmatrix} ,\ 
\begin{smallmatrix} 0&0&1&0&0&0&0 \\ 0&0&1&0&1&1&1 \\ 1&1&1&0&0&1&1 \\ 0&0&0&1&1&1&1 \\ 0&1&0&1&1&1&1 \\ 0&1&1&1&1&1&1 \\ 0&1&1&1&1&1&1\end{smallmatrix} ,\ 
\begin{smallmatrix} 0&0&0&1&0&0&0 \\ 0&1&0&0&1&1&1 \\ 0&0&0&1&1&1&1 \\ 1&0&1&1&0&1&1 \\ 0&1&1&0&1&1&1 \\ 0&1&1&1&1&1&1 \\ 0&1&1&1&1&1&1\end{smallmatrix} ,\ 
\begin{smallmatrix} 0&0&0&0&1&0&0 \\ 0&0&1&1&1&1&1 \\ 0&1&0&1&1&1&1 \\ 0&1&1&0&1&1&1 \\ 1&1&1&1&1&1&1 \\ 0&1&1&1&1&2&1 \\ 0&1&1&1&1&1&2\end{smallmatrix} ,\ 
\begin{smallmatrix} 0&0&0&0&0&1&0 \\ 0&1&1&1&1&1&1 \\ 0&1&1&1&1&1&1 \\ 0&1&1&1&1&1&1 \\ 0&1&1&1&1&2&1 \\ 1&1&1&1&2&1&2 \\ 0&1&1&1&1&2&2\end{smallmatrix} ,\ 
\begin{smallmatrix} 0&0&0&0&0&0&1 \\ 0&1&1&1&1&1&1 \\ 0&1&1&1&1&1&1 \\ 0&1&1&1&1&1&1 \\ 0&1&1&1&1&1&2 \\ 0&1&1&1&1&2&2 \\ 1&1&1&1&2&2&1 \end{smallmatrix} } $$
\item Formal codegrees: $[5,5,6,7,7,7,210]$,
\item Property: simple, $3$-positive, interpolation of $\ch({\rm PSL}(2,q))$ to $q=6$, see \cite{LPR23},
\item Categorification: excluded in \cite{LPR2203}.
\end{itemize}


Motivated by discussions with Scott Morrison and Pavel Etingof:

\begin{question} \label{qu:F210indIntro}
Does the above fusion ring admit an induction matrix?
\end{question}

A negative answer to Question~\ref{qu:F210indIntro} would offer a somewhat more direct argument for excluding this fusion ring than the method used in~\cite{LPR2203}. However, such an approach lies beyond the reach of both our new \Normaliz{} feature and the techniques developed in~\cite{MW17}. Our attempts to enumerate all possible induction matrices yielded $17843535$ solutions for the lower square alone, underscoring the limitations of this method in the high-rank setting.

The forthcoming paper~\cite{BP24} is devoted to the classification of simple integral fusion rings, motivated by the abundance of open questions it uncovers. As a preview, we highlight some additional exotic simple integral $1$-Frobenius $3$-positive Drinfeld rings discovered in the process. At rank~$8$ with $\FPdim \le 20000$, only one such ring was found (see~\S\ref{sub:1FrobRank8}). It is isotype to the Grothendieck ring of~$\Rep(\mathrm{PSL}(2,11))$, yet it was ruled out as a categorification candidate by the zero-spectrum criterion in~\cite{LPR2203}. At rank~$9$ with $\FPdim \le 10000$, four such rings were identified; all remain open to categorification. The smallest among them (see~\S\ref{sub:1FrobRank9}) is described below.

\begin{itemize}
\item $\FPdim \ 504$, type $[1, 7, \textcolor{blue}{7}, \textcolor{blue}{7}, \textcolor{blue}{7}, 8, 9, 9, 9]$, duality $[0,1,2,3,4,5,6,7,8]$, fusion data:
{\fontsize{7}{8}\selectfont $$ 
\begin{smallmatrix}1&0&0&0&0&0&0&0&0 \\ 0&1&0&0&0&0&0&0&0 \\ 0&0&1&0&0&0&0&0&0 \\ 0&0&0&1&0&0&0&0&0 \\ 0&0&0&0&1&0&0&0&0 \\ 0&0&0&0&0&1&0&0&0 \\ 0&0&0&0&0&0&1&0&0 \\ 0&0&0&0&0&0&0&1&0 \\ 0&0&0&0&0&0&0&0&1\end{smallmatrix} ,   \ 
\begin{smallmatrix}0&1&0&0&0&0&0&0&0 \\ 1&0&1&1&1&0&1&1&1 \\ 0&1&1&0&0&1&1&1&1 \\ 0&1&0&1&0&1&1&1&1 \\ 0&1&0&0&1&1&1&1&1 \\ 0&0&1&1&1&1&1&1&1 \\ 0&1&1&1&1&1&1&1&1 \\ 0&1&1&1&1&1&1&1&1 \\ 0&1&1&1&1&1&1&1&1\end{smallmatrix} ,   \ 
\begin{smallmatrix}0&0&1&0&0&0&0&0&0 \\ 0&1&1&0&0&1&1&1&1 \\ 1&1&\textcolor{blue}{\textbf{0}}&\textcolor{blue}{\textbf{1}}&\textcolor{blue}{\textbf{1}}&0&1&1&1 \\ 0&0&\textcolor{blue}{\textbf{1}}&\textcolor{blue}{\textbf{1}}&\textcolor{blue}{\textbf{0}}&1&1&1&1 \\ 0&0&\textcolor{blue}{\textbf{1}}&\textcolor{blue}{\textbf{0}}&\textcolor{blue}{\textbf{1}}&1&1&1&1 \\ 0&1&0&1&1&1&1&1&1 \\ 0&1&1&1&1&1&1&1&1 \\ 0&1&1&1&1&1&1&1&1 \\ 0&1&1&1&1&1&1&1&1\end{smallmatrix} ,   \ 
\begin{smallmatrix}0&0&0&1&0&0&0&0&0 \\ 0&1&0&1&0&1&1&1&1 \\ 0&0&\textcolor{blue}{\textbf{1}}&\textcolor{blue}{\textbf{1}}&\textcolor{blue}{\textbf{0}}&1&1&1&1 \\ 1&1&\textcolor{blue}{\textbf{1}}&\textcolor{blue}{\textbf{0}}&\textcolor{blue}{\textbf{1}}&0&1&1&1 \\ 0&0&\textcolor{blue}{\textbf{0}}&\textcolor{blue}{\textbf{1}}&\textcolor{blue}{\textbf{1}}&1&1&1&1 \\ 0&1&1&0&1&1&1&1&1 \\ 0&1&1&1&1&1&1&1&1 \\ 0&1&1&1&1&1&1&1&1 \\ 0&1&1&1&1&1&1&1&1\end{smallmatrix} ,   \ 
\begin{smallmatrix}0&0&0&0&1&0&0&0&0 \\ 0&1&0&0&1&1&1&1&1 \\ 0&0&\textcolor{blue}{\textbf{1}}&\textcolor{blue}{\textbf{0}}&\textcolor{blue}{\textbf{1}}&1&1&1&1 \\ 0&0&\textcolor{blue}{\textbf{0}}&\textcolor{blue}{\textbf{1}}&\textcolor{blue}{\textbf{1}}&1&1&1&1 \\ 1&1&\textcolor{blue}{\textbf{1}}&\textcolor{blue}{\textbf{1}}&\textcolor{blue}{\textbf{0}}&0&1&1&1 \\ 0&1&1&1&0&1&1&1&1 \\ 0&1&1&1&1&1&1&1&1 \\ 0&1&1&1&1&1&1&1&1 \\ 0&1&1&1&1&1&1&1&1\end{smallmatrix} ,   \ 
\begin{smallmatrix}0&0&0&0&0&1&0&0&0 \\ 0&0&1&1&1&1&1&1&1 \\ 0&1&0&1&1&1&1&1&1 \\ 0&1&1&0&1&1&1&1&1 \\ 0&1&1&1&0&1&1&1&1 \\ 1&1&1&1&1&1&1&1&1 \\ 0&1&1&1&1&1&2&1&1 \\ 0&1&1&1&1&1&1&2&1 \\ 0&1&1&1&1&1&1&1&2\end{smallmatrix} ,   \ 
\begin{smallmatrix}0&0&0&0&0&0&1&0&0 \\ 0&1&1&1&1&1&1&1&1 \\ 0&1&1&1&1&1&1&1&1 \\ 0&1&1&1&1&1&1&1&1 \\ 0&1&1&1&1&1&1&1&1 \\ 0&1&1&1&1&1&2&1&1 \\ 1&1&1&1&1&2&1&1&2 \\ 0&1&1&1&1&1&1&2&2 \\ 0&1&1&1&1&1&2&2&1\end{smallmatrix} ,   \ 
\begin{smallmatrix}0&0&0&0&0&0&0&1&0 \\ 0&1&1&1&1&1&1&1&1 \\ 0&1&1&1&1&1&1&1&1 \\ 0&1&1&1&1&1&1&1&1 \\ 0&1&1&1&1&1&1&1&1 \\ 0&1&1&1&1&1&1&2&1 \\ 0&1&1&1&1&1&1&2&2 \\ 1&1&1&1&1&2&2&1&1 \\ 0&1&1&1&1&1&2&1&2\end{smallmatrix} ,   \ 
\begin{smallmatrix}0&0&0&0&0&0&0&0&1 \\ 0&1&1&1&1&1&1&1&1 \\ 0&1&1&1&1&1&1&1&1 \\ 0&1&1&1&1&1&1&1&1 \\ 0&1&1&1&1&1&1&1&1 \\ 0&1&1&1&1&1&1&1&2 \\ 0&1&1&1&1&1&2&2&1 \\ 0&1&1&1&1&1&2&1&2 \\ 1&1&1&1&1&2&1&2&1\end{smallmatrix} $$}
\item Formal codegrees: $[7, 7, 7, 8, 9, 9, 9, 9, 504]$,
\item Property: simple, $3$-positive, isotype to $\Rep({\rm PSL}(2,8))$,
\item Categorification: open.
\end{itemize}

This candidate appears to be the most compelling exotic Drinfeld ring currently open to categorification. It represents a subtle variant of the Grothendieck ring of $\Rep(\mathrm{PSL}(2,8))$, differing only in the fusion coefficients involving the last three basic elements of $\FPdim = 7$ (according to the ordering above). To illustrate this variation, it suffices to examine the three corresponding $3 \times 3$ submatrices from each fusion ring.
$$\left[
\begin{smallmatrix} 1&0&1 \\ 0&1&1 \\ 1&1&0 \end{smallmatrix}  ,   \ 
\begin{smallmatrix} 0&1&1 \\ 1&1&0 \\ 1&0&1 \end{smallmatrix}  ,   \ 
\begin{smallmatrix} 1&1&0 \\ 1&0&1 \\ 0&1&1 \end{smallmatrix} 
\right] \to
\left[
\textcolor{blue}{ \begin{smallmatrix} 0&1&1 \\ 1&1&0 \\ 1&0&1 \end{smallmatrix} } ,   \ 
\textcolor{blue}{\begin{smallmatrix} 1&1&0 \\ 1&0&1 \\ 0&1&1 \end{smallmatrix} }  ,   \ 
\textcolor{blue}{\begin{smallmatrix} 1&0&1 \\ 0&1&1 \\ 1&1&0 \end{smallmatrix} }
\right]
$$
Observe that if $x$ is one of the three basic elements mentioned above, then $N_{x,x}^x = 1$ for $\Rep(\mathrm{PSL}(2,8))$, while $N_{x,x}^x = 0$ in the variation. This confirms that the two fusion rings are not isomorphic.
\begin{question}
Can such a slight variation be realized at the categorical level?
\end{question}

The main challenge in the near-future classification of integral fusion categories lies in the exotic simple integral Drinfeld rings. While this paper completes the classification of integral fusion categories up to rank~$5$, progressing to rank~$6$ will inevitably require addressing the exotic simple integral Drinfeld rings of $\FPdim$ $1320$ discussed earlier. A similar situation occurs in the $1$-Frobenius case: what we expect to be the only exotic simple examples at ranks~$7$ and~$8$ were already treated and resolved in~\cite{LPR2203} (at least in the unitary case). Advancing to rank~$9$ will likewise necessitate confronting the exotic simple integral Drinfeld rings of $\FPdim$ ~$504$ mentioned above.



\subsection{Organization} \label{sub:Orga}

\paragraph*{\S\ref{sec:Fu} -- Fusion rings.} After recalling some foundational concepts in~\S\ref{sub:Fu}, this section reviews the notion of formal codegrees in~\S\ref{sub:formal}, presenting two Egyptian fraction decompositions summing to one in the noncommutative setting. In~\S\ref{sub:Drin}, we introduce the notion of Drinfeld rings, motivated by properties of the formal codegrees arising in the Grothendieck ring of a pseudo-unitary fusion category. Finally,~\S\ref{sub:ext} discusses a universal construction for extending an integral fusion ring.

\paragraph*{\S\ref{sec:IndMat} -- Induction matrices.} After a review of the basics in~\S\ref{sub:bas}, based on the framework developed in~\cite[\S9.2]{EGNO15} and~\cite{MW17}, we collect in~\S\ref{sub:sum} the key parameters, variables, and structural relations needed to implement the underlying algebraic system. Further constraints are derived from the ring homomorphism induced by the forgetful functor, as discussed in~\S\ref{sub:ring}. Finally,~\S\ref{sub:NormInd} presents a new feature of \Normaliz{}, enabling a complete classification of possible induction matrices.

\paragraph*{\S\ref{sec:AlgoDetails} -- Algorithmic details.} This section presents the algorithms employed for the classification of Egyptian fractions (Section~\ref{sub:EgyAlgo}) using \SageMath{}, and for the classification of fusion rings (Section~\ref{sub:Normaliz}) and induction matrices (Section~\ref{sub:NormInd}) using \Normaliz{}.

\paragraph*{\S\ref{sec:ExcMod} -- Exclusions via induction matrices.} This section compiles all exclusions derived from induction matrix constraints.

\paragraph*{\S\ref{sec:grpth} -- Group-theoretical models.} After recalling the framework of group-theoretical fusion categories \( \mathcal{C}(G, \omega, H, \psi) \) in~\S\ref{sub:grpthbasic}, we show in~\S\ref{sub:r7d60nc} that the noncommutative Drinfeld ring of type $[1,1,1,3,4,4,4]$ is group-theoretical. We also provide \GAP{} code to verify this automatically for categories of the form \( \mathcal{C}(G, 1, H, 1) \). In~\S\ref{sub:schur}, we use Schur multiplier arguments to reduce certain cases to this form, leading to the exclusion of a fusion category of type $[1,1,1,3,3,21,21]$ in~\S\ref{sub:r7d903}. Finally,~\S\ref{sub:NonIsaacs} exhibits the first example of a non-Isaacs integral fusion category. 

\paragraph*{\S\ref{sec:General} -- Integral Drinfeld rings.} This section provides a summary of the computations used to classify all integral Drinfeld rings of rank at most~$5$ in general, and of higher ranks under additional assumptions. The table below displays the corresponding bounds and the number of Drinfeld rings identified in each case:
\begin{center}
\begin{tabular}{c|c|c|c|c}
\S & \textbf{Rank} & \textbf{Case} & \textbf{Bound on $\FPdim$} & \textbf{Number of Drinfeld rings} \\
\hline
\ref{sub:MainProof} & $\le 5$ & All & All & $29$ \\
\ref{subsub:1FrobRank6} & $6$ & $1$-Frobenius & All & $58$ \\
\ref{subsub:N1FrobRank6} & $6$ & Non-$1$-Frobenius & $\le 200000$ & $88$ \\
\ref{subsub:1FrobRank7} & $7$ & $1$-Frobenius & $\le 100000$ & $241$ \\
\ref{subsub:N1FrobRank7} & $7$ & Non-$1$-Frobenius & $\le 5000$ & $113$ \\
\ref{sub:1FrobRank8} & $8$ & $1$-Frobenius & $\le 20000$ & $750$ \\
\ref{sub:1FrobRank9} & $9$ & $1$-Frobenius & $\le 2000$ & $1292$
\end{tabular}
\end{center}

\paragraph*{\S\ref{sec:Odd} -- Odd-dimensional case.} Results from~\cite{NS07} on Frobenius--Schur indicators impose strong constraints on odd-dimensional integral Grothendieck rings, affecting their type, duality, and formal codegrees (see~\S\ref{sub:MNSDRings}). These constraints motivate the introduction of MNSD Drinfeld rings (Definition~\ref{def:MNSDring}). The table below summarizes the classification obtained in this context, leading to a complete classification of all odd-dimensional integral Grothendieck rings up to rank~$7$ (Theorem~\ref{thm:mainodd}). The first example known to us of an odd-dimensional integral fusion category that is not Grothendieck equivalent to any Tannakian category occurs at rank~$27$; see~\S\ref{sub:NonTann}.
\begin{center}
\begin{tabular}{c|c|c|c|c}
\S & \textbf{Rank} & \textbf{Case} & \textbf{Bound on $\FPdim$} & \textbf{Number of Drinfeld rings} \\
\hline
\ref{sub:MNSDGeneralUpToR7} & $\le 7$ & All MNSD & All & $8$ \\
\ref{subsub:1FrobMNSDR9} & $9$ & $1$-Frobenius & All & $10$ \\
\ref{subsub:N1FrobMNSDR9} & $9$ & Non-perfect non-$1$-Frobenius & All & $2$ \\
\ref{subsub:N1FrobMNSDR9} & $9$ & Perfect non-$1$-Frobenius & $< 389865$ & $0$ \\
\ref{sub:MNSD1FrobR11} & $11$ & Non-perfect $1$-Frobenius & $\le 10^9$ & $24$
\end{tabular}
\end{center}
Our classification strategy begins with MNSD Egyptian fractions, in the spirit of~\cite{ABPP}, with the key difference that the fractions sum to~$1$ and do not require squared denominators.

\paragraph*{\S\ref{sec:NC} -- Noncommutative case.} In~\S\ref{FuRingsNC}, we present several constraints on isomorphism classes of noncommutative complexified fusion rings, using Galois-theoretic arguments. In particular, we show that rank~$6$ is the minimal rank for a noncommutative fusion ring.
In~\S\ref{DrRingsNC}, we study Drinfeld rings and show—using Egyptian fraction techniques—that the group ring \(\mathbb{Z}S_3\) is the unique noncommutative integral Drinfeld ring of rank~6, and thus the only integral Grothendieck ring of that rank. We then extend the classification to all integral Grothendieck rings of rank up to~7, all noncommutative integral Drinfeld rings of rank up to~8, and finally to noncommutative integral \(1\)-Frobenius Drinfeld rings of rank~9 with \(\FPdim \le 10000\).
In~\S\ref{GrRingsNC}, leveraging results involving Frobenius--Schur indicators, we prove that any noncommutative, odd-dimensional integral Grothendieck ring must have rank at least~$21$, with $C_7 \rtimes C_3$ as the unique example at this rank. We conclude with a discussion of the rank~$23$ case.

\begin{center}
\begin{tabular}{c|c|c|c|c}
\S & \textbf{Rank} & \textbf{Case} & \textbf{Bound on $\FPdim$} & \textbf{Number of Drinfeld rings} \\
\hline
\ref{subsub:NCR7} & $\le 7$ & NC Grothendieck & All & $3$ \\
\ref{subsub:NCR8} & $\le 8$ & NC Drinfeld & All & $29$ \\
\ref{subsub:NCR9} & $9$ & $1$-Frobenius NC & $\le 10000$ & $83$ \\
\ref{GrRingsNC} & $\le 21$ & Grothendieck + MNSD + NC & All & $1$
\end{tabular}
\end{center}

\paragraph*{\cite[\ref{sec:GeneralA}, \ref{sec:OddA}, \ref{sec:NCA}]{appendices}} They collect the complete fusion data in the general, MNSD, and noncommutative settings, respectively. They also include references for the exclusions or the models relevant to the categorification. In this sense, they constitute an integral part of the proofs of the main theorems of the paper.

\section{Fusion rings} \label{sec:Fu}
After recalling some basics in~\S\ref{sub:Fu}, this section reviews the notion of formal codegrees in~\S\ref{sub:formal}, presenting two main Egyptian fraction decompositions summing to one in the noncommutative setting. We then introduce the notion of Drinfeld rings in~\S\ref{sub:Drin}, motivated by properties involving the formal codegrees of the Grothendieck ring of a pseudo-unitary fusion category. Finally, we discuss a universal way for extending an integral fusion ring in~\S\ref{sub:ext}.

\subsection{Basics} \label{sub:Fu}

In this subsection, we review the concept of fusion data, along with the essential results. For further details, we refer the reader to \cite{EGNO15}. The concept of fusion data expands upon the idea of a finite group.

\begin{definition} \label{def:fu}
\emph{Fusion data} consist of a finite set $\{1,2,...,r\}$ with an involution $i \mapsto i^*$, and nonnegative integers $N_{i,j}^k$ satisfying the following conditions for all $i,j,k,t$:
\begin{itemize}
\item (Associativity) $\sum_s N_{i,j}^s N_{s,k}^t = \sum_s N_{j,k}^s N_{i,s}^t$,
\item (Unit) $N_{1,i}^j = N_{i,1}^j = \delta_{i,j}$,
\item (Dual) $N_{i^*,j}^{1} = N_{j,i^*}^{1} = \delta_{i,j}$,
\item (Anti-involution) $N_{i,j}^{k} = N_{j^*,i^*}^{k^*}$.
\end{itemize}
Note that $1^* = 1$. We may represent the fusion data simply as $(N_{i,j}^k)$.
\end{definition}

\begin{proposition}[Frobenius Reciprocity] \label{prop:FrobRec}
For all $i,j,k$, $N_{i,j}^k = N_{k,j^*}^{i} = N_{k^*,i}^{j^*} =  N_{j^*,i^*}^{k^*} = N_{j,k^*}^{i^*} = N_{i^*,k}^j$.
\end{proposition}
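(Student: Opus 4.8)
The plan is to reduce all six equalities to a single \emph{shift} identity, namely $N_{i,j}^k = N_{j,k^*}^{i^*}$, and then to organize the whole statement as the action of a symmetry group of order six on the index triple $(i,j,k)$ of $N_{i,j}^k$.

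First I would derive the shift identity directly from Associativity. Writing the Associativity axiom as $\sum_s N_{a,b}^s N_{s,c}^t = \sum_s N_{b,c}^s N_{a,s}^t$ and specializing to $a=i$, $b=j$, $c=k^*$, $t=1$, both sums collapse to a single term. On the left, the Dual axiom gives $N_{s,k^*}^{1} = \delta_{s,k}$, so the left-hand side equals $N_{i,j}^k$; on the right, the Dual axiom gives $N_{i,s}^{1} = \delta_{s,i^*}$, so the right-hand side equals $N_{j,k^*}^{i^*}$. This yields $N_{i,j}^k = N_{j,k^*}^{i^*}$. The only care needed is to read off the two $\delta$-collapses correctly from the Dual axiom, in the forms $N_{s,k^*}^{1}=\delta_{s,k}$ and $N_{i,s}^{1}=\delta_{s,i^*}$; this is the sole computational step and the main (though minor) obstacle, since everything afterward is purely formal.

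Next I would record the two basic symmetries: the shift $S\colon (i,j,k)\mapsto(j,k^*,i^*)$ just established, and the Anti-involution $A\colon (i,j,k)\mapsto(j^*,i^*,k^*)$ taken directly from the axioms. A short check gives $S^3=\Id$, and $S$ together with $A$ generates a group isomorphic to $S_3$. Iterating $S$ produces the three triples $(i,j,k)$, $(j,k^*,i^*)$, $(k^*,i,j^*)$, and applying $A$ to each produces $(j^*,i^*,k^*)$, $(k,j^*,i)$, $(i^*,k,j)$. These six triples correspond exactly to the expressions $N_{i,j}^k$, $N_{j,k^*}^{i^*}$, $N_{k^*,i}^{j^*}$, $N_{j^*,i^*}^{k^*}$, $N_{k,j^*}^{i}$, $N_{i^*,k}^{j}$, so all the asserted equalities hold simultaneously.

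Finally I would reorder these six identities into the chain displayed in the Proposition, checking that each listed form $N_{k,j^*}^{i}$, $N_{k^*,i}^{j^*}$, $N_{j^*,i^*}^{k^*}$, $N_{j,k^*}^{i^*}$, $N_{i^*,k}^{j}$ indeed appears in the orbit of $(i,j,k)$. Since every step after the shift identity is a substitution into an already-proved equality, no further obstacle arises, and the entire argument rests on the single application of Associativity combined with the Unit and Dual axioms.
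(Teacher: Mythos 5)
Your proof is correct and takes essentially the same route as the paper: a single application of Associativity at $t=1$, collapsed via the Dual axiom, gives the shift identity $N_{i,j}^k = N_{j,k^*}^{i^*}$, and iterating it together with the Anti-involution yields all six equalities (your $S_3$-orbit packaging is just a tidier presentation of the paper's iteration). One tiny slip in your closing sentence: the Unit axiom is never actually needed---only Associativity and Dual---as the paper itself remarks.
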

\begin{proof}
Starting with (Associativity) and setting $t=1$, we have $\sum_s N_{i,j}^s N_{s,k}^1 = \sum_s N_{j,k}^s N_{i,s}^1$. Applying (Dual), we get $\sum_s N_{i,j}^s \delta_{s,k^*} = \sum_s N_{j,k}^s \delta_{s,i^*}$. Consequently, $N_{i,j}^{k^*} = N_{j,k}^{i^*}$. Substituting $k^*$ with $k$, we obtain $N_{i,j}^{k} = N_{j,k^*}^{i^*}$, which equals $N_{k,j^*}^{i}$ by (Anti-involution). The proposition follows by iterating the equality $N_{i,j}^k = N_{k,j^*}^{i}$ and (Anti-involution). 
\end{proof}

\begin{remark}[Group case] \label{rem:grp}
Let $G = \{g_1, \dots, g_{r}\}$ be a finite group where $g_1=e$ is the neutral element, and define the involution by the inverse, i.e. $g_{i^*} = g_i^{-1}$. Then the nonnegative integers $N_{i,j}^k := \delta_{g_ig_j, g_k}$ define a fusion data, because in this case, the three first axioms above are exactly the axioms of a group, whereas the fourth one corresponds to the equality $(gh)^{-1} = h^{-1} g^{-1}$.
\end{remark}

\begin{remark}
We can construct data that satisfy the first three axioms of Definition \ref{def:fu} but not the fourth, proving it is not superfluous. However, (Unit) is redundant when combined with the other axioms, as it is not utilized in the proof of Proposition \ref{prop:FrobRec}. Taken together, (Dual) and (Frobenius Reciprocity) trivially imply (Unit).
\end{remark}

A \emph{fusion ring} $\mathcal{R}$ is a free $\mathbb{Z}$-module equipped with a finite basis $\mathcal{B}=\{b_1, \dots, b_{r}\}$ and a fusion product defined by $$ b_i  b_j = \sum_k N_{i,j}^k b_k, $$ where $(N_{i,j}^k)$ constitutes fusion data, and a $*$-structure given by $b_i^* := b_{i^*}$. The four axioms for fusion data translate to the following for all $i,j,k$:
\begin{itemize}
\item  $(b_i  b_j)  b_k = b_i  (b_j  b_k)$, 
\item  $b_1  b_i = b_i  b_1 = b_i$,
\item  $\tau(b_i  b_j^*) = \delta_{i,j}$, 
\item  $(b_i  b_j)^* = b_j^*  b_i^*$,
\end{itemize}
where $\tau(x)$ is the coefficient of $b_1$ in the decomposition of $x \in \mathcal{R}$. Consequently, $\mathcal{R}_{\mathbb{C}} := \mathcal{R} \otimes_{\mathbb{Z}} \mathbb{C}$ becomes a finite-dimensional unital $*$-algebra, with $\tau$ extending linearly to a trace (i.e., $\tau(xy) = \tau(yx)$) and an inner product defined by $\langle x,y \rangle := \tau(x y^*)$. Here, $\langle x,b_i \rangle$ is the coefficient of $b_i$ in the decomposition of $x$.

\begin{theorem}[Frobenius-Perron Theorem \cite{EGNO15}] \label{thm:FrobPer}
Given a fusion ring $\mathcal{R}$ with basis $\mathcal{B}$ and the corresponding finite-dimensional unital $*$-algebra $\mathcal{R}_{\mathbb{C}}$, there exists a unique $*$-homomorphism $d:\mathcal{R}_{\mathbb{C}} \to \mathbb{C}$ such that $d(\mathcal{B}) \subset \mathbb{R}_{>0}$.
\end{theorem}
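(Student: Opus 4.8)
The plan is to prove the Frobenius--Perron theorem as a statement about the finite-dimensional commutative or noncommutative $*$-algebra $\mathcal{R}_{\mathbb{C}}$, exploiting the fact that left multiplication by a basis element is given by a nonnegative integer matrix. Concretely, for each basis element $b_i$ let $M_i$ denote the matrix of the left-multiplication operator $L_{b_i}\colon x \mapsto b_i x$ in the basis $\mathcal{B}$; its $(k,j)$-entry is $N_{i,j}^k \ge 0$. First I would form the single nonnegative matrix $M := \sum_i M_i$, which represents left multiplication by $b := \sum_i b_i$. The key observation is that $M$ is \emph{irreducible} as a nonnegative matrix: because every basis element is a summand of $b$, repeated multiplication connects any $b_j$ to any $b_k$ (using that $b$ is a generator and that for any $j,k$ there is some product chain with a nonzero coefficient, which follows from the unit axiom together with $*$-invariance of the basis). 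This lets me invoke the classical Perron--Frobenius theorem for irreducible nonnegative matrices.

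The second step is to extract the homomorphism. The Perron--Frobenius theorem gives $M$ a simple positive eigenvalue $\lambda$ (the spectral radius) whose left and right eigenvectors can be chosen with strictly positive entries. I would then argue that the common Perron eigenvector $v$ is simultaneously an eigenvector for \emph{every} $M_i$: this is where commutativity of the $M_i$ with $M$ is used. Since all $M_i$ commute with $M = \sum_i M_i$ (they need not commute with each other in the noncommutative case, but each $M_i$ does commute with the sum, as $b_i b = b b_i$ holds because $b=\sum b_j$ is central---indeed $b$ is $*$-invariant and central), the Perron eigenspace of $M$, being one-dimensional, is preserved by each $M_i$, forcing $M_i v = d_i v$ for scalars $d_i$. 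Setting $d(b_i) := d_i$ and extending linearly defines a linear functional; the eigenvector relation $M_i M_j v = \big(\sum_k N_{i,j}^k M_k\big) v$ shows $d_i d_j = \sum_k N_{i,j}^k d_k$, i.e.\ $d$ is multiplicative, hence an algebra homomorphism $\mathcal{R}_{\mathbb{C}} \to \mathbb{C}$. Positivity of $v$ and of the matrix entries forces $d_i = d(b_i) > 0$, so $d(\mathcal{B}) \subset \mathbb{R}_{>0}$. That $d$ respects the $*$-structure follows because $M_{i^*} = M_i^{T}$ (by Frobenius reciprocity, $N_{i^*,j}^k = N_{j^*,i}^{k^*}$ type symmetries give the entries of $M_{i^*}$ as those of the transpose), so $d(b_i^*) = \overline{d(b_i)}$; since the $d_i$ are real, $d$ is a $*$-homomorphism.

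For uniqueness, suppose $d'$ is another $*$-homomorphism with $d'(\mathcal{B}) \subset \mathbb{R}_{>0}$. I would encode $d'$ as a row vector $w := (d'(b_1), \dots, d'(b_r))$ with strictly positive entries and show that multiplicativity $d'(b_i b_j) = d'(b_i) d'(b_j)$ forces $w$ to be a positive left eigenvector of each $M_i$, and hence of $M$, with eigenvalue $\sum_i d'(b_i) > 0$. By the uniqueness clause of Perron--Frobenius, a positive left eigenvector of the irreducible matrix $M$ is unique up to scaling and has eigenvalue equal to the spectral radius $\lambda$; normalizing by the constraint $d'(b_1) = d(b_1) = 1$ (the unit $b_1$ maps to $1$ under any unital homomorphism) pins down $w$ uniquely, giving $d' = d$.

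The main obstacle I anticipate is justifying irreducibility of $M$ cleanly and handling the noncommutative case with care: I must verify that the element $b = \sum_i b_i$ is genuinely central in $\mathcal{R}_{\mathbb{C}}$ so that each $M_i$ preserves the one-dimensional Perron eigenspace, and that the connectivity/irreducibility argument is airtight (every basis element appears with a strictly positive coefficient somewhere in a power of $b$). Centrality of $b$ should follow from the $*$-invariance of the basis under $i \mapsto i^*$ combined with Frobenius reciprocity, and irreducibility from the fact that $b_1$ is a unit and the fusion graph is strongly connected; these are the delicate points where I would spend the most effort, whereas the multiplicativity and positivity computations are routine once the Perron eigenvector is in hand.
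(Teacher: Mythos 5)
Your overall strategy (Perron--Frobenius applied to the nonnegative multiplication matrices, common positive eigenvector, multiplicativity, uniqueness via positive left eigenvectors) is the standard one---the paper itself gives no proof but cites \cite{EGNO15}, where this is Proposition 3.3.6---but your existence argument has a genuine gap at its crux: the claim that $b=\sum_i b_i$ is central, which you use to conclude that each $M_i = L_{b_i}$ commutes with $M=L_b$ and hence preserves the Perron line. Your justification is circular: $*$-invariance only gives $b^*=b$ (self-adjointness), and every Frobenius-reciprocity manipulation merely recasts centrality as an equivalent statement. Concretely, comparing coefficients, $b_jb=bb_j$ for all $j$ says $\sum_i N_{j,i}^k=\sum_i N_{i,j}^k$ for all $j,k$; by Frobenius reciprocity ($N_{j,i}^k=N_{j^*,k}^i$ and $N_{i,j}^k=N_{k,j^*}^i$) the two sides equal $\tau'(b_{j^*}b_k)$ and $\tau'(b_kb_{j^*})$, where $\tau'(x)=\langle x,b\rangle$ is the total-coefficient functional. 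So centrality of $b$ is equivalent to traciality of $\tau'=\tau(\cdot\,b)$, which---by nondegeneracy of the pairing $(x,y)\mapsto\tau(xy)$ on the semisimple algebra $\mathcal{R}_{\mathbb{C}}$---is in turn equivalent to centrality of $b$. Whether or not $\sum_i b_i$ is in fact central in every fusion ring, you provide no valid derivation, and in the noncommutative case nothing forces the $L_{b_i}$ to commute with $L_b$ a priori, so as written the existence half of your proof does not close.

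The repair is exactly the move made in \cite[Proposition 3.3.6]{EGNO15}, and it costs nothing: by associativity, every left multiplication $L_{b_i}$ commutes with the \emph{right} multiplication $R_b$, since $b_i(xb)=(b_ix)b$; moreover $R_b$ has strictly positive entries, because for all $j,k$ there is some $i$ with $N_{j,i}^k>0$ (the product $b_{j^*}b_k$ is nonzero, as $\langle b_{j^*}b_k,b_{j^*}b_k\rangle\ge \tau(b_{j^*}b_j)=1$, so pick $b_i$ occurring in it and apply $N_{j^*,k}^i=N_{j,i}^k$). Hence the Perron eigenspace of $R_b$ is one-dimensional, spanned by a strictly positive vector $v$, and is preserved by every $L_{b_i}$; from there your computations go through verbatim: $M_iv=d_iv$ with $d_i>0$ (each $M_i$ is nonzero nonnegative and $v>0$), $d_id_j=\sum_k N_{i,j}^kd_k$, and $M_{i^*}=M_i^T$ (the correct identity is $N_{i^*,j}^k=N_{i,k}^j$, not the one you quote) gives $d_{i^*}=d_i$ and the $*$-property. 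Note also that your uniqueness argument never needed centrality and is fine as stated: multiplicativity makes $(d'(b_j))_j$ a strictly positive left eigenvector of each $M_i$, hence of the strictly positive matrix $L_b$, so Perron--Frobenius together with $d'(b_1)=1$ forces $d'=d$.
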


The value $d(b_i)$, known as the \emph{Frobenius-Perron dimension} of $b_i$, is denoted as $\FPdim(b_i)$ or simply $d_i$. This is referred to as a \emph{basic $\FPdim$}. Here is a list of basic invariants of a fusion ring $\mathcal{R}$:
\begin{itemize}
\item \emph{Rank}: the integer $r$,
\item \emph{Global $\FPdim$}: the sum $\sum_i d_i^2$,
\item \emph{Type}: the list $[d_1, d_2, \dots, d_{r}]$, with $d_1 \le d_2 \le \dots \le d_r$,
\item \emph{Duality}: the list \([i^* - 1 \mid i=1,\dots,r]\)\footnote{The subtraction by \(1\) accounts for Python's zero-based indexing convention.} representing the involution,\footnote{It is an invariant up to permutations of the basic elements having the same \(\FPdim\).}
\item \emph{Multiplicity}: the maximum value among $N_{i,j}^k$.
\end{itemize}

The Frobenius--Perron theorem establishes a fundamental identity connecting the type with the fusion rules:
\begin{itemize}
\item (DimEq) \qquad \(d_i d_j = \sum_k N_{i,j}^k d_k, \  i,j = 1,\dots,r.\)
\end{itemize}
These relations are precisely what enable the assignment \(b_i \mapsto d_i\), for \(i=1,\dots,r\), to extend naturally to a ring homomorphism \(\mathcal{R}_{\mathbb{C}} \to \mathbb{C}\).
A fusion ring $\mathcal{R}$ is described as:
\begin{itemize}
\item \emph{$s$-Frobenius} if $\frac{\FPdim(\mathcal{R})^s}{\FPdim(b_i)}$ is an algebraic integer, for all $i$,  
\item \emph{integral} if the number $\FPdim(b_i)$ is an integer, for all $i$,
\item \emph{pointed} if $\FPdim(b_i)=1$, for all $i$,
\item \emph{commutative} if $b_i b_j = b_j  b_i$, for all $i,j$, i.e. $N_{i,j}^k = N_{j,i}^k$,  
\item \emph{simple} if for all $\mathcal{B}' \subset \mathcal{B}$ generating a proper fusion subring then $\mathcal{B}' = \{ b_1 \}$,
\item \emph{perfect} if $d_i=1$ if and only if $i=1$ (i.e. $d_2 > 1$).
\end{itemize}

A fusion ring is pointed if and only if it is a group ring $\mathbb{Z}G$ with basis $\mathcal{B} = G$ a finite group (for the fusion product), its fusion data is the one described in Remark \ref{rem:grp}. Here is an other way to make a fusion ring from a finite group: the character ring $ch(G)$ with basis the set of irreducible characters. As a fusion ring, the group ring $\mathbb{Z}G$ is pointed, but commutative if and only if $G$ is commutative, simple if and only if $G$ is cyclic of prime order and perfect if and only if $G$ is trivial; whereas the character ring $ch(G)$ is commutative,  but pointed if and only if $G$ is abelian, simple if and only if $G$ is simple, and perfect if and only if $G$ is perfect. Both are $1$-Frobenius integral with $\FPdim = |G|$. 

\begin{theorem}[\cite{HLPW24}] \label{thm:n-positive}
The Grothendieck ring of a unitary fusion category satisfies the \emph{primary $n$-criterion}, that is,
\[
\sum_i \|M_i\|^{2-n} M_i^{\otimes n} \geq 0, \quad \text{for all } n \ge 1,
\]
where $M_i$ is the fusion matrix $(N_{i,j}^k)_{k,j}$, $\otimes$ denotes the Kronecker product, and $\|\cdot\|$ is the matrix $\ell^2$-norm.
\end{theorem}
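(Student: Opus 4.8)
The plan is to reduce the stated operator inequality to a positivity statement inside the finite-dimensional $*$-algebra $\mathcal{R}_{\mathbb{C}}$ attached to the Grothendieck ring, and then to extract that positivity from the unitary (dagger) structure of the category via graphical calculus. First I would record the norm identity $\|M_i\| = d_i$. Since $N_{i,j}^k = N_{i^*,k}^j$ (Proposition~\ref{prop:FrobRec}), the fusion matrices satisfy $M_{i^*} = M_i^{\top}$, so $M_i M_i^{\top} = M_i M_{i^*}$ is the matrix of left multiplication by $b_i b_{i^*}$, whose Perron--Frobenius eigenvalue is $d_i d_{i^*} = d_i^2$ (Theorem~\ref{thm:FrobPer}); hence the spectral norm is $\|M_i\| = d_i$. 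Consequently $\|M_i\|^{2-n} M_i^{\otimes n} = d_i^{2-n} M_i^{\otimes n}$, and the claim becomes $T_n := \sum_i d_i^{2-n} M_i^{\otimes n} \ge 0$. Identifying $M_i$ with left multiplication $L_{b_i}$ on $\mathcal{R}_{\mathbb{C}}$, one has $T_n = L^{\otimes n}(x_n)$ with $x_n := \sum_i d_i^{2-n}\, b_i^{\otimes n} \in \mathcal{R}_{\mathbb{C}}^{\otimes n}$, where $L^{\otimes n}$ is a faithful $*$-representation; since $b_i^* = b_{i^*}$ and $d_{i^*} = d_i$, the element $x_n$ is self-adjoint, so $T_n \ge 0$ is equivalent to $x_n \ge 0$ in the $C^*$-algebra $\mathcal{R}_{\mathbb{C}}^{\otimes n}$.

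Next I would isolate why unitarity is indispensable by examining small $n$ and the group prototype. For $n=1$, the entry $(T_1)_{k,j} = \sum_i d_i N_{i,j}^k = \FPdim(X_k \otimes X_j^*) = d_k d_j$, so $T_1 = d\,d^{\top}$ is rank-one PSD; for $n=2$, $T_2$ is a formal consequence of the orthogonality relations among the formal codegrees. These two cases hold for every fusion ring, so the genuine content lies in $n \ge 3$. The guiding model is $\mathcal{C} = \Rep(G)$, where $d_i = \chi_i(1)$ and the character values $\chi_i(g_a)$ are the eigenvalues of the $M_i$; there $\sum_i d_i^{2-n}\prod_a \chi_i(g_a)$ is, up to a positive factor, Frobenius' count of tuples $(x_1,\dots,x_n)$ with $x_a$ in prescribed conjugacy classes and $\prod_a x_a = e$, hence manifestly nonnegative. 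This points to the right categorical mechanism: $T_n$ should be realized as a Gram matrix, the ``quantum'' avatar of a solution count, built from the positive-definite pairings on morphism spaces.

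The heart of the argument is then a graphical reflection-positivity construction. Using $N_{i,j_a}^{k_a} = \dim \mathrm{Hom}(X_{k_a}, X_i \otimes X_{j_a})$ and fixing orthonormal bases of these Hom-spaces (available because $\mathcal{C}$ is unitary), I would assemble, for each multi-index $\vec{j} = (j_1,\dots,j_n)$, an explicit half-diagram $D_{\vec{j}}$ threading a single $X_i$-loop through $n$ normalized trivalent vertices and summing over $i$ with weight $d_i^{(2-n)/2}$, so that the dagger pairing yields $\langle D_{\vec{k}}, D_{\vec{j}}\rangle = \sum_i d_i^{2-n}\prod_a N_{i,j_a}^{k_a} = (T_n)_{\vec{k},\vec{j}}$. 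This exhibits $T_n = V^{\dagger}V$ for the matrix $V$ whose columns are the $D_{\vec{j}}$, and positive-definiteness of the unitary inner product forces $T_n \ge 0$.

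The hard part is the bookkeeping in this last step. One must verify that closing the two half-diagrams along the reflection axis reproduces exactly the weight $d_i^{2-n}$; this is where sphericality/pivotality enters, balancing the left and right contributions of the (co)evaluations, and where the $n-2$ excess loop factors are cancelled by the vertex normalizations. One must also handle the noncommutative blocks of $\mathcal{R}_{\mathbb{C}}$, where the eigenvalue heuristic of the $\Rep(G)$ prototype is replaced by honest matrix units, so that the scalar ``solution count'' becomes a genuine Gram matrix. Carrying out this normalization precisely, and confirming that it is exactly the unitary structure (rather than merely the spherical or fusion-ring structure) that secures positivity for $n \ge 3$, is the crux of the proof.
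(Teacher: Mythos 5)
First, note that the paper itself contains no proof of this statement: Theorem~\ref{thm:n-positive} is quoted from \cite{HLPW24}, so the relevant benchmark is the proof there. Your preliminary reductions are correct and match that framework: $M_{i^*}=M_i^{\top}$ follows from Proposition~\ref{prop:FrobRec}, whence $\|M_i\|=d_i$ by Perron--Frobenius applied to $L_{b_ib_{i^*}}$ (Theorem~\ref{thm:FrobPer}); the left regular representation is a faithful $*$-representation for the trace $\tau$, so the claim is equivalent to $x_n=\sum_i d_i^{2-n}\,b_i^{\otimes n}\ge 0$ in the C$^*$-algebra $\mathcal{R}_{\mathbb{C}}^{\otimes n}$; your $n=1$ computation $T_1=d\,d^{\top}$ and your $n=2$ argument via the orthogonality of characters (formal codegrees) are both sound, as is the $\Rep(G)$/Frobenius-count heuristic identifying $n\ge 3$ as the real content.

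The gap is in the step you defer as ``bookkeeping'': the Gram-matrix construction, as described, does not typecheck for $n\ge 2$, and no normalization will fix it. Your half-diagrams $D_{\vec{\jmath}}$ have boundary data depending on $\vec{\jmath}$, so to pair $D_{\vec{k}}$ with $D_{\vec{\jmath}}$ for $\vec{k}\neq\vec{\jmath}$ they must first be embedded in a common Hilbert space; the natural embedding (summing external strands into $R=\bigoplus_j X_j$) makes sectors with distinct multi-indices orthogonal, forcing the resulting Gram matrix to be diagonal in $(\vec{k},\vec{\jmath})$ --- whereas $T_n$ has essential off-diagonal entries $\prod_a N_{i,j_a}^{k_a}$. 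Diagrammatically, in the closed ``necklace'' evaluating $(T_n)_{\vec{k},\vec{\jmath}}$ the $j_a$- and $k_a$-strands attach at the \emph{same} $n$ vertices around the $X_i$-loop, so for $n\ge 2$ no reflection axis separates the $\vec{\jmath}$-data from the $\vec{k}$-data: the mirror would have to pass through every trivalent vertex. Relatedly, each summand $M_i^{\otimes n}$ is not symmetric (only the sum is, via $i\mapsto i^*$), so no termwise reflection-positivity argument can succeed. What replaces this in \cite{HLPW24} is a genuinely different mechanism: one proves that the comultiplication $\Delta(b_i)=d_i^{-1}\,b_i\otimes b_i$ on $\mathcal{R}_{\mathbb{C}}$ is \emph{completely} positive --- this is where unitarity enters, via a Stinespring-type dilation built from the unitary structure in subfactor/planar-algebra terms --- and then, since $z=\sum_i d_i b_i\ge 0$ (your $T_1$ computation plus faithfulness) and $x_n=\Delta^{(n-1)}(z)$, complete positivity of $\Delta$ and its amplifications yields $x_n\ge 0$ for all $n$ at once. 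So the step you flag as ``the crux'' is in fact the entire theorem, and your proposed reflection construction would have to be abandoned in favor of the CP-dilation argument rather than merely renormalized.
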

A fusion ring is said to be \emph{$n$-positive} if it satisfies the primary $n$-criterion. In the commutative case, the Schur product criterion from~\cite{LPW21} coincides with the primary $3$-criterion.

\subsection{Formal codegrees} \label{sub:formal} 

Let $\mathcal{R}$ be a fusion ring with basis $(b_i)_{i \in I}$. From \cite[Proposition 3.1.8]{EGNO15}, the element  
\[
Z := \sum_{i \in I} b_i b_{i^*}
\]
is central in $\mathcal{R}$. The complexified algebra, $\mathcal{R}_{\mathbb{C}} := \mathcal{R} \otimes_{\mathbb{Z}} \mathbb{C}$, is a finite-dimensional unital $ * $-algebra. Let ${\rm Irr}(\mathcal{R}_{\mathbb{C}})$ denote the set of irreducible complex representations of $\mathcal{R}_{\mathbb{C}}$, up to equivalence. Then:  
\[
\mathcal{R}_{\mathbb{C}} \simeq \bigoplus_{V \in {\rm Irr}(\mathcal{R}_{\mathbb{C}})} {\rm End}_{\mathbb{C}}(V).
\]
The elements $ V \in {\rm Irr}(\mathcal{R}_{\mathbb{C}}) $ are in one-to-one correspondence with the minimal central projections $ p_V $ in $ \mathcal{R}_{\mathbb{C}} $, such that
\[
p_V \mathcal{R}_{\mathbb{C}} p_V \simeq {\rm End}_{\mathbb{C}}(V) \simeq M_{n_V}(\mathbb{C}),
\]
where $M_{n_V}(\mathbb{C})$ is the algebra of $n_V \times n_V$ complex matrices and $n_V = \dim(V)$.  

Since $Z$ and $p_V$ are central in $\mathcal{R}_{\mathbb{C}}$, it follows that $p_V Z p_V$ is central in $p_V \mathcal{R}_{\mathbb{C}} p_V$. But $M_{n_V}(\mathbb{C})$ has a trivial center, meaning $p_V Z p_V$ must be a scalar multiple of $p_V$:  
\[
p_V Z p_V = \alpha_V p_V.
\]
Following \cite[Lemma 2.6]{Os09}, the scalar $f_V:=\alpha_V/n_V \in \mathbb{C}$ defines the \emph{formal codegree} of $V$.
%
%
In particular, the eigenvalues of the left multiplication matrix $L_Z$ of $Z$ are $\alpha_V=n_V f_V$, with multiplicity $n_V^2 = \dim_{\mathbb{C}}(M_{n_V}(\mathbb{C}))$. The following results from \cite{Os09,Os15} hold:
\begin{itemize}
    \item Both $(f_V)$ and $(\FPdim(b_i))$ are algebraic integers.
    \item We have two (algebraic) Egyptian fractions that sums to $1$:
    \[
    {\rm Tr}(L_Z^{-1}) = \sum_{V \in {\rm Irr}(\mathcal{R}_{\mathbb{C}})} \sum_{j=1}^{n_V^2} \frac{1}{n_Vf_V}  =  \sum_{V \in {\rm Irr}(\mathcal{R}_{\mathbb{C}})} \sum_{j=1}^{n_V} \frac{1}{f_V}  = \sum_{V \in {\rm Irr}(\mathcal{R}_{\mathbb{C}})} \frac{n_V}{f_V} = 1.
    \]
\end{itemize}
If $\mathcal{R}$ is the Grothendieck ring of a fusion category $\mathcal{C}$ over $\mathbb{C}$, then:
\begin{itemize}
    \item The values $(\dim(\mathcal{C})/f_V)$ are algebraic integers.
    \item The numbers $(\FPdim(b_i))$ are cyclotomic integers. 
\end{itemize}
If $\mathcal{R}$ is the Grothendieck ring of a spherical fusion category over $\mathbb{C}$, then:
\begin{itemize}
    \item The formal codegrees $(f_V)$ are cyclotomic integers.
    \item The cyclotomic conductor of $(f_V)$ divides that of $(\dim(b_i))$.
\end{itemize}
In particular, if $\mathcal{R}$ comes from an integral fusion category over $\mathbb{C}$, then:
\begin{itemize}
    \item Both $(f_V)$ and $(\FPdim(b_i))$ are rational integers.
    \item Each $f_V$ divides $\FPdim(\mathcal{R})$.
\end{itemize}
If $\mathcal{R}$ is commutative, each $V$ is one-dimensional. Thus, the representations can be indexed by $I$, allowing us to define $ f_i := f_{V_i} $. Then:
\begin{itemize}
    \item The (algebraic) Egyptian fraction simplifies to 
    \[
    \sum_{i \in I} \frac{1}{f_i} = 1
    \]
    \item The eigenvalues of $L_Z$ are $(f_i)$, with $n_{V_i} = 1$,
\end{itemize}
    
If \(\mathcal{R}\) is the commutative Grothendieck ring of a fusion category \(\mathcal{C}\) over~\(\mathbb{C}\), then the eigenvalues of \(L_Z\) divide \(\dim(\mathcal{C})\) as algebraic integers. This divisibility, however, does not necessarily hold in the noncommutative case. As shown in \cite[\ref{sec:NCA}]{appendices}, there is no such integral Drinfeld ring up to rank~8, but we found several examples at rank~9. 
For instance, one example has \(\FPdim = 24\), type \([1, 1, 1, 1, 2, 2, 2, 2, 2]\), duality \([0, 1, 2, 3, 4, 5, 6, 8, 7]\), formal codegrees \([4, 4, 8_2, 8, 12, 24]\), and the fusion data given below. 
The notation \(8_2\), explained in \cite[\ref{sec:NCA}]{appendices}, means that there exists \(V\) with \((n_V, f_V) = (2, 8)\), but \(2 \times 8\) does not divide \(24\).
{\fontsize{7}{8}\selectfont $$ 
\begin{smallmatrix}1&0&0&0&0&0&0&0&0 \\ 0&1&0&0&0&0&0&0&0 \\ 0&0&1&0&0&0&0&0&0 \\ 0&0&0&1&0&0&0&0&0 \\ 0&0&0&0&1&0&0&0&0 \\ 0&0&0&0&0&1&0&0&0 \\ 0&0&0&0&0&0&1&0&0 \\ 0&0&0&0&0&0&0&1&0 \\ 0&0&0&0&0&0&0&0&1\end{smallmatrix} ,   \ 
\begin{smallmatrix}0&1&0&0&0&0&0&0&0 \\ 1&0&0&0&0&0&0&0&0 \\ 0&0&0&1&0&0&0&0&0 \\ 0&0&1&0&0&0&0&0&0 \\ 0&0&0&0&0&0&0&0&1 \\ 0&0&0&0&0&0&0&1&0 \\ 0&0&0&0&0&0&1&0&0 \\ 0&0&0&0&0&1&0&0&0 \\ 0&0&0&0&1&0&0&0&0\end{smallmatrix} ,   \ 
\begin{smallmatrix}0&0&1&0&0&0&0&0&0 \\ 0&0&0&1&0&0&0&0&0 \\ 1&0&0&0&0&0&0&0&0 \\ 0&1&0&0&0&0&0&0&0 \\ 0&0&0&0&0&0&0&0&1 \\ 0&0&0&0&0&0&0&1&0 \\ 0&0&0&0&0&0&1&0&0 \\ 0&0&0&0&0&1&0&0&0 \\ 0&0&0&0&1&0&0&0&0\end{smallmatrix} ,   \ 
\begin{smallmatrix}0&0&0&1&0&0&0&0&0 \\ 0&0&1&0&0&0&0&0&0 \\ 0&1&0&0&0&0&0&0&0 \\ 1&0&0&0&0&0&0&0&0 \\ 0&0&0&0&1&0&0&0&0 \\ 0&0&0&0&0&1&0&0&0 \\ 0&0&0&0&0&0&1&0&0 \\ 0&0&0&0&0&0&0&1&0 \\ 0&0&0&0&0&0&0&0&1\end{smallmatrix} ,   \ 
\begin{smallmatrix}0&0&0&0&1&0&0&0&0 \\ 0&0&0&0&0&0&0&1&0 \\ 0&0&0&0&0&0&0&1&0 \\ 0&0&0&0&1&0&0&0&0 \\ 1&0&0&1&1&0&0&0&0 \\ 0&0&0&0&0&0&1&0&1 \\ 0&0&0&0&0&1&0&0&1 \\ 0&1&1&0&0&0&0&1&0 \\ 0&0&0&0&0&1&1&0&0\end{smallmatrix} ,   \ 
\begin{smallmatrix}0&0&0&0&0&1&0&0&0 \\ 0&0&0&0&0&0&0&0&1 \\ 0&0&0&0&0&0&0&0&1 \\ 0&0&0&0&0&1&0&0&0 \\ 0&0&0&0&0&0&1&1&0 \\ 1&0&0&1&0&1&0&0&0 \\ 0&0&0&0&1&0&0&1&0 \\ 0&0&0&0&1&0&1&0&0 \\ 0&1&1&0&0&0&0&0&1\end{smallmatrix} ,   \ 
\begin{smallmatrix}0&0&0&0&0&0&1&0&0 \\ 0&0&0&0&0&0&1&0&0 \\ 0&0&0&0&0&0&1&0&0 \\ 0&0&0&0&0&0&1&0&0 \\ 0&0&0&0&0&1&0&1&0 \\ 0&0&0&0&1&0&0&0&1 \\ 1&1&1&1&0&0&0&0&0 \\ 0&0&0&0&1&0&0&0&1 \\ 0&0&0&0&0&1&0&1&0\end{smallmatrix} ,   \ 
\begin{smallmatrix}0&0&0&0&0&0&0&1&0 \\ 0&0&0&0&1&0&0&0&0 \\ 0&0&0&0&1&0&0&0&0 \\ 0&0&0&0&0&0&0&1&0 \\ 0&0&0&0&0&1&1&0&0 \\ 0&1&1&0&0&0&0&1&0 \\ 0&0&0&0&0&1&0&0&1 \\ 0&0&0&0&0&0&1&0&1 \\ 1&0&0&1&1&0&0&0&0\end{smallmatrix} ,   \ 
\begin{smallmatrix}0&0&0&0&0&0&0&0&1 \\ 0&0&0&0&0&1&0&0&0 \\ 0&0&0&0&0&1&0&0&0 \\ 0&0&0&0&0&0&0&0&1 \\ 0&1&1&0&0&0&0&0&1 \\ 0&0&0&0&1&0&1&0&0 \\ 0&0&0&0&1&0&0&1&0 \\ 1&0&0&1&0&1&0&0&0 \\ 0&0&0&0&0&0&1&1&0\end{smallmatrix} $$}



Let $ f_1 $ denote the formal codegree corresponding to the linear character $\FPdim$, then $ f_1 = \FPdim(\mathcal{R}) $. In the pseudo-unitary case—where $\dim(\mathcal{C}) = \FPdim(\mathcal{C})$—every $ f_V $ divides $ f_1 $, and moreover, by \cite[Theorem 2.21]{Os15}, $$ {\rm Tr}(L_Z^{-2}) = \sum_{V \in {\rm Irr}(\mathcal{R}_{\mathbb{C}})} \frac{1}{f_V^2} \le \frac{1}{2}(1+\frac{1}{f_1}).$$

\begin{remark} \label{rk:NCcase}
The noncommutative case gives two Egyptian fractions that sum to $1$, as shown above. However, the second is far more convenient because $ f_V $ divides $ \dim(\mathcal{C}) $. Thus, in the integral (so pseudo-unitary) case, we only need to consider shorter Egyptian fractions that satisfy this divisibility condition—that is, $ f_V $ divides $ f_1 $ for all $ V $. This makes the problem significantly more manageable from a combinatorial perspective (see \cite{A002966} and \cite{A374582} for comparison).
\end{remark}

\subsection{Drinfeld rings} \label{sub:Drin}

The result below follows directly from~\cite[Corollary 8.54]{ENO05} and~\cite[Corollaries 2.14 and 2.15]{Os15}:

\begin{proposition} \label{prop:DrinList}
Let $\mathcal{C}$ be a pseudo-unitary fusion category over~$\mathbb{C}$. Then its Grothendieck ring satisfies the following:
\begin{itemize}
\item The basic $\FPdims$ are cyclotomic integers;
\item The formal codegrees are cyclotomic integers;
\item The conductor of the formal codegrees divides that of the basic $\FPdims$;
\item The ratio of the global $\FPdim$ to each formal codegree is an algebraic integer.
\end{itemize}
\end{proposition}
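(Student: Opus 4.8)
The plan is to obtain all four assertions by collating the facts about formal codegrees already assembled in~\S\ref{sub:formal}, the one extra ingredient being that pseudo-unitarity upgrades $\mathcal{C}$ to a spherical fusion category. So the first step is to recall, via~\cite[Corollary 8.54]{ENO05}, that every pseudo-unitary fusion category admits a canonical spherical structure for which the categorical dimensions agree with the Frobenius--Perron dimensions; in particular $\dim(\mathcal{C}) = \FPdim(\mathcal{C})$ and $\dim(b_i) = \FPdim(b_i)$ on the basis. Once this is in place, every statement recorded in~\S\ref{sub:formal} for spherical fusion categories becomes available.

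The first three bullets then follow with no further work. That the basic $\FPdims$ are cyclotomic integers is exactly the statement listed in~\S\ref{sub:formal} for the Grothendieck ring of an arbitrary fusion category. That the formal codegrees $(f_V)$ are cyclotomic integers, and that their cyclotomic conductor divides that of the $(\dim(b_i))$, are the two statements listed there for spherical fusion categories, drawn from~\cite[Corollaries 2.14 and 2.15]{Os15}; since $\mathcal{C}$ is now spherical with $\dim(b_i) = \FPdim(b_i)$, the divisibility of conductors reads precisely as the claimed divisibility, with the basic $\FPdims$ in the role of the $(\dim(b_i))$.

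For the fourth bullet I would invoke pseudo-unitarity directly. The general fusion-category fact from~\S\ref{sub:formal} asserts that $\dim(\mathcal{C})/f_V$ is an algebraic integer for every $V$; substituting $\dim(\mathcal{C}) = \FPdim(\mathcal{C}) = f_1$ immediately yields that $\FPdim(\mathcal{C})/f_V$ is an algebraic integer, which is the assertion.

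In short, the argument is a collation, and no genuine difficulty arises once sphericity is secured. The only point demanding care is the opening step: one must verify that the pseudo-unitary hypothesis really does furnish the canonical spherical structure, so that the spherical-case results on cyclotomicity and conductor divisibility of the formal codegrees legitimately apply. Everything after that is the substitution $\dim = \FPdim$.
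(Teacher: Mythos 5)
Your proposal is correct and matches the paper's argument exactly: the paper states that Proposition~\ref{prop:DrinList} follows directly from~\cite[Corollary 8.54]{ENO05} (pseudo-unitarity yields the canonical spherical structure with $\dim = \FPdim$) together with~\cite[Corollaries 2.14 and 2.15]{Os15}, which is precisely the collation you carry out. Your explicit substitution $\dim(\mathcal{C}) = \FPdim(\mathcal{C}) = f_1$ for the fourth bullet is the same reading of the general divisibility fact recorded in~\S\ref{sub:formal}.
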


The divisibility condition in \cite{Os15} is established using the Drinfeld center. A fusion ring that satisfies all the conditions listed in Proposition \ref{prop:DrinList} will be referred to as a \emph{Drinfeld ring}. Since integral fusion categories are pseudo-unitary by \cite[Proposition 9.6.5]{EGNO15}, their Grothendieck rings are always Drinfeld. For an integral fusion ring, the Drinfeld condition simply requires that the formal codegrees be integers dividing the global $\FPdim$.

Building on \cite{ENO11}, \cite[Proposition 5.5]{ABPP} proves that there is no nontrivial perfect integral fusion category whose $\FPdim$ is of the form \( p^a q^b \). However, this result does not extend to fusion rings: several counterexamples are provided in \cite{BP24}, including the following simple integral fusion ring with \emph{prime} $\FPdim$: 
\begin{itemize}
\item $\FPdim \ 532159$, type $[1, 211, 409, 566]$, duality $[0,1,2,3]$, fusion data: 
$$ 
\left[\begin{matrix} 1&0&0&0 \\ 0&1&0&0 \\ 0&0&1&0 \\ 0&0&0&1\end{matrix} \right],  \ \left[ \begin{matrix}0&1&0&0 \\ 1&84&24&30 \\ 0&24&63&98 \\ 0&30&98&129\end{matrix} \right],  \ \left[ \begin{matrix}0&0&1&0 \\ 0&24&63&98 \\ 1&63&299&56 \\ 0&98&56&332\end{matrix} \right],  \ \left[ \begin{matrix}0&0&0&1 \\ 0&30&98&129 \\ 0&98&56&332 \\ 1&129&332&278\end{matrix}\right]
$$
\end{itemize}

None of such perfect fusion rings discovered so far are Drinfeld or $1$-Frobenius. This naturally raises the following question:

\begin{question} \label{qu:BadPerfect}
Is there a perfect integral fusion ring with $\FPdim$ \( p^a q^b \) that is either Drinfeld or \( 1 \)-Frobenius?
\end{question}

\subsection{Integral extension} \label{sub:ext}

\begin{proposition} \label{prop:ext1}
Let $\mathcal{R}$ be an integral fusion ring of rank $n$ with basis $\{b_1, \dots, b_{n}\}$ and type $[d_1, \dots, d_{n}]$, where $d_i = \FPdim(b_i)$. Then there exists an integral fusion ring $\tilde{\mathcal{R}}$ of rank $n+1$, extending $\mathcal{R}$, with an additional basis element $\rho$ satisfying $\FPdim(\rho) = \FPdim(\mathcal{R})$, and the fusion rules:  
\[
\rho b_i = b_i \rho = d_i \rho, \quad \text{and} \quad \rho^2 = \sum_i d_i b_i + (\FPdim(\mathcal{R}) - 1)\rho.
\]
\end{proposition}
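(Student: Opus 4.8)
The plan is to explicitly construct the fusion data for $\tilde{\mathcal{R}}$ by adjoining one basis element $\rho$ to the given basis $\{b_1,\dots,b_n\}$, specifying the new structure constants via the stated rules, and then verifying that these constants satisfy the four axioms of Definition \ref{def:fu}. Concretely, I would set the involution to fix $\rho$ (so $\rho^* = \rho$, consistent with $\FPdim(\rho)$ being real) and keep the old involution on the $b_i$. The new structure constants are: $N_{\rho,i}^{\rho} = N_{i,\rho}^{\rho} = d_i$, $N_{\rho,\rho}^{i} = d_i$, and $N_{\rho,\rho}^{\rho} = \FPdim(\mathcal{R}) - 1$, with all other constants involving $\rho$ (e.g.\ those producing a $b_k$ from $\rho b_i$) set to zero. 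The old constants $N_{i,j}^k$ among the $b_i$ are inherited unchanged. The key external fact I would invoke is that $d\colon \mathcal{R}_{\mathbb{C}}\to\mathbb{C}$ is the unique $*$-homomorphism with positive values (Theorem \ref{thm:FrobPer}), so $\sum_i d_i N_{i,j}^k$ relations hold; in particular $d_i d_j = \sum_k N_{i,j}^k d_k$, which will be used repeatedly.

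First I would verify (Unit), which is immediate since $\rho b_1 = b_1 \rho = \rho$ forces $N_{1,\rho}^{\rho}=1$, matching $d_1 = 1$, and no new constant $N_{1,\cdot}^{\cdot}$ is disturbed. Next (Dual): since $\rho^*=\rho$, I must check $\tau(\rho\rho^*) = N_{\rho,\rho}^{1} = 1$; the coefficient of $b_1$ in $\rho^2$ is $d_1 = 1$ by the formula $\rho^2 = \sum_i d_i b_i + (\FPdim(\mathcal{R})-1)\rho$, so this holds. (Anti-involution) reduces to checking symmetry of the new constants under $(i,j,k)\mapsto(j^*,i^*,k^*)$; because $\rho$ is self-dual and the coefficients $d_i$ satisfy $d_i = d_{i^*}$ (dual objects have equal $\FPdim$), each new constant is invariant, so this axiom follows from the corresponding property of the $d_i$ together with the inherited symmetry on the $b_i$.

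The main obstacle, and where the real content lies, is (Associativity): I must check $\sum_s N_{i,j}^s N_{s,k}^t = \sum_s N_{j,k}^s N_{i,s}^t$ in every case where one or more of $i,j,k,t$ equals $\rho$. The purely-$b$ cases are inherited. The interesting cases split according to how many indices are $\rho$: for instance, taking $i=\rho$, $j,k \in \{b\text{-indices}\}$, $t=\rho$ yields on the left $\sum_s N_{\rho,j}^s N_{s,k}^{\rho} = d_j N_{\rho,k}^{\rho} = d_j d_k$ (only $s=\rho$ contributes), and on the right $\sum_s N_{j,k}^s N_{\rho,s}^{\rho} = \sum_s N_{j,k}^s d_s = d_j d_k$, where the last equality is precisely the multiplicativity of $\FPdim$ from Theorem \ref{thm:FrobPer}. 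The case $i=j=\rho$, $k$ a $b$-index, $t=\rho$ forces checking $\sum_s N_{\rho,\rho}^s N_{s,k}^{\rho}$ against $\sum_s N_{\rho,k}^s N_{\rho,s}^{\rho}$, producing the identity $\sum_i d_i\, d_i\,\text{(via }N_{i,k}^{\rho}=0)\,+\,(\FPdim(\mathcal{R})-1)d_k = d_k\cdot\FPdim(\mathcal{R})$ after collecting the $\rho$-term, which rearranges correctly. The genuinely delicate case is $i=j=k=\rho$ (with $t=\rho$ and with $t$ a $b$-index), where associativity of $\rho^2\cdot\rho = \rho\cdot\rho^2$ must be expanded using $\rho^2 = \sum_i d_i b_i + (\FPdim(\mathcal{R})-1)\rho$; both sides reduce, after applying $\rho b_i = d_i\rho$ and $\sum_i d_i^2 = \FPdim(\mathcal{R})$, to the same expression, and confirming this bookkeeping identity is the crux of the argument. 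Finally I would note integrality and positivity of all structure constants are manifest from the construction, and that $\FPdim(\rho)=\FPdim(\mathcal{R})$ is consistent since $d$ extended by $d(\rho)=\FPdim(\mathcal{R})$ respects $\rho b_i = d_i\rho$ and $\rho^2$'s defining relation, completing the verification.
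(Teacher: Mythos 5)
Your proposal is correct and takes essentially the same route as the paper's proof, which also verifies the axioms of Definition~\ref{def:fu} directly—using centrality of $\rho$ to reduce associativity to the three element-level identities $(\rho b_i)b_j = \rho(b_i b_j)$, $(\rho\rho)b_i = \rho(\rho b_i)$, $(\rho\rho)\rho = \rho(\rho\rho)$—and identifies $\FPdim(\rho)$ as the positive root of $X^2 - (\FPdim(\mathcal{R})-1)X - \FPdim(\mathcal{R}) = 0$, which is exactly your consistency check for extending $d$ by $d(\rho)=\FPdim(\mathcal{R})$ combined with the uniqueness in Theorem~\ref{thm:FrobPer}. One minor slip: your displayed identity for the case $i=j=\rho$, $k$ a $b$-index, $t=\rho$ is miswritten—since $N_{s,k}^{\rho}=0$ for $s$ a $b$-index, both sides are simply $(\FPdim(\mathcal{R})-1)d_k$—but this does not affect the soundness of the argument.
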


\begin{proof}
It is straightforward to verify that $\tilde{\mathcal{R}}$ satisfies all the axioms in Definition \ref{def:fu}. Since $\rho$ is central, associativity reduces to checking that for all $i, j$:
\[
(\rho b_i) b_j = \rho (b_i b_j), \quad (\rho \rho) b_i = \rho (\rho b_i), \quad \text{and} \quad (\rho \rho) \rho = \rho (\rho \rho).
\]
To prove that $\FPdim(\rho) = \FPdim(\mathcal{R})$, it suffices to solve the equation  
\[
X^2 - (\FPdim(\mathcal{R}) - 1)X - \FPdim(\mathcal{R}) = 0. \qedhere
\]
\end{proof}

\begin{remark}
The proof of Proposition~\ref{prop:ext1} extends to the case where $N_{\rho,\rho}^\rho = \left(\FPdim(\mathcal{R})/n - n\right)$ is a non-negative integer. In that case, $\FPdim(\rho) = \FPdim(\mathcal{R})/n$. However, the resulting extension may fail to be Drinfeld, even if $\mathcal{R}$ is. For instance, when $R = \ch(A_5)$, the extension is Drinfeld for $n = 1,2$ (see \cite[\S\ref{subsub:1FrobR6}(\ref{[1,3,3,4,5,30]}) and (\ref{[1,3,3,4,5,60]})]{appendices}), but not for $n = 3$. The extension for $n = 1$ of an integral Drinfeld ring may always be Drinfeld.
\end{remark}

By iterating Proposition \ref{prop:ext1} starting from the trivial fusion ring, we obtain a sequence of fusion rings with types  
\[
[1], [1,1], [1,1,2], [1,1,2,6], \dots,
\]
related to the Fibonacci-like sequence defined by  
\[
u_{n+1} = u_n + u_n^2, \quad u_0 = 1,
\]
whose first terms are $1, 2, 6, 42, 1806, \dots$ (see \cite{A007018}). The fusion ring of type $[1,1,2,6]$ does not admit a complex categorification, as shown in Lemma \ref{lem:1126}. More generally, by iterating Proposition \ref{prop:ext1} starting from any integral fusion ring $\mathcal{R} = \mathcal{R}_0$, we can construct a sequence of fusion rings $\mathcal{R}_n$.

\begin{conjecture} \label{conj:ext}
For every integral fusion ring $\mathcal{R} = \mathcal{R}_0$, there exists an integer $n$ such that $\mathcal{R}_n$ does not admit a complex categorification.
\end{conjecture}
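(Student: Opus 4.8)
The plan is to work by contraposition through the tower, exploiting that the extension of Proposition~\ref{prop:ext1} realizes $\mathcal{R}_n$ as a fusion subring of $\mathcal{R}_{n+1}$. First I would record a monotonicity principle. The basis of $\mathcal{R}_n$ is closed under both fusion and duality inside $\mathcal{R}_{n+1}$, since the new, self-dual generator $\rho_{n+1}$ never occurs in products of old basis elements; hence $\mathcal{R}_n$ is a genuine fusion subring. If $\mathcal{C}$ categorifies $\mathcal{R}_{n+1}$, the full subcategory on the simple objects indexing $\mathcal{R}_n$ is a fusion subcategory categorifying $\mathcal{R}_n$ (see~\cite{EGNO15}). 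So non-categorifiability propagates \emph{upward} along the tower, and the conjecture is equivalent to the assertion that the entire sequence $(\mathcal{R}_n)_{n\ge 0}$ cannot consist solely of categorifiable rings; it suffices, for each starting ring, to locate one non-categorifiable term. For the tower issuing from the trivial ring this is already furnished by Lemma~\ref{lem:1126}, which shows $[1,1,2,6]=\mathcal{R}_3$ is not categorifiable.

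The key structural input I would develop concerns the distinguished generator. Suppose $\mathcal{C}_n$ categorifies $\mathcal{R}_n$, let $\mathcal{C}_{n-1}\subset\mathcal{C}_n$ be the subcategory categorifying $\mathcal{R}_{n-1}$, and write $D_{n-1}=\FPdim(\mathcal{C}_{n-1})$. The new simple object $\rho_n$ satisfies $\rho_n\otimes X\cong \FPdim(X)\,\rho_n$ for every simple $X\in\mathcal{C}_{n-1}$ and $\rho_n^{\otimes 2}\cong R_{n-1}\oplus(D_{n-1}-1)\rho_n$, where $R_{n-1}=\bigoplus_X\FPdim(X)\,X$ is the regular object. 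Hence $\mathrm{add}(\rho_n)$ is closed under the $\mathcal{C}_{n-1}$-action and is a module category of rank one, whose internal-Hom algebra is $\underline{\mathrm{Hom}}(\rho_n,\rho_n)\cong R_{n-1}$. A rank-one module category is precisely a (quasi-)fiber functor, so $\mathcal{C}_{n-1}\cong\Rep(H_{n-1})$ for a semisimple quasi-Hopf algebra $H_{n-1}$ of dimension $D_{n-1}$, and the absorbing rule forces $\rho_n$ to restrict to the regular representation of $H_{n-1}$. Thus a fully categorifiable tower would make every $\mathcal{C}_n$ of quasi-Hopf type, organized into quotients $\cdots\twoheadrightarrow H_n\twoheadrightarrow H_{n-1}\twoheadrightarrow\cdots$ with $\dim H_{n}=\dim H_{n-1}(\dim H_{n-1}+1)$ and a distinguished irreducible of dimension $\dim H_{n-1}$.

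The final step is to contradict this tower for large $n$, pushing the super-exponential growth $\dim H_n\sim(\dim H_{n-1})^2$ against representation-theoretic divisibility and the constraints of Proposition~\ref{prop:DrinList}; one may also invoke the primary $n$-positivity of Theorem~\ref{thm:n-positive} to dispose of the unitary case, since a failure of the Schur product criterion at some finite level removes all unitary categorifications. I would first confirm that the purely ring-theoretic obstructions do \emph{not} suffice: one computes that the formal codegrees of the extension are those of $\mathcal{R}$ with $\FPdim(\mathcal{R})$ replaced by the pair $\{\FPdim(\mathcal{R})+1,\ \FPdim(\mathcal{R})(\FPdim(\mathcal{R})+1)\}$, so the Drinfeld and $1$-Frobenius properties are preserved and, in the commutative case, $\sum_V f_V^{-2}$ strictly decreases, whence the bound $\mathrm{Tr}(L_Z^{-2})\le\tfrac12(1+f_1^{-1})$ is never violated. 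This is exactly why the obstruction must be genuinely categorical rather than numerical.

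The main obstacle — and the reason the statement is only a conjecture — is this last step: controlling the quasi-Hopf, in particular non-group-theoretical, possibilities as the tower grows. Each individual $\mathcal{R}_n$ may well be categorifiable ($\mathcal{R}_1=\Rep(C_2)$ and $\mathcal{R}_2=\Rep(S_3)$ are, and the fiber functor mechanism above is visibly satisfied by the $2$-dimensional irreducible of $S_3$), so no single-level argument can close the case; what is needed is an asymptotic incompatibility between the rigid "restricts-to-the-regular-representation" behaviour of $\rho_n$ and the mere existence of a dimension-$D_{n-1}(D_{n-1}+1)$ quasi-Hopf algebra $H_n$. The cleanest route would require a divisibility input in the spirit of Kaplansky's sixth conjecture — that irreducible dimensions divide $\dim H$ — which is itself open for semisimple Hopf algebras and unavailable for quasi-Hopf algebras. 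Absent such input the general case remains out of reach, even though the mechanism above pins down exactly what a hypothetical infinite categorifiable tower would have to look like.
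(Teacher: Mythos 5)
The statement you are addressing is Conjecture~\ref{conj:ext}, and the first thing to say is that the paper contains \emph{no proof of it}: it is posed as an open problem, with only the tower over the trivial ring settled ($n(1)=3$, via the exclusion of the type-$[1,1,2,6]$ ring in Lemma~\ref{lem:1126}), and Question~\ref{qu:ext} left open. Your proposal, by your own admission in its final paragraph, is not a proof either, so the honest verdict is that there is a genuine gap — indeed the decisive step is missing both in your text and in the literature. That said, your intermediate claims are correct and worth separating from the gap. The monotonicity principle is right: products of old basis elements in Proposition~\ref{prop:ext1} never involve $\rho$, so $\mathcal{R}_n$ is a fusion subring of $\mathcal{R}_{n+1}$ closed under duality, and non-categorifiability propagates upward — this is exactly the mechanism the paper itself uses in its exclusion tables (``excluded by the fusion subring of type $[1,1,2,6]$''). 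Your formal-codegree computation also checks out: the $\FPdim$ character of $\mathcal{R}$ extends to $\tilde{\mathcal{R}}$ with $\rho\mapsto\FPdim(\mathcal{R})$ or $\rho\mapsto -1$, every other character extends with $\rho\mapsto 0$, so the codegree $f_1=\FPdim(\mathcal{R})$ is replaced by the pair $\{\FPdim(\mathcal{R})+1,\ \FPdim(\mathcal{R})(\FPdim(\mathcal{R})+1)\}$, consistent with $\tfrac{1}{D}=\tfrac{1}{D+1}+\tfrac{1}{D(D+1)}$; this correctly shows that the Drinfeld and $1$-Frobenius conditions of Proposition~\ref{prop:DrinList} and the Ostrik trace bound can never exclude any term of the tower, i.e.\ the obstruction must be categorical, not numerical. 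The rank-one module category observation ($\mathrm{add}(\rho_n)$ is an indecomposable $\mathcal{C}_{n-1}$-module with one simple, internal Hom the regular algebra, hence a quasi-fiber functor and $\mathcal{C}_{n-1}\simeq\Rep(H_{n-1})$ for a semisimple quasi-Hopf algebra) is a genuine structural insight that goes beyond anything stated in the paper.

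The gap is precisely where you locate it, and it is fatal to the argument as a proof: you give no mechanism that contradicts the existence of an infinite quasi-Hopf tower with $\dim H_n=\dim H_{n-1}(\dim H_{n-1}+1)$ and $\rho_n$ restricting to the regular representation. The ``super-exponential growth against divisibility'' step is not an argument — the needed Kaplansky-type divisibility is open even for semisimple Hopf algebras, as you note — and the appeal to Theorem~\ref{thm:n-positive} is likewise unsubstantiated: you offer no reason the tower should eventually violate the primary $3$-criterion, and the one known non-categorifiable term, $[1,1,2,6]$, is excluded in the paper by induction matrices and premodular classification (Lemma~\ref{lem:1126}), not by positivity, so there is no evidence that positivity ever fails along such towers. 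In short: your reduction of the conjecture to a question about quasi-Hopf towers is correct and clarifying, but it transforms the conjecture rather than proves it, matching the paper's own status for this statement as open.
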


The smallest such $n$ defines an invariant of the integral fusion ring $\mathcal{R}$, denoted $ n(\mathcal{R}) $. In particular,  
\[
n(\mathcal{R}) = 0 \quad \text{if and only if} \quad \mathcal{R} \text{ does not admit a complex categorification}.
\]
Moreover, we note that $ n(1) = 3 $, where $ 1 $ denotes the trivial fusion ring.

\begin{question} \label{qu:ext}
Is there a fusion ring $\mathcal{R}$ with $ n(\mathcal{R}) > 3 $?
\end{question}

\section{Induction Matrices} \label{sec:IndMat}

After recalling some basics in~\S\ref{sub:bas} concerning the notion of induction matrices—as discussed, for instance, in~\cite[\S 9.2]{EGNO15} and~\cite{MW17}—we gather in~\S\ref{sub:sum} the relevant parameters, variables, and relations with the aim of explicitly implementing the underlying system. Additional relations arise from the ring homomorphism induced by the forgetful functor, as explained in~\S\ref{sub:ring}. Finally, in~\S\ref{sub:NormInd}, we present our new \Normaliz{} feature, which enables a complete classification of all possible induction matrices.

\subsection{Basics} \label{sub:bas}

Let ${\rm R}$ be a fusion ring with fusion data $(N_{i,j}^k)$ and basis $\{a_1, \ldots, a_{r}\}$, where $a_1$ is the unit. For simplicity, we assume that ${\rm R}$ is integral. Assume that ${\rm R}$ admits a categorification into a fusion category $\mathcal{C}$ over the complex field. Then the Drinfeld center $\mathcal{Z}(\mathcal{C})$ of $\mathcal{C}$ is an integral modular fusion category. Let ${\rm ZR}$ be the Grothendieck ring of $\mathcal{Z}(\mathcal{C})$, which is an integral commutative half-Frobenius fusion ring. Let $\{b_1, \ldots, b_{n}\}$ be the basis of ${\rm ZR}$.

\begin{remark} Multiple ${\rm ZR}$ are possible because a fusion ring can have several non-equivalent categorifications $\mathcal{C}$. For instance, in the pointed case, fusion rings are represented by group rings $\mathbb{Z}G$. However, for any given finite group $G$, there are multiple categorifications as the category of $G$-graded vector spaces $\text{Vec}(G,\omega)$ twisted by a 3-cocycle $\omega$. When considering two non-equivalent 3-cocycles $\omega_1$ and $\omega_2$, the Grothendieck rings of $\mathcal{Z}(\text{Vec}(G,\omega_1))$ and $\mathcal{Z}(\text{Vec}(G,\omega_2))$ can be non-isomorphic fusion rings. For further details, refer to \cite{GrMo}.
\end{remark}

Let $d_i := \FPdim(a_i)$ and $m_i := \FPdim(b_i)$. Then, $\FPdim({\rm R}) := \sum_i d_i^2$ and $\FPdim({\rm ZR}) := \sum_i m_i^2$. Note that \cite[Theorem 7.16.6]{EGNO15} states that $\FPdim({\rm ZR}) = \FPdim({\rm R})^2$. However, by half-Frobenius property, $m_i^2$ divides $\FPdim({\rm ZR})$, so $m_i$ divides $\FPdim({\rm R})$. There is a ring morphism $F: {\rm ZR} \to {\rm R}$ preserving $\FPdim$, induced by the (so-called) forgetful functor $\mathcal{Z}(\mathcal{C}) \to \mathcal{C}$. Then,
\[ F(b_i) = \sum_j F_{i,j} a_j, \]
where $F_{i,j}$ are nonnegative integers. Hence,
\begin{equation}
m_i = \sum_j F_{i,j}d_j. \label{eq:rows}
\end{equation} 

There is an additive morphism $I: {\rm R} \to {\rm ZR}$ (not preserving $\FPdim$, so not multiplicative) induced by the adjoint of the forgetful functor. As a matrix, $I$ is just the transpose of $F$, i.e.,
\[ I(a_j) = \sum_i F_{i,j} b_i. \]

The $r \times n$ matrix associated with $I$ is commonly referred to as the \emph{induction matrix}. It satisfies several arithmetic properties, which imply that, for a given fusion ring, only finitely many induction matrices are possible. When the rank is sufficiently small, \Normaliz{} can be used to classify them (see~\S\ref{sub:NormInd}).

\begin{remark}
There can be zero, one or several possible induction matrices. If none, then the fusion ring ${\rm R}$ is excluded from categorification, which is very useful. If there are induction matrices but no ${\rm ZR}$ compatible with them, then R is excluded as well from categorification. Idem if there are compatible {\rm ZR} but no modular data. 
\end{remark}

In general, for a given fusion ring ${\rm R}$, several ${\rm ZR}$ are possible, and several $n$ ( = rank(${\rm ZR}$)) are possible. Hence, $n$ is also a variable. Note that \cite[Proposition 9.2.2]{EGNO15} states that for all $j$,
\[ F(I(a_j)) = \sum_t a_t a_j a_{t^*}. \]
But
\[ F(I(a_j)) = \sum_k \left( \sum_i F_{i,j} F_{i,k} \right) a_k, \]
and
\[ \sum_t a_t a_j a_{t^*} = \sum_k \left( \sum_{u,t} N_{t,j}^u N_{u,t^*}^k \right) a_k. \]
We get the following equation:
\begin{equation}
\sum_i F_{i,j} F_{i,k} = \sum_{u,t} N_{t,j}^u N_{u,t^*}^k. \label{eq:high}
\end{equation}
Note that
\[
F(I(a_1)) = \sum_t a_t a_{t^*} = \sum_k \left( \sum_t N_{t,t^*}^k \right) a_k,
\]
so the left multiplication matrix for $F(I(a_1))$ is
\begin{equation}\left( \sum_{t,k} N_{t,t^*}^k N_{k,l}^u \right) _{u,l}. \label{eq:eigenval}
\end{equation} 
This matrix has eigenvalues $n_V f_V$ with multiplicity $n_V^2$, where $f_V$ denotes the formal codegree of $V \in \mathrm{Irr}({\rm R}_{\mathbb{C}})$ and $n_V = \dim(V)$; see \S\ref{sub:formal}. As recalled in \S\ref{sub:Drin}, if ${\rm R}$ is a Grothendieck ring, then it is also a Drinfeld ring, and in this case, each $f_V$ is an integer dividing $\FPdim({\rm R})$, since ${\rm R}$ is integral. Note that $s := |\mathrm{Irr}({\rm R}_{\mathbb{C}})| \le r$, with equality if and only if ${\rm R}$ is commutative. According to \cite[Theorem 2.13]{Os15}, the set $\mathrm{Irr}({\rm R}_{\mathbb{C}})$ embeds into $\mathcal{O}(\mathcal{Z}(\mathcal{C}))$, and hence into the basis of $\mathrm{ZR}$ in our notation. Let us label the elements of $\mathrm{Irr}({\rm R}_{\mathbb{C}})$ as $V_i$, such that it maps to $b_i$. Define $f_i := f_{V_i}$, $n_i:=n_{V_i}$ and $m_i := \FPdim({\rm R})/f_i$, for $1 \le i \le s$. Finally, we have:
\begin{itemize}
\item $F(b_1) = a_1, \quad \text{so } F_{1,j} = \delta_{1,j},$
\item $F_{i,1} = n_i, \quad \text{for all } i \in \{1, \ldots, s\},$
\item $F_{i,1} = 0, \quad \text{for all } i \in \{s+1, \ldots, n\}.$
\end{itemize}
These last two identities also follow from \cite[Theorem 2.13]{Os15}, and will be abbreviated below as $F_{i,1} = n_i \delta_{i \leq s}$.

%
\subsection{Parameters, variables, and relations} \label{sub:sum}

\subsubsection{Parameters}
\begin{itemize}
  \item Fusion data $(N_{i,j}^k)$ of rank $r$;
  \item $d_i := \FPdim(a_i) = \text{norm of the matrix } (N_{i,j}^k)_{k,j}$, assumed to be positive integers;
  \item $\FPdim({\rm R}) = \sum_i d_i^2$;
  \item $n_jf_j = \text{eigenvalues of multiplicity $n_j^2$ of the matrix } \left( \sum_{t,k} N_{t,t^*}^k N_{k,l}^u \right)_{u,l}$;
  \item $s = |\mathrm{Irr}({\rm R}_{\mathbb{C}})| \le r$.
\end{itemize}

\subsubsection{Variables}
\begin{itemize}
  \item Rank $n$ of ${\rm ZR}$;
  \item $n \times r$ matrix $(F_{i,j})$;
  \item $m_i = \sum_j F_{i,j} d_j$, with $i \in \{1,\ldots,n\}$.
\end{itemize}

\subsubsection{Relations} \label{subsub:rel}
\begin{enumerate}
  \item \label{rel1} $F_{i,j}$ are nonnegative integers;
  \item \label{rel2} For all $j,k \in \{1,\ldots,r\}$, $\sum_i F_{i,j} F_{i,k} = \sum_{u,t} N_{t,j}^u N_{u,t^*}^k$;
  \item \label{rel3} For all $i \in \{1,\ldots,n\}$, $m_i$ is a positive integer dividing $\FPdim({\rm R})$;
  \item \label{rel4} $\sum_i m_i^2 = \FPdim({\rm R})^2$;
  \item \label{rel5} For all $i \in \{1,\ldots,s\}$, $f_i$ are positive integers dividing $\FPdim({\rm R})$;
  \item \label{rel6} For all $i \in \{1,\ldots,s\}$, $m_i = \FPdim({\rm R})/f_i$;
  \item \label{rel7} For all $j \in \{1,\ldots,r\}$, $F_{1,j} = \delta_{1,j}$;
  \item \label{rel8} For all $i \in \{1,\ldots,n\}$, $F_{i,1} = n_i\delta_{i \le s-1}$.
\end{enumerate}
The last three identities above allow us to directly determine $r + n - 1$ variables. Next, we focus on the subsystem of $r + s - 2$ linear Diophantine equations in $(r-1)(s-1)$ variables, each with positive coefficients, corresponding to the \textbf{lower part of the induction matrix}, specifically its $s \times r$ submatrix. 
For all $i \in \{2, \dots, s\}$ and $j \in \{2, \dots, r\}$, 
\begin{equation}
\sum_{i=2}^{s} n_i F_{i,j} = \sum_{t=1}^{r} N_{t, t^*}^j, \label{eq:col}
\end{equation}
$$
\sum_{j=2}^{r} d_j F_{i,j} = \frac{\FPdim({\rm R})}{f_i} - n_i.
$$
To prove the first identity, apply equation (\ref{rel2}) with $k = 1$. The term with $i = 1$ on the left-hand side vanishes due to equation (\ref{rel7}), since $j > 1$. Furthermore, equation (\ref{rel8}) gives $F_{i,1} = n_i$ for $i \le s$, and zero otherwise. On the right-hand side, note that $N_{s, t^*}^1 = \delta_{s,t}$, and by Frobenius reciprocity, $N_{t, j}^t = N_{t^*, t}^j$. Finally, we have $\sum_t N_{t^*, t}^j = \sum_t N_{t, t^*}^j$, which follows by a change of variable, replacing $t$ with $t^*$.
For the second identity, apply equation (\ref{rel6}). We have $m_i = \sum_j F_{i,j} d_j$, while equation (\ref{rel8}) gives $F_{i,1} d_1 = n_i$, since $i \le s$. Substituting this yields the stated expression.

%

\subsection{Ring morphism} \label{sub:ring}

The forgetful functor from $\mathcal{Z}(\mathcal{C})$ to $\mathcal{C}$ is a tensor functor. Consequently:
\[ F(b_{i^*}) = F(b_i)^* \text{ which implies } F_{i^*, j} = F_{i, j^*}, \text{ which could restrict the possible dualities}, \]
and
\[ F(b_i b_j) = F(b_i) F(b_j). \]

Given that $(M_{i,j}^k)$ represents the fusion data of ${\rm ZR}$, we have:
\[ F(b_i b_j) = F( \sum_k M_{i,j}^k b_k ) = \sum_k M_{i,j}^k F(b_k) = \sum_k M_{i,j}^k \left( \sum_t F_{k,t} a_t \right) = \sum_t \left( \sum_k M_{i,j}^k F_{k,t} \right) a_t. \]

On the other hand:
\begin{align*}
F(b_i) F(b_j) = \left( \sum_l F_{i,l} a_l \right) \left( \sum_u F_{j,u} a_u \right) &= \sum_{l,u} F_{i,l} F_{j,u} (a_l a_u) \\
&= \sum_{l,u} F_{i,l} F_{j,u} \left( \sum_t N_{l,u}^t a_t \right) = \sum_t \left( \sum_{l,u} F_{i,l} F_{j,u} N_{l,u}^t \right) a_t.
\end{align*}

Therefore, for all $i, j, t$ , we have:
$$
\sum_k M_{i,j}^k F_{k,t} = \sum_{l,u} F_{i,l} F_{j,u} N_{l,u}^t.
$$

These additional equations must be imposed in order to classify all possible fusion rings~$\mathrm{ZR}$ of type~$(m_i)$ that are compatible with the matrix~$(F_{i,j})$.

\section{Algorithmic details} \label{sec:AlgoDetails}

This section details the algorithms used for classifying Egyptian fractions (Section~\ref{sub:EgyAlgo}) with \SageMath{}, fusion rings (Section~\ref{sub:Normaliz}) and induction matrices (Section~\ref{sub:NormInd}) with \Normaliz{}.

\subsection{Efficient generation of Egyptian fractions} \label{sub:EgyAlgo}

To efficiently generate Egyptian fractions of a fixed length $r$, we employed \SageMath{}'s {\tt MapReduce} functionality, which provides easy parallelization for enumerating elements of a \emph{recursively enumerated set} (RES) with a forest structure. 

Our ultimate goal is to generate all Egyptian fractions
\[
\sum_{i=1}^r \frac{1}{f_i} = 1.
\]
To this end, we consider the RES consisting of partial Egyptian fractions
\[
\sum_{i=1}^{\ell} \frac{1}{f_{r+1-i}} = s,
\]
of length $\ell \leq r$, subject to the conditions
\[
1 \leq f_r \leq f_{r-\ell+1} \leq \dots \leq f_{r-\ell}, \quad s \leq 1,
\]
denoted by the tuple $(s; f_r, \dots, f_{r-\ell+1})$. Such partial Egyptian fractions correspond to nodes in a forest. The roots are $(\tfrac{1}{k};k)$ for $k=1,2,\dots,r$, and a node $(s;f_r,\dots,f_{r-\ell+1})$ has children
\[
\Bigl(s+\tfrac{1}{f_{r-\ell}};\; f_r,\dots,f_{r-\ell+1},f_{r-\ell}\Bigr),
\]
where the new denominator $f_{r-\ell}$ satisfies
\[
f_{r-\ell+1} \leq f_{r-\ell} \leq \frac{r-\ell}{1-s}.
\]
Nodes with $s=1$ or $\ell=r$ have no children.

\medskip

Several optimizations make this approach more practical. In particular, for each node with $\ell=r-2$ and rational sum $s=\tfrac{p}{q}$, the last two denominators $f_2$ and $f_1$ (which usually have the widest search ranges) can be determined directly from
\[
\frac{p}{q} + \frac{1}{f_2} + \frac{1}{f_1} = 1,
\]
which is equivalent to
\begin{equation}\label{eq:last_two_f}
\bigl((q-p)f_2 - q\bigr)\cdot \bigl((q-p)f_1 - q\bigr) \;=\; q^2.
\end{equation}
Thus the problem reduces to iterating over the divisors $d \mid q^2$ with
\(
d \equiv q \pmod{q-p}.
\)

We further incorporated the divisibility condition $f_i \mid f_1$ (motivated by the requirement that formal codegrees divide the global dimension $\FPdim$). This allows stronger pruning of the search and, combined with \eqref{eq:last_two_f}, implies
\(
\bigl((q-p)f_2 - q\bigr) \mid q.
\)

\medskip

Our implementation also supports additional constraints on the generated Egyptian fractions, such as requiring odd denominators only, pairwise distinct denominators, bounded denominators, or denominators with bounded prime factors. For applications to MNSD Drinfeld rings, we generalized the method to Egyptian fractions with prescribed numerators (taken to be $2,2,\dots,2,1$ in the MNSD case). Further implementation details are available in the GitHub repository~\cite{MaxScripts}.

\subsection{The fusion ring solver in \Normaliz{}}\label{sub:Normaliz}

In the following we explain the algorithmic ideas behind the fusion ring solver in \Normaliz{}  \cite{Norma}. See the manual \cite{NorManual} for input and output formats and the full range of options.

\Normaliz{} \cite{Norma} is a software package for computations in discrete convex geometry and toric algebra. One of its core tasks is to compute lattice points in a polytope $P \subset \RR^n$ defined by a system of linear equations and inequalities with rational coefficients. (\Normaliz{} can also handle polytopes defined over real algebraic number fields.)  

The computation of fusion rings of a given type and duality amounts to finding the lattice points $(N_{i,j}^k)$ in such a polytope that additionally satisfy the quadratic associativity equations from Definition~\ref{def:fu}. Concretely, our fusion data $(N_{i,j}^k)$ must satisfy the following conditions:
\begin{itemize}
\item (NonNeg)  \ \ $N_{i,j}^k$ is a nonnegative integer, for all $i,j,k$.
\item (DimEq) \ \  $d_i d_j = \sum_k N_{i,j}^k d_k$, for all $i,j$.
\item (Assoc) \ \  $\sum_s N_{i,j}^s N_{s,k}^t = \sum_s N_{j,k}^s N_{i,s}^t$, for all $i,j,k,t$. 
\end{itemize}

The conditions (Unit), (Dual), and (Anti-involution) from Definition~\ref{def:fu} are used to replace certain variables $N_{i,j}^k$ with fixed values $0$ or $1$, and to impose the identification $N_{i,j}^k = N_{j^*,i^*}^{k^*}$. This identification is further extended by Proposition~\ref{prop:FrobRec}. Note that in \Normaliz{} the basis vectors of a fusion ring are indexed starting at $0$, so that $b_0$ represents the unit element.

The default algorithm in \Normaliz{} for computing lattice points in polytopes $P \subset \RR^n$ is the standard \emph{project-and-lift} method, which proceeds inductively on the ambient dimension $n$. For $n=1$, one must simply find the interval in $\RR$ defined by the equations and inequalities. In the general case, the projection $P' \subset \RR^{n-1}$ is the image of $P$ under elimination of the $n$-th coordinate. Assuming the lattice points of $P'$ are known, the lattice points of $P$ are precisely those integral points in $P$ that project to some $x' \in P'$. The fiber over $x'$ is an interval in $\RR$. Figure~\ref{fig:proj} illustrates this idea.

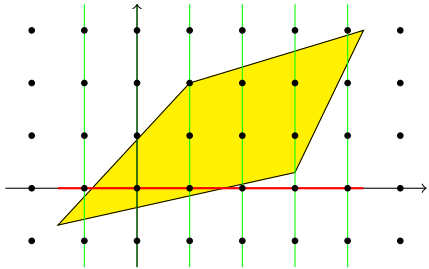
\begin{figure}[hbt]
\begin{center}
	\begin{tikzpicture}[scale=0.7]	
		\filldraw[yellow] (-1.5,-0.7) -- (1,2) -- (4.3,3) -- (3,0.3) -- cycle;
		\draw (-1.5,-0.7) -- (1,2) -- (4.3,3) -- (3,0.3) -- cycle;
		
		\foreach \x in {-1,...,4} {
			\draw[green] (\x,-1.5) -- (\x,3.5);
		}
		\draw[->] (-2.5,0) -- (5.5,0);
		\draw[->] (0,-1.5) -- (0,3.5);
		\draw[color=red,thick] (-1.5,0) -- (4.3,0);
		
		\foreach \x in {-2,...,5}
		\foreach \y in {-1,...,3} {
			\filldraw[fill=black] (\x,\y) circle (1.5pt);
		}
	\end{tikzpicture}
\end{center}
\caption{Illustration of the project-and-lift method.}
\label{fig:proj}
\end{figure} 

As simple as this idea is, it becomes infeasible for fusion rings because computing $P'$ requires pure Fourier--Motzkin elimination, i.e.\ eliminating variables from a system of linear constraints. Fortunately, there is a way around this. The algorithm admits relaxation: it suffices to compute an \emph{overpolytope} $P'' \supset P'$. For example, omitting a summand $d_k N_{i,j}^k$ in (DimEq) yields the valid inequality
\[
\sum_k d_k N_{i,j}^k \le d_i d_j.
\]
Together with the nonnegativity conditions (NonNeg), these truncated inequalities define the polytope $P''$. We refer to this approach as \emph{coarse projection}.

Another useful feature of (DimEq) is that they involve only a small number of variables. This makes \emph{patching} possible. Each such equation is a \emph{patch}. Restricted to the variables with nonzero coefficients, the patch admits \emph{local solutions}. A \emph{global solution} is then obtained by patching together local solutions that agree on overlaps.  

\Normaliz{} does not compute all local solutions beforehand. Instead, it inserts patches successively: at each step, the existing partial solutions are extended along the overlap with the next patch. Without the highly selective quadratic equations (Assoc), this process would typically cause a combinatorial explosion. To prevent this, partial solutions are discarded as soon as they violate any quadratic equation whose support is contained in the patches considered so far.

Some additional key aspects:
\begin{itemize}
\item Choosing a good insertion order of patches is crucial. The goal is to make associativity equations applicable as early as possible, but one should also avoid inserting ``bad'' patches with very large right-hand sides $d_i d_j$ too soon. \Normaliz{} provides options to balance these considerations.
\item ``Look-ahead'' techniques are employed by coarsening equations to congruences modulo coefficients. This reduces the number of variables and often applies to earlier patches. Similarly, equations can be relaxed to inequalities in fewer variables.
\item The system of equations is heavily overdetermined. Hence \emph{heuristic minimization} is applied: quadratic equations that hold for a sufficiently large number of partial solutions are discarded, since they no longer provide selective power.

\item The computation of fusion rings in \Normaliz{} is in principle a tree search. \Normaliz{} uses a mixture of breadth first and depth first strategies.
\end{itemize}

Both coarse projection and patching are particularly effective for $0$--$1$ problems.

Two sets of fusion data define isomorphic fusion rings if and only if they differ by a permutation $\pi$ of $\{b_1,\dots,b_r\}$ that commutes with duality and preserves the Frobenius--Perron dimensions. We are only interested in pairwise nonisomorphic fusion rings and adopt the convention that the lexicographically largest defining data serve as the \emph{normal form}. The algorithm discards partial solutions whenever it becomes clear that they cannot extend to a normal form.

For exceptionally large fusion data, we have used the high-performance cluster at Osnabrück (50 nodes, 64 threads each, 1~TB memory) with a static splitting strategy, allowing multi-round computations by successive refinement.

Table~\ref{tab:times} summarizes representative computation times. Here $\#$var, $\#$aut, and $\#$fus denote the number of variables, automorphisms, and fusion rings up to isomorphism, respectively. All timings were obtained on a PC with a Ryzen 900 CPU. Dualities are the identity, except for the last one, the type and duality of the character ring of $M_{12}$ (exponents count multiplicities, and the basic elements of $\FPdim=16$ are dual to each other). This example was one of the primary challenges in developing the \Normaliz{} algorithm. The first four examples are the smallest-rank exotic simple integral fusion rings mentioned in the introduction. The rank 12 example arose in a different context.

\begin{table}[hbt]
\begin{tabular}{|l|r|r|r|r|r|c|}
\hline
Type & Rank & FPdim & $\#$var & $\#$aut & $\#$fus & Time \\
\hline
$[1,11,14,16]$ & 4 & 574 & 10 & 1 & 1 & 0.002 s \\
\hline
$[1,9,10,11,21,24]$ & 6 & 1320 & 35 & 1 & 3 & 0.01 s \\ 
\hline
$[1,5,5,5,6,7,7]$ & 7 & 210 & 56 & 12 & 2 & 0.03 s \\
\hline
$[1,7,7,7,7,8,9,9,9]$ & 9 & 504 & 120 & 144 & 2 & 0.3 s \\
\hline
$[1,1,2,3,3,6,6,8,8,8,12,12]$ & 12 & 576 & 286 & 48 & 199 & 3.5 s \\
\hline
$[1,11^2,16^2,45,54,55^3,66,99,120,144,176]$ & 15 & 95040 & 546 & 24 & 12 &  59:20 m\\ 
\hline
\end{tabular}
\caption{Computation times.}
\label{tab:times}
\end{table}

\Normaliz{} can also be run in restricted modes. The option \texttt{NoCoarseProjection} disables coarse projection, while \texttt{NoPatching} disables patching. The only example in Table~\ref{tab:times} that runs reasonably with \texttt{NoCoarseProjection} is $[1,11,14,16]$. The option \texttt{NoPatching} works for $[1,9,10,11,21,24]$ and $[1,5,5,5,6,7,7]$, and in fact runs them slightly faster than the full algorithm, likely by avoiding the overhead of patch management. For $[1,7,7,7,7,8,9,9,9]$, however, \texttt{NoPatching} becomes ineffective.  

Finally, automorphisms can be ignored altogether by invoking \texttt{LatticePoints}, in which case \Normaliz{} simply enumerates all lattice points. For example, for $[1,1,2,3,3,6,6,8,8,8,12,12]$ this increases the computation time to $10.5$ seconds and produces $1659$ lattice points.

\subsection{Induction matrices via \Normaliz{}} \label{sub:NormInd}

A closer look at \S\ref{sub:sum} shows that computing induction matrices essentially amounts to finding lattice points in polytopes, although in a rather intricate way.

Throughout this subsection, rows and columns refer to the matrix $I$: $F_{i,j}$ denotes the entry in row $i$, column $j$. For details on the output format and options of \Normaliz{}, see the manual~\cite{NorManual}.

The first preparatory step in the computation of $I$ is straightforward. We check whether the fusion ring $R$ is commutative. Then we compute the eigenvalues of matrix~\eqref{eq:eigenval} and their multiplicities (as roots of the characteristic polynomial). Since the list of candidates is finite, we can simply test all of them.

In the commutative case, the multiplicity of an eigenvalue $f$ counts how often $\FPdim(R)/f$ appears as a value of some $m_i$ in the lower part of the potential induction matrices.

The noncommutative case is more delicate since $\mathrm{Irr}(R_{\mathbb{C}})$ is unknown. We only know the eigenvalues and their multiplicities, and must recover $n_j$ and $f_j$ for $j=1,\dots,s$. However, for ranks $\le 8$, Lemma~\ref{lem:NCmin} guarantees a unique choice for each $n_j$. This allows us to determine $s$, the number of rows in the lower part, and the corresponding $m_i$.

We now address the computation of the lower part. The variables are the entries $F_{i,j}$ with $i=1,\dots,s$ and $j=1,\dots,r$. The entries $F_{i,1}$ and $F_{1,j}$ are known. The rows must satisfy the linear equations~\eqref{eq:rows}, and the columns must satisfy~\eqref{eq:col}. This system of equations is fed into the project-and-lift algorithm of \Normaliz{}, together with the nonnegativity constraints on $F_{i,j}$. Coarse projection and patching, introduced in \S\ref{sub:Normaliz}, apply here as well.

For the higher part (rows beyond the lower part), the difficulty is that $n$ is not known. To proceed, we consider all potential rows given by solutions to
\[
\sum_j d_j F_{i,j} = m_i \quad \text{for all } i > s,
\]
with $F_{i1}=0$ when $i > s$. For each divisor $m_i$ of $\FPdim(R)$, this is again a lattice-point problem. Coarse projection applies, but no patching is needed. Let $H_k$, $k=1,\dots,h$, denote the list of all such representations of the divisors $m_i$. The unknowns are the multiplicities $\mu_k$ with which the rows $H_k$ occur. The number $h$ can be very large, which limits the computation. The constraints are:
\begin{enumerate}
  \item[(i)] $\mu_k \ge 0$ for all $k$, and
  \item[(ii)] linear equations: 
  \begin{enumerate}
  \item[(a)] one equation ensuring that the total Frobenius--Perron dimension of the center is $\FPdim(R)^2$, and 
  \item[(b)] the system derived from~\eqref{eq:high} for $i,j=1,\dots,r$, taking into account the contribution of the lower part.
  \end{enumerate}
\end{enumerate}
The crucial point is that the total contribution of rows equal to $H_k$ is linear in~$\mu_k$.

\Normaliz{} can also check ring homomorphisms between fusion rings, including those from \S\ref{sub:ring}. To do this, one specifies the image ring and the matrix of the homomorphism as a $\ZZ$-linear map. The potential preimages are determined by type and duality as usual. Requiring that the given map is a ring homomorphism imposes additional linear equations on the fusion data. As a result, only those fusion rings admitting the specified homomorphism are obtained.

\begin{example} \label{ex:IndMat}
We illustrate the computation with the example in \cite[\S\ref{sub:Rank4}(\ref{1126})]{appendices}, excluded in the proof of Lemma~\ref{lem:1126}. To generate the input file via \SageMath{} for the fusion ring of type \texttt{[1,1,2,6]} with duality \texttt{[0,1,2,3]}, run:
\begin{verbatim}
sage: %attach TypeToNormaliz.sage
sage: NormalizInduction([1,1,2,6],[0,1,2,3])
\end{verbatim}

Running \Normaliz{} on this input produces a \texttt{.ind} file listing all possible induction matrices for each fusion ring. In this case there is only one fusion ring, with the induction matrix:
\[
\begin{bmatrix}
1 & 0 & 0 & 1 & 0 & 0 & 0 & 0 & 0 & 0 & 0 & 1 & 0 & 0 & 1 & 0 \\
0 & 1 & 0 & 1 & 0 & 0 & 0 & 0 & 0 & 0 & 0 & 1 & 0 & 0 & 0 & 1 \\
0 & 0 & 1 & 2 & 0 & 0 & 0 & 0 & 0 & 0 & 0 & 0 & 1 & 1 & 1 & 1 \\
0 & 0 & 0 & 0 & 1 & 1 & 1 & 1 & 1 & 1 & 1 & 2 & 2 & 2 & 3 & 3
\end{bmatrix}
\]
followed by the type of the corresponding potential Drinfeld center:
\[
[1,1,2,6,6,6,6,6,6,6,6,14,14,14,21,21].
\]
\end{example}


\section{Exclusions via induction matrices} \label{sec:ExcMod} \label{sec:Indu}

\begin{remark} \label{rk:collected}
All computations of induction matrices in this paper—carried out as illustrated in Example \ref{ex:IndMat} using our new \Normaliz{} feature—are documented in the file \verb|InvestInduction.txt|, located in the \verb|Data/InductionMatrices| directory of~\cite{CodeData}.
\end{remark}

\begin{lemma} \label{lem:NoInd}
A fusion ring with any of the following type and duality structures admits no induction matrix, and therefore no categorification as a fusion category over~$\mathbb{C}$.
\begin{itemize}
\item $[1,1,4,4,6]$, $[0,1,3,2,4]$;
\item $[1,1,1,6,9]$, $[0,2,1,3,4]$;
\item $[1,1,5,7,8]$, $[0,1,2,3,4]$;
\item $[1,1,2,3,15]$, $[0,1,2,3,4]$;
\item $[1,1,2,9,15]$, $[0,1,2,3,4]$;
\item $[1,1,2,8,8,10]$, $[0,1,2,4,3,5]$;
\item $[1,1,2,8,8,14]$, $[0,1,2,4,3,5]$;
\item $[1,1,1,10,11,14]$, $[0,2,1,3,4,5]$;
\item $[1,1,8,10,10,14]$, $[0,1,2,4,3,5]$;	
\item $[1,1,1,1,1,1,3,3]$, $[0,1,2,3,5,4,7,6]$;
\item $[1,1,1,1,1,1,6,6]$, $[0,1,2,3,5,4,6,7]$.
\end{itemize}
\end{lemma}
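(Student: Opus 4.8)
The plan is to invoke the obstruction theory of \S\ref{sec:IndMat}: if a fusion ring $R$ were categorifiable by a fusion category $\mathcal{C}$ over $\mathbb{C}$, then the adjoint of the forgetful functor $\mathcal{Z}(\mathcal{C}) \to \mathcal{C}$ would supply an induction matrix $F = (F_{i,j})$ satisfying all of relations (\ref{rel1})--(\ref{rel8}) of \S\ref{subsub:rel} together with the ring-morphism relations of \S\ref{sub:ring}. Hence it suffices, for each of the eleven (type, duality) pairs, to show that this Diophantine system is infeasible; by the obstruction recorded in \S\ref{sub:bas} (a fusion ring admitting no induction matrix cannot be categorified), infeasibility yields the claim.

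First I would, for each listed pair, enumerate the finitely many fusion rings $R$ realizing it. The rank-$\le 5$ cases are automatically commutative by Proposition~\ref{prop:min6}, so there $n_V = 1$ for every $V$ and the formal codegrees $f_i$ are exactly the eigenvalues of the central-multiplication matrix $L_Z$; the rank-$6$ and rank-$8$ cases may be noncommutative, which only changes the value of $s = |\mathrm{Irr}(R_{\mathbb{C}})|$ and the multiplicities $n_V$. For each such $R$ I would compute the parameters feeding the system: the right-hand coefficients $\sum_{u,t} N_{t,j}^u N_{u,t^*}^k$ of (\ref{rel2}), the codegrees $n_V f_V$ (as eigenvalues of $\left(\sum_{t,k} N_{t,t^*}^k N_{k,l}^u\right)_{u,l}$ with multiplicities $n_V^2$), and the forced values $m_i = \FPdim(R)/f_i$.

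Then I would run the \textsf{Normaliz} feature of \S\ref{sub:NormInd} on each fusion ring. It solves precisely the linear Diophantine subsystem—the $r+s-2$ equations in the $(r-1)(s-1)$ unknowns forming the lower $s \times r$ block, subject to nonnegativity (\ref{rel1}), the divisibility constraints (\ref{rel3}) and (\ref{rel5}), the global identity (\ref{rel4}), and the normalizations (\ref{rel7})--(\ref{rel8})—and then imposes the multiplicative relations of \S\ref{sub:ring} to test compatibility with an actual ring structure on $\mathrm{ZR}$. For each of the eleven pairs the output is empty, and all runs are collected in the file referenced in Remark~\ref{rk:collected}.

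The main obstacle is that, although infeasibility is conceptually just ``the solution polytope has no admissible lattice points,'' verifying it genuinely requires the exhaustive search, whose size grows quickly with rank and multiplicity; this is exactly why the dedicated \textsf{Normaliz} feature was built and why its noncommutative implementation is capped at rank~$8$, which is what makes the two rank-$8$ entries $[1,1,1,1,1,1,3,3]$ and $[1,1,1,1,1,1,6,6]$ tractable. A human-checkable argument is feasible only case by case: one would isolate the most restrictive constraints—typically the diagonal instances $\sum_i F_{i,j}^2 = \sum_{u,t} N_{t,j}^u N_{u,t^*}^j$ of (\ref{rel2}) combined with the column identity $\sum_{j\ge 2} d_j F_{i,j} = m_i - n_i$—and derive a contradiction from nonnegativity and the divisibility of the $m_i$. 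Carrying this out uniformly across all eleven entries is impractical, so the exhaustive computation is the honest proof.
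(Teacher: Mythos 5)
Your proposal is correct and matches the paper's proof, which simply runs the new \textsf{Normaliz} induction-matrix feature of \S\ref{sub:NormInd} on each listed (type, duality) pair and observes that the computation yields no solution, as documented in Remark~\ref{rk:collected}. Your added discussion of the underlying Diophantine system and the noncommutative subtleties at rank~$8$ is accurate but amounts to an expanded account of the same computational argument.
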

\begin{proof}
For each case, the computation of all possible induction matrices yields no solution (see Remark~\ref{rk:collected}).
\end{proof}

\begin{lemma} \label{lem:NoInd2}
A fusion ring of type~$[1,1,1,1,1,1,3,3]$, duality~$[0,1,2,3,5,4,6,7]$, and multiplicity one admits no induction matrix, and therefore no categorification as a fusion category over~$\mathbb{C}$.
\end{lemma}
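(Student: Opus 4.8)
The plan is to reduce the statement to a finite, explicitly solvable constraint problem in two stages: first enumerate every multiplicity-one fusion ring carrying the prescribed type and duality, and then run the induction-matrix system of \S\ref{sub:sum}--\S\ref{sub:ring} on each, showing it to be infeasible. A categorification would force a compatible induction matrix into the Drinfeld center, so infeasibility in every case gives the claim.

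First I would pin down the pointed part. The six basic elements of $\FPdim 1$ are the invertible objects and, under fusion, form a group of order~$6$; the duality $[0,1,2,3,5,4,6,7]$ prescribes three involutions ($a_2,a_3,a_4$) together with one mutually inverse pair ($a_5,a_6$), which forces this group to be $S_3$ rather than $C_6$ (the latter has a single involution). In particular the ring is noncommutative, and its global $\FPdim$ equals $6\cdot 1 + 2\cdot 9 = 24$. This places the problem squarely in the regime handled by the noncommutative \textsf{Normaliz} feature of \S\ref{sub:NormInd}, whose current implementation supports exactly ranks up to~$8$.

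Next I would exploit the action of the invertibles on the two $\FPdim 3$ objects $a_7,a_8$ to cut down the search. Left multiplication by $a_g$ ($g\in S_3$) permutes $\{a_7,a_8\}$, so the induced map $S_3\to S_2$ is either trivial or the sign character; hence the stabilizer of $a_7$ is either all of $S_3$ or the subgroup $\{a_1,a_5,a_6\}\cong C_3$. This determines the mixed products $a_g a_7$ and $a_7 a_g$ up to this dichotomy, and Frobenius reciprocity (Proposition~\ref{prop:FrobRec}) then expresses the coefficients $N_{7,7}^{k}$, $N_{7,8}^{k}$, $N_{8,8}^{k}$ for $k\le 6$ in terms of a few booleans. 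Under multiplicity one, $N_{i,j}^k\in\{0,1\}$, so the residual associativity relations of Definition~\ref{def:fu} collapse to a finite $0/1$ Diophantine system, which I would hand to \textsf{Normaliz} to produce the complete (and short) list of admissible fusion rings.

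Finally, for each fusion ring on that list I would compute the formal codegrees $n_V f_V$ as the eigenvalues of the multiplication matrix of $Z=\sum_i a_i a_{i^*}$ (see \S\ref{sub:formal}), assemble the parameters of \S\ref{sub:sum}, and solve the linear Diophantine system consisting of relations (\ref{rel1})--(\ref{rel8}) together with the ring-morphism equations of \S\ref{sub:ring}. The main obstacle is combinatorial size: without the multiplicity-one hypothesis the fusion-ring enumeration is not tractable, which is precisely why the restriction appears in the statement and why this case is split off from Lemma~\ref{lem:NoInd}; and even with it, the rank-$8$ noncommutative induction system sits at the edge of what the feature can handle. The delicate points are therefore to certify completeness of the enumerated fusion-ring list and to confirm that the induction-matrix infeasibility is genuine rather than an artifact of search truncation.
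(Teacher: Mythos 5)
Your proposal follows essentially the same route as the paper's proof: classify the fusion rings with the given type and duality, then certify via the \textsf{Normaliz} feature of \S\ref{sub:NormInd} that the induction-matrix system of \S\ref{sub:sum}--\S\ref{sub:ring} has no solution (Remark~\ref{rk:collected}); your structural pre-processing (pointed part forced to be $S_3$ by the involution count, hence noncommutativity, $\FPdim = 24$, and the $S_3$-action on the two $\FPdim$-$3$ objects) is a sound way to organize that enumeration, which in fact yields exactly two fusion rings. One correction: your stated rationale for the multiplicity-one hypothesis is wrong. The enumeration at this type and duality is easy even without any multiplicity bound and produces just the two rings; the hypothesis is needed because the other ring, of multiplicity two (\S\ref{subsub:1FrobNCRank8}(\ref{[1,1,1,1,1,1,3,3]b})), genuinely admits many induction matrices and is even categorifiable (e.g.\ as $\mathcal{C}(S_4,1,A_4,1)$), so the lemma would be false without it --- the restriction is mathematical, not computational. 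Relatedly, your worry about ``search truncation'' is moot: the \textsf{Normaliz} computation is an exact, complete enumeration of the lattice points of a finite Diophantine system, so an empty output is a proof of infeasibility, not a heuristic failure to find solutions.
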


\begin{proof}
There are exactly two fusion rings with this type and duality. The one in~\cite[\S\ref{subsub:1FrobNCRank8}(\ref{[1,1,1,1,1,1,3,3]b})]{appendices} has multiplicity two and admits many induction matrices; the other, in~\cite[\S\ref{subsub:1FrobNCRank8}(\ref{[1,1,1,1,1,1,3,3]a})]{appendices}, has multiplicity one and admits none (see Remark~\ref{rk:collected}). 
\end{proof}

\begin{lemma} \label{lem:1126}
There exists no integral fusion category~$\mathcal{C}$ of rank $4$, $\FPdim$ $42$, and type $[1,1,2,6]$.
\end{lemma}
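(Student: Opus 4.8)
The plan is to combine the uniqueness of the fusion ring with the induction-matrix machinery of~\S\ref{sec:IndMat}. First I would record that, as noted in~\S\ref{sub:NormInd}, there is a unique fusion ring $R$ of type $[1,1,2,6]$; it is self-dual (duality $[0,1,2,3]$), arises as the rank-one extension of $\ch(S_3)$ in the sense of Proposition~\ref{prop:ext1}, and its invertibles form $C_2$. A short computation with $Z=\sum_i b_ib_{i^*}=4b_1+2b_2+3b_3+5b_4$ shows that the characters of $R_{\mathbb{C}}$ have formal codegrees $\{42,7,3,2\}$, so $R$ is genuinely an integral Drinfeld ring (with $\tfrac1{42}+\tfrac17+\tfrac13+\tfrac12=1$) and even $1$-Frobenius. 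Hence $R$ cannot be discarded by the elementary criteria of~\S\ref{sub:Drin}, and in particular the cheap exclusions of Lemma~\ref{lem:NoInd} (absence of any induction matrix) are unavailable here: the whole difficulty is that an induction matrix does exist.

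Next, suppose for contradiction that $R$ is categorified by a fusion category $\mathcal{C}$. Then $\mathcal{Z}(\mathcal{C})$ is an integral modular fusion category of $\FPdim=42^2=1764$, whose Grothendieck ring $\mathrm{ZR}$ is half-Frobenius and carries a forgetful ring morphism $F:\mathrm{ZR}\to R$, i.e. an induction matrix in the sense of~\S\ref{sub:bas}. The values $m_i=\FPdim(R)/f_i$ attached to $\mathrm{Irr}(R_{\mathbb{C}})$ are $1,6,14,21$, which must occur among the basic $\FPdims$ of $\mathrm{ZR}$. I would then run the \textsf{Normaliz} feature of~\S\ref{sub:NormInd} to enumerate all induction matrices satisfying relations~(\ref{rel1})--(\ref{rel8}) of~\S\ref{subsub:rel}; as displayed there, the system admits exactly one solution, which forces the type of $\mathrm{ZR}$ to be
\[
[1,1,2,6,6,6,6,6,6,6,6,14,14,14,21,21].
\]
The remaining task is to refute this potential center. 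For this I would impose the ring-morphism relations of~\S\ref{sub:ring}, namely $\sum_k M_{i,j}^kF_{k,t}=\sum_{l,u}F_{i,l}F_{j,u}N_{l,u}^t$ together with $F_{i^*,j}=F_{i,j^*}$, and ask whether they are compatible with an associative commutative fusion ring $\mathrm{ZR}$ of the type above. Since the induction matrix is now fixed, this is a finite Diophantine system in the unknown structure constants $M_{i,j}^k$, again attackable by \textsf{Normaliz}; if it has no solution, then no such center exists and $\mathcal{C}$ cannot exist.

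The main obstacle is precisely this last exclusion of the rank-$16$ center. Its rank exceeds the rank-$13$ range covered by the classification of integral modular categories in~\cite{ABPP}, so that result cannot be invoked directly, and one must instead push the (large but finite) associativity-plus-morphism system for $\mathrm{ZR}$ to its conclusion. Should a consistent fusion ring $\mathrm{ZR}$ nonetheless survive at the level of fusion rules, I would finish by testing it against the modular-data constraints that a genuine $\mathcal{Z}(\mathcal{C})$ must satisfy (compatible $S$- and $T$-matrices obeying the balancing and Verlinde relations, together with the Frobenius--Schur and Galois restrictions on the $f_V$ of~\S\ref{sub:formal}); failure of any of these yields the contradiction. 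As an independent sanity check, one expects every integral fusion category of $\FPdim=2\cdot3\cdot7$ to be group-theoretical, and a scan of the six groups of order~$42$ and their group-theoretical data should confirm that type $[1,1,2,6]$ never arises; this would re-prove the statement outside the induction-matrix framework, but the computation of $\mathrm{ZR}$ is the route that fits the methods developed in this paper.
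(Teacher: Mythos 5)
Your setup is correct (uniqueness of the fusion ring, formal codegrees $[2,3,7,42]$, the unique induction matrix, the rank-$16$ center type), but your primary route never closes, and it misses the one observation that makes the paper's proof short. Look again at the unique induction matrix displayed in~\S\ref{sub:NormInd}: its columns indexed by the center's simples of $\FPdim$ $1,1,2,6$ form an identity block, i.e.\ there are simple objects $b_1,b_2,b_3,b_5$ of $\mathcal{Z}(\mathcal{C})$ with $F(b_i)$ simple and exhausting the simples of $\mathcal{C}$. Since $F$ is a tensor functor, this embeds $\mathcal{C}$ as a fusion subcategory of its Drinfeld center, so $\mathcal{C}$ inherits a braiding; but by~\cite[Theorem 4.11]{B16} there is no premodular fusion category of type $[1,1,2,6]$, and the contradiction is immediate. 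Your proposed completion instead---classify all fusion rings $\mathrm{ZR}$ of the rank-$16$ type compatible with the fixed induction matrix, then test modular data---is left entirely conditional (``if it has no solution\dots'', ``should a consistent $\mathrm{ZR}$ survive\dots''), and you correctly note that \cite{ABPP} stops at rank~$13$, so nothing guarantees this computation terminates in an exclusion. As written, the argument proves nothing; it is a search strategy whose outcome you have not established, and the paper never performs it.

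Ironically, what you call an ``independent sanity check'' is a complete proof and is essentially the paper's Remark~\ref{rk:alternative}: since $42=2\times 3\times 7$, the category would be group-theoretical by~\cite[Theorem 9.2]{ENO11}, and since $42$ is square-free the Schur multipliers of all the stabilizers $H^g$ vanish (\S\ref{sub:schur}), so all cocycles $\psi^g$ are trivial and the type of $\mathcal{C}(G,\omega,H,\psi)$ equals that of $\mathcal{C}(G,1,H,1)$, as in~\S\ref{sub:r7d903}; a finite scan over the six groups of order~$42$ then rules out type $[1,1,2,6]$. (The paper's remark runs the same reduction slightly differently, via the Lagrangian subcategory $\Rep(G)$ of $\mathcal{Z}(\mathcal{C})$ and a comparison of character degrees with the center's type $t$.) So you should promote that argument from backup to main proof---or, better, extract the braiding from the induction matrix and cite~\cite{B16}---rather than rest the lemma on an uncompleted rank-$16$ Diophantine computation.
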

\begin{proof}
There is a unique possible induction matrix (see Remark~\ref{rk:collected}), embedding~$\mathcal{C}$ into its Drinfeld center~$\mathcal{Z}(\mathcal{C})$, which has rank~$16$ and type $t = [[1,2], [2,1], [6,8], [14,3], [21,2]].$ This embedding induces a braiding on~$\mathcal{C}$, but no premodular fusion category of type~$[1,1,2,6]$ exists, by~\cite[Theorem 4.11]{B16}.
\end{proof}

\begin{remark} \label{rk:alternative}
Here is an alternative to the last sentence of the proof of Lemma~\ref{lem:1126}. Since~$\FPdim(\mathcal{C}) = 42 = 2 \times 3 \times 7$, the fusion category~$\mathcal{C}$ must be group-theoretical by~\cite[Theorem 9.2]{ENO11}; that is, it is Morita equivalent to~$\VVec(G, \omega)$ for some finite group~$G$ of order~$42$ and some $3$-cocycle~$\omega$. Consequently,~$\mathcal{Z}(\mathcal{C})$ is braided equivalent to~$\mathcal{Z}(\VVec(G, \omega))$, by~\cite[Proposition 8.5.3]{EGNO15}. Such Drinfeld centers are characterized by the existence of a Lagrangian subcategory~$\Rep(G)$, according to~\cite[Proposition 9.13.5]{EGNO15}. However, there are only six non-isomorphic finite groups of order~$42$, and none of them have character degrees covered by the type~$t$ of~$\mathcal{Z}(\mathcal{C})$, leading to a contradiction.
\end{remark}


\begin{lemma} \label{lem:A4Braid}
Any fusion category of rank~$4$, $\FPdim$~$12$, and type~$[1,1,1,3]$ admits a braiding.
\end{lemma}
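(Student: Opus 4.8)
The plan is to first pin down the fusion ring and then to produce a braiding on every categorification by exhibiting it as an equivariantization of a symmetric pointed category. The fusion ring is forced: the three objects of $\FPdim 1$ are the invertibles, a group of order $3$, hence $C_3 = \{\one, g, g^2\}$; let $X$ be the object of $\FPdim 3$. Since $X$ is the unique object of its dimension it is self-dual, and $g \otimes X$ again has $\FPdim 3$, so $g \otimes X \cong X \cong X \otimes g$. Frobenius reciprocity (Proposition~\ref{prop:FrobRec}) then gives $N_{X,X}^{\one} = N_{X,X}^{g} = N_{X,X}^{g^2} = 1$, and the count $9 = 1+1+1+3\,N_{X,X}^{X}$ yields $N_{X,X}^{X} = 2$. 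Thus $X \otimes X = \one \oplus g \oplus g^2 \oplus 2X$, and the fusion ring is uniquely $\ch(A_4)$. It remains to show that every fusion category $\mathcal{C}$ realizing it admits a braiding.

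The core idea is to locate a \emph{central} copy of $\Rep(C_3)$. Because $g \otimes X \cong X \otimes g$ and the invertibles commute among themselves, the maximal pointed subcategory $\mathcal{C}_{pt} \cong \VVec(C_3, \omega)$ commutes objectwise with all of $\mathcal{C}$. The key step is to upgrade this to a genuine central structure: a braided embedding $\Rep(C_3) \hookrightarrow \mathcal{Z}(\mathcal{C})$ whose composite with the forgetful functor is an equivalence onto $\mathcal{C}_{pt}$ (in particular $\omega$ is trivial, so $\mathcal{C}_{pt}$ is Tannakian). Here I would exploit the induction-matrix analysis of \S\ref{sub:NormInd} to control $\mathcal{Z}(\mathcal{C})$ and to check that the invertibles of $\mathcal{C}$ lift to transparent objects of the center, thereby furnishing the desired Tannakian subcategory.

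Given a central $\Rep(C_3)$, de-equivariantization produces $\mathcal{C} \cong \mathcal{D}^{C_3}$ with $\FPdim(\mathcal{D}) = 12/3 = 4$. A fusion category of $\FPdim 4$ is pointed, so $\mathcal{D} = \VVec(A, \omega')$ with $|A| = 4$; the requirement that $C_3$ permute the three nontrivial simple objects transitively forces $A = C_2^2$ with $C_3$ acting by cyclic permutation, and this rigidity pins $\omega'$ to the trivial class, so $\mathcal{D} = \VVec(C_2^2) = \Rep(C_2^2)$ is symmetric. The $C_3$-action is by symmetric tensor autoequivalences, whence $\mathcal{C} = \mathcal{D}^{C_3}$ inherits a (symmetric, a fortiori braided) structure, recovering $\mathcal{C} \simeq \Rep(A_4)$.

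The main obstacle is the middle step: upgrading the objectwise commutativity of the invertibles to an actual half-braiding lifting $\Rep(C_3)$ into $\mathcal{Z}(\mathcal{C})$, i.e. producing the central Tannakian subcategory \emph{without} presupposing that $\mathcal{C}$ is already braided (which would be circular). Everything downstream---de-equivariantization, identification of the base $\VVec(C_2^2)$, and braidedness of the equivariantization---is then routine. As a fallback, one may instead invoke the explicit classification of the categorifications of $\ch(A_4)$ and verify braidedness directly for each member of the resulting finite list.
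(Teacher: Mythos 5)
Your reduction of the fusion ring to $\ch(A_4)$ is correct, but the proof has a genuine gap, and it sits exactly where you say it does: the lift of $\mathcal{C}_{pt}$ to a central Tannakian $\Rep(C_3)\hookrightarrow\mathcal{Z}(\mathcal{C})$ is the entire content of the lemma, and you leave it as a plan rather than an argument. The objectwise identity $g\otimes X\cong X\otimes g$ is a fusion-ring statement and carries no information toward a half-braiding. More importantly, even granting the induction-matrix data you appeal to, an induction matrix alone does not show that the lifted invertibles are \emph{transparent}, nor that the lift of $\mathcal{C}_{pt}$ is Tannakian rather than a nontrivially braided $\VVec(C_3)$: that distinction is invisible to the induction matrix (it lives in the twists/$T$-matrix, which this data does not determine). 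Two downstream steps are also unjustified: $H^3(C_2^2,\mathbb{C}^\times)\cong C_2^3$ contains a nontrivial class invariant under the cyclic $C_3$-action, so "rigidity pins $\omega'$ to the trivial class" does not follow; and your conclusion that $\mathcal{C}\simeq\Rep(A_4)$ as a symmetric category is strictly stronger than the lemma and than what the paper asserts (only Grothendieck equivalence to $\Rep(A_4)$) — ruling out twisted equivariantizations $\VVec(C_2^2,\omega')^{C_3}$ would require a separate argument that you do not supply. Your fallback ("classify all categorifications and check each") is likewise not carried out and no such classification is cited.

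For comparison, the paper's proof is one step and avoids your superstructure entirely: the \textsf{Normaliz} enumeration (Remark~\ref{rk:collected}) shows there are exactly \emph{two} possible induction matrices for this fusion ring. The first would force a Drinfeld center of rank~$11$ and type $[1,1,1,3,4,4,4,4,4,4,6]$, which is not the type of any integral modular fusion category by the rank-$\le 13$ classification of~\cite{ABPP}. The second induction matrix exhibits every simple object of $\mathcal{C}$ as the image of a simple object of $\mathcal{Z}(\mathcal{C})$, i.e.\ it embeds $\mathcal{C}$ into its Drinfeld center (of rank~$14$, type $[1,1,1,3,3,3,3,3,4,4,4,4,4,4]$), and a section of the forgetful functor is precisely a central structure, hence a braiding on all of $\mathcal{C}$. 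So the computation you defer to in your "main obstacle" actually finishes the proof directly — no central $\Rep(C_3)$, no de-equivariantization, no identification of the base $\VVec(C_2^2)$ — and the finite case-check in your fallback should be over induction matrices, not over categorifications.
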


\begin{proof}
There are exactly two possible induction matrices (see Remark~\ref{rk:collected}). The first gives rise to a Drinfeld center of rank~$11$ and type~$[1,1,1,3,4,4,4,4,4,4,6]$, which is excluded as the type of a modular fusion category by~\cite{ABPP}. The second matrix embeds the fusion category into its Drinfeld center, which has rank~$14$ and type~$[1,1,1,3,3,3,3,3,4,4,4,4,4,4]$. The result follows.
\end{proof}

Any fusion category as in Lemma~\ref{lem:A4Braid} is Grothendieck equivalent to~$\Rep(A_4)$; see \cite[\S\ref{sub:Rank4}(\ref{[1,1,1,3][0,2,1,3]})]{appendices}.

\begin{lemma} \label{lem:AllBraidedR5}
Any fusion category of rank~$5$, $\FPdim$ $20$, and type~$[1,1,1,1,4]$ admits a braiding.
\end{lemma}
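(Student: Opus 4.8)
The plan is to proceed exactly as in the proof of Lemma~\ref{lem:A4Braid}, using the \textsf{Normaliz} induction-matrix feature of~\S\ref{sub:NormInd}. First I would pin down the relevant fusion rings. The four invertible elements form a group of order $4$, so either $C_4$ or $C_2^2$; tensoring the $4$-dimensional object $\rho$ with an invertible $g$ yields a simple object of $\FPdim 4$, hence $g\rho=\rho g=\rho$, and consequently $N_{\rho,\rho^*}^{g}=\dim\mathrm{Hom}(g\otimes\rho,\rho)=1$ for every invertible $g$. Counting dimensions forces $\rho^*=\rho$ and $\rho^2=\sum_g g+3\rho$, so the only freedom is the group structure ($C_4$ or $C_2^2$) and its induced duality. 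For each of these finitely many data I would run the induction-matrix computation, obtaining the complete list of matrices $(F_{i,j})$ together with the type of the associated potential Drinfeld center $\mathcal{Z}(\mathcal{C})$, as recorded in Remark~\ref{rk:collected}.

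Next I would sort the resulting induction matrices into two groups, just as the two matrices were treated in Lemma~\ref{lem:A4Braid}. For any matrix whose associated center type cannot be the type of an integral modular fusion category, I would discard it by invoking the classification of~\cite{ABPP}. For each surviving matrix I would exhibit within $(F_{i,j})$ a set of five simple objects of $\mathcal{Z}(\mathcal{C})$ that the forgetful functor $F$ carries, with multiplicity one, bijectively onto the simples of $\mathcal{C}$ --- equivalently a sub-fusion-ring of $\mathrm{ZR}$ mapping isomorphically onto $\mathrm{R}$ under $F$. Since $F\colon \mathcal{Z}(\mathcal{C})\to\mathcal{C}$ is a tensor functor, such a subcategory $\mathcal{C}'$ is sent by $F$ to a tensor equivalence $F|_{\mathcal{C}'}$, and $s:=\iota\circ (F|_{\mathcal{C}'})^{-1}\colon \mathcal{C}\to\mathcal{Z}(\mathcal{C})$ is then a tensor section of the forgetful functor. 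A tensor section of $F$ is precisely a central structure on $\Id_{\mathcal{C}}$, i.e.\ a braiding on $\mathcal{C}$ (see~\cite[\S8.3]{EGNO15}), which gives the claim for every categorification.

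The hard part will be twofold. First, the case analysis must be genuinely exhaustive: I would need to verify that \emph{every} admissible induction matrix either is excluded by~\cite{ABPP} or contains the required section, with no leftover matrix that fails both, so the enumeration in Remark~\ref{rk:collected} has to be complete rather than illustrative. Second, and more delicate, is upgrading the numerical splitting visible in $(F_{i,j})$ to an actual fusion subcategory $\mathcal{C}'\subseteq\mathcal{Z}(\mathcal{C})$ realizing the section: one must check that the five distinguished objects are closed under tensor product in $\mathcal{Z}(\mathcal{C})$ with exactly the fusion coefficients $N_{i,j}^k$ of $\mathrm{R}$. This closure is controlled by the center's fusion data $(M_{i,j}^k)$, which is itself constrained by the ring-morphism relations $\sum_k M_{i,j}^k F_{k,t}=\sum_{l,u}F_{i,l}F_{j,u}N_{l,u}^t$ of~\S\ref{sub:ring}; confirming that these force $M_{i,j}^k=N_{i,j}^k$ on the distinguished simples is where the argument must be spelled out carefully. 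In contrast to Lemma~\ref{lem:1126}, where the induced premodular structure was used to \emph{derive a contradiction}, here the same induction data is used constructively to produce the braiding.
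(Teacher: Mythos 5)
Your plan is essentially the paper's own proof: the paper notes that there are exactly two fusion rings of type $[1,1,1,1,4]$ (the near-groups $C_2^2+3$ and $C_4+3$, whose fusion rules you derive correctly), that each admits a unique induction matrix (Remark~\ref{rk:collected}), and that in both cases this matrix embeds the category into its Drinfeld center, which induces the braiding --- the \cite{ABPP}-exclusion branch you hold in reserve from Lemma~\ref{lem:A4Braid} simply turns out not to be needed here. Your additional care in promoting the ring-level section visible in $(F_{i,j})$ to an honest tensor section of the forgetful functor (via the relations of \S\ref{sub:ring} and \cite[\S 8.3]{EGNO15}) spells out a step the paper's one-line proof leaves implicit, but it is the same argument rather than a different route.
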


\begin{proof}
There are two possible fusion rings, each admitting a unique induction matrix (see Remark~\ref{rk:collected}). In both cases, the fusion category embeds into its Drinfeld center. The result follows.
\end{proof}

\begin{lemma} \label{lem:AllBraided[1,1,1,3,12]}
There is no fusion category of rank~$5$, $\FPdim \ 156$, and type~$[1,1,1,3,12]$.
\end{lemma}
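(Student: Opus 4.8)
The plan is first to pin down the fusion ring exactly, and only then to obstruct its categorification, since the elementary invariants turn out to be consistent. For uniqueness: the three basic elements of $\FPdim\ 1$ form the pointed part, a group of order $3$, hence $C_3=\{1,g,g^2\}$; write $x$ for the element of $\FPdim\ 3$ and $y$ for the one of $\FPdim\ 12$. Because $x$ and $y$ are the unique basic elements of their dimensions, multiplication by an invertible must fix them, so $gx=x$, $gy=y$, and both $x,y$ are self-dual. Frobenius reciprocity (Proposition~\ref{prop:FrobRec}) then forces the multiplicities of $1,g,g^2$ inside $x^2$ and $y^2$, and dimension counting determines the remaining coefficients, yielding the unique possibility
\[
x^2 = 1 + g + g^2 + 2x, \qquad xy = 3y, \qquad y^2 = 1 + g + g^2 + 3x + 11y .
\]
A routine associativity check confirms this is a genuine (and the only) fusion ring of type $[1,1,1,3,12]$.

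The next step is to verify that the cheap obstructions fail, so that a true categorification-level argument is unavoidable. The ring is commutative (Proposition~\ref{prop:min6}); its characters are the $\FPdim$ character together with $(1,1,1,3,-1)$, $(1,1,1,-1,0)$, and the two Galois-conjugate characters $(1,\omega,\omega^2,0,0)$ with $\omega$ a primitive cube root of unity. Evaluating the central element $Z=\sum_i b_i b_{i^*}=5+2g+2g^2+5x+11y$ on each character gives the formal codegrees $[3,3,4,13,156]$. These are integers, each dividing $\FPdim=156$, and $\tfrac{1}{3}+\tfrac{1}{3}+\tfrac{1}{4}+\tfrac{1}{13}+\tfrac{1}{156}=1$, so the ring is a bona fide integral Drinfeld ring. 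Thus neither divisibility of the formal codegrees nor the Egyptian-fraction constraint excludes it, and the exclusion must come from the structure of the Drinfeld center.

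The main step is the induction-matrix analysis of~\S\ref{sec:IndMat}. If a fusion category $\mathcal{C}$ of this type existed, then $\mathcal{Z}(\mathcal{C})$ would be an integral modular fusion category admitting an induction matrix $(F_{i,j})$ satisfying relations~(\ref{rel1})--(\ref{rel8}); in particular the lower $s\times r$ block is constrained by the eigenvalue data $n_i f_i$ computed above, with target dimensions $m_i=\FPdim/f_i=(1,12,39,52,52)$. I would run the \textsf{Normaliz} feature of~\S\ref{sub:NormInd} on this unique fusion ring (as collected in Remark~\ref{rk:collected}) to enumerate all nonnegative integer solutions of the resulting Diophantine system, together with the half-Frobenius and divisibility conditions on the candidate center.

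The expected outcome—and the crux of the argument—is that this enumeration returns no admissible induction matrix, so $\mathcal{C}$ cannot embed into any Drinfeld center and hence cannot exist. The main obstacle is computational: the $(r-1)(s-1)$ entries of the lower block are coupled through the quadratic relations~(\ref{rel2}) and the integrality constraints on the center, and one must rely on the completeness of the \textsf{Normaliz} enumeration. Should admissible matrices appear instead, the fallback—exactly as in Lemmas~\ref{lem:1126} and~\ref{lem:A4Braid}—is that each forced candidate center type is then excluded as an integral modular fusion category by the classification of~\cite{ABPP}, the embedding inducing a braiding on $\mathcal{C}$ that no premodular category of type $[1,1,1,3,12]$ can carry.
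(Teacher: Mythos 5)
Your setup is sound: the uniqueness of the fusion ring, the character table, and the formal codegrees $[3,3,4,13,156]$ with $m_i=(1,12,39,52,52)$ all match the paper's data in \S\ref{sub:Rank5}(\ref{[1, 1, 1, 3, 12]}), and the induction-matrix machinery of \S\ref{sec:IndMat} is indeed the tool the paper uses. But your stated ``expected outcome—and the crux of the argument''—that the \textsf{Normaliz} enumeration returns \emph{no} admissible induction matrix—is false. As recorded in Remark~\ref{rk:collected} and used in the paper's proof, there is exactly \emph{one} induction matrix for this ring, and it embeds $\mathcal{C}$ into its Drinfeld center $\mathcal{Z}(\mathcal{C})$. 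So the primary route you propose would not terminate the argument; the exclusion cannot come from the absence of induction matrices.

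Your fallback, however, is essentially the paper's actual proof: the unique induction matrix realizes $\mathcal{C}$ as a fusion subcategory of $\mathcal{Z}(\mathcal{C})$, hence induces a braiding on $\mathcal{C}$, and no premodular fusion category of type $[1,1,1,3,12]$ exists. One correction to how you frame it: the relevant citation is the rank-$5$ premodular classification \cite[Theorem~I.1]{BO18} (the rank-$5$ analogue of \cite{B16} used in Lemma~\ref{lem:1126}), not the first clause of your fallback via \cite{ABPP}. The route through \cite{ABPP}—excluding the forced center type as an integral modular category—only works when the candidate center has rank at most $13$ (as for the first matrix in Lemma~\ref{lem:A4Braid}); here the center of a category of $\FPdim\ 156$ is far beyond that range, so the braiding obstruction on $\mathcal{C}$ itself is the step that closes the proof.
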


\begin{proof}
There is a unique possible induction matrix (see Remark~\ref{rk:collected}), which embeds the fusion category into its Drinfeld center. The result then follows from~\cite[Theorem~I.1]{BO18}.
\end{proof}


\begin{lemma} \label{lem:11136}
There is no integral fusion category of rank $5$, $\FPdim \ 48 = 2^4 3$ and type $[1,1,1,3,6]$.
\end{lemma}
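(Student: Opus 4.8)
The plan is to first reduce to the unique fusion ring of this type and then exclude its categorification by the induction-matrix method of \S\ref{sub:NormInd}. Since every fusion ring of rank at most $5$ is commutative (Proposition~\ref{prop:min6}), I would start from the combinatorics. The three basic elements of $\FPdim 1$ form the group of invertibles, necessarily $C_3$; write them $1,g,g^2$, and let $x,y$ be the basic elements of $\FPdim 3$ and $6$. As the unique elements of their dimensions, $x$ and $y$ are self-dual and $C_3$-fixed, i.e.\ $g\otimes x=x$ and $g\otimes y=y$. Frobenius reciprocity then forces each invertible to occur exactly once in $x\otimes x$, and a dimension count eliminates the alternative $x\otimes x=1+g+g^2+y$ (it would force the half-integer multiplicity $N_{x,y}^{y}=15/6$), leaving $x\otimes x=1+g+g^2+2x$. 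The remaining products are then forced to $x\otimes y=3y$ and $y\otimes y=1+g+g^2+3x+4y$, so $\langle 1,g,g^2,x\rangle\cong\ch(A_4)$ and there is exactly one fusion ring of type $[1,1,1,3,6]$.

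Next I would confirm that this ring is Drinfeld, so that the Egyptian-fraction filter of \S\ref{sub:Drin} does not already dispose of it. The characters of the commutative ring are determined by $\phi(g)\in\{1,\omega,\omega^2\}$ together with the relations above, and computing $f_\phi=\sum_i|\phi(b_i)|^2$ yields the formal codegrees $\{3,3,4,16,48\}$, all dividing $48$ with $\tfrac13+\tfrac13+\tfrac14+\tfrac1{16}+\tfrac1{48}=1$. Thus the ring is a genuine integral Drinfeld ring, and any exclusion must come from a finer obstruction.

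The heart of the argument is then the induction-matrix analysis, carried out exactly as in Lemmas~\ref{lem:NoInd}, \ref{lem:1126}, and~\ref{lem:AllBraided[1,1,1,3,12]}. I would run the \textsf{Normaliz} feature of \S\ref{sub:NormInd} to enumerate all matrices $(F_{i,j})$ satisfying the relations of \S\ref{subsub:rel}. There are two possible outcomes. If no induction matrix exists, then $\mathcal{C}$ admits no categorification over $\mathbb{C}$ and we are done. Otherwise every admissible matrix embeds $\mathcal{C}$ into its Drinfeld center $\mathcal{Z}(\mathcal{C})$, thereby inducing a braiding on $\mathcal{C}$; in that case I would invoke the classification of small braided/premodular fusion categories (in the spirit of \cite{B16} used for Lemma~\ref{lem:1126} and \cite{BO18} used for Lemma~\ref{lem:AllBraided[1,1,1,3,12]}) to conclude that no premodular fusion category of type $[1,1,1,3,6]$ exists, or else check the resulting center type against the modular classification of \cite{ABPP}.

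An independent structural route is available and I would use it to corroborate the computation. Since $\FPdim(\mathcal{C})=2^4\cdot 3$, the category $\mathcal{C}$ is solvable; moreover the adjoint subcategory equals $\mathcal{C}$ (each invertible appears in some $b\otimes b^{*}$), so the universal grading is trivial. By the structure theory of solvable fusion categories, one that is not a nontrivial $\mathbb{Z}/p$-extension must be a nontrivial $\mathbb{Z}/p$-equivariantization, and since the $C_3$ of invertibles forms the Tannakian subcategory $\Rep(C_3)$ this forces $\mathcal{C}\cong\mathcal{D}^{C_3}$ with $\FPdim(\mathcal{D})=16$. Unwinding the equivariantization against the type $[1,1,1,3,6]$ shows that $\mathcal{D}$ must have type $[1,1,1,1,2,2,2]$ carrying a free $C_3$-action that cyclically permutes both its three nontrivial invertibles and its three $2$-dimensional objects. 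For the group-theoretical realizations $\mathcal{D}=\Rep(G)$ with $G$ the dihedral, semidihedral, or generalized quaternion group of order $16$, no such action exists because $\mathrm{Aut}(G)$ is a $2$-group. The main obstacle is twofold: controlling the induction-matrix enumeration so that the \textsf{Normaliz} computation is conclusive (the $\FPdim$ $1320$ and $210$ examples show that the solution count can explode), and, for the structural route, ruling out any non-group-theoretical category $\mathcal{D}$ of $\FPdim 16$ that might carry the required $C_3$-action.
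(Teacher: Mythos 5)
Your setup is sound and matches the paper up to the point where the real work begins: the reduction to a unique fusion ring of type $[1,1,1,3,6]$ (see \S\ref{sub:Rank5}(\ref{[1,1,1,3,6]})), the verification that it is Drinfeld with formal codegrees $[3,3,4,16,48]$, and the decision to attack it via induction matrices are all correct. The gap is in your case analysis of what the \textsf{Normaliz} computation returns. The actual outcome is the one your plan does not cover: there are $12$ induction matrices, so the ``no solution'' branch fails, and these matrices do \emph{not} embed $\mathcal{C}$ into $\mathcal{Z}(\mathcal{C})$ the way the unique matrices do in Lemmas~\ref{lem:1126}, \ref{lem:11266}, \ref{lem:112210} and \ref{lem:AllBraided[1,1,1,3,12]}; hence no braiding on $\mathcal{C}$ is induced and the premodular rank-$5$ classification of \cite{BO18} cannot be invoked. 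Your other closer, ``check the center type against \cite{ABPP}'', also fails: the candidate centers have $\FPdim = 48^2 = 2304$ and rank far above $13$, the limit of that classification. What is actually needed, and what the paper supplies, is a bespoke argument at the level of the hypothetical integral \emph{modular} category $\mathcal{D}$ of the computed center type (all $12$ types share the modular partition with two triples of $16$-dimensional simples): one checks $\mathcal{D}_{pt} \cong \VVec(C_3)$ is Tannakian, de-equivariantizes, uses $\gcd(3,16)=1$ and \cite[Lemma 7.2]{NR16} to keep a $16$-dimensional object simple, observes that the trivial component of the induced $C_3$-grading has $\FPdim = 2^8$ and is therefore nilpotent, so $\FPdim(Y)^2 = 2^8$ divides $\FPdim\bigl((\mathcal{D}_{C_3})_{ad}\bigr)$ by \cite[Corollary 5.3]{GN08}, while \cite[Theorem 8.28]{ENO05} gives a faithful $C_2$-grading bounding that adjoint dimension by $2^7$ — a contradiction. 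Nothing in your proposal plays the role of this step.

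Your ``independent structural route'' via solvability and $C_3$-de-equivariantization is attractive and is genuinely in the spirit of the paper's center-side argument, but as written it is incomplete in two ways, one of which you flag yourself. First, as you note, $\mathcal{D}$ of $\FPdim\ 16$ and type $[1,1,1,1,2,2,2]$ need not be of the form $\Rep(G)$, and ruling out the other categorifications (twisted group-theoretical models, non-group-theoretical possibilities) is precisely the hard content that is missing. Second, even in the case $\mathcal{D} = \Rep(G)$, the relevant object is a categorical $C_3$-action, i.e.\ a homomorphism $C_3 \to \mathrm{Aut}_\otimes(\Rep(G))$, and $\mathrm{Aut}_\otimes(\Rep(G))$ can be strictly larger than $\mathrm{Out}(G)$ (it also sees invariant cocycle twists), so ``$\Aut(G)$ is a $2$-group'' does not by itself exclude an order-$3$ tensor autoequivalence permuting the invertibles. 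So this route, too, stops short of a proof; the paper's de-equivariantization-plus-nilpotency argument is the step you would need to reconstruct.
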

\begin{proof}   
There is a single possible fusion ring, see \cite[\S\ref{sub:Rank5}(\ref{[1,1,1,3,6]})]{appendices}. There are $12$ possible induction matrices (Remark \ref{rk:collected}), providing $12$ possible types $t$ for the Drinfeld center, whose modular partition are all as follows:  
$$[[1,1,1,3, \dots, 3, 6, \dots, 6, 12, \dots, 12, 24, \dots, 24],[16,16,16],[16,16,16]],$$
%
%
%
Let $\mathcal{D}$ be an integral modular fusion category of type $t$. It is easy to check that $\mathcal{D}_{pt}\cong \VVec(C_3)$ is Tannakian (just need to check the assumption of \cite[Theorem 8.2 (1)]{ABPP}).  Consider the functor $F: \mathcal{D}\to \mathcal{D}_{C_3}$, the de-equivariantization of $\mathcal{D}$ by $C_3$. Let $X$ be a $16$-dimensional simple object of $\mathcal{D}$. Since $\gcd(3,16)=1$, $F(X)=Y$ is also a simple object in $\mathcal{D}_{C_3}$ by \cite[Lemma 7.2]{NR16}. $\mathcal{D}_{C_3}$ admits a faithful $C_3$-grading with the trivial component $\mathcal{D}_{C_3}^0$.  Since the $\FPdim$ of $\mathcal{D}_{C_3}^0$ is $2^8$, $\mathcal{D}_{C_3}^0$ is nilponent and so is $\mathcal{D}_{C_3}$.  Hence $\FPdim(Y)^2=2^8$ divides the dimension of the $(\mathcal{D}_{C_3})_{ad}$, the  adjoint subcategory of $\mathcal{D}_{C_3}$, by \cite[Corollary 5.3]{GN08}. By \cite[Theorem 8.28]{ENO05}, $\mathcal{D}_{C_3}^0$ admits a faithful $C_2$-grading. Hence  $\FPdim((\mathcal{D}_{C_3})_{ad})$ is at most $2^7$. This contradicts the fact we have gotten that $\FPdim(Y)^2=2^8$ dividing the dimension of the $(\mathcal{D}_{C_3})_{ad}$.
\end{proof}

\begin{lemma} \label{lem:11266}
There is no integral fusion category~$\mathcal{C}$ of rank~$5$, with~$\FPdim(\mathcal{C}) = 78$ and type~$[1,1,2,6,6]$.
\end{lemma}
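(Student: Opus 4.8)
\emph{Proof strategy.} The governing feature is that $\FPdim(\mathcal{C}) = 78 = 2\cdot 3\cdot 13$ is a product of three distinct primes, so I would argue exactly as in the alternative presented in Remark~\ref{rk:alternative} for Lemma~\ref{lem:1126}. Assuming such a $\mathcal{C}$ exists, \cite[Theorem~9.2]{ENO11} forces it to be group-theoretical, hence Morita equivalent to $\VVec(G,\omega)$ for some group $G$ of order $78$ and some $3$-cocycle $\omega$. Then $\mathcal{Z}(\mathcal{C})$ is braided equivalent to $\mathcal{Z}(\VVec(G,\omega))$ by~\cite[Proposition~8.5.3]{EGNO15}, and by~\cite[Proposition~9.13.5]{EGNO15} such a center is characterized by carrying a Lagrangian subcategory $\Rep(G')$ for some group $G'$ of order $78$. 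In particular, the character degrees of $G'$ must all occur among the FPdims of the simple objects of $\mathcal{Z}(\mathcal{C})$.

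I would then pin down the admissible types of $\mathcal{Z}(\mathcal{C})$ using the \textsf{Normaliz} feature of~\S\ref{sub:NormInd}. First I would isolate the finitely many Drinfeld fusion rings of type $[1,1,2,6,6]$: their formal codegrees satisfy $\sum_i 1/f_i = 1$ with each $f_i \mid 78$ and $f_1 = 78$, for instance $[78,13,13,3,2]$, which induces center objects of FPdims $1,6,6,26,39$. Running the induction-matrix computation on each such ring (as documented in Remark~\ref{rk:collected}) returns the complete list of candidate induction matrices together with the associated candidate types for $\mathcal{Z}(\mathcal{C})$. Should this list be empty, the lemma is immediate, exactly as in Lemma~\ref{lem:NoInd}.

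The proof then finishes with a finite group-theoretic comparison. There are precisely six groups of order $78$, with character-degree multisets $1^{78}$ (for $C_{78}$), $1^2\,2^{19}$ (for $D_{39}$), $1^{26}\,2^{13}$ (for $S_3\times C_{13}$), $1^6\,2^{18}$ (for $C_3\times D_{13}$), $1^6\,3^8$ (for $C_2\times(C_{13}\rtimes C_3)$), and $1^6\,6^2$ (for $C_{13}\rtimes C_6$). Each has squarefree order and hence trivial Schur multiplier, so no projective twisting intervenes and the Lagrangian is genuinely one of these $\Rep(G')$. I would then verify that none of these six multisets can be realized by a Lagrangian Tannakian subcategory of FPdim $78$ sitting inside any candidate type of $\mathcal{Z}(\mathcal{C})$, yielding the required contradiction.

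The step I expect to be the main obstacle is this final matching, and specifically the group $C_{13}\rtimes C_6$: its degrees $1^6\,6^2$ already reuse the two FPdim-$6$ objects induced from $\mathcal{C}$, so excluding it demands the finer data of the full center type — for instance, counting the invertible objects of $\mathcal{Z}(\mathcal{C})$ (a $\Rep(C_{13}\rtimes C_6)$ Lagrangian would require six of them) and inspecting the fusion among the degree-$6$ objects. If instead the induction-matrix computation returns no admissible solution, this difficulty evaporates and the lemma reduces to a single \textsf{Normaliz} run.
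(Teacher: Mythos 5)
Your proposal is correct, but it follows the route the paper only sketches as an alternative (Remark~\ref{rk:alternative}), not the proof it actually gives. The paper's primary argument runs: the \textsf{Normaliz} computation (Remark~\ref{rk:collected}) yields a \emph{unique} induction matrix, which embeds $\mathcal{C}$ into $\mathcal{Z}(\mathcal{C})$ (rank~$36$, type $t=[[1,2],[2,1],[6,28],[26,3],[39,2]]$) and hence induces a braiding on $\mathcal{C}$ itself; one then invokes the classification of rank-$5$ premodular categories \cite[Theorem I.1]{BO18}, which contains no category of type $[1,1,2,6,6]$. Your argument instead uses $78=2\times 3\times 13$ to force $\mathcal{C}$ to be group-theoretical via \cite[Theorem 9.2]{ENO11}, so that $\mathcal{Z}(\mathcal{C})\simeq\mathcal{Z}(\VVec(G,\omega))$ must contain a Lagrangian $\Rep(G')$ with $|G'|=78$, and then matches character degrees against the center type; this is exactly the paper's stated alternative, and your degree multisets for the six groups of order~$78$ are correct. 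Your trade-off: you avoid any reliance on the external premodular classification \cite{BO18}, at the cost of the group-theoretical reduction plus a finite group-theoretic check — but note that both routes still rest on the same \textsf{Normaliz} induction-matrix computation, since the center type $t$ is needed for the degree comparison (your fallback ``if the list is empty'' does not occur: there is exactly one induction matrix). Two small refinements: the Schur-multiplier remark is superfluous here, since a Lagrangian subcategory of a twisted double is an honest $\Rep(G')$ by \cite[Proposition 9.13.5]{EGNO15} irrespective of cocycle data; and the obstacle you flag for $C_{13}\rtimes C_6$ evaporates once $t$ is known, because $\Rep(C_{13}\rtimes C_6)$ requires six invertible objects while $t$ provides only two — indeed every one of the six groups fails already on multiplicity counts (invertibles for $C_{78}$, $S_3\times C_{13}$, $C_3\times D_{13}$, $C_{13}\rtimes C_6$; only one object of dimension~$2$ against the $19$ needed for $D_{39}$; no objects of dimension~$3$ for $C_2\times(C_{13}\rtimes C_3)$).
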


\begin{proof}
As in the proof of Lemma~\ref{lem:1126}, there is a unique induction matrix (see Remark~\ref{rk:collected}), embedding~$\mathcal{C}$ into its Drinfeld center~$\mathcal{Z}(\mathcal{C})$, which has rank~$36$ and type~$t = [[1, 2], [2, 1], [6, 28], [26, 3], [39, 2]]$. This embedding induces a braiding on~$\mathcal{C}$, but according to~\cite[Theorem I.1]{BO18}, there is no premodular fusion category of type~$[1,1,2,6,6]$.
\end{proof}
The same alternative proof as in Remark~\ref{rk:alternative} applies, since $78 = 2 \times 3 \times 13$, and none of the six finite groups of order~$78$ have character degrees covered by the type~$t$.


\begin{lemma} \label{lem:112210}
There is no integral fusion category~$\mathcal{C}$ of rank~$5$, with~$\FPdim(\mathcal{C}) = 110$ and type~$[1,1,2,2,10]$.
\end{lemma}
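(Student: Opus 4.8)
The plan is to follow verbatim the strategy of Lemmas~\ref{lem:1126} and~\ref{lem:11266}. First I would record that the type $[1,1,2,2,10]$ indeed gives $\FPdim(\mathcal{C}) = 1+1+4+4+100 = 110 = 2 \times 5 \times 11$, a squarefree product of three distinct primes. Next, using the \textsf{Normaliz} feature of~\S\ref{sub:NormInd}, I would enumerate all induction matrices attached to the fusion ring(s) of this type (see Remark~\ref{rk:collected}). Guided by the analogous cases, I expect a unique induction matrix, realizing an embedding $\mathcal{C} \hookrightarrow \mathcal{Z}(\mathcal{C})$ of $\mathcal{C}$ as a fusion subcategory of its Drinfeld center, with the center type $t$ output alongside it.

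Since a fusion subcategory of a braided category is braided, this embedding equips $\mathcal{C}$ with a braiding, so $\mathcal{C}$ becomes a premodular fusion category of type $[1,1,2,2,10]$. The contradiction then comes from the classification of low-rank premodular categories: exactly as for type $[1,1,2,6,6]$, I would invoke~\cite[Theorem~I.1]{BO18} to conclude that no premodular fusion category of this type exists, which closes the argument.

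For robustness I would also set up the group-theoretical alternative of Remark~\ref{rk:alternative}. As $\FPdim(\mathcal{C}) = 2 \times 5 \times 11$, \cite[Theorem~9.2]{ENO11} forces $\mathcal{C}$ to be group-theoretical, so $\mathcal{Z}(\mathcal{C})$ is braided equivalent to $\mathcal{Z}(\VVec(G,\omega))$ for some group $G$ of order $110$, and therefore contains a Lagrangian copy of $\Rep(G)$; the character degrees of $G$ must then be covered, as a multiset, by the center type $t$. There are exactly six groups of order $110$, and one checks their character degrees: the only one carrying a $10$-dimensional irreducible is the Frobenius group $C_{11} \rtimes C_{10}$, whose degrees are $[1,1,1,1,1,1,1,1,1,1,10]$, while the remaining five have maximal character degree at most $5$. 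Hence no group of order $110$ has character degrees fitting inside a type $t$ that already accommodates $\mathcal{C}$'s dimensions $[1,1,2,2,10]$ (the Frobenius group's ten linear characters alone overflow the available multiplicity of dimension~$1$, which in the analogous order-$42$ case was only~$2$), giving the required contradiction.

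The main obstacle I anticipate is not conceptual but verificational: the whole argument hinges on the \textsf{Normaliz} enumeration terminating with the expected small set of induction matrices, all of which must induce a braiding, and on the resulting premodular type being genuinely excluded by the external classification. Should~\cite[Theorem~I.1]{BO18} not apply verbatim to type $[1,1,2,2,10]$, the burden shifts entirely to the group-theoretical route, where the delicate point becomes the multiset bookkeeping of character degrees against the computed center type $t$ rather than against $\FPdim(\mathcal{C})$ alone.
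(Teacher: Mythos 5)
Your proposal is correct and matches the paper's proof essentially verbatim: the paper likewise finds (via the \textsf{Normaliz} computation of Remark~\ref{rk:collected}) a unique induction matrix embedding $\mathcal{C}$ into $\mathcal{Z}(\mathcal{C})$ (of rank~$28$ and type $t = [[1,2],[2,2],[10,12],[22,10],[55,2]]$), so that $\mathcal{C}$ acquires a braiding, and then excludes the premodular type $[1,1,2,2,10]$ by \cite[Theorem~I.1]{BO18}. Your backup group-theoretical route is also exactly the paper's stated alternative (as in Remark~\ref{rk:alternative}): since $110 = 2 \times 5 \times 11$, none of the six groups of order~$110$ have character degrees covered by~$t$.
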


\begin{proof}
As in the proof of Lemma~\ref{lem:1126}, there is a unique induction matrix (see Remark~\ref{rk:collected}), embedding~$\mathcal{C}$ into its Drinfeld center~$\mathcal{Z}(\mathcal{C})$, which has rank~$28$ and type~$t = [[1, 2], [2, 2], [10, 12], [22, 10], [55, 2]]$. This embedding induces a braiding on~$\mathcal{C}$, but according to~\cite[Theorem I.1]{BO18}, there is no premodular fusion category of type~$[1,1,2,2,10]$.
\end{proof}
The same alternative proof as in Remark~\ref{rk:alternative} applies, since $110 = 2 \times 5 \times 11$, and none of the six finite groups of order~$110$ have character degrees covered by the type~$t$.


\subsubsection*{Strong Lagrange}

Let us conclude this section by providing an alternative exclusion process using the following result, which is stronger than Lagrange's theorem and is a consequence of~\cite[Remark 8.17]{ENO05}:

\begin{proposition} \label{prop:Lag+}
Let~$\mathcal{C}$ be a fusion category over~$\mathbb{C}$, and let~$\mathcal{D}$ be a fusion subcategory. Let~$\mathcal{M}$ be an indecomposable component of~$\mathcal{C}$ considered as a left~$\mathcal{D}$-module. Then~$\frac{\FPdim(\mathcal{M})}{\FPdim(\mathcal{D})}$ is an algebraic integer.
\end{proposition}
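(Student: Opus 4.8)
The plan is to deduce this refinement of Lagrange's theorem for fusion categories directly from the module-category machinery, specifically by applying~\cite[Remark 8.17]{ENO05} to the decomposition of $\mathcal{C}$ as a left $\mathcal{D}$-module category. First I would recall the setup: viewing $\mathcal{C}$ as a left module category over the fusion subcategory $\mathcal{D}$ (via the tensor product $\mathcal{D}\times\mathcal{C}\to\mathcal{C}$), one obtains a decomposition $\mathcal{C}\simeq\bigoplus_a \mathcal{M}_a$ into indecomposable left $\mathcal{D}$-module categories, where $\FPdim(\mathcal{C})=\sum_a\FPdim(\mathcal{M}_a)$. The Frobenius--Perron dimension of a $\mathcal{D}$-module category $\mathcal{M}$ is defined so that $\FPdim(\mathcal{M})$ measures the total dimension of $\mathcal{M}$ relative to the $\mathcal{D}$-action, and the content of~\cite[Remark 8.17]{ENO05} is precisely that for each indecomposable $\mathcal{M}_a$ the ratio $\FPdim(\mathcal{M}_a)/\FPdim(\mathcal{D})$ is an algebraic integer.

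The key steps, in order, are: (i) fix the left $\mathcal{D}$-module structure on $\mathcal{C}$ and isolate a single indecomposable component $\mathcal{M}$; (ii) recall that to the pair $(\mathcal{D},\mathcal{M})$ one associates the dual category $\mathcal{D}^*_{\mathcal{M}}=\mathrm{Fun}_{\mathcal{D}}(\mathcal{M},\mathcal{M})$, which is again a fusion category with $\FPdim(\mathcal{D}^*_{\mathcal{M}})=\FPdim(\mathcal{D})$ by~\cite[Theorem 7.16.6]{EGNO15} (Morita invariance of the global dimension); (iii) invoke~\cite[Remark 8.17]{ENO05}, which states that $\FPdim(\mathcal{M})^2/\FPdim(\mathcal{D})$ and, more to the point, $\FPdim(\mathcal{M})/\FPdim(\mathcal{D})$ are algebraic integers, being quotients of global dimensions by the dimension of the acting category. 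Concretely, each object of $\mathcal{M}$ acquires an ``internal Hom'' in $\mathcal{D}$, and the regular-representation argument used to prove that $\FPdim(\mathcal{C})/\FPdim(\mathcal{D})$ is an algebraic integer when $\mathcal{M}=\mathcal{C}$ applies verbatim to each indecomposable summand.

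Since essentially all of the work has been packaged into~\cite[Remark 8.17]{ENO05}, the proof itself should be short: it is an application of that remark to the left-module decomposition, together with the observation that the relevant ratio is the one asserted. I would therefore write it as a direct citation, spelling out only the identification of $\mathcal{M}$ as an indecomposable left $\mathcal{D}$-module component of $\mathcal{C}$ and the conclusion that $\FPdim(\mathcal{M})/\FPdim(\mathcal{D})\in\overline{\mathbb{Z}}$.

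The main obstacle I anticipate is purely expository rather than mathematical: one must be careful that the notion of $\FPdim$ of a module category in~\cite{ENO05} agrees with the naive sum of squared dimensions of simple objects used elsewhere in this paper, and that ``indecomposable component of $\mathcal{C}$ as a left $\mathcal{D}$-module'' is unambiguous (as opposed to right-module or bimodule structures). Provided these conventions are pinned down, there is no genuine difficulty, and the statement follows immediately from the cited remark.
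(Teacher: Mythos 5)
Your proposal is correct and follows exactly the paper's route: the paper offers no separate argument but presents the proposition as a direct consequence of \cite[Remark 8.17]{ENO05} applied to the decomposition of $\mathcal{C}$ into indecomposable left $\mathcal{D}$-module components, which is precisely what you do. One incidental slip worth fixing: your step (ii) is not needed for the argument, and \cite[Theorem 7.16.6]{EGNO15} states $\FPdim(\mathcal{Z}(\mathcal{C}))=\FPdim(\mathcal{C})^2$ rather than Morita invariance of $\FPdim$ (that is a separate result), but since the conclusion rests entirely on the cited remark, this does not affect the validity of your proof.
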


Proposition~\ref{prop:Lag+} provides a criterion for categorification (a necessary condition): from a fusion ring, one can classify the indecomposables for each fusion subring and verify whether the divisibility condition holds. A specific case of this is evident directly from the type:

\begin{corollary} \label{prop:Lag+type}
Let~$\mathcal{C}$ be a fusion category over~$\mathbb{C}$, and let~$[[d_1, n_1], [d_2, n_2], \dots, [d_s, n_s]]$ be its type, where~$d_1 = 1 < \cdots < d_s$. For each~$i$, the quantity~$\frac{n_i d_i^2}{n_1}$ is an algebraic integer.
\end{corollary}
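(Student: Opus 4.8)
The plan is to apply Proposition~\ref{prop:Lag+} to the pair consisting of $\mathcal{C}$ and its maximal pointed subcategory $\mathcal{D} := \mathcal{C}_{pt}$. Recall that in a fusion category an object is invertible precisely when its $\FPdim$ equals $1$, so $\mathcal{D}$ is the fusion subcategory generated by the $n_1$ simple objects of $\FPdim$ $d_1 = 1$; these form a finite group $G$ of order $n_1$ under tensor product, so that $\FPdim(\mathcal{D}) = n_1$. I would then view $\mathcal{C}$ as a left $\mathcal{D}$-module category via the tensor product of $\mathcal{C}$ restricted to $\mathcal{D}$.

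The next step is to decompose the regular module $\mathcal{C}$ into indecomposable left $\mathcal{D}$-module components. Since $\mathcal{D}$ is pointed, these components are exactly the orbits of the action of $G$ on $\mathrm{Irr}(\mathcal{C})$ by $g \cdot X := g \otimes X$: two simple objects lie in the same component if and only if one is obtained from the other by tensoring with an invertible. Because tensoring with an invertible object is multiplicative on $\FPdim$ and $\FPdim(g) = 1$, every object in a single orbit $\mathcal{O}$ has the same $\FPdim$, say $d_i$; consequently the $\FPdim$ of the associated indecomposable component equals $\sum_{X \in \mathcal{O}} \FPdim(X)^2 = |\mathcal{O}|\, d_i^2$. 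Proposition~\ref{prop:Lag+} then yields that $\frac{|\mathcal{O}|\, d_i^2}{n_1}$ is an algebraic integer for each orbit $\mathcal{O}$.

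Finally, I would fix $i$ and sum over the orbits $\mathcal{O}$ consisting of simple objects of $\FPdim$ $d_i$. Since these orbits partition the full set of $n_i$ simple objects of $\FPdim$ $d_i$, we have $\sum_{\mathcal{O}} |\mathcal{O}| = n_i$, and therefore $\frac{n_i d_i^2}{n_1} = \sum_{\mathcal{O}} \frac{|\mathcal{O}|\, d_i^2}{n_1}$ is a finite sum of algebraic integers, hence itself an algebraic integer. The only delicate point to justify carefully is the identification of the indecomposable $\mathcal{D}$-module summands of $\mathcal{C}$ with the $G$-orbits, together with the assertion that the $\FPdim$ of such a summand is the sum of the squared $\FPdims$ of the simple objects it contains; both are standard for the regular module over a pointed subcategory, but they are where all the content sits. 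Everything else reduces to the elementary fact that a finite sum of algebraic integers is an algebraic integer.
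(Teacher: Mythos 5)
Your proof is correct and is exactly the paper's argument: the paper's entire proof of this corollary reads ``Apply Proposition~\ref{prop:Lag+} to $\mathcal{D} = \mathcal{C}_{pt}$.'' Your identification of the indecomposable $\mathcal{D}$-module components with the orbits of the group of invertibles, the computation $\FPdim(\mathcal{M}) = |\mathcal{O}|\,d_i^2$, and the summation over orbits of a fixed $\FPdim$ are precisely the details the paper leaves implicit, and they are filled in correctly.
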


\begin{proof}
Apply Proposition~\ref{prop:Lag+} to~$\mathcal{D} = \mathcal{C}_{pt}$.
\end{proof}

In particular, in the integral case,~$n_1$ divides~$n_i d_i^2$ for all~$i$. This can be applied, for example, to the fusion ring of type~$[1, 1, 5, 7, 8]$ in~\cite[\S\ref{sub:Rank5}(\ref{11578})]{appendices}.

\section{Group-theoretical models} \label{sec:grpth}

After recalling some basic facts about group-theoretical fusion categories \( \mathcal{C}(G, \omega, H, \psi) \) in \S\ref{sub:grpthbasic}, we explain in \S\ref{sub:r7d60nc} why the noncommutative Drinfeld ring of type $[1, 1, 1, 3, 4, 4, 4]$ is group-theoretical, and we provide a \GAP{} code to verify this automatically for categories of the form \( \mathcal{C}(G, 1, H, 1) \). In \S\ref{sub:schur}, we recall results involving the Schur multiplier that, under suitable conditions, allow a reduction to this specific case, which in turn enables us to exclude a fusion category of type $[1, 1, 1, 3, 3, 21, 21]$ in \S\ref{sub:r7d903}. Finally, in \S\ref{sub:NonIsaacs}, we exhibit the first non-Isaacs group-theoretical fusion category.

\subsection{Basics} \label{sub:grpthbasic}
Following \cite[\S 9.7]{EGNO15}, a fusion category is called \emph{group-theoretical} if it is Morita equivalent to pointed fusion category, \( \mathrm{Vec}(G, \omega) \) for some finite group \( G \) and 3-cocycle \( \omega \). Such a category is completely determined by a quadruple \( (G, \omega, H, \psi) \), where \( H \subseteq G \) is a subgroup and \( \psi \) is a 2-cochain on \( H \) satisfying \( d_2 \psi = \omega|_{H \times H \times H} \). It is denoted by \( \mathcal{C}(G, \omega, H, \psi) \), and it is always integral \cite[Remark 9.7.7]{EGNO15}. Furthermore, as shown in \cite[Theorem 9.2]{ENO11}, any integral fusion category of dimension \( pqr \), where \( p, q, r \) are distinct prime numbers, is necessarily group-theoretical.

Let \( R \) be a set of representatives for the double cosets \( H \backslash G / H \). According to \cite{Os03b}, \cite[\S 5.1]{GeNa09}, and \cite[Example 9.7.4]{EGNO15}, there is a bijection between the isomorphism classes of simple objects in \( \mathcal{C} \) and the isomorphism classes of pairs \( (g, \rho) \), where \( g \in R \) and \( \rho \) is an irreducible projective representation of the subgroup
\(
H^g = H \cap g H g^{-1}
\)
with 2-cocycle \( \psi^g \in H^2(H^g, \mathbb{C}^\times) \). That is, \( \rho \) is a simple object of \( \Rep_{\psi^g}(H^g) \). The cocycle \( \psi^g \) depends on \( \psi \) and \( \omega \), and is trivial when both \( \psi \) and \( \omega \) are trivial. The corresponding simple object is denoted \( X_{g, \rho} \).

According to \cite[Remark 2.3(ii)]{Nik08}, \cite[Proof of Theorem 5.1]{GeNa09}, and \cite[Theorem 6.1]{GS25},
\[
\FPdim(X_{g, \rho}) = [H : H^g] \cdot \dim(\rho) \  \text{ and } \   X_{g, \rho}^* = X_{g', \nu^*},
\]
where \( \{g'\} = R \cap (H g^{-1} H) \), and \( \nu \) is a simple object of \( \Rep_{\psi^{g'}}(H^{g'}) \) determined as follows. First, observe that the subgroups \( H^{g} \) and \( H^{g'} \) are conjugate. Indeed, since there exist \( h_1, h_2 \in H \) such that  
\(
g' = h_1 g^{-1} h_2,
\)
it follows that  
\(
H^{g'} = h_1 H^{g^{-1}} h_1^{-1}.
\)
Moreover, \( g^{-1} H^g g = H^{g^{-1}} \). For $h \in H^{g'}$, the simple object \( \nu \) is given by
\[
\nu(h) = \rho(g h_1^{-1} h h_1 g^{-1}). 
\]

Importantly, in general one cannot assume \( g' = g^{-1} \). For instance, this fails when \( (H,G) = (C_2, C_4) \).


\subsection{Type $[1,1,1,3,4,4,4]$} \label{sub:r7d60nc}
This subsection is devoted to demonstrating that the noncommutative Drinfeld ring of rank $7$, with $\FPdim$ 60 and type $[1,1,1,3,4,4,4]$ described in \cite[\S\ref{sub:NCRank7}(\ref{[1,1,1,3,4,4,4]})]{appendices}, is isomorphic to the Grothendieck ring of the group-theoretical fusion category $\mathcal{C}(A_5, 1, A_4, 1)$.
The set of double cosets
\[
R = A_4 \backslash A_5 / A_4 = \{ A_4, A_4 g A_4 \}, \quad g \notin A_4
\]
contains exactly two distinct double cosets, of sizes 12 and 48 respectively.

\textit{Trivial Double Coset:} The double coset $A_4$ corresponds to representations of the subgroup $A_4$, which has irreducible representations of dimensions $1, 1, 1,$ and $3$. This yields three simple objects of dimension 1 and one simple object of dimension 3.

\textit{Non-Trivial Double Coset:} For $g \notin A_4$, the stabilizer subgroup $A_4 \cap g A_4 g^{-1}$ is isomorphic to the cyclic group $C_3$, which has index 4 in $A_4$. The irreducible representations of $C_3$ are all one-dimensional; each such representation induces a simple object in $\mathcal{C}$ of dimension $4 \times 1 = 4$. Thus, this double coset contributes three simple objects of dimension 4.

This computation can be independently verified using the \GAP{}~\cite{gap} function \texttt{GroupTheoreticalType}, available in the file \texttt{GroupTheoretical.gap} located in the \verb|Code/GAP| directory of~\cite{CodeData}.
\begin{verbatim}
gap> Read("GroupTheoretical.gap");
gap> G:=AlternatingGroup(5);; H:=AlternatingGroup(4);;
gap> GroupTheoreticalType(G,H);
[ 1, 1, 1, 3, 4, 4, 4 ]
\end{verbatim}

With some extra work, one could write a script that computes all possible types for all choices of \( \omega \) and \( \psi \).

Finally, there are two Drinfeld rings of type $[1,1,1,3,4,4,4]$: a commutative one, say $\mathcal{R}_C$, and a noncommutative one, say $\mathcal{R}_{NC}$. To verify that the Grothendieck ring above is isomorphic to $\mathcal{R}_{NC}$, we observed that $\mathcal{R}_C$ has some formal codegrees equal to $6$. In contrast, the Drinfeld center $\mathcal{Z}(\VVec(A_5))$, which has type $$[1, 3, 3, 4, 5, 12, 12, 12, 12, 12, 12, 12, 12, 12, 12, 15, 15, 15, 15, 20, 20, 20],$$ contains no simple object with $\FPdim = 60/6 = 10$. 

Alternatively, since \( \mathcal{R}_C \) contains five self-dual basic elements, whereas \( \mathcal{R}_{NC} \) contains only three, the conclusion can also be verified by the following computation using \texttt{GroupTheoreticalTypeDuality}, a function available in the same \GAP{} file as above. This function also computes the duality.
\begin{verbatim}
gap> Read("GroupTheoretical.gap");
gap> GroupTheoreticalTypeDuality(G, H);
[ [ 1, 1, 1, 3, 4, 4, 4 ], [0, 2, 1, 3, 4, 5, 6] ]
\end{verbatim}
\subsection{Schur multiplier} \label{sub:schur}
According to \cite[Definition 11.12]{Isaacs}, the \emph{Schur multiplier} of a finite group \( G \) is the abelian group \( H^2(G, \mathbb{C}^\times) \), denoted \( M(G) \). As shown in \cite[Corollary 11.21]{Isaacs}, if a prime \( p \) divides \( |M(G)| \), then the Sylow \( p \)-subgroup of \( G \) is not cyclic. It follows that if all Sylow subgroups of \( G \) are cyclic, then \( M(G) \) is trivial. In particular, \( M(G) \) is trivial whenever \( G \) is cyclic or has square-free order.

\subsection{Type $[1,1,1,3,3,21,21]$} \label{sub:r7d903}
This subsection is devoted to proving that the Drinfeld ring of rank $7$, $\FPdim$ $903$ and type $[1,1,1,3,3,21,21]$ described in \cite[\S\ref{sub:NCRank7}(\ref{[1,1,1,3,3,21,21]})]{appendices}, does not admit a categorification.
Assume, for contradiction, that such a fusion category $\mathcal{C}$ exists. Since $903 = 3 \times 7 \times 43$, the category $\mathcal{C}$ must be group-theoretical (see \S\ref{sub:grpthbasic}), and hence of the form $\mathcal{C}(G, \omega, H, \psi)$, where $G$ is a finite group of order $903$ and $H$ is a subgroup of $G$.
Since $903$ is square-free, every subgroup $H^g = H \cap gHg^{-1}$ also has square-free order. As recalled in \S\ref{sub:schur}, this ensures that the Schur multiplier $M(H^g)$ is trivial. Consequently, all the $2$-cocycles $\psi^g$ are trivial. Therefore, using the notation from \S\ref{sub:grpthbasic}, the type of the category $\mathcal{C}(G, \omega, H, \psi)$ coincides with that of $\mathcal{C}(G, 1, H, 1)$.
The \GAP{} function \texttt{FindGroupSubgroup}, available in the same \GAP{} file as above, classifies all pairs of groups $G$ and subgroups $H$ such that the group-theoretical category $\mathcal{C}(G,1,H,1)$ has type \texttt{t}, and also computes the corresponding duality. When applied to the case $t = [1,1,1,3,3,21,21]$, the function returns no solutions.
\begin{verbatim}
gap> Read("GroupTheoretical.gap");
gap> FindGroupSubgroup([1,1,1,3,3,21,21]);
[  ]
\end{verbatim}
%

\subsection{A non-Isaacs group-theoretical category} \label{sub:NonIsaacs}

In this subsection, we exhibit the first known example of an integral fusion category that is not Isaacs, yet is group-theoretical.

Let \( R \) be a commutative fusion ring. Denote its character table by \( (\lambda_{i,j}) \), the basic Frobenius-Perron dimensions by \( (d_i) \), and the formal codegrees by \( (c_j) \), where \( d_1 = 1 \) and \( c_1 = \FPdim(R) \). We say that \( R \) is \emph{Isaacs} if
\(
\frac{\lambda_{i,j} c_1}{d_i c_j}
\)
is an algebraic integer for all \( i,j \). This notion has been extended to noncommutative fusion rings in \cite{BP25}. A pseudo-unitary fusion category is said to be Isaacs if its Grothendieck ring satisfies this property.

The following computation shows that the group-theoretical fusion category \( \mathcal{C}(G, 1, H, 1) \) has type \( [1,1,2,3,3,6] \) if and only if \( G = A_5 \) and \( H = S_3 \):
\begin{verbatim}
gap> FindGroupSubgroup([1,1,2,3,3,6]);
[ [ 5, "A5", 6, "S3", [ [ 1, 1, 2, 3, 3, 6 ], [ 0, 1, 2, 3, 4, 5 ] ] ] ]
\end{verbatim}

However, according to the classification, \cite[\S\ref{subsub:1FrobR6}(\ref{[1,1,2,3,3,6][0,1,2,3,4,5]})]{appendices} is the unique self-dual Drinfeld ring of type \( [1,1,2,3,3,6] \).
We will prove that it is not Isaacs. Its formal codegrees are \( [60, 15, 12, 4, 4, 3] \), and its character tables is:

\[
\left[
\begin{matrix}
1 & 1 & 1 & 1 & 1 & 1 \\
1 & 1 & 1 & -1 & -1 & 1 \\
2 & 2 & 2 & 0 & 0 & -1 \\
3 & -2 & 1 & 1 & -1 & 0 \\
3 & -2 & 1 & -1 & 1 & 0 \\
6 & 1 & -2 & 0 & 0 & 0
\end{matrix}
\right],
\]
We observe that
\[
\frac{\lambda_{4,2} c_1}{d_4 c_2} = \frac{-2 \times 60}{3 \times 15} = -\frac{8}{3},
\]
which is not an algebraic integer. Therefore, the fusion ring is not Isaacs. We conclude that the group-theoretical fusion category \( \mathcal{C}(A_5, 1, S_3, 1) \) is not Isaacs.

\section{Integral Drinfeld rings} \label{sec:General}

This section provides a summary of the computations used to classify all integral Drinfeld rings of rank at most~$5$ in general, and of higher ranks under additional assumptions. The table below displays the corresponding bounds and the number of Drinfeld rings identified in each case:

\begin{center}
\begin{tabular}{c|c|c|c|c}
\S & \textbf{Rank} & \textbf{Case} & \textbf{Bound on $\FPdim$} & \textbf{Number of Drinfeld rings} \\
\hline
\ref{sub:MainProof} & $\le 5$ & All & All & $29$ \\
\ref{subsub:1FrobRank6} & $6$ & $1$-Frobenius & All & $58$ \\
\ref{subsub:N1FrobRank6} & $6$ &  Non-$1$-Frobenius & $\le 200000$ & $88$ \\
\ref{subsub:1FrobRank7} & $7$ & $1$-Frobenius & $\le 100000$ & $241$ \\
\ref{subsub:N1FrobRank7} & $7$ & Non-$1$-Frobenius & $\le 5000$ & $113$ \\
\ref{sub:1FrobRank8} & $8$ & $1$-Frobenius & $\le 20000$ & $750$ \\
\ref{sub:1FrobRank9} & $9$ & $1$-Frobenius & $\le 2000$ & $1292$
\end{tabular}
\end{center}

A technical subtlety arises in the noncommutative case; we have deferred this discussion to \cite[\S\ref{sub:DivA}]{appendices}.

\subsection{Up to rank 5} \label{sub:MainProof} 
There are $1 + 1 + 3 + 14 + 147 = 166$ Egyptian fractions of length at most~$5$ (see~\cite{A002966}), but only $1 + 1 + 3 + 12 + 97 = 114$ of them satisfy the divisibility condition (see~\cite{A374582}); the full list is available in the \verb|Data/EgyptianFractionsDiv| folder of~\cite{CodeData}. These correspond to $1$, $1$, $3$, $9$, and $48$ distinct global $\FPdim$ (up to $1$, $2$, $6$, $42$, and $1806$, respectively), which in turn yield $1 + 1 + 2 + 7 + 208 = 219$ potential types.

Among these, only $27$ types admit fusion rings, giving rise to $36$ fusion rings in total, of which $29$ are Drinfeld rings. A detailed list—including the global $\FPdim$, type, duality, formal codegrees, and fusion data for each Drinfeld ring—is provided in~\cite[\S\ref{sub:UpToRank5}]{appendices}. For each case, either a concrete categorification is given or a reference is provided to a theoretical obstruction; all such exclusions are collected in~\S\ref{sec:ExcMod}.
Complete computational details and copy-pastable data can be found in the file \verb|GeneralUpToRank5.txt|, located in the \verb|Data/General| directory of~\cite{CodeData}. 

All relevant \SageMath{}~\cite{sage} functions are provided in the \verb|Code/SageMath| directory of~\cite{CodeData} and are explained in~\cite{ABPP}. For computations involving \Normaliz{}, see~\cite{NorManual}.

\subsection{Rank 6}  \label{sub:GeneralRank6}

There are $3462$ Egyptian fractions of rank $6$ (see \cite{A002966}). Among them, exactly $1568$ ones satisfy the divisibility assumption (see \cite{A374582}), corresponding to $492$ different possible global $\FPdim$ between $6$ and $3263442$, and $37694793$ possible types.  

\subsubsection{$1$-Frobenius case} \label{subsub:1FrobRank6} 

The number of $1$-Frobenius types is $1406$ only, but $40$ ones only admit fusion rings, $125$ fusion rings in total, and only $58$ ones are Drinfeld rings. The list is available in \cite[\S\ref{subsub:1FrobR6}]{appendices}. It contains a single simple item, the Grothendieck ring of $\Rep({\rm PSL}(2,7))$.
Copy-pastable data can be found in the file \verb|1FrobR6.txt|, located in the \verb|Data/General| directory of~\cite{CodeData}.

\subsubsection{Non-$1$-Frobenius case} \label{subsub:N1FrobRank6}

Without the $1$-Frobenius assumption, we already considered the $478$ first global $\FPdims$ (among $492$), those less than $200000$, themselves corresponding to $5597826$ possibles types, but $165$ types only admit fusion rings, $297$ fusion rings in total, and only $88$ ones are Drinfeld rings. Among them, exactly $32$ ones are non-$1$-Frobenius. They are listed in \cite[\S\ref{subsub:non1FrobR6}]{appendices}. Among them, there are two simple exotic ones. They are the first exotic simple integral non-$1$-Frobenius Drinfeld rings, and one of them is 3-positive (see \cite[\S\ref{subsub:non1FrobR6}(\ref{[1,9,10,11,21,24]})]{appendices}, and the comments in \S\ref{sub:ExoticIntro}).   
Copy-pastable data can be found in the file \verb|N1FrobR6d200000.txt|, located in the \verb|Data/General| directory of~\cite{CodeData}.

\begin{remark} 
With current technology, it should be feasible to achieve the remaining $14$ global $\FPdims$ necessary to complete the classification. However, this would require a herculean computational effort.
\end{remark}

\subsection{Rank 7} \label{sub:GeneralRank7} There are $294314$ Egyptian fractions of rank $7$ (see \cite{A002966}). Among them, exactly $76309$ satisfy the divisibility assumption (see \cite{A374582}), corresponding to $20655$ distinct possible global $\FPdim$ values between $7$ and $10650056950806$.

\subsubsection{1-Frobenius case} \label{subsub:1FrobRank7}

In the $1$-Frobenius case, we have already considered the $3370$ possible global $\FPdim \le 10^5$, which correspond to $60740$ possible types. Among these, only $183$ types admit fusion rings, yielding a total of $2066$ fusion rings, of which only $241$ are Drinfeld rings. These are available in the file \verb|1FrobR7d10^5.txt| within the \verb|Data/General| folder of \cite{CodeData}. Among them, exactly $5$ are simple, as listed in \cite[\S\ref{subsub:1FrobRank7A}]{appendices} where (\ref{[1,5,5,5,6,7,7]}) is the only simple exotic 3-positive one.
See Question~\ref{qu:F210indIntro} (and the paragraphs around it), motivated by discussions with Scott Morrison and Pavel Etingof.

\subsubsection{Non-1-Frobenius case} \label{subsub:N1FrobRank7}

Without the $1$-Frobenius assumption, we have already considered the $685$ possible global $\FPdim \le 5000$, which correspond to $1864563$ potential types. Among these, only $646$ types admit fusion rings, yielding a total of $2938$ fusion rings. Of these, just $284$ are Drinfeld rings, and exactly $113$ are non-$1$-Frobenius. These are listed in the file \verb|N1FrobR7d5000.txt|, located in the \verb|Data/General| folder of~\cite{CodeData}. Among the non-$1$-Frobenius Drinfeld rings, exactly $5$ are simple, as detailed in~\cite[\S\ref{subsub:N1FrobRank7A}]{appendices}. Only two of them are both exotic and $3$-positive.

\subsection{Rank 8}  \label{sub:1FrobRank8}
There are $159330691$ Egyptian fractions of rank~$8$ (see~\cite{A002966}). Among them, exactly $16993752$ satisfy the divisibility condition (see~\cite{A374582}), corresponding to $5792401$ distinct values of the global $\FPdim$ between~$8$ and~$113423713055421844361000443$. 

Without delving into details, by pushing \Normaliz{} to its limits on HPC, we classified all $750$ integral $1$-Frobenius Drinfeld rings of rank~$8$ with $\FPdim \leq 20000$ (as well as all $792$ with $\FPdim \leq 25000$, except for the unresolved type~$[1,44,49,55,55,56,56,70]$). The complete list is provided in the file \verb|1FrobR8d25000.txt|, located in the \verb|Data/General| directory of~\cite{CodeData}.

Among these, exactly seven are simple, but only one (beow)is both $3$-positive and exotic. It is isotype to~$\Rep(\mathrm{PSL}(2,11))$ and was ruled out from categorification by the zero spectrum criterion in~\cite{LPR2203}.

\begin{itemize} 
\item $\FPdim \ 660$, type $[1,5,5,10,10,11,12,12]$, duality $[0,2,1,3,4,5,6,7]$, fusion data:
{\fontsize{10}{12}\selectfont $$ 
\begin{smallmatrix} 1&0&0&0&0&0&0&0 \\ 0&1&0&0&0&0&0&0 \\ 0&0&1&0&0&0&0&0 \\ 0&0&0&1&0&0&0&0 \\ 0&0&0&0&1&0&0&0 \\ 0&0&0&0&0&1&0&0 \\ 0&0&0&0&0&0&1&0 \\ 0&0&0&0&0&0&0&1\end{smallmatrix} ,\ 
\begin{smallmatrix} 0&1&0&0&0&0&0&0 \\ 0&0&1&1&1&0&0&0 \\ 1&0&0&0&0&0&1&1 \\ 0&0&1&0&1&1&1&1 \\ 0&0&1&1&0&1&1&1 \\ 0&0&0&1&1&1&1&1 \\ 0&1&0&1&1&1&1&1 \\ 0&1&0&1&1&1&1&1\end{smallmatrix} ,\ 
\begin{smallmatrix} 0&0&1&0&0&0&0&0 \\ 1&0&0&0&0&0&1&1 \\ 0&1&0&1&1&0&0&0 \\ 0&1&0&0&1&1&1&1 \\ 0&1&0&1&0&1&1&1 \\ 0&0&0&1&1&1&1&1 \\ 0&0&1&1&1&1&1&1 \\ 0&0&1&1&1&1&1&1\end{smallmatrix} ,\ 
\begin{smallmatrix} 0&0&0&1&0&0&0&0 \\ 0&0&1&0&1&1&1&1 \\ 0&1&0&0&1&1&1&1 \\ 1&0&0&3&1&1&2&2 \\ 0&1&1&1&1&2&2&2 \\ 0&1&1&1&2&2&2&2 \\ 0&1&1&2&2&2&2&2 \\ 0&1&1&2&2&2&2&2\end{smallmatrix} ,\ 
\begin{smallmatrix} 0&0&0&0&1&0&0&0 \\ 0&0&1&1&0&1&1&1 \\ 0&1&0&1&0&1&1&1 \\ 0&1&1&1&1&2&2&2 \\ 1&0&0&1&3&1&2&2 \\ 0&1&1&2&1&2&2&2 \\ 0&1&1&2&2&2&2&2 \\ 0&1&1&2&2&2&2&2\end{smallmatrix} ,\ 
\begin{smallmatrix} 0&0&0&0&0&1&0&0 \\ 0&0&0&1&1&1&1&1 \\ 0&0&0&1&1&1&1&1 \\ 0&1&1&1&2&2&2&2 \\ 0&1&1&2&1&2&2&2 \\ 1&1&1&2&2&2&2&2 \\ 0&1&1&2&2&2&3&2 \\ 0&1&1&2&2&2&2&3\end{smallmatrix} ,\ 
\begin{smallmatrix} 0&0&0&0&0&0&1&0 \\ 0&1&0&1&1&1&1&1 \\ 0&0&1&1&1&1&1&1 \\ 0&1&1&2&2&2&2&2 \\ 0&1&1&2&2&2&2&2 \\ 0&1&1&2&2&2&3&2 \\ 1&1&1&2&2&3&2&3 \\ 0&1&1&2&2&2&3&3\end{smallmatrix} ,\ 
\begin{smallmatrix} 0&0&0&0&0&0&0&1 \\ 0&1&0&1&1&1&1&1 \\ 0&0&1&1&1&1&1&1 \\ 0&1&1&2&2&2&2&2 \\ 0&1&1&2&2&2&2&2 \\ 0&1&1&2&2&2&2&3 \\ 0&1&1&2&2&2&3&3 \\ 1&1&1&2&2&3&3&2\end{smallmatrix}
$$}
\item Formal codegrees: $[4,5,5,11,11,12,12,660]$,
\item Property: non-Isaacs, isotype to $\Rep({\rm PSL}(2,11))$,
\item Categorification: excluded in \cite{LPR2203}.
\end{itemize}

%

\subsection{Rank 9}  \label{sub:1FrobRank9}

Without going into detail—and \emph{without} pushing \Normaliz{} to its limits—we classified all $1292$ integral $1$-Frobenius Drinfeld rings of rank~$9$ with $\FPdim \le 2000$. The complete list is available in the file \verb|1FrobR9d2000.txt|, located in the \verb|Data/General| directory of~\cite{CodeData}.

Among these, exactly $9$ are simple, but only two (listed below) are both $3$-positive and exotic. They are isotype to $\Rep(G)$ for $G = \mathrm{PSL}(2,q)$ with $q = 8$ and $13$, respectively, and remain open for categorification.

\begin{enumerate}
\item $\FPdim \ 504$, type $[1, 7, 7, 7, 7, 8, 9, 9, 9]$, duality $[0,1,2,3,4,5,6,7,8]$, fusion data:
{\fontsize{7}{8}\selectfont $$ 
\begin{smallmatrix}1&0&0&0&0&0&0&0&0 \\ 0&1&0&0&0&0&0&0&0 \\ 0&0&1&0&0&0&0&0&0 \\ 0&0&0&1&0&0&0&0&0 \\ 0&0&0&0&1&0&0&0&0 \\ 0&0&0&0&0&1&0&0&0 \\ 0&0&0&0&0&0&1&0&0 \\ 0&0&0&0&0&0&0&1&0 \\ 0&0&0&0&0&0&0&0&1\end{smallmatrix} ,   \ 
\begin{smallmatrix}0&1&0&0&0&0&0&0&0 \\ 1&0&1&1&1&0&1&1&1 \\ 0&1&1&0&0&1&1&1&1 \\ 0&1&0&1&0&1&1&1&1 \\ 0&1&0&0&1&1&1&1&1 \\ 0&0&1&1&1&1&1&1&1 \\ 0&1&1&1&1&1&1&1&1 \\ 0&1&1&1&1&1&1&1&1 \\ 0&1&1&1&1&1&1&1&1\end{smallmatrix} ,   \ 
\begin{smallmatrix}0&0&1&0&0&0&0&0&0 \\ 0&1&1&0&0&1&1&1&1 \\ 1&1&0&1&1&0&1&1&1 \\ 0&0&1&1&0&1&1&1&1 \\ 0&0&1&0&1&1&1&1&1 \\ 0&1&0&1&1&1&1&1&1 \\ 0&1&1&1&1&1&1&1&1 \\ 0&1&1&1&1&1&1&1&1 \\ 0&1&1&1&1&1&1&1&1\end{smallmatrix} ,   \ 
\begin{smallmatrix}0&0&0&1&0&0&0&0&0 \\ 0&1&0&1&0&1&1&1&1 \\ 0&0&1&1&0&1&1&1&1 \\ 1&1&1&0&1&0&1&1&1 \\ 0&0&0&1&1&1&1&1&1 \\ 0&1&1&0&1&1&1&1&1 \\ 0&1&1&1&1&1&1&1&1 \\ 0&1&1&1&1&1&1&1&1 \\ 0&1&1&1&1&1&1&1&1\end{smallmatrix} ,   \ 
\begin{smallmatrix}0&0&0&0&1&0&0&0&0 \\ 0&1&0&0&1&1&1&1&1 \\ 0&0&1&0&1&1&1&1&1 \\ 0&0&0&1&1&1&1&1&1 \\ 1&1&1&1&0&0&1&1&1 \\ 0&1&1&1&0&1&1&1&1 \\ 0&1&1&1&1&1&1&1&1 \\ 0&1&1&1&1&1&1&1&1 \\ 0&1&1&1&1&1&1&1&1\end{smallmatrix} ,   \ 
\begin{smallmatrix}0&0&0&0&0&1&0&0&0 \\ 0&0&1&1&1&1&1&1&1 \\ 0&1&0&1&1&1&1&1&1 \\ 0&1&1&0&1&1&1&1&1 \\ 0&1&1&1&0&1&1&1&1 \\ 1&1&1&1&1&1&1&1&1 \\ 0&1&1&1&1&1&2&1&1 \\ 0&1&1&1&1&1&1&2&1 \\ 0&1&1&1&1&1&1&1&2\end{smallmatrix} ,   \ 
\begin{smallmatrix}0&0&0&0&0&0&1&0&0 \\ 0&1&1&1&1&1&1&1&1 \\ 0&1&1&1&1&1&1&1&1 \\ 0&1&1&1&1&1&1&1&1 \\ 0&1&1&1&1&1&1&1&1 \\ 0&1&1&1&1&1&2&1&1 \\ 1&1&1&1&1&2&1&1&2 \\ 0&1&1&1&1&1&1&2&2 \\ 0&1&1&1&1&1&2&2&1\end{smallmatrix} ,   \ 
\begin{smallmatrix}0&0&0&0&0&0&0&1&0 \\ 0&1&1&1&1&1&1&1&1 \\ 0&1&1&1&1&1&1&1&1 \\ 0&1&1&1&1&1&1&1&1 \\ 0&1&1&1&1&1&1&1&1 \\ 0&1&1&1&1&1&1&2&1 \\ 0&1&1&1&1&1&1&2&2 \\ 1&1&1&1&1&2&2&1&1 \\ 0&1&1&1&1&1&2&1&2\end{smallmatrix} ,   \ 
\begin{smallmatrix}0&0&0&0&0&0&0&0&1 \\ 0&1&1&1&1&1&1&1&1 \\ 0&1&1&1&1&1&1&1&1 \\ 0&1&1&1&1&1&1&1&1 \\ 0&1&1&1&1&1&1&1&1 \\ 0&1&1&1&1&1&1&1&2 \\ 0&1&1&1&1&1&2&2&1 \\ 0&1&1&1&1&1&2&1&2 \\ 1&1&1&1&1&2&1&2&1\end{smallmatrix} $$}
\begin{itemize}
\item Formal codegrees: $[7, 7, 7, 8, 9, 9, 9, 9, 504]$,
\item Property: simple, exotic, $3$-positive, isotype to $\Rep({\rm PSL}(2,8))$,
\item Categorification: open.
\end{itemize}
\item $\FPdim \ 1092$, type $[1,7,7,12,12,12,13,14,14]$, duality $[0,1,2,3,4,5,6,8,7]$, fusion data:
{\fontsize{7}{8}\selectfont $$ 
\begin{smallmatrix}1&0&0&0&0&0&0&0&0 \\ 0&1&0&0&0&0&0&0&0 \\ 0&0&1&0&0&0&0&0&0 \\ 0&0&0&1&0&0&0&0&0 \\ 0&0&0&0&1&0&0&0&0 \\ 0&0&0&0&0&1&0&0&0 \\ 0&0&0&0&0&0&1&0&0 \\ 0&0&0&0&0&0&0&1&0 \\ 0&0&0&0&0&0&0&0&1\end{smallmatrix} ,   \ 
\begin{smallmatrix}0&1&0&0&0&0&0&0&0 \\ 1&1&0&0&0&0&1&1&1 \\ 0&0&0&1&1&1&1&0&0 \\ 0&0&1&1&1&1&1&1&1 \\ 0&0&1&1&1&1&1&1&1 \\ 0&0&1&1&1&1&1&1&1 \\ 0&1&1&1&1&1&1&1&1 \\ 0&1&0&1&1&1&1&2&1 \\ 0&1&0&1&1&1&1&1&2\end{smallmatrix} ,   \ 
\begin{smallmatrix}0&0&1&0&0&0&0&0&0 \\ 0&0&0&1&1&1&1&0&0 \\ 1&0&1&0&0&0&1&1&1 \\ 0&1&0&1&1&1&1&1&1 \\ 0&1&0&1&1&1&1&1&1 \\ 0&1&0&1&1&1&1&1&1 \\ 0&1&1&1&1&1&1&1&1 \\ 0&0&1&1&1&1&1&2&1 \\ 0&0&1&1&1&1&1&1&2\end{smallmatrix} ,   \ 
\begin{smallmatrix}0&0&0&1&0&0&0&0&0 \\ 0&0&1&1&1&1&1&1&1 \\ 0&1&0&1&1&1&1&1&1 \\ 1&1&1&2&1&2&1&2&2 \\ 0&1&1&1&2&1&2&2&2 \\ 0&1&1&2&1&1&2&2&2 \\ 0&1&1&1&2&2&2&2&2 \\ 0&1&1&2&2&2&2&2&2 \\ 0&1&1&2&2&2&2&2&2\end{smallmatrix} ,   \ 
\begin{smallmatrix}0&0&0&0&1&0&0&0&0 \\ 0&0&1&1&1&1&1&1&1 \\ 0&1&0&1&1&1&1&1&1 \\ 0&1&1&1&2&1&2&2&2 \\ 1&1&1&2&2&1&1&2&2 \\ 0&1&1&1&1&2&2&2&2 \\ 0&1&1&2&1&2&2&2&2 \\ 0&1&1&2&2&2&2&2&2 \\ 0&1&1&2&2&2&2&2&2\end{smallmatrix} ,   \ 
\begin{smallmatrix}0&0&0&0&0&1&0&0&0 \\ 0&0&1&1&1&1&1&1&1 \\ 0&1&0&1&1&1&1&1&1 \\ 0&1&1&2&1&1&2&2&2 \\ 0&1&1&1&1&2&2&2&2 \\ 1&1&1&1&2&2&1&2&2 \\ 0&1&1&2&2&1&2&2&2 \\ 0&1&1&2&2&2&2&2&2 \\ 0&1&1&2&2&2&2&2&2\end{smallmatrix} ,   \ 
\begin{smallmatrix}0&0&0&0&0&0&1&0&0 \\ 0&1&1&1&1&1&1&1&1 \\ 0&1&1&1&1&1&1&1&1 \\ 0&1&1&1&2&2&2&2&2 \\ 0&1&1&2&1&2&2&2&2 \\ 0&1&1&2&2&1&2&2&2 \\ 1&1&1&2&2&2&2&2&2 \\ 0&1&1&2&2&2&2&3&2 \\ 0&1&1&2&2&2&2&2&3\end{smallmatrix} ,   \ 
\begin{smallmatrix}0&0&0&0&0&0&0&1&0 \\ 0&1&0&1&1&1&1&2&1 \\ 0&0&1&1&1&1&1&2&1 \\ 0&1&1&2&2&2&2&2&2 \\ 0&1&1&2&2&2&2&2&2 \\ 0&1&1&2&2&2&2&2&2 \\ 0&1&1&2&2&2&2&3&2 \\ 0&1&1&2&2&2&2&2&4 \\ 1&2&2&2&2&2&3&2&2\end{smallmatrix} ,   \ 
\begin{smallmatrix}0&0&0&0&0&0&0&0&1 \\ 0&1&0&1&1&1&1&1&2 \\ 0&0&1&1&1&1&1&1&2 \\ 0&1&1&2&2&2&2&2&2 \\ 0&1&1&2&2&2&2&2&2 \\ 0&1&1&2&2&2&2&2&2 \\ 0&1&1&2&2&2&2&2&3 \\ 1&2&2&2&2&2&3&2&2 \\ 0&1&1&2&2&2&2&4&2\end{smallmatrix} $$}
\begin{itemize}
\item Formal codegrees: $[4,7,7,7,12,12,13,13,1092]$,
\item Property: simple, exotic, $3$-positive, isotype to $\Rep({\rm PSL}(2,13))$,
\item Categorification: open.
\end{itemize}
\end{enumerate}

\section{Odd-dimensional case} \label{sec:Odd}

The results of~\cite{NS07} concerning Frobenius--Schur indicators impose strong constraints on odd-dimensional integral Grothendieck rings, influencing their type, duality, and formal codegrees (see~\S\ref{sub:MNSDRings}). These constraints naturally lead to the definition of MNSD Drinfeld rings (Definition \ref{def:MNSDring}). Guided by the table below, we then provide a complete classification of all MNSD Drinfeld rings. As a consequence, we obtain a classification of all odd-dimensional integral Grothendieck rings up to rank~$7$, thereby establishing Theorem~\ref{thm:mainodd}. The first example known to us of an odd-dimensional integral fusion category that is not Grothendieck equivalent to any Tannakian category occurs at rank~$27$; see~\S\ref{sub:NonTann}.

\begin{center}
\begin{tabular}{c|c|c|c|c}
\S & \textbf{Rank} & \textbf{Case} & \textbf{Bound on $\FPdim$} & \textbf{Number of Drinfeld rings} \\
\hline
\ref{sub:MNSDGeneralUpToR7} & $\le 7$ & All & All & $8$ \\
\ref{subsub:1FrobMNSDR9} & $9$ & $1$-Frobenius & All & $10$ \\
\ref{subsub:N1FrobMNSDR9} & $9$ & Non-perfect non-$1$-Frobenius & All & $2$ \\
\ref{subsub:N1FrobMNSDR9} & $9$ & Perfect non-$1$-Frobenius & $ < 389865 $ & $0$ \\
\ref{sub:MNSD1FrobR11} & $11$ & Non-perfect $1$-Frobenius & $\le 10^9$ & $24$
\end{tabular}
\end{center}

Our classification strategy begins with MNSD Egyptian fractions, following the approach of~\cite{ABPP}, with the key distinction that they sum to~$1$ and do not require squared denominators.

\subsection{MNSD Drinfeld rings} \label{sub:MNSDRings}



\begin{definition} \label{def:MNSDtype}
A sequence of positive integers $1 = m_1 \le \cdots \le m_r$ is called \emph{MNSD} if:
\begin{itemize}
    \item $r$ is odd, and each $m_i$ is odd;
    \item for all $j$, we have $m_{2j} = m_{2j+1}$.
\end{itemize}
Equivalently, such a sequence has the form
\[
(1, m_2, m_2, m_4, m_4, \dots, m_{r-1}, m_{r-1}).
\]
\end{definition}

\begin{theorem}[Corollary 8.2 in \cite{NS07}] \label{thm:OddMNSD}
Let $\mathcal{C}$ be an integral fusion category over $\mathbb{C}$ with odd Frobenius-Perron dimension. Then $\mathcal{C}$ is \emph{Maximally Non-Self Dual} (i.e., no simple object is self-dual except $\mathbf{1}$). In particular, its type is MNSD.
\end{theorem}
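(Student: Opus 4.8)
The plan is to reduce the statement to a single fact about second Frobenius--Schur indicators and then extract that fact from the cyclotomic arithmetic of the higher indicators. Since $\mathcal{C}$ is integral it is pseudo-unitary, hence spherical \cite{EGNO15}, so the higher Frobenius--Schur indicators $\nu_n(X)$ of \cite{NS07} are defined for every simple object $X$ and every $n \in \mathbb{Z}$. I would recall the two basic facts that frame everything: $\nu_2(X) \in \{0,1,-1\}$, with $\nu_2(X) = 0$ precisely when $X \not\cong X^*$ and $\nu_2(X) = \pm 1$ when $X$ is self-dual; and $\nu_1(X) = \dim \mathrm{Hom}(\mathbf{1}, X) = \delta_{X, \mathbf{1}}$. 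Thus proving that $\mathcal{C}$ is maximally non-self-dual is exactly the assertion that $\nu_2(X) = 0$ for every simple $X \neq \mathbf{1}$.

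The core of the argument combines two arithmetic inputs from \cite{NS07}. First, the indicators are $N$-periodic in $n$ and take values in $\mathbb{Z}[\zeta_N]$, where $N = \mathrm{FSexp}(\mathcal{C})$ is the Frobenius--Schur exponent, and they are Galois-equivariant: $\sigma_a(\nu_n(X)) = \nu_{an}(X)$ for the automorphism $\sigma_a \in \mathrm{Gal}(\mathbb{Q}(\zeta_N)/\mathbb{Q})$ sending $\zeta_N \mapsto \zeta_N^a$. Second, by a Cauchy-type theorem for spherical fusion categories, every prime dividing $N$ divides $\FPdim(\mathcal{C})$; since $\FPdim(\mathcal{C})$ is odd, $N$ is odd and $2$ is invertible modulo $N$. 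Now suppose, for contradiction, that some simple $X \neq \mathbf{1}$ is self-dual, so $\nu_2(X) = \pm 1 \in \mathbb{Q}$ is fixed by the entire Galois group. Choosing $a \equiv 2^{-1} \pmod N$ (which is coprime to $N$), Galois-equivariance and periodicity give $\nu_2(X) = \sigma_a(\nu_2(X)) = \nu_{2a}(X) = \nu_1(X) = 0$, contradicting $\nu_2(X) = \pm 1$. Hence no nontrivial simple object is self-dual, i.e.\ $\mathcal{C}$ is maximally non-self-dual.

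It then remains to read off the MNSD \emph{shape} of the type. Since $*$ has no fixed point among the nontrivial simple objects, it partitions them into two-element orbits $\{X, X^*\}$ with $\FPdim(X) = \FPdim(X^*)$; this forces the rank $r$ to be odd and, after ordering the basis so that dual pairs are adjacent, yields $m_{2j} = m_{2j+1}$. For the oddness of each $m_i$, I would use that $X \otimes X^*$ is self-dual, so applying $(-)^*$ to the hom-spaces gives $N_{X,X^*}^{Y} = N_{X,X^*}^{Y^*}$; grouping the identity $d_X^2 = \sum_Y N_{X,X^*}^{Y} d_Y$ according to $*$-orbits, the only self-dual constituent is $\mathbf{1}$ (contributing $1$), while every other constituent occurs in a dual pair contributing an even amount. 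Hence $d_X^2 \equiv 1 \pmod 2$, each $d_X = m_i$ is odd, and the type is MNSD.

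The genuinely hard part is entirely encapsulated in the external input from \cite{NS07}: the well-definedness, invariance, cyclotomic integrality, and—above all—the Galois-equivariance of the higher indicators $\nu_n$, together with the Cauchy theorem controlling the prime divisors of $\mathrm{FSexp}(\mathcal{C})$. Once these are granted, the remainder is elementary. The one point I would verify with care is that the normalization conventions make $\sigma_a(\nu_n(X)) = \nu_{an}(X)$ hold with the exponent on the indicated side, since the whole contradiction rests on specializing this identity at $a \equiv 2^{-1}$ to land precisely on $\nu_1$.
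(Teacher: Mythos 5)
Your proof is correct, and it is essentially the argument the paper relies on: the paper does not prove this statement itself but imports it verbatim as Corollary~8.2 of \cite{NS07}, and your reconstruction follows the same route as Ng--Schauenburg's proof there --- cyclotomic integrality and Galois equivariance of the higher indicators $\nu_n$, the Cauchy-type theorem (Theorem~8.4 in \cite{NS07}) forcing $N=\mathrm{FSexp}(\mathcal{C})$ to be odd, and then the specialization $\sigma_a(\nu_2(X))=\nu_{2a}(X)=\nu_1(X)$ at $a\equiv 2^{-1}\pmod N$. Your caution about the side of the exponent in the equivariance formula is well placed but harmless: whichever convention \cite{NS07} uses, a suitable choice of $a$ lands on $\nu_1$, so the contradiction goes through.

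One point worth highlighting: the final clause ``in particular, its type is MNSD'' requires each $\FPdim(X)$ to be odd, which is \emph{not} part of \cite[Corollary~8.2]{NS07} and cannot be obtained from divisibility of basic dimensions into the global $\FPdim$ (that is the $1$-Frobenius property, which the paper treats as an extra hypothesis, not a theorem). Your parity argument --- $X\otimes X^*$ is self-dual, so $N_{X,X^*}^{Y}=N_{X,X^*}^{Y^*}$, the unit contributes $1$, and by the first part all other constituents come in dual pairs contributing evenly to $d_X^2=\sum_Y N_{X,X^*}^Y d_Y$ --- correctly fills this small gap that the paper leaves implicit, and is the right elementary way to do it.
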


\begin{definition} \label{def:coMNSD}
A sequence of positive integers $n_1 \ge \cdots \ge n_r$ is called \emph{co-MNSD} if each $n_i$ divides $n_1$, and the sequence $(n_1 / n_i)$ is MNSD.
\end{definition}

\begin{definition} \label{def:EgyMNSD}
An Egyptian fraction $\sum_i 1/n_i = 1$ is called \emph{MNSD} if the sequence of positive integers $(n_i)$ is co-MNSD.
\end{definition}

\begin{proposition} \label{prop:MNSDFormalCodegrees}
Let $\mathcal{C}$ be an integral fusion category over $\mathbb{C}$ with odd Frobenius-Perron dimension. Then its formal codegrees (\S\ref{sub:formal}) form a co-MNSD sequence, and so make an MNSD Egyptian fraction.
\end{proposition}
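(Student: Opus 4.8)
The plan is to verify, for the sequence of formal codegrees listed in decreasing order with each $f_V$ repeated $n_V=\dim V$ times (so that $\sum 1/f = 1$), the four requirements packed into Definitions~\ref{def:MNSDtype}--\ref{def:coMNSD}: (a) every codegree divides the largest one $f_1=\FPdim(\mathcal{C})$; (b) each quotient $\FPdim(\mathcal{C})/f_V$ is odd; (c) the codegrees below $\FPdim(\mathcal{C})$ occur in equal pairs; and (d) the total number of codegrees (with multiplicity) is odd. Requirement (a) is exactly the Drinfeld property: an integral fusion category is pseudo-unitary, so Proposition~\ref{prop:DrinList} gives that each $f_V$ is a positive integer dividing $f_1$. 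Requirement (b) is then immediate, since $\FPdim(\mathcal{C})$ is odd by hypothesis and $f_V\mid\FPdim(\mathcal{C})$ forces $f_V$ odd, whence the quotient of two odd integers is odd.

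The heart of the argument is (c) and (d), where I would exploit duality. Let $D$ be the linear extension of $b_i\mapsto b_{i^*}$; it is an algebra anti-automorphism of $\mathcal{R}_{\mathbb{C}}$ fixing $Z=\sum_i b_i b_{i^*}$, and it sends the central idempotent $p_V$ to the block $p_{V^*}$ of the dual representation. Comparing $Z=D(Z)$ block by block gives $n_V f_V = n_{V^*}f_{V^*}$, and since $n_{V^*}=n_V$ we get $f_{V^*}=f_V$; thus the involution $V\mapsto V^*$ on $\mathrm{Irr}(\mathcal{R}_{\mathbb{C}})$ preserves both $n_V$ and $f_V$. The key input is that the Frobenius--Perron character $V_1$ is the \emph{only} self-conjugate irreducible. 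I would get this from the Frobenius--Schur count: $\mathrm{tr}(D)$ equals the number of self-dual basis elements, which is $1$ by the MNSD conclusion of Theorem~\ref{thm:OddMNSD}. In the commutative case this is transparent: writing $\Lambda=(\lambda_{i,j})$ for the character table, the relations $\lambda_{i^*,j}=\overline{\lambda_{i,j}}=\lambda_{i,\bar j}$ give $P\Lambda=\Lambda Q$ with $P,Q$ the permutation matrices of object duality and character conjugation, so $\mathrm{tr}(P)=\mathrm{tr}(Q)=1$ and there is exactly one self-conjugate character. Hence every irreducible other than $V_1$ lies in a genuine dual pair $\{V,V^*\}$ with $f_{V^*}=f_V$ and $n_{V^*}=n_V$; listing the codegrees with multiplicity, every value other than $\FPdim(\mathcal{C})$ occurs an even number of times (giving (c)), and the total multiplicity $\sum_V n_V = 1 + \sum_{\{V,V^*\}}2n_V$ is odd (giving (d)).

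Combining (a)--(d) finishes the proof: sorting decreasingly places $f_1=\FPdim(\mathcal{C})$ first, every $f_V$ divides it, and the equal-pair structure yields $f_{2j}=f_{2j+1}$, so that $(\FPdim(\mathcal{C})/f_i)$ is an increasing, odd-valued sequence of odd length satisfying $m_{2j}=m_{2j+1}$, i.e.\ MNSD in the sense of Definition~\ref{def:MNSDtype}. Thus the codegrees are co-MNSD (Definition~\ref{def:coMNSD}) and form an MNSD Egyptian fraction.

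The main obstacle is the noncommutative case, where the Frobenius--Schur count reads $\mathrm{tr}(D)=\sum_{V\cong V^*}\epsilon_V\,n_V=1$ with signs $\epsilon_V=\pm1$ recording whether the invariant form on a self-dual $V$ is symmetric or antisymmetric. A priori this signed identity could hold with several nontrivial self-conjugate representations cancelling, which would destroy the clean pairing. I expect to rule this out by showing $\epsilon_V=+1$ for every self-dual $V$ in the odd-dimensional setting — the fusion-categorical analogue of Burnside's theorem that a group of odd order has no nontrivial real irreducible representation — which is precisely the type of statement supplied by the Frobenius--Schur indicator results of~\cite{NS07} underlying Theorem~\ref{thm:OddMNSD}. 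Once all $\epsilon_V=+1$, the signed count becomes an honest count and forces $V_1$ to be the unique self-conjugate irreducible, so the argument goes through uniformly (and reduces to the transparent commutative computation above when all $n_V=1$).
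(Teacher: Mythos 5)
Your reductions (a), (b), and the pairing step are sound: the linear extension $D$ of $b_i\mapsto b_{i^*}$ is indeed an algebra anti-automorphism (by the anti-involution axiom), it fixes $Z$, and comparing blocks gives $n_{\bar V}f_{\bar V}=n_Vf_V$ with $n_{\bar V}=n_V$, hence $f_{\bar V}=f_V$. Your commutative argument via $P\Lambda=\Lambda Q$ is complete and is a genuinely different, more elementary route than the paper's for that case. But the noncommutative case — which you correctly flag — is a genuine gap, and it is not of the kind that \cite{NS07} supplies. The trace identity only yields the signed count $\sum_{V\cong\bar V}\epsilon_V n_V=1$, and your proposed repair, $\epsilon_V=+1$ for every self-conjugate $V$, is a statement about the matrix blocks of the complexified Grothendieck ring $\mathcal{R}_{\mathbb{C}}$, whereas the Frobenius--Schur results behind Theorem~\ref{thm:OddMNSD} (\cite[Corollary~8.2]{NS07}) concern self-duality of simple \emph{objects} of a spherical fusion category; there is no indicator attached in \cite{NS07} to irreducible representations of $\mathcal{R}_{\mathbb{C}}$, so the analogy with Burnside's odd-order theorem is a hope, not a citation. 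Note also that the noncommutative case cannot be dismissed as peripheral: the proposition is invoked in exactly that setting in the proof of Theorem~\ref{thm:OddNC2Intro}.

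The known way to close your gap is to transport the involution $V\mapsto\bar V$ to a category where \cite{NS07} does apply, and this is precisely the paper's proof, into which the repaired argument collapses. By \cite[Theorem~2.13]{Os15}, the formal codegrees are $\FPdim(\mathcal{C})/\FPdim(A)$ as $A$ runs over the simple summands of $I(\one)$ in $\mathcal{Z}(\mathcal{C})$ (with multiplicities $n_V$, cf.\ Lemma~\ref{lem:MultI(1)}); since the forgetful functor is tensor, the summands of $I(\one)$ are closed under duality with matching multiplicities; and $\FPdim(\mathcal{Z}(\mathcal{C}))=\FPdim(\mathcal{C})^2$ is odd, so Theorem~\ref{thm:OddMNSD} applied to the \emph{center} shows that $\one$ is the only self-dual summand. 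Consequently there are no nontrivial self-conjugate $V$ at all — the sign-cancellation scenario never arises — and the pairing, parity of the length, divisibility, and oddness of the quotients all follow at once (the quotients $\FPdim(A)$ are odd integers since $\FPdim(A)^2$ divides $\FPdim(\mathcal{C})^2$ by the half-Frobenius property of modular categories). If you insist on an argument internal to $\mathcal{R}_{\mathbb{C}}$, you would need an independent proof that an MNSD integral Drinfeld ring admits no self-conjugate block with $\epsilon_V=-1$, which is not available in the literature and, for abstract (non-categorifiable) rings, may well fail.
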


\begin{proof}
Let $F\colon \mathcal{Z}(\mathcal{C}) \to \mathcal{C}$ denote the forgetful functor, and let $I\colon \mathcal{C} \to \mathcal{Z}(\mathcal{C})$ be its right adjoint. Suppose $A$ is a simple direct summand of $I(\mathbf{1})$ in $\mathcal{Z}(\mathcal{C})$, so that $\mathbf{1}$ appears as a subobject of $F(A)$ in~$\mathcal{C}$. Since $F$ is a tensor functor, it preserves duals, and thus $\mathbf{1}$ also appears in $F(A^*) = F(A)^*$. Therefore, $A^*$ is also a direct summand of $I(\mathbf{1})$.

As $\mathcal{C}$ is integral, it is pseudo-unitary and therefore spherical (see \cite{EGNO15}). By \cite[Theorem 2.13]{Os15}, the formal codegrees are given by $\FPdim(\mathcal{C}) / \FPdim(A)$, where $A$ runs over the simple summands of $I(\mathbf{1})$.

By \cite[Theorem 7.16.6]{EGNO15}, we have $\FPdim(\mathcal{Z}(\mathcal{C})) = \FPdim(\mathcal{C})^2$. Since $\FPdim(\mathcal{C})$ is odd, so is $\FPdim(\mathcal{Z}(\mathcal{C}))$. Hence, by Theorem~\ref{thm:OddMNSD}, the Drinfeld center $\mathcal{Z}(\mathcal{C})$ has MNSD type. The result is an immediate consequence of the preceding two paragraphs.
\end{proof}

\begin{definition} \label{def:MNSDring}
An integral Drinfeld ring is called \emph{MNSD} if:
\begin{itemize}
    \item it is Maximally Non-Self Dual (in particular, its type is MNSD);
    \item its formal codegrees form a co-MNSD sequence.
\end{itemize}
\end{definition}

It follows immediately from the above that the Grothendieck ring of any odd-dimensional integral fusion category over~$\mathbb{C}$ is an MNSD integral Drinfeld ring.


\begin{remark} \label{rk:ExoticRk4}
Here is the smallest example (from \cite{BP24}) of non-pointed, odd-dimensional, simple integral fusion rings:
\begin{itemize}
\item $\FPdim \ 7315$, type $[1,35,40,67]$, duality $[0,1,2,3]$, fusion data: 
$$ 
\left[\begin{matrix} 1&0&0&0 \\ 0&1&0&0 \\ 0&0&1&0 \\ 0&0&0&1\end{matrix} \right],  \ \left[ \begin{matrix}0&1&0&0 \\ 1&30&1&2 \\ 0&1&9&15 \\ 0&2&15&25\end{matrix} \right],  \ \left[ \begin{matrix}0&0&1&0 \\ 0&1&9&15 \\ 1&9&12&12 \\ 0&15&12&25\end{matrix} \right],  \ \left[ \begin{matrix}0&0&0&1 \\ 0&2&15&25 \\ 0&15&12&25 \\ 1&25&25&39\end{matrix}\right]
$$
\end{itemize}

However, we have not found any such examples that are MNSD Drinfeld or $1$-Frobenius.
\end{remark}

\begin{question}
Is there a non-pointed, odd-dimensional, simple integral fusion ring that is either MNSD Drinfeld or \( 1 \)-Frobenius?
\end{question}

Theorem~\ref{thm:OddNC2Intro} implies that, for Grothendieck rings of odd dimension, the noncommutative case can be safely ruled out at ranks less than~$21$. However, this exclusion does not hold \emph{a priori} for MNSD Drinfeld rings. Consequently, one must account for certain technical subtleties—similar to those addressed in \cite[\S\ref{sub:DivA}]{appendices}—which already cover ranks below~$9$. The verification at ranks~$9$ and~$11$ for MNSD Drinfeld rings follows the same method. Full details are provided in the file \verb|InvestR9R11MNSD.txt|, located in the \verb|Data/EgyptianFractionsDiv/Except| directory of~\cite{CodeData}.

\subsection{Up to rank 7} \label{sub:MNSDGeneralUpToR7}
This subsection is dedicated to prove Theorem \ref{thm:mainodd}. 

\subsubsection{Up to rank 5} There are four MNSD integral Drinfeld rings up to rank $5$, contained in \cite[\S\ref{sub:UpToRank5}]{appendices}, namely the Grothendieck rings of $\Rep(G)$, with $G = C_1, C_3, C_5$ and $C_7\rtimes C_3$. 

\subsubsection{Rank 7}
There are 13 MNSD Egyptian fractions of length~7 that satisfy the divisibility condition, as defined in Definition~\ref{def:EgyMNSD}. These are listed in the file \verb|EgyFracL7DivMNSD.txt|, located in the \verb|Data/EgyptianFractionsDiv| folder of~\cite{CodeData}. They correspond to 11 distinct global $\FPdim$ values:
\[
\{7, 15, 27, 35, 39, 55, 63, 147, 171, 315, 903\},
\]
yielding 11 possible MNSD types. Among these, only 4 types admit fusion rings---5 fusion rings in total---of which 4 are Drinfeld rings. A complete list of these Drinfeld rings, including their global $\FPdim$, type, duality, formal codegrees, and fusion rules, is provided in~\cite[\S\ref{sub:Rank7odd}]{appendices}. The corresponding Grothendieck rings are those of $\Rep(G)$ for \( G = C_7 \), \( C_{13} \rtimes C_3 \), \( C_{11} \rtimes C_5 \), and one excluded case detailed in \cite[\S\ref{sub:Rank7odd}(\ref{[1,1,1,3,3,21,21]})]{appendices}. Computational details and copy-pastable data are available in \verb|MNSDRank7.txt| within the \verb|Data/Odd| folder of~\cite{CodeData}.
 
\subsection{Rank 9} \label{sub:MNSDGeneralR9}
Starting from this rank, the classification becomes combinatorially complex. For this reason, we divide into two subsections: one dedicated to the 1-Frobenius case, and the other for discussing
 the rest.
\subsubsection{1-Frobenius} \label{subsub:1FrobMNSDR9}
There are 115 MNSD Egyptian fractions of length~$9$ satisfying the divisibility condition. These are listed in the file \verb|EgyFracL9DivMNSD.txt|, located in the \verb|Data/EgyptianFractionsDiv| folder of~\cite{CodeData}. They correspond to 76 distinct possible global $\FPdim$, which yield 17 possible 1-Frobenius MNSD types. Among these, only 8 types admit fusion rings—22 fusion rings in total—of which 10 are Drinfeld rings. A complete list of these Drinfeld rings, including their global $\FPdim$, type, duality, formal codegrees, and fusion data, is provided in~\cite[\S\ref{sub:Rank9MNSD1Frob}]{appendices}. Among them, exactly three are Grothendieck equivalent to Tannakian categories, namely $\Rep(G)$ for \( G = C_3^2 \), \( C_9 \), and \( C_{19} \rtimes C_3 \). One does not admit categorification, and the remaining six remain open. Computational details and copy-pastable data are available in the file \verb|MNSD1FrobRank9.txt| within the \verb|Data/Odd| folder of~\cite{CodeData}.

\subsubsection{Non-1-Frobenius} \label{subsub:N1FrobMNSDR9}
As observed in~\cite[\ref{sec:OddA}]{appendices}, all MNSD Drinfeld rings of rank up to~$7$ are 1-Frobenius. In contrast, there exist exactly two non-perfect, non-1-Frobenius MNSD Drinfeld rings of rank~$9$; see~\cite[\S\ref{sub:Rank9MNSDN1Frob}]{appendices}, as well as the copy-pastable data in the file \verb|N1FrobMNSDRank9.txt| located in the \verb|Data/Odd| directory of~\cite{CodeData}. In the perfect case, apart from the last three remaining global $\FPdim$s---$389865$, $544509$, and $1631721$—for which the classification is still incomplete, no further examples were found.

\subsection{Rank 11} \label{sub:MNSD1FrobR11} This subsection presents the classification of all 24 MNSD integral 1-Frobenius Drinfeld rings of rank~$11$ and global $\FPdim < 10^9$, except for six perfect types with $\FPdim > 10^6$ that remain open.

There are 2799 MNSD Egyptian fractions of length~$11$ satisfying the divisibility condition. These are listed in the file \verb|EgyFracL11DivMNSD.txt|, located in the \verb|Data/EgyptianFractionsDiv| folder of~\cite{CodeData}. They correspond to 1650 distinct possible global $\FPdim$, between $11$ and $5325028475403$. We restricted our analysis to those with $\FPdim < 10^9$, which excludes only the last 80. This yields 439 possible 1-Frobenius MNSD types. Among these, six perfect types with $\FPdim \ge 1789515$ are still open. Of the remaining types, only 25 admit fusion rings—491 fusion rings in total—of which 24 are Drinfeld rings. Among them, seven are Grothendieck equivalent to Tannakian categories, namely $\Rep(G)$ for $G = C_{11},\ C_9 \rtimes C_3,\ C_3^2 \rtimes C_3,\ C_5^2 \rtimes C_3,\ C_{31} \rtimes C_5,\ C_{19} \rtimes C_9,\ C_{29} \rtimes C_7$. One Drinfeld ring does not admit categorification, and the remaining ones remain open. Computational details and copy-pastable data are available in the file \verb|MNSD1FrobRank11.txt| within the \verb|Data/Odd| folder of~\cite{CodeData}.

\subsection{Non-Tannakian} \label{sub:NonTann}

This subsection concerns odd-dimensional integral fusion categories that are not Grothendieck equivalent to any Tannakian category. By Theorem~\ref{thm:mainodd}, no such example exists up to rank~$7$, and none has been found so far up to rank~$11$. However, there is a group-theoretical example at rank~$27$ with $\FPdim$~$75$ and type~$[[1,25],[5,2]]$, namely $\mathcal{C}(C_5^2 \rtimes C_3,1,C_5,1)$, as shown by the following computation:
\begin{verbatim}
gap> FindGroupSubgroup([1,1,1,1,1,1,1,1,1,1,1,1,1,1,1,1,1,1,1,1,1,1,1,1,1,5,5]);
[[2,"(C5 x C5) : C3",3,"C5",[[1,1,1,1,1,1,1,1,1,1,1,1,1,1,1,1,1,1,1,1,1,1,1,1,1,5,5],
         [0,4,3,2,1,20,24,23,22,21,15,19,18,17,16,10,14,13,12,11,5,9,8,7,6,26,25]]],
 [2,"(C5 x C5) : C3",4,"C5",[[1,1,1,1,1,1,1,1,1,1,1,1,1,1,1,1,1,1,1,1,1,1,1,1,1,5,5],
         [0,4,3,2,1,20,24,23,22,21,15,19,18,17,16,10,14,13,12,11,5,9,8,7,6,26,25]]]]
\end{verbatim}

\section{Noncommutative case} \label{sec:NC} 

We begin in~\S\ref{FuRingsNC} by presenting several constraints on the isomorphism classes of noncommutative complexified fusion rings, arising from Galois-theoretic considerations. In particular, we establish that rank~$6$ is the minimal possible rank for a noncommutative fusion ring. 

In~\S\ref{DrRingsNC}, we study Drinfeld rings and show—using Egyptian fraction techniques—that the group ring \(\mathbb{Z}S_3\) is the unique noncommutative integral Drinfeld ring of rank~6, and thus the only integral Grothendieck ring of that rank. We then extend the classification to all integral Grothendieck rings of rank up to~7, all noncommutative integral Drinfeld rings of rank up to~8, and finally to noncommutative integral \(1\)-Frobenius Drinfeld rings of rank~9 with \(\FPdim \le 10000\).

Finally, in~\S\ref{GrRingsNC}, leveraging results involving Frobenius--Schur indicators, we prove that any noncommutative, odd-dimensional integral Grothendieck ring must have rank at least~$21$. At this minimal rank, the sole example is the pointed fusion ring corresponding to the group~$C_7 \rtimes C_3$. We conclude with a discussion about the rank~$23$.

\begin{center}
\begin{tabular}{c|c|c|c|c}
\S & \textbf{Rank} & \textbf{Case} & \textbf{Bound on $\FPdim$} & \textbf{Number of Drinfeld rings} \\
\hline
\ref{subsub:NCR7} & $\le 7$ & NC Grothendieck & All & $3$ \\
\ref{subsub:NCR8} & $\le 8$ & NC Drinfeld & All & $29$ \\
\ref{subsub:NCR9} & $9$ & $1$-Frobenius NC & $\le 10000$ & $83$ \\
\ref{GrRingsNC} & $\le 21$ & Grothendieck + MNSD + NC & All & $1$
\end{tabular}
\end{center}

\subsection{Fusion rings} \label{FuRingsNC}

The exclusion of rank~$5$ in the following result is inspired by a MathOverflow response of Victor Ostrik~\cite{OsMO}, with additional insights provided by Noah Snyder. The next lemma is elementary.

\begin{lemma} \label{lem:galois}
Let \( P \in \mathbb{Q}[X] \), and let \(\sigma \in \operatorname{Aut}(\overline{\mathbb{Q}}/\mathbb{Q})\) be any Galois automorphism. Then \(\sigma\) permutes the roots of \(P\), preserving their multiplicities.
\end{lemma}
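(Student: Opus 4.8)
The plan is to exploit the two defining features of a Galois automorphism $\sigma \in \operatorname{Aut}(\overline{\mathbb{Q}}/\mathbb{Q})$: it is a field automorphism, and it fixes $\mathbb{Q}$ pointwise, so it commutes with polynomial evaluation once we let it act on coefficients. First I would extend $\sigma$ to a ring automorphism of $\overline{\mathbb{Q}}[X]$ by letting it act coefficientwise while fixing $X$, and denote this extension again by $\sigma$. Since every coefficient of $P$ lies in $\mathbb{Q}$ and $\sigma$ fixes $\mathbb{Q}$, we have $\sigma(P) = P$. Because $P \in \mathbb{Q}[X] \subset \overline{\mathbb{Q}}[X]$, all of its roots are algebraic numbers and hence lie in $\overline{\mathbb{Q}}$, so $\sigma$ genuinely acts on the root set of $P$.

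Next I would check that $\sigma$ sends roots to roots. If $\alpha \in \overline{\mathbb{Q}}$ satisfies $P(\alpha)=0$, then applying $\sigma$ and using that it is a ring homomorphism fixing the coefficients of $P$ gives $P(\sigma(\alpha)) = \sigma(P(\alpha)) = \sigma(0) = 0$. Running the same argument for $\sigma^{-1}$ shows that the induced map on roots is surjective, while injectivity is inherited from the injectivity of $\sigma$ on $\overline{\mathbb{Q}}$. Hence $\sigma$ restricts to a permutation of the set of roots.

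For the multiplicities I would pass to the factorization over $\overline{\mathbb{Q}}$. Suppose $\alpha$ is a root of multiplicity $m$, so that $P(X) = (X-\alpha)^m\, Q(X)$ in $\overline{\mathbb{Q}}[X]$ with $Q(\alpha) \neq 0$. Applying the coefficientwise extension $\sigma$ to this identity yields
\[
P(X) = \sigma(P)(X) = (X - \sigma(\alpha))^m\, \sigma(Q)(X),
\]
and evaluating the complementary factor at $\sigma(\alpha)$ gives $\sigma(Q)(\sigma(\alpha)) = \sigma(Q(\alpha)) \neq 0$ by injectivity of $\sigma$. Therefore $\sigma(\alpha)$ is a root of $P$ of multiplicity exactly $m$, which establishes the multiplicity-preserving claim.

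There is no genuine obstacle here; the content is entirely formal. The only point requiring a moment of care is the multiplicity statement, where one must pass from the field automorphism $\sigma$ of $\overline{\mathbb{Q}}$ to its coefficientwise extension on $\overline{\mathbb{Q}}[X]$ and verify that this extension simultaneously sends the factor $(X-\alpha)^m$ to $(X-\sigma(\alpha))^m$ and preserves the nonvanishing of $Q$ at $\alpha$; everything else is immediate.
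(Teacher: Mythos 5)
Your proof is correct and complete: the coefficientwise extension of \(\sigma\) to \(\overline{\mathbb{Q}}[X]\) fixes \(P\), so roots map to roots, and the factorization argument \(P=(X-\alpha)^m Q\) with \(\sigma(Q)(\sigma(\alpha))=\sigma(Q(\alpha))\neq 0\) correctly pins down the multiplicity. The paper states this lemma without proof, labeling it elementary, and your argument is exactly the standard one it implicitly invokes, so nothing further is needed.
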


\begin{proposition} \label{prop:min6}
The minimal rank of a noncommutative fusion ring is~$6$.
\end{proposition}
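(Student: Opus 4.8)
The plan is to analyze the Artin--Wedderburn decomposition $\mathcal{R}_{\mathbb{C}} \cong \bigoplus_{V} M_{n_V}(\mathbb{C})$ recalled in \S\ref{sub:formal}, combining the constraint $r = \sum_V n_V^2$ with the formal-codegree Egyptian fraction and the Galois principle of Lemma~\ref{lem:galois}. First I would record that the Frobenius--Perron homomorphism $d$ of Theorem~\ref{thm:FrobPer} is a one-dimensional representation, so at least one block has $n_V = 1$. Since $\mathcal{R}$ is noncommutative exactly when some $n_V \geq 2$, and such a block already contributes $n_V^2 \geq 4$ to $r = \sum_V n_V^2$, every fusion ring of rank $\leq 4$ has all $n_V = 1$ and is commutative (a single block of size $2$ would force $r = 4$ but leave no room for the $\FPdim$ character). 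For $r = 5$, the only decomposition containing a block of size $\geq 2$ is $5 = 2^2 + 1^2$, i.e. $\mathcal{R}_{\mathbb{C}} \cong M_2(\mathbb{C}) \oplus \mathbb{C}$, with a single two-dimensional irreducible $V_2$ and a unique one-dimensional representation, necessarily $\FPdim$. The whole problem thus reduces to excluding this case.

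For the exclusion I would use the central element $Z = \sum_i b_i b_{i^*}$. Write $f_1 = \FPdim(\mathcal{R}) = \sum_i d_i^2$ for the codegree of the $\FPdim$ character and set $f_2 := f_{V_2}$. The Egyptian fraction of \S\ref{sub:formal} reads $\tfrac{1}{f_1} + \tfrac{2}{f_2} = 1$, so $f_2 = 2f_1/(f_1 - 1)$. Since $b_i b_{i^*}$ contains $b_1$ with coefficient $N_{i,i^*}^1 = 1$, we have $d_i^2 = d_i d_{i^*} \geq 1$, hence $d_i \geq 1$ for all $i$; therefore $f_1 = \sum_{i=1}^{5} d_i^2 \geq 5$, with equality if and only if every $d_i = 1$. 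Now $Z$ has integer coordinates in the basis $\{b_i\}$, so its left-multiplication matrix $L_Z$ is an integer matrix, and by \S\ref{sub:formal} its eigenvalues are $f_1$ (multiplicity $1$) and $\alpha_{V_2} = 2f_2$ (multiplicity $4$). Consequently its characteristic polynomial is $(X - f_1)(X - 2f_2)^4 \in \mathbb{Z}[X]$, and in particular $2f_2$ is an algebraic integer.

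At this point Lemma~\ref{lem:galois} enters: every Galois automorphism permutes the roots of this polynomial while preserving multiplicities. Whether or not $f_1 = 2f_2$, the value $2f_2$ is pinned down---either as the unique root of multiplicity $4$, or else as the value $f_1 \in \mathbb{Z}$---and is thus fixed by the whole Galois group, hence rational and therefore (being an algebraic integer) an ordinary integer. Finally I would extract the numerics: from $2f_2 = 4 + \tfrac{4}{f_1 - 1} \in \mathbb{Z}$ we obtain $(f_1 - 1) \mid 4$, which together with $f_1 \geq 5$ forces $f_1 = 5$. Then all $d_i = 1$, so $\mathcal{R}$ is pointed; but a pointed fusion ring of rank $5$ is the group ring of a group of order $5$, namely $\mathbb{Z}C_5$, which is commutative---a contradiction. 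Hence no noncommutative fusion ring of rank $5$ exists, and since $\mathbb{Z}S_3$ (with $\mathbb{C}S_3 \cong \mathbb{C}^2 \oplus M_2(\mathbb{C})$) is noncommutative of rank $6$, the minimal rank is $6$.

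The main obstacle is the Galois step: one must argue carefully that the shape of the characteristic polynomial of $L_Z$ forces $2f_2$ to be an integer, treating the degenerate possibility $f_1 = 2f_2$ separately so that Lemma~\ref{lem:galois} applies cleanly. Everything else is bookkeeping on the decomposition $r = \sum_V n_V^2$ and the single Egyptian-fraction identity.
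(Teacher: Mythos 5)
Your proposal is correct and follows essentially the same route as the paper's proof: reduce via the Wedderburn decomposition and the $\FPdim$ character to $\mathcal{R}_{\mathbb{C}} \cong \mathbb{C} \oplus M_2(\mathbb{C})$ at rank $5$, use the Egyptian fraction $\tfrac{1}{f_1} + \tfrac{2}{f_2} = 1$ together with Lemma~\ref{lem:galois} applied to the characteristic polynomial of $L_Z$ to force integrality, handle the degenerate case $f_1 = 2f_2$ via the nonexistence of a noncommutative pointed ring of rank $5$, and realize rank $6$ by $\mathbb{Z}S_3$. The only (immaterial) difference is the final bookkeeping: the paper deduces $b \geq 3$ and hence $\FPdim = 1 + \tfrac{2}{b-2} \leq 3 < 5$, while you deduce $(f_1 - 1) \mid 4$ forcing $f_1 = 5$ and conclude via pointedness.
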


\begin{proof}
Let \(\mathcal{R}\) be a fusion ring of rank \(r < 6\). Its complexified algebra \(\mathcal{R}_{\mathbb{C}} := \mathcal{R} \otimes_{\mathbb{Z}} \mathbb{C}\) is a finite-dimensional unital \(*\)-algebra and thus decomposes as a direct sum of matrix algebras over \(\mathbb{C}\). Since \(\mathcal{R}\) is assumed to be noncommutative, \(\mathcal{R}_{\mathbb{C}}\) must contain a summand isomorphic to \(M_n(\mathbb{C})\) for some \(n \geq 2\).

By Theorem~\ref{thm:FrobPer}, the Frobenius–Perron dimension defines a one-dimensional representation of \(\mathcal{R}_{\mathbb{C}}\). Since the rank \(r\) of \(\mathcal{R}\) equals the complex dimension of \(\mathcal{R}_{\mathbb{C}}\), we get:
\[
r \geq \dim_{\mathbb{C}}(\mathbb{C} \oplus M_2(\mathbb{C})) = 1 + 4 = 5.
\]

Now suppose \(r = 5\). From \S\ref{sub:formal}, let \(a = \FPdim(\mathcal{R})\), and let \(b\) denote the second formal codegree of \(\mathcal{R}\). These are positive algebraic integers satisfying
\[
\frac{1}{a} + \frac{2}{b} = 1.
\]
Rewriting this gives
\[
a = 1 + \frac{2}{b - 2}.
\]
In particular, \(b > 2\). Let \(P \in \mathbb{Z}[X]\) denote the characteristic polynomial of the multiplication matrix \(L_Z\) (see \S\ref{sub:formal}). Then the roots of \(P\) are \(a\) with multiplicity~1 and \(2b\) with multiplicity~4.

Suppose \(a = 2b\). Then \(a = \FPdim(\mathcal{R}) = 5 = r\), so \(\mathcal{R}\) is pointed, and corresponds to a nonabelian group of order~5, which does not exist. Thus \(a\) and \(2b\) are distinct.

By Lemma~\ref{lem:galois}, \(a\) and \(2b\) cannot be Galois conjugate, and hence must both be rational numbers. Since they are also algebraic integers, it follows that \(a, b \in \mathbb{Z}\). As \(b > 2\), we must have \(b \geq 3\). Therefore,
\[
\FPdim(\mathcal{R}) = a = 1 + \frac{2}{b - 2} \leq 3,
\]
which contradicts the fact that \(\FPdim(\mathcal{R}) \geq r = 5\). This contradiction shows that rank \(r = 5\) is impossible.

Finally, rank~6 is realized by the group ring of \(S_3\), completing the proof.
\end{proof}


\begin{lemma} \label{lem:excl}
The algebra \(\mathcal{R}_{\mathbb{C}}\) cannot be isomorphic to \(\mathbb{C}^{\oplus n} \oplus M_m(\mathbb{C})\) with \(n \leq 1\) and \(m \geq 2\).
\end{lemma}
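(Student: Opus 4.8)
The plan is to reduce to a genuine two-block decomposition and then rerun, in slightly more general form, the Egyptian-fraction-plus-Galois argument already used for rank~$5$ in the proof of Proposition~\ref{prop:min6}. First I would pin down $n$. By Theorem~\ref{thm:FrobPer} the Frobenius--Perron dimension is a one-dimensional $*$-representation of $\mathcal{R}_{\mathbb{C}}$, so at least one simple summand is $\mathbb{C}$; since an $m$-dimensional irreducible with $m \ge 2$ cannot carry this character, the $\FPdim$-representation must sit among the $\mathbb{C}$-summands, forcing $n \ge 1$. Together with the hypothesis $n \le 1$ this gives $n = 1$, so it suffices to exclude $\mathcal{R}_{\mathbb{C}} \cong \mathbb{C} \oplus M_m(\mathbb{C})$ with $m \ge 2$. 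Then $\mathrm{Irr}(\mathcal{R}_{\mathbb{C}})$ has exactly two elements: the trivial one, with $n_V = 1$ and formal codegree $a := f_1 = \FPdim(\mathcal{R})$, and the $m$-dimensional one, with $n_V = m$ and formal codegree $b$. The Egyptian fraction of \S\ref{sub:formal} reads
\[
\frac{1}{a} + \frac{m}{b} = 1,
\]
while the rank is $r = \dim_{\mathbb{C}} \mathcal{R}_{\mathbb{C}} = 1 + m^2$.

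Next I would establish that $a$ and $b$ are \emph{rational} integers, which is the crux, since $\mathcal{R}$ is not assumed integral. Recall from \S\ref{sub:formal} that $L_Z$ has integer entries (as $Z = \sum_i b_i b_{i^*}$ and the $N_{i,j}^k$ are integers) and that its eigenvalues are $n_V f_V$ with multiplicity $n_V^2$; here these are $a$ with multiplicity $1$ and $mb$ with multiplicity $m^2 \ge 4$. Hence the characteristic polynomial $P \in \mathbb{Z}[X]$ of $L_Z$ has exactly two distinct root values, $a$ and $mb$, of distinct multiplicities. By Lemma~\ref{lem:galois}, every $\sigma \in \Aut(\overline{\mathbb{Q}}/\mathbb{Q})$ permutes these roots preserving multiplicity; since $1 \ne m^2$, each root value is individually fixed (or they already coincide), so $a, mb \in \mathbb{Q}$. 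Being algebraic integers, $a, mb \in \mathbb{Z}$, and then $b = (mb)/m$ is a rational algebraic integer, whence $b \in \mathbb{Z}$ as well.

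Finally I would close with an elementary estimate. The displayed identity forces $\tfrac{m}{b} < 1$, so $b > m$, giving $b - m \ge 1$ and
\[
a = 1 + \frac{m}{b-m} \le 1 + m.
\]
On the other hand $a = \FPdim(\mathcal{R}) = \sum_i d_i^2 \ge r = 1 + m^2$, and $1 + m^2 > 1 + m$ whenever $m \ge 2$, a contradiction that completes the exclusion. The main obstacle is the integrality step of the second paragraph: without the Galois argument of Lemma~\ref{lem:galois}, the quantities $a$ and $b$ are only algebraic integers and the final numeric inequality carries no force; everything else is routine bookkeeping with the two-block decomposition and the codegree Egyptian fraction.
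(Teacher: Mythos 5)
Your proof is correct and takes essentially the same route as the paper: reduce to \(n=1\), write the codegree Egyptian fraction \(\tfrac{1}{a}+\tfrac{m}{b}=1\), use the Galois argument of Lemma~\ref{lem:galois} on the characteristic polynomial of \(L_Z\) to force \(a,b\in\mathbb{Z}\), and conclude from \(1+m^2=\operatorname{rank}(\mathcal{R})\le \FPdim(\mathcal{R})=1+\tfrac{m}{b-m}\le 1+m\), exactly as in the paper's adaptation of Proposition~\ref{prop:min6}. The only (harmless) divergence is your handling of the degenerate case \(a=mb\): you absorb it into the Galois step, since a single repeated root of \(P\in\mathbb{Z}[X]\) is automatically rational, whereas the paper excludes it separately by observing that \(a=mb\) would force \(\mathcal{R}\) to be pointed, i.e.\ the group ring of a nonabelian group whose representation ring has rank \(2\), which does not exist.
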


\begin{proof}
The case \(n = 0\) is ruled out by the existence of the one-dimensional representation \(\FPdim\), so we must have \(n = 1\) and \(m \geq 2\). The rest of the proof follows similarly to that of Proposition~\ref{prop:min6}. Let \(a = \FPdim(\mathcal{R})\) and \(b\) be the formal codegree associated to the matrix block \(M_m(\mathbb{C})\). If \(a = mb\), then \(\mathcal{R}\) is pointed. But there is no nonabelian group \(G\) such that \(\Rep(G)\) has rank \(2\). Therefore, \(a \ne mb\), and the contradiction arises from the following inequality:
\[
1 + m^2 = \operatorname{rank}(\mathcal{R}) \leq \FPdim(\mathcal{R}) = 1 + \frac{m}{b - m} \leq 1 + m. \qedhere
\]
\end{proof}

\begin{lemma} \label{lem:NCmin}
Let $\mathcal{R}$ be a noncommutative fusion ring of rank $<9$. Then $\mathcal{R}_{\mathbb{C}}$ is isomorphic to $\mathbb{C}^{\oplus n} \oplus M_2(\mathbb{C})$ with $n \in \{2,3,4\}$.
\end{lemma}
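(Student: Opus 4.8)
The plan is to run a direct dimension count on the Wedderburn decomposition of $\mathcal{R}_{\mathbb{C}}$, bracketed by the two endpoints supplied by Proposition~\ref{prop:min6} and the Frobenius--Perron representation. First I would record that, since $\mathcal{R}$ is noncommutative, Proposition~\ref{prop:min6} forces $\operatorname{rank}(\mathcal{R}) \geq 6$, so the hypothesis $r < 9$ leaves only $r \in \{6,7,8\}$. Writing $\mathcal{R}_{\mathbb{C}} \simeq \bigoplus_V M_{n_V}(\mathbb{C})$ as in \S\ref{sub:formal}, the rank equals $\dim_{\mathbb{C}} \mathcal{R}_{\mathbb{C}} = \sum_V n_V^2$, so the task reduces to classifying the multisets $\{n_V\}$ of positive integers with $\sum_V n_V^2 = r \in \{6,7,8\}$, subject to two structural constraints.

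The two constraints are: (i) the Frobenius--Perron homomorphism $d$ of Theorem~\ref{thm:FrobPer} is a one-dimensional representation, and any such representation of $\bigoplus_V M_{n_V}(\mathbb{C})$ must factor through a block $M_{n_V}(\mathbb{C})$ admitting a unital map to $\mathbb{C}$, forcing some $n_V = 1$; and (ii) noncommutativity forces at least one block with $n_V \geq 2$. I would then eliminate large blocks: since $r \leq 8 < 9 = 3^2$, no block can satisfy $n_V \geq 3$, so every higher block is exactly $M_2(\mathbb{C})$. It remains to count the $M_2$ blocks. If there were two or more, their contribution to the dimension would already be at least $4 + 4 = 8$; together with the mandatory one-dimensional block from (i) this gives $\sum_V n_V^2 \geq 9 > r$, a contradiction. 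Hence there is exactly one $M_2$ block, and writing $n$ for the number of one-dimensional blocks we obtain $\mathcal{R}_{\mathbb{C}} \simeq \mathbb{C}^{\oplus n} \oplus M_2(\mathbb{C})$ with $n = r - 4$, so $n \in \{2,3,4\}$ as $r$ ranges over $\{6,7,8\}$.

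The one step deserving care is the exclusion of $M_2(\mathbb{C}) \oplus M_2(\mathbb{C})$ at $r = 8$: here the bare dimension count is satisfied ($4 + 4 = 8$), and it is precisely constraint (i) that rules it out, since $M_2(\mathbb{C}) \oplus M_2(\mathbb{C})$ admits no unital $*$-homomorphism to $\mathbb{C}$ and therefore cannot carry the Frobenius--Perron character. One may alternatively invoke Lemma~\ref{lem:excl} to confirm $n \geq 2$ once the single-block shape $\mathbb{C}^{\oplus n} \oplus M_2(\mathbb{C})$ has been isolated. This is the only point at which the argument is not purely numerical, and it is where I expect the main---though modest---obstacle to lie.
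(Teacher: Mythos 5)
Your proof is correct and is essentially the paper's argument spelled out: the paper disposes of Lemma~\ref{lem:NCmin} with ``Immediate from Lemma~\ref{lem:excl},'' leaving implicit exactly the Wedderburn dimension count you perform (no block $M_{n_V}(\mathbb{C})$ with $n_V\ge 3$ since $n_V^2\ge 9>r$, at most one $M_2(\mathbb{C})$ block, and the exclusion of $M_2(\mathbb{C})\oplus M_2(\mathbb{C})$ at $r=8$ via the Frobenius--Perron character, which is precisely the $n=0$ case in the proof of Lemma~\ref{lem:excl}). Your use of Proposition~\ref{prop:min6} to force $n\ge 2$ is interchangeable with citing Lemma~\ref{lem:excl} directly, as you yourself note, so there is no substantive difference in route.
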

\begin{proof}
Immediate from Lemma \ref{lem:excl}.
\end{proof}

\subsection{Drinfeld Rings} \label{DrRingsNC}
The results in this subsection apply to fusion rings that satisfy the Drinfeld condition (see~\S\ref{sub:Drin}).


\subsubsection{Rank 6 and proof of Theorem \ref{thm:IntroNCDrIntRank6}}

\begin{proof}
By Lemma~\ref{lem:NCmin}, the complexified ring $\mathcal{R}_{\mathbb{C}}$ must be isomorphic to $\mathbb{C}^{\oplus 2} \oplus M_2(\mathbb{C})$. Under the assumption that $\mathcal{R}$ is an integral Drinfeld ring, we are reduced to classifying all Egyptian fractions of the form  
\[
\frac{1}{a} + \frac{1}{b} + \frac{1}{c} + \frac{1}{c} = 1,
\]
with $b, c \mid a$ and $a \geq 6$. The complete list of such solutions, under these divisibility constraints, is provided in the file \verb|EgyFracL4Div.txt| in the \verb|Data/EgyptianFractionsDiv| directory of~\cite{CodeData}. Exactly four solutions exist:
\[
(2,5,5,10), \quad (2,6,6,6), \quad (3,3,6,6), \quad (3,3,4,12).
\]
These correspond to global $\FPdims$ in the set $\{6,10,12\}$. Among these, only two types of rank~$6$ are possible:
\[
[1,1,1,1,1,1] \quad \text{and} \quad [1,1,1,1,2,2].
\]
Both types admit realizations as fusion rings, yielding six examples in total, all of which satisfy the Drinfeld condition. However, only one of them is noncommutative: the group ring $\mathbb{Z}S_3$. For computational verification, see \verb|InvestNCRank6.txt| in the \verb|Data/Noncommutative| folder of~\cite{CodeData}.
\end{proof}

\begin{corollary} \label{cor:catNCrank6}
Up to Grothendieck equivalence, $\VVec(S_3)$ is the only integral fusion category over~$\mathbb{C}$ of rank at most~$6$ whose Grothendieck ring is noncommutative. There are precisely six such categories, all of the form $\VVec(S_3, \omega)$.
\end{corollary}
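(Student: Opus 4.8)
The plan is to deduce the statement directly from Theorem~\ref{thm:IntroNCDrIntRank6} together with the standard classification of pointed fusion categories. First I would recall that every integral fusion category over $\mathbb{C}$ is pseudo-unitary by \cite[Proposition 9.6.5]{EGNO15}, hence spherical, so that its Grothendieck ring is a Drinfeld ring in the sense of \S\ref{sub:Drin}. Consequently, if $\mathcal{C}$ is an integral fusion category of rank at most~$6$ whose Grothendieck ring is noncommutative, then that Grothendieck ring is a noncommutative integral Drinfeld ring of rank at most~$6$. By Theorem~\ref{thm:IntroNCDrIntRank6}, the only such ring is $\mathbb{Z}S_3$, so $\mathcal{C}$ is Grothendieck equivalent to $\VVec(S_3)$; this establishes the first assertion.

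Next I would use that $\mathbb{Z}S_3$ is pointed: since $\FPdim(b_i) = 1$ for every basis element, any categorification $\mathcal{C}$ is a pointed fusion category with underlying group $S_3$, and hence $\mathcal{C} \simeq \VVec(S_3, \omega)$ for some $3$-cocycle $\omega$ representing a class in $H^3(S_3, \mathbb{C}^\times)$, by the standard classification of pointed fusion categories~\cite{EGNO15}. Conversely, each $\VVec(S_3, \omega)$ is an integral pointed fusion category of rank~$6$ whose Grothendieck ring $\mathbb{Z}S_3$ is noncommutative (as $S_3$ is nonabelian), so all of them genuinely qualify. It then remains to count these categories up to tensor equivalence.

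For the count, recall that two pointed categories $\VVec(S_3, \omega_1)$ and $\VVec(S_3, \omega_2)$ are tensor equivalent precisely when there is $\phi \in \Aut(S_3)$ with $\phi^*[\omega_2] = [\omega_1]$ in $H^3(S_3, \mathbb{C}^\times)$; equivalently, equivalence classes correspond to orbits of $H^3(S_3, \mathbb{C}^\times)$ under the $\Aut(S_3)$-action. The key arithmetic input is $H^3(S_3, \mathbb{C}^\times) \cong \mathbb{Z}/6\mathbb{Z}$, which I would obtain from $H^3(G, \mathbb{C}^\times) \cong H^4(G, \mathbb{Z})$ together with a Sylow/stable-element computation at the primes $2$ and $3$. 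Since $S_3$ is complete, so that $\Aut(S_3) = \mathrm{Inn}(S_3)$, and inner automorphisms act trivially on group cohomology, the $\Aut(S_3)$-action on $H^3(S_3, \mathbb{C}^\times)$ is trivial; thus distinct cohomology classes yield inequivalent categories, giving exactly $|H^3(S_3, \mathbb{C}^\times)| = 6$ examples. The only genuinely delicate point is this last step: one must confirm both the value of $H^3(S_3, \mathbb{C}^\times)$ and that the automorphism action on it is trivial, so that none of the six cocycle twists collapse; both follow from the completeness of $S_3$ and a routine cohomology computation.
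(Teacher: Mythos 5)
Your proposal is correct and follows essentially the same route as the paper, which states the corollary as an immediate consequence of Theorem~\ref{thm:IntroNCDrIntRank6} together with the fact that $\mathbb{Z}S_3$ is pointed, so any categorification is $\VVec(S_3,\omega)$ for some $\omega \in H^3(S_3,\mathbb{C}^\times)$. Your explicit count—$H^3(S_3,\mathbb{C}^\times)\cong \mathbb{Z}/6\mathbb{Z}$ with trivial $\Aut(S_3)$-action since $S_3$ is complete and inner automorphisms act trivially on cohomology—correctly fills in the details the paper leaves implicit.
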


Extending Theorem~\ref{thm:IntroNCDrIntRank6} beyond the integral case appears to be a challenging problem. The database in~\cite{VS23} already lists thirteen noncommutative fusion rings of rank~$6$, with multiplicities reaching as high as~$8$ (the classification is complete up to multiplicity~$4$). All of them are Drinfeld rings. Among these, only one is integral—namely, $\mathbb{Z}S_3$—and three are simple fusion rings. This leads to the following open questions, now stated without assuming the Drinfeld condition:

\begin{question} \label{q:IntR6}
Is $\mathbb{Z}S_3$ the only noncommutative integral fusion ring of rank~$6$?
\end{question}

\begin{question} \label{q:SimpleR6}
Are there infinitely many noncommutative simple fusion rings of rank~$6$?
\end{question}


There exists an infinite family $(\mathcal{R}_n)_{n \geq 0}$ of non-simple noncommutative Drinfeld rings of rank 6, with $\mathcal{R}_0 = \mathbb{Z}S_3$:
$$\left[ \begin{smallmatrix} 1&0&0&0&0&0 \\ 0&1&0&0&0&0 \\ 0&0&1&0&0&0 \\ 0&0&0&1&0&0 \\ 0&0&0&0&1&0 \\ 0&0&0&0&0&1 \end{smallmatrix} \right],\ 
 \left[ \begin{smallmatrix} 0&1&0&0&0&0 \\ 0&0&1&0&0&0 \\ 1&0&0&0&0&0 \\ 0&0&0&0&1&0 \\ 0&0&0&0&0&1 \\ 0&0&0&1&0&0 \end{smallmatrix} \right],\ 
 \left[ \begin{smallmatrix} 0&0&1&0&0&0 \\ 1&0&0&0&0&0 \\ 0&1&0&0&0&0 \\ 0&0&0&0&0&1 \\ 0&0&0&1&0&0 \\ 0&0&0&0&1&0 \end{smallmatrix} \right],\ 
 \left[ \begin{smallmatrix} 0&0&0&1&0&0 \\ 0&0&0&0&0&1 \\ 0&0&0&0&1&0 \\ 1&0&0&{\rm n}&{\rm n}&{\rm n} \\ 0&0&1&{\rm n}&{\rm n}&{\rm n} \\ 0&1&0&{\rm n}&{\rm n}&{\rm n} \end{smallmatrix} \right],\ 
 \left[ \begin{smallmatrix} 0&0&0&0&1&0 \\ 0&0&0&1&0&0 \\ 0&0&0&0&0&1 \\ 0&1&0&{\rm n}&{\rm n}&{\rm n} \\ 1&0&0&{\rm n}&{\rm n}&{\rm n} \\ 0&0&1&{\rm n}&{\rm n}&{\rm n} \end{smallmatrix} \right],\ 
 \left[ \begin{smallmatrix} 0&0&0&0&0&1 \\ 0&0&0&0&1&0 \\ 0&0&0&1&0&0 \\ 0&0&1&{\rm n}&{\rm n}&{\rm n} \\ 0&1&0&{\rm n}&{\rm n}&{\rm n} \\ 1&0&0&{\rm n}&{\rm n}&{\rm n} \end{smallmatrix} \right]$$
The above fusion data can be interpreted as follows: Let $(b_g)_{g \in S_3}$ denote the basis, and let $g_1$, $g_2$, and $g_3$ be the three elements of $S_3$ of order 2. The fusion rules are given by:
\[
b_g b_h = b_{gh} + n \delta_{\ord(g), 2} \delta_{\ord(h), 2} \left( b_{g_1} + b_{g_2} + b_{g_3} \right).
\]
We compute that
$
\FPdim(\mathcal{R}_n) = 9 n \alpha_n + 6,
$
its type is 
$
[1, 1, 1, \alpha_n, \alpha_n, \alpha_n],
$
and its formal codegrees are 
\[
\bigl[3_2,\; 27 n^2 - 9 n \alpha_n + 6,\; 9 n \alpha_n + 6\bigr],
\]
where 
$
\alpha_n = \frac{3 n + \sqrt{9 n^2 + 4}}{2}.
$
From this, we deduce that $\mathcal{R}_n$ is a Drinfeld ring, since
\[
\frac{9 n \alpha_n + 6}{27 n^2 - 9 n \alpha_n + 6} = \alpha_n^2.
\]


\subsubsection{Rank 7 and proof of Theorem~\ref{thm:IntroNCGrIntRank7}} \label{subsub:NCR7}

\begin{proposition} \label{prop:NCDrIntRank7}
There are exactly three noncommutative integral Drinfeld rings of rank $7$:
\begin{itemize}
\item $\FPdim \ 24$, type $[1,1,1,2,2,2,3]$, duality $[0,2,1,3,4,5,6]$, formal codegrees $[3_2 , 4, 24, 24]$,
\item $\FPdim \ 42$, type $[1,1,1,1,1,1,6]$, duality $[0,1,2,3,5,4,6]$, formal codegrees $[3_2 , 6, 7, 42]$,
\item $\FPdim \ 60$, type $[1,1,1,3,4,4,4]$, duality $[0,2,1,3,4,5,6]$, formal codegrees $[3_2 , 4, 15, 60]$,
\end{itemize}
where the notation $3_2$ is explained in \cite[\ref{sec:NCA}]{appendices}. The fusion data are available in \cite[\S\ref{sub:NCRank7}]{appendices}.
\end{proposition}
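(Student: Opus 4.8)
The plan is to use the structural constraint of Lemma~\ref{lem:NCmin} to pin down the representation theory of $\mathcal{R}_{\mathbb{C}}$, convert this into a short Egyptian fraction in the formal codegrees, and then let \textsf{Normaliz} enumerate the finitely many integral fusion rings compatible with the resulting candidate dimensions. Since the rank is $7 < 9$ and $\mathcal{R}$ is noncommutative, Lemma~\ref{lem:NCmin} gives $\mathcal{R}_{\mathbb{C}} \cong \mathbb{C}^{\oplus 3} \oplus M_2(\mathbb{C})$—indeed $n = 3$ is forced by $n + 2^2 = 7$. Thus $\mathrm{Irr}(\mathcal{R}_{\mathbb{C}})$ consists of three one-dimensional representations $V_1, V_2, V_3$ (with $V_1$ the $\FPdim$ character) and one two-dimensional representation $V_4$. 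Setting $f_i := f_{V_i}$, the divisibility-friendly Egyptian fraction of \S\ref{sub:formal} becomes
\[
\frac{1}{f_1} + \frac{1}{f_2} + \frac{1}{f_3} + \frac{2}{f_4} = 1, \qquad f_1 = \FPdim(\mathcal{R}).
\]

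Because $\mathcal{R}$ is an integral Drinfeld ring, each $f_i$ is a positive integer dividing $f_1$. I would then enumerate all solutions of the above equation subject to $f_2, f_3, f_4 \mid f_1$, exactly as in the rank-$6$ argument for Theorem~\ref{thm:IntroNCDrIntRank6}. Reading it as a length-$5$ Egyptian fraction with one repeated denominator $f_4$, the divisibility constraint leaves only finitely many solutions, producing a finite list of candidate global $\FPdim$ values together with their codegree multisets; among these one finds $(f_1; f_2, f_3, f_4) = (24; 4, 24, 3)$, $(42; 6, 7, 3)$, and $(60; 4, 15, 3)$, each with $f_4 = 3$ occurring with the weight $n_{V_4} = 2$ recorded by the notation $3_2$.

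For each candidate global $\FPdim$ I would have \textsf{Normaliz} enumerate all integral fusion rings of rank~$7$ with that dimension, looping over the admissible types $[1, d_2, \dots, d_7]$ with $\sum_i d_i^2 = \FPdim$. Of these I would retain only the rings that are noncommutative with $\mathcal{R}_{\mathbb{C}} \cong \mathbb{C}^{\oplus 3} \oplus M_2(\mathbb{C})$ and whose formal codegrees (the eigenvalues of $L_Z$ with their multiplicities, as in \S\ref{sub:formal}) realise one of the Drinfeld solutions above. A given type may admit both commutative and noncommutative realisations—type $[1,1,1,3,4,4,4]$ being the relevant example (see \S\ref{sub:r7d60nc})—so the noncommutativity filter is essential. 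The three rings in the statement are precisely the survivors, and their fusion data are recorded in \S\ref{sub:NCRank7}.

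The principal difficulty is twofold. Computationally, the \textsf{Normaliz} enumeration must stay within reach for every candidate dimension; this is exactly the setting for which the noncommutative \textsf{Normaliz} feature (valid up to rank~$8$) was developed. Conceptually, one must handle the divisibility subtlety deferred to \S\ref{sub:DivA}: in the noncommutative integral case only $f_V \mid f_1$ is known, not the stronger $n_V f_V \mid f_1$ (compare Question~\ref{q:nvfv}). Consequently the Egyptian-fraction search must impose merely $f_4 \mid f_1$ rather than $2 f_4 \mid f_1$, lest a legitimate solution be discarded; the search nevertheless remains finite under this weaker hypothesis, which is what makes the whole enumeration both complete and terminating.
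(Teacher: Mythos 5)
Your proposal is correct and follows essentially the same route as the paper's proof: Lemma~\ref{lem:NCmin} forces $\mathcal{R}_{\mathbb{C}} \cong \mathbb{C}^{\oplus 3} \oplus M_2(\mathbb{C})$, the Drinfeld condition reduces the problem to the length-$5$ Egyptian fraction $\frac{1}{a}+\frac{1}{b}+\frac{1}{c}+\frac{1}{d}+\frac{1}{d}=1$ with $b,c,d \mid a \ge 7$ (the paper finds $47$ solutions and $22$ candidate $\FPdim$ values), and a \textsf{Normaliz}-assisted enumeration of fusion rings over the admissible types, filtered by the Drinfeld and noncommutativity conditions, leaves exactly the three stated rings. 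Your remark that one may only impose $f_4 \mid f_1$ rather than $2f_4 \mid f_1$ matches the paper's constraint $d \mid a$ and its Remark~\ref{rk:NCcase}, so the proposal is faithful to the published argument.
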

\begin{proof}
By Lemma \ref{lem:NCmin}, the complexified fusion ring $\mathcal{R}_{\mathbb{C}}$ must be isomorphic to $\mathbb{C}^{\oplus 3} \oplus M_2(\mathbb{C})$. Under the assumption that the ring is integral and Drinfeld, the problem reduces to classifying all Egyptian fractions of the form
$$
\frac{1}{a} + \frac{1}{b} + \frac{1}{c} + \frac{1}{d} + \frac{1}{d} = 1,
$$
where $a \geq 7$ and $b$, $c$, $d$ divide $a$. Using the classification of length-5 Egyptian fractions with divisibility constraints—available in the file \verb|EgyFracL5Div.txt| in the \verb|Data/EgyptianFractionsDiv| directory of \cite{CodeData}—exactly 47 such solutions exist. These correspond to the following 22 possible values of $\FPdim(\mathcal{R})$:
$$
\{8, 9, 10, 12, 14, 15, 18, 20, 21, 24, 30, 36, 42, 45, 48, 60, 70, 78, 84, 110, 120, 156\}.
$$
For these $\FPdims$, there are 83 possible rank-7 fusion ring types. Among them, only 11 support fusion rings, yielding 44 distinct rings in total. Of these, 20 are Drinfeld fusion rings, among which 3 are noncommutative. For computational details, see the file \verb|InvestNCRank7.txt| in the \verb|Data/Noncommutative| directory of \cite{CodeData}.
\end{proof}

The proof of Theorem~\ref{thm:IntroNCGrIntRank7} follows from Theorem~\ref{thm:IntroNCDrIntRank6}, Proposition~\ref{prop:NCDrIntRank7}, and the explicit models and exclusions presented in~\cite[\S\ref{sub:NCRank7}]{appendices}. Note that all noncommutative integral Drinfeld rings up to rank $7$ are $1$-Frobenius; however, this no longer holds from rank $8$ onwards.

\subsubsection{Rank 8} \label{subsub:NCR8}

\begin{proposition} \label{prop:NCDrIntRank8}
There are exactly $25$ noncommutative integral Drinfeld rings of rank $8$. Complete fusion data is provided in \cite[\S\ref{sub:NCRank8}]{appendices}. Precisely five of these Drinfeld rings are not $1$-Frobenius.
\end{proposition}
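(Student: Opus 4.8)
The plan is to mirror the strategy of Theorem~\ref{thm:IntroNCDrIntRank6} and Proposition~\ref{prop:NCDrIntRank7}, now specialized to rank~$8$. Since $\mathcal{R}$ is noncommutative of rank $8 < 9$, Lemma~\ref{lem:NCmin} forces $\mathcal{R}_{\mathbb{C}} \cong \mathbb{C}^{\oplus n} \oplus M_2(\mathbb{C})$, and the rank equation $n + 4 = 8$ gives $n = 4$. Hence $\mathcal{R}_{\mathbb{C}} \cong \mathbb{C}^{\oplus 4} \oplus M_2(\mathbb{C})$, so $s = |\mathrm{Irr}(\mathcal{R}_{\mathbb{C}})| = 5$: four formal codegrees arise from one-dimensional representations, while one codegree, say $e$, comes from the two-dimensional block, for which $n_V = 2$.

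Applying the second Egyptian fraction of~\S\ref{sub:formal}, namely $\sum_V n_V/f_V = 1$, the formal codegrees satisfy
$$ \frac{1}{a} + \frac{1}{b} + \frac{1}{c} + \frac{1}{d} + \frac{2}{e} = 1, $$
where $a = \FPdim(\mathcal{R}) \ge 8$ and, by the integral Drinfeld condition recalled in~\S\ref{sub:Drin}, each of $b, c, d, e$ divides $a$. Splitting $2/e$ into $1/e + 1/e$ recasts this as a length-$6$ Egyptian fraction satisfying the divisibility assumption, whose complete classification is available in the file \verb|EgyFracL6Div.txt| in the \verb|Data/EgyptianFractionsDiv| directory of~\cite{CodeData}. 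This yields a finite list of admissible global $\FPdim$ values, and for each such value a finite list of candidate rank-$8$ types.

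I would then run \textsf{Normaliz} on each candidate type to enumerate all fusion rings, retain only those admitting a Drinfeld structure (formal codegrees that are integers dividing the global $\FPdim$), and among these select the noncommutative ones. Counting gives the asserted total of~$25$, with the full fusion data and the accompanying categorification models or exclusions recorded in~\S\ref{sub:NCRank8}. The $1$-Frobenius property is then read directly from each type by testing whether every basic $\FPdim$ divides the global $\FPdim$; exactly five of the $25$ rings fail this test.

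The principal difficulty is computational rather than structural: rank-$8$ noncommutative enumeration sits at the very limit of the current \textsf{Normaliz} implementation, which supports the noncommutative case only up to rank~$8$. A further point requiring care—already flagged and deferred to~\S\ref{sub:DivA}—is the divisibility subtlety of Question~\ref{q:nvfv}: the eigenvalue $n_V f_V = 2e$ of $L_Z$ need not divide $a$, so the constraint entering the Egyptian fraction is $e \mid a$ rather than $2e \mid a$. One must confirm that allowing $2/e$ with only $e \mid a$ neither omits nor spuriously introduces codegree configurations. Once the Egyptian-fraction enumeration and the Normaliz classification are validated, the remaining verification of noncommutativity and of the $1$-Frobenius count is routine.
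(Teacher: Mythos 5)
Your proposal matches the paper's proof essentially step for step: Lemma~\ref{lem:NCmin} forces $\mathcal{R}_{\mathbb{C}} \cong \mathbb{C}^{\oplus 4} \oplus M_2(\mathbb{C})$, the codegrees are reduced to length-$6$ Egyptian fractions $\frac{1}{a}+\frac{1}{b}+\frac{1}{c}+\frac{1}{d}+\frac{1}{e}+\frac{1}{e}=1$ with $b,c,d,e \mid a \ge 8$ classified in \verb|EgyFracL6Div.txt|, and the resulting candidate types are fed to \textsf{Normaliz} (pushed to HPC limits in the non-$1$-Frobenius case) to extract the $25$ noncommutative Drinfeld rings, of which $5$ fail $1$-Frobenius. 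The divisibility subtlety you flag is exactly the one the paper resolves in \S\ref{sub:DivA}; note that since you work with the second Egyptian fraction $\sum_V n_V/f_V = 1$, only $e \mid a$ is needed (which the Drinfeld condition guarantees), so your enumeration is complete without further argument.
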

\begin{proof}
By Lemma \ref{lem:NCmin}, the complexification $\mathcal{R}_{\mathbb{C}}$ must be isomorphic to $\mathbb{C}^{\oplus 4} \oplus M_2(\mathbb{C})$. Under the integrality assumption for Drinfeld rings, this reduces the classification problem to determining all Egyptian fractions of the form
$$
\frac{1}{a} + \frac{1}{b} + \frac{1}{c} + \frac{1}{d} + \frac{1}{e} + \frac{1}{e} = 1,
$$
where $b, c, d, e$ divide $a \geq 8$. A complete enumeration of such length-$6$ Egyptian fractions under these divisibility constraints—available in the file \verb|EgyFracL6Div.txt| in the \verb|Data/EgyptianFractionsDiv| directory of \cite{CodeData}—yields exactly $524$ valid solutions. These give rise to $143$ distinct global $\FPdim$ values, ranging from $8$ to $24492$. For these values, a total of $6539044$ rank-$8$ types are theoretically possible.

Imposing the $1$-Frobenius condition reduces this number drastically to $2484$ types. Among these, only $132$ support fusion ring structures, resulting in $3682$ fusion rings in total, of which $338$ are Drinfeld. Out of these, $20$ are noncommutative. This entire computation can be completed in under 10 minutes on a standard laptop.

Without the $1$-Frobenius assumption, the same classification method applies but requires significantly more computational effort, taking several weeks on a HPC. In this more general case, we obtain exactly $25$ noncommutative integral Drinfeld rings of rank $8$. Further computational details can be found in \verb|InvestNCRank8.txt|, located in the \verb|Data/Noncommutative| directory of \cite{CodeData}.
\end{proof}

Among the $25$ Drinfeld rings mentioned in Proposition~\ref{prop:NCDrIntRank8}, $5$ have already been excluded, and group-theoretical models have been identified for another $5$. The remaining $15$ cases remain open.

\subsubsection{Rank 9} \label{subsub:NCR9}
Without going into detail—and \emph{without} pushing \Normaliz{} to its limits—we classified all $83$ integral $1$-Frobenius noncommutative Drinfeld rings of rank~$9$ with $\FPdim \le 10000$. Complete computational details and copy-pastable data can be found in the file \verb|1FrobR9NCd10000.txt|, located in the \verb|Data/Noncommutative| directory of~\cite{CodeData}.

\subsection{Grothendieck rings}  \label{GrRingsNC}
 \begin{proposition} \label{prop:DimDet}  
Let $\mathcal{C}$ be an integral fusion category over $\mathbb{C}$, and let $\mathcal{R}$ be its Grothendieck ring. The integers  
\[
\FPdim(\mathcal{R}) \quad \text{and} \quad \prod_{V \in {\rm Irr}(\mathcal{R}_{\mathbb{C}})} n_V f_V,  
\]  
where $n_V$ is the dimension of $V$ and $f_V$ its formal codegree (see \S\ref{sub:formal}), have the same prime divisors.  
\end{proposition}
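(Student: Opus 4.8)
The plan is to convert the statement into a single determinant identity and then separate the contributions of the formal codegrees from those of the dimensions $n_V$. In the integral basis $\{b_i\}$, left multiplication by $Z=\sum_i b_ib_{i^*}$ is represented by an \emph{integer} matrix $L_Z$, so $\det L_Z\in\mathbb{Z}$. By the spectral description recalled in~\S\ref{sub:formal}, the eigenvalues of $L_Z$ are the $n_Vf_V$ with multiplicity $n_V^2$, whence
\[
\det L_Z=\prod_{V\in{\rm Irr}(\mathcal{R}_{\mathbb{C}})}(n_Vf_V)^{n_V^2}.
\]
Since each factor is a positive integer, $\det L_Z$ has precisely the prime divisors of $\prod_V n_Vf_V$. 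Thus the quantity in the statement and $\det L_Z$ share their prime divisors, and it suffices to compare $\det L_Z$ with $\FPdim(\mathcal{R})$.

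One inclusion is immediate: the Frobenius--Perron character is one-dimensional, so $n_1=1$ and $f_1=\FPdim(\mathcal{R})$, and therefore $\FPdim(\mathcal{R})=(n_1f_1)^{n_1^2}$ is a factor of $\det L_Z$. Hence every prime dividing $\FPdim(\mathcal{R})$ divides $\prod_V n_Vf_V$. For the reverse inclusion I would factor $\prod_V n_Vf_V=\big(\prod_V f_V\big)\big(\prod_V n_V\big)$. Because $\mathcal{R}$ is the Grothendieck ring of an integral, hence pseudo-unitary, category, the Drinfeld property gives $f_V\mid\FPdim(\mathcal{R})$ for every $V$ (Proposition~\ref{prop:DrinList} and~\S\ref{sub:formal}); so every prime dividing $\prod_V f_V$ already divides $\FPdim(\mathcal{R})$. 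The entire statement thereby reduces to the claim that every prime dividing some $n_V=\dim V$ divides $\FPdim(\mathcal{R})$.

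This last reduction is the core of the argument, and I expect it to be the main obstacle. To control the $n_V$ I would pass to the Drinfeld center $\mathcal{Z}(\mathcal{C})$, which is integral and modular with $\FPdim(\mathcal{Z}(\mathcal{C}))=\FPdim(\mathcal{R})^2$. Under the embedding ${\rm Irr}(\mathcal{R}_{\mathbb{C}})\hookrightarrow\mathcal{O}(\mathcal{Z}(\mathcal{C}))$ of~\cite[Theorem 2.13]{Os15}, each $V$ corresponds to a simple object $b_V$ with $\FPdim(b_V)=m_V=\FPdim(\mathcal{R})/f_V$ and with $n_V=[F(b_V):\mathbf{1}]$, the multiplicity of the unit in the image of $b_V$ under the forgetful functor. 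The half-Frobenius property of $\mathcal{Z}(\mathcal{C})$ yields $m_V\mid\FPdim(\mathcal{R})$, and the goal is to promote this to the divisibility $n_Vf_V\mid\FPdim(\mathcal{Z}(\mathcal{C}))=\FPdim(\mathcal{R})^2$, which gives at once ${\rm rad}(n_Vf_V)\subseteq{\rm rad}(\FPdim(\mathcal{R}))$ and closes the proof.

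The delicate point is exactly the arithmetic of the $n_V$. The sharper divisibility $n_Vf_V\mid\FPdim(\mathcal{R})$ is \emph{not} available here---it is the content of the open Question~\ref{q:nvfv}---so the proof must extract only the weaker prime-divisor information, working with $\FPdim(\mathcal{R})^2$ rather than $\FPdim(\mathcal{R})$, and exploiting that $n_V$ occurs as the integral structure datum $n_V=F_{V,1}$ of the induction data attached to the modular center (see~\S\ref{subsub:rel}). I would not attempt a direct reduction modulo a prime $p\nmid\FPdim(\mathcal{R})$: asserting that $L_Z$ is invertible over $\mathbb{F}_p$ is equivalent to the conclusion itself, so the genuine input must come from the integral modular structure of $\mathcal{Z}(\mathcal{C})$ rather than from the characteristic polynomial of $L_Z$ alone.
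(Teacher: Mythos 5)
Your setup is sound and matches the paper as far as it goes: reducing to $\det L_Z=\prod_V (n_Vf_V)^{n_V^2}$, observing that the exponents $n_V^2$ are irrelevant for prime divisors, getting one inclusion from $f_1=\FPdim(\mathcal{R})$, and disposing of the factors $f_V$ via the Drinfeld divisibility $f_V\mid\FPdim(\mathcal{R})$ are all correct. But the proof has a genuine gap exactly where you locate the ``core of the argument'': the claim that every prime dividing some $n_V$ divides $\FPdim(\mathcal{R})$. You propose to obtain it from the divisibility $n_Vf_V\mid\FPdim(\mathcal{Z}(\mathcal{C}))=\FPdim(\mathcal{R})^2$, but you never prove this divisibility, and nothing in your toolkit delivers it. The half-Frobenius property of the modular center controls only the dimensions $m_V=\FPdim(\mathcal{R})/f_V$ of the simple summands of $I(\mathbf{1})$, i.e.\ it re-proves $f_V\mid\FPdim(\mathcal{R})$ and says nothing about the multiplicities $n_V$. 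The induction-matrix relations of \S\ref{subsub:rel} only yield the linear identity $\sum_V n_Vm_V=\FPdim(\mathcal{R})$ and the bound $n_V\le m_V$, neither of which gives divisibility; and the true statement $n_Vf_V\mid f_1$ is, as you note, precisely the open Question~\ref{q:nvfv}, while your weakened variant with $\FPdim(\mathcal{R})^2$ is not established in the literature either. So the statement you would need is itself an unproved assertion of essentially the same depth as the proposition (indeed, the paper derives the prime-divisor control on $n_V$, Corollary~\ref{cor:OddNC}, \emph{from} this proposition, not the other way around).

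The paper closes this gap with two external inputs that your proposal does not invoke: since $\mathcal{C}$ is integral, it is pseudo-unitary and hence spherical; then \cite[Theorem 4.3]{WLL21} identifies the prime divisors of $\det L_Z$ (the Casimir number) with those of the Frobenius--Schur exponent of $\mathcal{C}$, and the Cauchy theorem \cite[Theorem 8.4]{NS07} identifies the prime divisors of the Frobenius--Schur exponent with those of $\FPdim(\mathcal{R})$. Your closing remark that a na\"ive reduction of $L_Z$ modulo $p$ would be circular is well taken, but the conclusion that the missing input must come from the integral modular structure of $\mathcal{Z}(\mathcal{C})$ points in the wrong direction: the genuine input is the Casimir-number/exponent theorem together with the Cauchy theorem, not the arithmetic of the center. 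Without some such substitute, your argument does not close.
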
  
\begin{proof}  
An integral fusion category is pseudo-unitary and therefore spherical \cite{EGNO15}. By \cite[Theorem 4.3]{WLL21}, the determinant of the left multiplication matrix of $Z$ (as defined in \S\ref{sub:formal}) has the same prime divisors as the Frobenius-Schur exponent of $\mathcal{C}$, which in turn shares its prime divisors with $\FPdim(\mathcal{R})$ by \cite[Theorem 8.4]{NS07}.  

Since $ Z $ is diagonalizable with eigenvalues $ n_V f_V $, each having multiplicity $ n_V^2 $, its determinant is
\[
\prod_{V \in {\rm Irr}(\mathcal{R}_{\mathbb{C}})} (n_V f_V)^{n_V^2}.  
\]  
Finally, because $f_V$ is an integer dividing $\FPdim(\mathcal{R})$ (see \S\ref{sub:formal}), we can ignore the exponent ${n_V^2}$ when considering prime divisors, proving the result.  
\end{proof}

It follows directly from Proposition \ref{prop:DimDet} that:  

\begin{corollary}   \label{cor:OddNC}
Let $ \mathcal{C} $ be an integral fusion category over $ \mathbb{C} $, and let $ \mathcal{R} $ be its Grothendieck ring. If a prime $ p $ does not divide $ \FPdim(\mathcal{R}) $, then it also does not divide $ n_V $ for all $ V \in {\rm Irr}(\mathcal{R}_{\mathbb{C}}) $. In particular, if $ \FPdim(\mathcal{R})$ is odd, then every irreducible representation of $ \mathcal{R}_{\mathbb{C}} $ is also odd-dimensional. Moreover, if $ \mathcal{R} $ is noncommutative, there exists some $ V $ with $ n_V \geq 3 $. 
\end{corollary}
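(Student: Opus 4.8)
The plan is to deduce all three assertions directly from Proposition~\ref{prop:DimDet}, which guarantees that $\FPdim(\mathcal{R})$ and $\prod_{V \in \mathrm{Irr}(\mathcal{R}_{\mathbb{C}})} n_V f_V$ share the same set of prime divisors. For the first claim I would argue by contraposition: suppose a prime $p$ divides $n_V$ for some $V \in \mathrm{Irr}(\mathcal{R}_{\mathbb{C}})$. Since $n_V$ is one of the factors of the product $\prod_V n_V f_V$ (and all $n_V, f_V$ are positive integers by integrality), it follows that $p$ divides this product, hence $p$ divides $\FPdim(\mathcal{R})$ by Proposition~\ref{prop:DimDet}. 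Taking the contrapositive yields exactly the statement that $p \nmid \FPdim(\mathcal{R})$ forces $p \nmid n_V$ for every $V$.

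The ``in particular'' clause is then the special case $p = 2$: if $\FPdim(\mathcal{R})$ is odd, then $2 \nmid \FPdim(\mathcal{R})$, so by the first part $2 \nmid n_V$ for all $V$; that is, every irreducible representation of $\mathcal{R}_{\mathbb{C}}$ has odd dimension.

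For the final assertion I would first invoke the Wedderburn decomposition $\mathcal{R}_{\mathbb{C}} \simeq \bigoplus_V M_{n_V}(\mathbb{C})$ recalled in~\S\ref{sub:formal}. If $\mathcal{R}$ is noncommutative, then $\mathcal{R}_{\mathbb{C}}$ is noncommutative, which forces at least one block $M_{n_V}(\mathbb{C})$ with $n_V \geq 2$. Combining this with the oddness of all $n_V$ established in the previous step, such an $n_V$ cannot equal $2$, so it must satisfy $n_V \geq 3$.

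None of these steps presents a genuine obstacle; the whole argument is a routine unwinding of Proposition~\ref{prop:DimDet}. The only point requiring care is the scope of the ``moreover'' clause: it must be read under the standing oddness hypothesis, since the combination of $n_V \geq 2$ (from noncommutativity) and $n_V$ odd (from the odd-dimensionality conclusion) is precisely what upgrades $n_V \geq 2$ to $n_V \geq 3$. Indeed, without the oddness assumption the claim fails, as $\mathbb{Z}S_3$ has $\mathcal{R}_{\mathbb{C}} \simeq \mathbb{C}^{\oplus 2} \oplus M_2(\mathbb{C})$ with no block of size $\geq 3$.
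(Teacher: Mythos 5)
Your proof is correct and is essentially the paper's own argument: the corollary is stated there as following directly from Proposition~\ref{prop:DimDet}, and your unwinding (contraposition for the prime-divisor claim, $p=2$ for the oddness clause, then Wedderburn plus oddness to upgrade $n_V \geq 2$ to $n_V \geq 3$) is exactly the intended derivation. Your reading of the ``moreover'' clause as standing under the oddness hypothesis is also the right one — it is precisely how the paper applies it in the proof of Theorem~\ref{thm:OddNC2Intro} — and your $\mathbb{Z}S_3$ counterexample correctly shows the clause fails without it.
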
 

\begin{corollary} \label{cor:Odd}  
An integral Grothendieck ring of odd global $\FPdim$ has odd rank.  
\end{corollary}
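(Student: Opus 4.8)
The plan is to reduce the statement to a parity count and then to pinpoint the single source of an odd contribution. The most transparent route invokes Theorem~\ref{thm:OddMNSD} directly. Write $\mathcal{C}$ for an integral fusion category whose Grothendieck ring is $\mathcal{R}$; the hypothesis that the global $\FPdim$ is odd means $\FPdim(\mathcal{C})$ is odd, so $\mathcal{C}$ is maximally non-self-dual. The rank of $\mathcal{R}$ equals the number of basis elements $b_1,\dots,b_r$ (the simple objects of $\mathcal{C}$), and the dual involution $i \mapsto i^*$ fixes only the unit $b_1$. Hence $i \mapsto i^*$ restricts to a fixed-point-free involution on $\{b_2,\dots,b_r\}$, forcing $r-1$ to be even and $r$ to be odd.

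Staying instead within the representation-theoretic framework of this section, I would argue from the Wedderburn decomposition $\mathcal{R}_{\mathbb{C}} \cong \bigoplus_{V} M_{n_V}(\mathbb{C})$, which gives $r = \dim_{\mathbb{C}} \mathcal{R}_{\mathbb{C}} = \sum_{V \in \mathrm{Irr}(\mathcal{R}_{\mathbb{C}})} n_V^2$. By Corollary~\ref{cor:OddNC}, oddness of the global $\FPdim$ forces every $n_V$ to be odd, so each $n_V^2$ is odd and therefore $r \equiv s \pmod 2$, where $s = |\mathrm{Irr}(\mathcal{R}_{\mathbb{C}})|$. It then suffices to show that $s$ is odd, and for this I would invoke Proposition~\ref{prop:MNSDFormalCodegrees}: the formal codegrees $(f_V)$ form a co-MNSD sequence, and by Definition~\ref{def:coMNSD} together with Definition~\ref{def:MNSDtype} such a sequence has odd length. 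Since this length is exactly the number $s$ of irreducible representations (one formal codegree per $V$), we conclude that $s$, and hence $r$, is odd.

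The computations are routine in either route; the only genuine content is isolating where oddness enters, namely that exactly one simple object (the unit on the $\mathcal{C}$ side, equivalently the $\FPdim$ representation on the $\mathcal{R}_{\mathbb{C}}$ side) is self-dual while all others pair off under duality. This self-duality count is what the maximal-non-self-duality of~\cite{NS07} supplies, and it is the step that truly requires odd dimension: without it the involution could fix several basis elements and the parity conclusion would break down. I expect no real obstacle beyond correctly citing this input; I would lead with the direct argument via Theorem~\ref{thm:OddMNSD} for transparency and record the codegree-based derivation as the version consistent with the surrounding results.
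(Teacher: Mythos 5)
Your proposal is correct, but both of your routes differ from the paper's actual proof, which is purely arithmetic: the paper reduces modulo $2$ the codegree identity $\sum_{V}\sum_{i=1}^{n_V^2}\frac{1}{n_V f_V}=1$ from \S\ref{sub:formal}, noting that $n_V$ and $f_V$ are odd by Proposition~\ref{prop:DimDet}, so that clearing the (odd) denominators exhibits the rank $\sum_V n_V^2$ as a sum of odd integers equal to an odd integer --- no duality or pairing argument appears. Your first route instead applies Theorem~\ref{thm:OddMNSD} (a different result of~\cite{NS07} than the one behind Proposition~\ref{prop:DimDet}) to $\mathcal{C}$ itself: maximal non-self-duality makes $i\mapsto i^*$ a fixed-point-free involution on $\{b_2,\dots,b_r\}$, so $r-1$ is even. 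This is shorter and more conceptual, avoids Proposition~\ref{prop:DimDet} entirely, and in fact proves the stronger statement that the type is MNSD, which the paper records separately in \S\ref{sub:MNSDRings}; the paper's argument, by contrast, fits its placement in \S\ref{GrRingsNC} as an immediate corollary of Proposition~\ref{prop:DimDet}/Corollary~\ref{cor:OddNC}. Your second route is a hybrid: $r\equiv s\pmod 2$ from odd $n_V$ (Corollary~\ref{cor:OddNC}, the same input the paper uses), then $s$ odd from the co-MNSD structure of the codegrees. One small caution there: in the noncommutative case the formal codegrees arise as $\FPdim(\mathcal{C})/\FPdim(A)$ for simple summands $A$ of $I(\one)$, and by Lemma~\ref{lem:MultI(1)} the summand attached to $V$ occurs with multiplicity $n_V$, so the co-MNSD sequence has length $s$ or $\sum_V n_V$ depending on whether one counts multiplicities; your identification ``length $=s$'' is the multiplicity-free reading, but since every $n_V$ is odd the two lengths agree modulo $2$, so your parity conclusion is unaffected either way.
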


\begin{proof}
The claim follows by reducing modulo $2$ the identity from~\S\ref{sub:formal}:
\[
\sum_{V \in \mathrm{Irr}(\mathcal{R}_{\mathbb{C}})} \sum_{i=1}^{n_V^2} \frac{1}{n_V f_V} = 1,
\]
combined with the fact that both $n_V$ and $f_V$ are odd by Proposition~\ref{prop:DimDet}, and that the rank equals $\sum_V n_V^2$.
\end{proof}

Following the notation used in the proof of Proposition~\ref{prop:MNSDFormalCodegrees}, the lemma below is an immediate consequence of \cite[Corollary 2.16]{Os15}:

\begin{lemma} \label{lem:MultI(1)}
Let \( V \) be an irreducible representation of the complexified integral Grothendieck ring \( \mathcal{K}(\mathcal{C})_{\mathbb{C}} \). Then the corresponding simple object \( A_V \) in \( \mathcal{Z}(\mathcal{C}) \) appears as a direct summand of \( I(\one) \) with multiplicity \( \dim(V) \).
\end{lemma}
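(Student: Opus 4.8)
The plan is to compute the multiplicity of $A_V$ in $I(\one)$ directly from the adjunction between the forgetful functor $F\colon \mathcal{Z}(\mathcal{C}) \to \mathcal{C}$ and its right adjoint $I$, and then to feed in the structural identification of Ostrik for the one nontrivial point. Since $\mathcal{Z}(\mathcal{C})$ is semisimple and $A_V$ is simple, the multiplicity sought is $\dim_{\mathbb{C}} \mathrm{Hom}_{\mathcal{Z}(\mathcal{C})}(A_V, I(\one))$. The adjunction $F \dashv I$ provides a natural isomorphism
\[
\mathrm{Hom}_{\mathcal{Z}(\mathcal{C})}(A_V, I(\one)) \;\cong\; \mathrm{Hom}_{\mathcal{C}}(F(A_V), \one),
\]
so by semisimplicity of $\mathcal{C}$ the multiplicity of $A_V$ in $I(\one)$ equals the multiplicity of $\one$ in $F(A_V)$. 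In the notation of \S\ref{sub:bas} this is exactly the coefficient $F_{V,1}$, and the claim reduces to establishing $F_{V,1} = \dim(V)$.

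To identify this number with $\dim(V)$, I would examine the endomorphism algebra of $I(\one)$. Applying the same adjunction to $\mathrm{End}_{\mathcal{Z}(\mathcal{C})}(I(\one)) = \mathrm{Hom}_{\mathcal{Z}(\mathcal{C})}(I(\one), I(\one))$ gives a vector space isomorphism
\[
\mathrm{End}_{\mathcal{Z}(\mathcal{C})}(I(\one)) \;\cong\; \mathrm{Hom}_{\mathcal{C}}(F I(\one), \one),
\]
and since $F I(\one) = \sum_t a_t \otimes a_{t^*}$ (the object underlying the central element $Z$ of \S\ref{sub:formal}), a dimension count yields $\dim_{\mathbb{C}}\mathrm{End}_{\mathcal{Z}(\mathcal{C})}(I(\one)) = r = \dim_{\mathbb{C}} \mathcal{K}(\mathcal{C})_{\mathbb{C}}$. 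Writing $I(\one) = \bigoplus_V m_V A_V$, this alone forces only $\sum_V m_V^2 = \sum_V (\dim V)^2$, not the individual equalities. The decisive input is \cite[Corollary 2.16]{Os15}, resting on the correspondence $V \leftrightarrow A_V$ of \cite[Theorem 2.13]{Os15}: it upgrades the above to an \emph{algebra} isomorphism $\mathrm{End}_{\mathcal{Z}(\mathcal{C})}(I(\one)) \cong \mathcal{K}(\mathcal{C})_{\mathbb{C}} \cong \bigoplus_V \mathrm{End}(V)$ compatible with this labeling. Matching the simple block $M_{m_V}(\mathbb{C})$ on the left with $M_{\dim V}(\mathbb{C})$ on the right then gives $m_V = \dim(V)$ for every $V$, which is the assertion.

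The main obstacle is exactly the compatibility invoked in that last step: one must know that the block of $\mathrm{End}(I(\one))$ corresponding to the summand $A_V$ is matched, under the Ostrik isomorphism, with the block $\mathrm{End}(V)$ indexed by the \emph{same} $V$, rather than merely sharing a total dimension. Concretely, this is where the half-braiding on $I(\one)$ enters, forcing the composition product on $\mathrm{End}(I(\one))$ to agree with the product in $\mathcal{K}(\mathcal{C})_{\mathbb{C}}$; it is precisely this content that \cite[Corollary 2.16]{Os15} packages, so once cited the conclusion is immediate. As a consistency check, the resulting equality $F_{V,1} = \dim(V)$ recovers the relation $F_{i,1} = n_i\,\delta_{i \le s}$ recorded in \S\ref{sub:bas}, and the eigenvalue $n_V f_V$ of left multiplication by $Z$, occurring with multiplicity $n_V^2$, matches the block sizes obtained here.
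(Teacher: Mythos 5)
Your proposal is correct and follows essentially the same route as the paper: the paper proves the lemma by declaring it an immediate consequence of \cite[Corollary 2.16]{Os15} (via the correspondence $V \leftrightarrow A_V$ of \cite[Theorem 2.13]{Os15}), which is precisely the citation your argument rests on at its one decisive step. Your adjunction bookkeeping reducing the multiplicity to $\dim_{\mathbb{C}}\mathrm{Hom}_{\mathcal{C}}(F(A_V),\one)$, and your explicit flagging that an abstract block-size match would not suffice without the labeled algebra isomorphism $\mathrm{End}_{\mathcal{Z}(\mathcal{C})}(I(\one)) \cong \mathcal{K}(\mathcal{C})_{\mathbb{C}}$, is a sound unpacking of what that corollary packages, so the two proofs coincide in substance.
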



\subsubsection{Proof of Theorem \ref{thm:OddNC2Intro}} \label{subsub:ProofOddNC2Intro}

\begin{proof}  
The smallest order of a noncommutative finite group of odd order is \( 21 \), uniquely realized by \( G = C_7 \rtimes C_3 \). Consequently, \( \VVec(G) \) yields a noncommutative integral Grothendieck ring of odd dimension and rank \( 21 \). 

We now show that there exists no non-pointed, noncommutative, odd-dimensional integral Grothendieck ring of rank less than or equal to \( 21 \). 

By Proposition~\ref{prop:MNSDFormalCodegrees}, Corollary~\ref{cor:OddNC}, and Lemma~\ref{lem:MultI(1)}, there must exist some irreducible representation \( V \) with \( n_V \geq 3 \) and \( V^* \not\simeq V \). Additionally, we have \( n_{\FPdim} = 1 \). Hence, the rank must be at least 
$
1 + 3^2 + 3^2 = 19.
$ 
Corollary~\ref{cor:Odd} excludes ranks \( 20, 22 \), leaving only ranks \( 19 \) and \( 21 \) for consideration.

To exclude rank \( 19 \), consider an Egyptian fraction of the form
\[
\frac{1}{a} + \frac{3}{b} + \frac{3}{b} = 1,
\]
where \( b \mid a \geq 19 \). Rewriting, we obtain \( 6a + b = ab \), which implies \( b \equiv 0 \mod a \), i.e., \( a \mid b \). Thus, \( a = b = 7 \), contradicting the assumption \( a \geq 19 \).

Now consider rank \( 21 \). The relevant Egyptian fraction has the form
\[
\frac{1}{a} + \frac{1}{b} + \frac{1}{b} + \frac{3}{c} + \frac{3}{c} = 1,
\]
with \( b, c \mid a \geq 21 \). If \( b = c \), then as before, we get \( a = b = c = 9 \), again contradicting \( a \geq 21 \). Hence, \( b \neq c \).

According to the classification of length-9 MNSD Egyptian fractions under divisibility constraints—recorded in the file \verb|EgyFracL9DivMNSD.txt| located in the \verb|Data/EgyptianFractionsDiv| directory of \cite{CodeData}—the only solutions are:
\[
(a,b,c) = (21,3,21),\, (21,21,7),\, (57,3,19),\, (105,15,7).
\]
In the non-pointed case, the global $\FPdim$ must be \( 57 = 3 \times 19 \) or \( 105 = 3 \times 5 \times 7 \), both of which are square-free and have at most three prime divisors. By~\cite[Theorem 9.2]{ENO11}, we can reduce to the group-theoretical case, and hence by~\cite[Theorem 1.5]{ENO11}, to the $1$-Frobenius case. However, there exists no MNSD $1$-Frobenius type of rank~21 with $\FPdim = 57$ or $105$, as confirmed by the following computation. The corresponding \SageMath{} code is available in the \verb|Code/SageMath| directory of~\cite{CodeData}.
\begin{verbatim}
sage: %attach TypesFinder1Frob.spyx
sage: [TypesFinderMNSD(i,21) for i in [57,105]]
[[], []]
\end{verbatim}
This concludes the proof.
\end{proof}

%

\subsubsection*{Rank 23}

Let us now discuss the rank 23. The relevant Egyptian fraction is of the form:
\[
\frac{1}{a} + \frac{1}{b} + \frac{1}{b} + \frac{1}{c} + \frac{1}{c} + \frac{3}{d} + \frac{3}{d} = 1,
\]
with \( b, c, d \mid a \geq 23 \). According to the classification of length-11 MNSD Egyptian fractions under divisibility constraints, as recorded in the file \verb|EgyFracL11DivMNSD.txt|, located in the \verb|Data/EgyptianFractionsDiv| directory of \cite{CodeData}, there are 48 such Egyptian fractions. This implies that the global $\FPdim$ belongs to the set
\begin{align*}
\{ &27, 35, 39, 55, 63, 75, 99, 119, 147, 155, 171, 175, 195, 203, 315, 399, 495, 595, 735, 903, 1155, 1575, \\
&2035, 2223, 2667, 3255, 4515, 6555, 7455, 22155 \}.
\end{align*}
While there are millions of possible MNSD types, completing the classification in general may prove too complex. However, restricting the analysis to the 1-Frobenius case reduces the number of possible MNSD types to just 118, corresponding to only five global $\FPdim$ values, namely
\[
\{39, 119, 903, 1575, 3255\}.
\]
The first three values, \(39 = 3 \times 13\), \(119 = 7 \times 17\), and \(903 = 3 \times 7 \times 43\), are square-free with at most three prime factors. As in the proof of \S\ref{subsub:ProofOddNC2Intro}, this reduces us to the group-theoretical, and hence the MNSD $1$-Frobenius case:
\begin{verbatim}
sage: %attach TypesFinder1Frob.spyx
sage: for i in [39, 119, 903]:
....:     print(i,TypesFinderMNSD(i,23))
39  [[1,1,1,1,1,1,1,1,1,1,1,1,1,1,1,1,1,1,1,1,1,3,3]]
119 [[1,1,1,1,1,1,1,1,1,1,1,1,1,1,1,1,1,1,1,1,1,7,7]]
903 [[1,1,1,1,1,1,1,1,1,1,1,1,1,1,1,1,1,1,1,1,1,21,21],
     [1,1,1,3,3,7,7,7,7,7,7,7,7,7,7,7,7,7,7,7,7,7,7]]
\end{verbatim}
Let \texttt{L} be the list of the four types mentioned above. We now apply the method described in \S\ref{sub:r7d903}:
\begin{verbatim}
gap> Read("GroupTheoretical.gap");
gap> for l in L do A:=FindGroupSubgroup(l);; if Length(A)>0 then Print(l,A); fi; od;
[[1,"C43 : C21",4,"C21",[[1,1,1,1,1,1,1,1,1,1,1,1,1,1,1,1,1,1,1,1,1,21,21],
          [0,6,5,4,3,2,1,14,20,19,18,17,16,15,7,13,12,11,10,9,8,22,21]]],
 [1,"C43 : C21",8,"C43 : C21",[[1,1,1,1,1,1,1,1,1,1,1,1,1,1,1,1,1,1,1,1,1,21,21],
          [0,2,1,18,20,19,15,17,16,12,14,13,9,11,10,6,8,7,3,5,4,22,21]]]]
\end{verbatim}
We deduce that all types are excluded except the third one, for which there are two possible models: \( \Rep(G) \) and \( \mathcal{C}(G,1,H,1) \), where \( G = C_{43} \rtimes C_{21} \) and \( H = C_{21} \). The Grothendieck ring of the former is commutative. For the latter, we verified in small examples that \( \Rep(K \rtimes H) \) and \( \mathcal{C}(K \rtimes H,1,H,1) \) always share the same type. This suggests that they may always be equivalent—at least when \( K \) and \( H \) are cyclic—which would eliminate the third type as a candidate in the noncommutative case.

Next, consider MNSD types of rank~23 with global \( \FPdim = 1575 = 3^2 \times 5^2 \times 7 \) and \( \FPdim = 3255 = 3 \times 5 \times 7 \times 31 \). There are 15810 types in the first case and 214752 in the second:
\begin{verbatim}
sage: %attach TypesFinder.spyx
sage: [[i,len(TypesFinderMNSD(i, 23))] for i in [1575, 3255]]
[[1575, 15810], [3255, 214752]]
\end{verbatim}

However, restricting to the $1$-Frobenius case drastically reduces the counts to 73 and 41, respectively:
\begin{verbatim}
sage: %attach TypesFinder1Frob.spyx
sage: [[i,len(TypesFinderMNSD(i, 23))] for i in [1575, 3255]]
[[1575, 73], [3255, 41]]
\end{verbatim}

Roughly one third of the remaining types are perfect, making a brute-force classification still unattainable. In both cases, a reduction to group-theoretical categories is ruled out. Furthermore, using \GAP{} (with the \textsf{SmallGrp}~1.5.1 package), we verified that no category of the form \( \mathcal{C}(G,1,H,1) \) exists with rank~23 and global \( \FPdim = 1575 \) or \( 3255 \).

\begin{verbatim}
gap> Read("GroupTheoretical.gap");
gap> for i in [1575,3255] do Print(i,FindGroupSubgroupOrderRank(i,23)); od;
1575[  ]3255[  ]
\end{verbatim}


\section*{Acknowledgments} 
We thank Shlomo Gelaki and Dmitri Nikshych for valuable discussions on group-theoretical fusion categories.
Sebastien Palcoux is supported by NSFC (Grant no. 12471031). The high-performance computing cluster at the University of Osnabrück, which played a crucial role in our computations, was funded by DFG grant 456666331.

\vspace*{.5cm}

\noindent \textbf{Availability of data and materials.} Data for the computations in this paper are available on reasonable request from the authors. The softwares used for the computations can be downloaded from the URLs listed in the references.

\vspace*{.15cm}

\noindent \textbf{Conflict of interest statement.} On behalf of all authors, the corresponding author declares that there are no conflicts of interest.




\begin{thebibliography}{99}

\bibitem{A374582}
{\sc M.A.~Alekseyev}, {\em Egyptian fractions with divisibility assumption}, Entry A374582 in OEIS, \url{http://oeis.org/A374582}

\bibitem{MaxScripts}
{\sc M.A.~Alekseyev}, {\em Egyptian Fractions}, GitHub repository, \url{https://github.com/maxale/egyptian-fractions}.

\bibitem{appendices}
{\sc M.A.~Alekseyev, W.~Bruns, J.~Dong, S.~Palcoux}, {\em Appendices of ``Classifying integral Grothendieck rings up to rank 5 and beyond''}, GitHub, \url{https://github.com/sebastienpalcoux/Fusion-Categories/tree/main/IntegralPaper}

\bibitem{ABPP}
{\sc M.A.~Alekseyev, W.~Bruns, S.~Palcoux, F.V. Petrov}, {\em Classification of integral modular data up to rank 13}, arXiv:2302.01613.

\bibitem{BrRo}
{\sc P.~Bruillard, E.C.~Rowell}, {\em Modular categories, integrality and Egyptian fractions.} Proc. Amer. Math. Soc. 140 (2012), no. 4, 1141--1150.

\bibitem{NorManual} 
{\sc W.~Bruns, M.~Horn}, {\em Manual - Normaliz 3.10.5}. Available at \url{https://github.com/Normaliz/Normaliz/blob/master/doc/Normaliz.pdf}.

\bibitem{Norma} 
{\sc W.~Bruns, B.~Ichim, C.~S\"oger and U.~von der Ohe}, {\em Normaliz. Algorithms for rational cones and affine monoids}. Available at \url{https://www.normaliz.uni-osnabrueck.de}.

\bibitem{BP24} 
{\sc W.~Bruns, S.~Palcoux}, {\em Classifying simple integral fusion rings}, work in progress (2024)

\bibitem{B16}
{\sc  P.~Bruillard}, {\em Rank 4 premodular categories.} With an Appendix by César Galindo, Siu-Hung Ng, Julia Plavnik, Eric Rowell and Zhenghan Wang. New York J. Math. 22 (2016), 775--800. 

\bibitem{BO18}
{\sc P.~Bruillard, C.M.~Ortiz-Marrero}, {\em Classification of rank 5 premodular categories.} J. Math. Phys. 59 (2018), no. 1, 011702, 8 pp.

\bibitem{BP25}
{\sc S.~Burciu, S.~Palcoux}, {\em Structure constants, Isaacs property and extended Haagerup fusion categories}. Comm. Algebra 53 (2025), no. 4, 1438--1452.

\bibitem{CGP24} {\sc A.~Czenky, W.~Gvozdjak, J.~Plavnik}, {\em Classification of low-rank odd-dimensional modular categories}. J. Algebra 655, 223--293 (2024).

\bibitem{CP22} {\sc A.~Czenky, J.~Plavnik}, {\em On odd-dimensional modular tensor categories}. Algebra Number Theory 16, 1919--1939 (2022).

\bibitem{EGNO15}
{\sc P.~Etingof, S.~Gelaki, D.~Nikshych, and V.~Ostrik}, {\em Tensor Categories}, American Mathematical Society, (2015).
\newblock Mathematical Surveys and Monographs Volume 205.

\bibitem{EGO04} {\sc P.~Etingof, S.~Gelaki, V.~Ostrik}, {\em Classification of fusion categories of dimension $pq$}. Int. Math. Res. Not. 57, 3041--3056 (2004).

\bibitem{ENO05}
{\sc P.~Etingof, D.~Nikshych, and V.~Ostrik}, {\em On fusion categories}. Ann. of Math. (2) 162 (2005), no. 2, 581--642.

\bibitem{ENO11}
{\sc P.~Etingof, D.~Nikshych, and V.~Ostrik}, {\em Weakly group-theoretical and solvable fusion categories}. Adv. Math. 226 (2011), no. 1, 176--205.

\bibitem{EO04} {\sc P.~Etingof, V.~Ostrik}, {\em Finite tensor categories}. Mosc. Math. J. 4, 627--654 (2004).

\bibitem{gap}
The GAP~Group, \emph{GAP -- Groups, Algorithms, and Programming, Version 4.12.2}, 2022. Available at \url{https://www.gap-system.org}.

\bibitem{GeNa09}
{\sc S.~Gelaki, D.~Naidu}, {\em Some properties of group-theoretical categories}, J. Algebra, 322(8):2631-2641, 2009.

\bibitem{GN08}
{\sc S.~Gelaki, D.~Nikshych}, {\em Nilpotent fusion categories}. Adv. Math. 217 (2008), no. 3, 1053--1071.

\bibitem{GS25}
{\sc S.~Gelaki, G.~Sanmarco}, {\em On finite group scheme-theoretical categories, II}, 	arXiv:2403.08785

\bibitem{GrMo}
{\sc A.~Gruen, S.~Morrison}, {\em Computing modular data for pointed fusion categories.} Indiana Univ. Math. J. 70 (2021), no. 2, 561--593, \url{https://tqft.net/web/research/students/AngusGruen}

\bibitem{HLPW24}
{\sc L.~Huang, Z.~Liu, S.~Palcoux, J.~Wu}, {\em Complete Positivity of Comultiplication and Primary Criteria for Unitary Categorification}, Int. Math. Res. Not. IMRN (2024), no.1, 817-860.

\bibitem{Isaacs}
{\sc I.M.~Isaacs}, {\em Character theory of finite groups}. Corrected reprint of the 1976 original [Academic Press, New York; MR0460423]. AMS Chelsea Publishing, Providence, RI, 2006. xii+310 pp.

\bibitem{Iz17}
{\sc M.~Izumi}, {\em A Cuntz algebra approach to the classification of near-group categories}. Proceedings of the 2014 Maui and 2015 Qinhuangdao conferences in honour of Vaughan F. R. Jones' 60th birthday, 222--343, Proc. Centre Math. Appl. Austral. Nat. Univ., 46, Austral. Nat. Univ., Canberra, 2017.

\bibitem{IK02}
{\sc M.~Izumi, H.~Kosaki}, {\em Kac algebras arising from composition of subfactors: general theory and classification}. Mem. Amer. Math. Soc. 158 (2002), no. 750, 198 pp.

\bibitem{Lan03}
{\sc E.~Landau}, {\em Über die Klassenzahl der binären quadratischen Formen von negativer Discriminante}. (German) Math. Ann. 56 (1903), no. 4, 671--676.

\bibitem{LPW21}
{\sc Z.~Liu, S.~Palcoux, J.~Wu}, {\em Fusion Bialgebras and Fourier Analysis}, Adv. Math. 390 (2021), Paper No. 107905

\bibitem{LPR23}
{\sc Z.~Liu, S.~Palcoux, Y.~Ren}, {\em Interpolated family of non group-like simple integral fusion rings of Lie type }, Internat. J. Math. 34 (2023), no. 6, Paper No. 2350030

\bibitem{LPR2203}
{\sc Z.~Liu, S.~Palcoux, Y.~Ren}, {\em Triangular prism equations and categorification}, arXiv:2203.06522 (2022).

\bibitem{LPR22}
{\sc Z.~Liu, S.~Palcoux, Y.~Ren}, {\em Classification of Grothendieck rings of complex fusion categories of multiplicity one up to rank six}. Lett. Math. Phys. 112 (2022), no. 3, Paper No. 54, 37 pp.

\bibitem{MW17}
{\sc S.~Morrison, Scott, K.~Walker}, {\em The center of the extended Haagerup subfactor has 22 simple objects.} Internat. J. Math. 28 (2017), no. 1, 1750009, 11 pp.

\bibitem{NR16}
{\sc S.~Natale, E.P.~Rodríguez}, {\em Graphs attached to simple Frobenius-Perron dimensions of an integral fusion category}, Monatsh. Math. 179 (2016), no. 4, 615--649.

\bibitem{NS07}
{\sc S.H.~Ng, P.~Schauenburg}, {\em Frobenius-Schur indicators and exponents of spherical categories.} Adv. Math. 211 (2007), no. 1, 34--71.

\bibitem{Nik08}
{\sc D.~Nikshych}, {\em Non-group-theoretical semisimple Hopf algebras from group actions on fusion categories}. Selecta Math. (N.S.) 14 (2008), no. 1, 145--161.

\bibitem{Os03}
{\sc V.~Ostrik}, {\em Fusion categories of rank 2}. Math. Res. Lett. 10 (2003), no. 2-3, 177--183. 

\bibitem{Os03b}
{\sc V.~Ostrik}, {\em Module categories over the Drinfeld double of a finite group}. Int. Math. Res. Not. 2003, no. 27, 1507--1520.

\bibitem{Os09}
{\sc V.~Ostrik}, {\em On formal codegrees of fusion categories}. Math. Res. Lett. 16 (2009), no. 5, 895--901.

\bibitem{Os15}
{\sc V.~Ostrik}, {\em Pivotal fusion categories of rank 3}. Mosc. Math. J. 15 (2015), no. 2, 373--396, 405.

\bibitem{OsMO}
{\sc V.~Ostrik}, {\em What is the smallest rank for a noncommutative fusion ring?}, https://mathoverflow.net/q/348789 (version: 2019-12-22)

\bibitem{CodeData}
{\sc S.~Palcoux}, {\em Fusion Categories Repository}, GitHub, \url{https://github.com/sebastienpalcoux/Fusion-Categories/tree/main/IntegralPaper}

\bibitem{sage}
The Sage Developers, \emph{SageMath, the Sage Mathematics Software System (Version 10.3)}, 2024. Available at \url{https://www.sagemath.org}.

\bibitem{A007018}
{\sc N.J.A.~Sloane}, {\em $a(n) = a(n-1)^2 + a(n-1), a(0)=1$}, Entry A007018 in OEIS, \url{http://oeis.org/A007018}

\bibitem{A002966}
{\sc N.J.A.~Sloane, R.G.~Wilson v}, {\em Egyptian fractions}, Entry A002966 in OEIS, \url{http://oeis.org/A002966}

\bibitem{TY98}
{\sc D.~Tambara, S.~Yamagami}, {\em Tensor categories with fusion rules of self-duality for finite abelian groups}.
J. Algebra 209, 692-707 (1998)

\bibitem{VS23}
{\sc G.~Vercleyen, J.K.~Slingerland}, {\em On low rank fusion rings}. J. Math. Phys. 64 (2023), no. 9, Paper No. 091703, 25 pp.

\bibitem{WLL21}
{\sc Z.~Wang, G.~Liu, L.~Li}, {\em The Casimir number and the determinant of a fusion category}. Glasg. Math. J. 63 (2021), no. 2, 438--450.

\end{thebibliography}
\end{document}